\documentclass[11pt, reqno]{amsart}
\usepackage{mathrsfs}
\usepackage{amssymb,amsthm,amsmath} 
\usepackage[numbers,sort&compress]{natbib}
\usepackage{amssymb,amsmath}
\usepackage{amsfonts}
\usepackage{mathrsfs}
\usepackage{latexsym}
\usepackage{amssymb}
\usepackage{amsthm}
\usepackage{xcolor}
\newif\ifshowrevisions
\newcommand{\rev}[1]{\ifshowrevisions{\color{blue}#1}\else#1\fi}
\newenvironment{revision}
{\begingroup\ifshowrevisions\color{blue}\fi}
{\endgroup}
\usepackage{pdfsync}
\usepackage{indentfirst}
\usepackage{graphics}
\usepackage{subfigure}
\usepackage{epsfig}
\usepackage{float}
\usepackage{cases}
\date{}
\allowdisplaybreaks

\usepackage{hyperref}
\hypersetup{hidelinks}

\usepackage{amsmath}
\usepackage{amsthm}

\newtheorem{remark}{Remark}[section]

\newtheorem{theorem}{Theorem}[section]
\newtheorem{proposition}{Proposition}[section]
\newtheorem{lemma}{Lemma}[section]
\newtheorem{corollary}{Corollary}[section]

\newcommand{\beq}{\begin{equation}}
	\newcommand{\eeq}{\end{equation}}
\newcommand{\ben}{\begin{eqnarray}}
	\newcommand{\een}{\end{eqnarray}}
\newcommand{\beno}{\begin{eqnarray*}}
	\newcommand{\eeno}{\end{eqnarray*}}
\newcommand{\alabel}{\stepcounter{equation}\tag{\theequation}\label}

\numberwithin{equation}{section}

\def\n{\nabla}

\begin{document}
	\title[Suppression of blow-up for PKS--NS system]{Blow-up suppression for  the 2D parabolic-parabolic Patlak-Keller-Segel-Navier-Stokes system}

	\author{Xiaomeng~Chen}
	\address[Xiaomeng~Chen]{School of Mathematical Sciences, Dalian University of Technology, Dalian, 116024,  China}
	\email{cxm@mail.dlut.edu.cn}
	
	\author{Shikun~Cui}
	\address[Shikun~Cui]{State Key Laboratory of Mathematical Sciences,
		Academy of Mathematics $\&$ Systems Science, The Chinese Academy of Sciences,
		Beijing,
		100190, China}
	\email{cskmath@163.com}
	
	\author{Wendong~Wang}
	\address[Wendong~Wang]{School of Mathematical Sciences, Dalian University of Technology, Dalian, 116024,  China}
	\email{wendong@dlut.edu.cn}

	\author{Yi~Zhang}
	\address[Yi~Zhang]{School of Mathematical Sciences, Dalian University of Technology, Dalian, 116024,  China}
	\email{zymath@mail.dlut.edu.cn}
	\subjclass[2020]{Primary 35Q92; Secondary 35B44, 76D05, 92C17}
	
	\date{\today}
	
	\begin{abstract} Chemotactic aggregation can overcome diffusion and lead to finite-time blow-up in Patlak--Keller--Segel models. Mixing induced by suitable fluid flows can counteract this aggregation through enhanced dissipation. Earlier results of  Zeng–Zhang–Zi (J. Funct. Anal., 2021) for the coupled system impose additional smallness assumptions on the initial chemical gradient. A natural question is whether this restriction can be removed for the coupled system near a stationary Couette flow. We give a positive answer to this question
		for the two-dimensional parabolic–parabolic Patlak–Keller–Segel–Navier–Stokes system near the Couette flow $(Ay,0)$ in $\mathbb T\times\mathbb R$. It is proved that, when the Couette flow is strong enough ($A$ is large enough), the system admits a global solution that remains uniformly bounded in time for arbitrary fixed initial data. 
		The main difficulties are the transient growth of the chemical gradient and the nonlinear coupling among the density, chemical field and fluid.
		We develop a two-stage energy method combining short-time estimates in Kelvin coordinates with long-time estimates in physical coordinates. 
		The key is to split the nonzero chemical modes into two parts while accounting for the nonlinear transport. One part carries the initial data, and the other is a forced component with zero initial data.
		A weighted coupled energy controls the former through a time-integrable Gronwall coefficient and absorbs the feedback from the latter without increasing the initial energy. Enhanced dissipation then provides the bounds needed to enter the long-time regime, while one-dimensional zero-mode estimates and Moser iteration complete the argument without a mass restriction.
		
	\end{abstract}

	\maketitle
	

	{\small {\bf Keywords:} 	Blow-up suppression;  parabolic-parabolic Patlak--Keller--Segel--Navier--Stokes system; stability; 
		Couette flow}

	{\small {\bf MSC 2020:} Primary 35Q92; Secondary 35B44, 76D05, 92C17.}
	
	\tableofcontents

	\parskip5pt
	\parindent=1.5em
	\section{Introduction}
	
	Consider the following parabolic--parabolic Patlak--Keller--Segel (PKS) system
	\begin{equation}\label{eq:ks}
		\left\{
		\begin{array}{lr}
			\partial_tn=\Delta n-\nabla\cdot(n\nabla c), \\
			\partial_tc=\Delta c-c+n,
		\end{array}
		\right.
	\end{equation}
	where $n$ and $c$ denote the cell density and the chemical concentration, respectively. The PKS model, originating in the works of Patlak~\cite{Patlak1953} and Keller--Segel~\cite{KellerSegel1970}, describes the competition between diffusion and chemotactic aggregation. When aggregation dominates diffusion, solutions may develop singularities. For the planar parabolic--parabolic problem, Calvez--Corrias~\cite{CalvezCorrias2008} established global existence below the $8\pi$ mass threshold under their assumptions, while Schweyer~\cite{Schweyer2014} constructed radially symmetric finite-time blow-up solutions. These results motivate the search for mechanisms that prevent chemotactic singularities without imposing a smallness condition on the initial cell mass.
	
	As said in \cite{KiselevXu2016} or \cite{ZengZhangZi2021}, chemotactic motion often takes place in a moving fluid. A natural question is whether fluid transport can suppress the concentration driven by the aggregation term $\nabla\cdot(n\nabla c)$. In this paper, we investigate the two-dimensional fully parabolic PKS system coupled with the incompressible Navier--Stokes (NS) equations in $\mathbb T\times\mathbb R$, where $\mathbb T=[0,2\pi]$:
	\begin{equation}\label{ini}
		\left\{
		\begin{array}{lr}
			\partial_tn+v\cdot\nabla n=\Delta n-\nabla\cdot(n\nabla c), \\
			\partial_tc+v\cdot\nabla c+c=\Delta c+n, \\
			\partial_tv+v\cdot\nabla v+\nabla P=\Delta v+n\nabla \phi, \\
			\nabla\cdot v=0,
		\end{array}
		\right.
	\end{equation}
	with the initial conditions
	\[
	(n,c,v)\big|_{t=0}=(n_{\rm in},c_{\rm in},v_{\rm in}),
	\]
	where $v$ is the fluid velocity, $P$ is the pressure, and $\phi$ is a prescribed potential. Throughout this paper, we take $\phi=y$ and denote the conserved cell mass by
	\[
	M:=\|n_{\rm in}\|_{L^1(\mathbb T\times\mathbb R)}
	=\|n(t)\|_{L^1(\mathbb T\times\mathbb R)}.
	\]
	Unlike PKS equations with a prescribed ambient flow, \eqref{ini} includes the feedback of the cell density on the fluid. The resulting velocity transports both $n$ and $c$, so the estimates for the density, chemical concentration and velocity must be closed together. Let us briefly recall some results related to the present problem. 
	
	{\bf The PKS equations of parabolic-elliptic with prescribed transport.} Kiselev--Xu~\cite{KiselevXu2016} proved that sufficiently strong mixing flows suppress blow-up in the parabolic--elliptic setting. Bedrossian--He~\cite{BedrossianHe2017} established blow-up suppression via shear flows, whose mixing effect reduces the effective dimension of the dynamics. In the three-dimensional parabolic--elliptic setting, Feng--Shi--Wang~\cite{FengShiWang2022} established blow-up
	suppression on $\mathbb T^3$ via planar helical flows, while Shi--Wang~\cite{ShiWang2024} obtained a similar result in $\mathbb T^2\times\mathbb R$ for the plane Couette--Poiseuille flow. Both results allow arbitrary initial mass provided that the flow is sufficiently strong. The underlying mechanism is enhanced dissipation: advection creates small spatial scales on which diffusion acts more efficiently. This mechanism also plays an important role in hydrodynamic stability; see, for example, Bedrossian--Masmoudi--Vicol~\cite{BedrossianMasmoudiVicol2016}, Bedrossian--Vicol--Wang~\cite{BedrossianVicolWang2018}, and Wei--Zhang~\cite{WeiZhang2021,WeiZhang2023}.
	
	{\bf An additional difficulty arises in the parabolic--parabolic case.} Since the chemical concentration satisfies its own evolution equation, a strong shear flow may amplify the chemical gradient before enhanced dissipation has sufficiently reduced the nonzero modes. This gradient enters the aggregation flux, so mixing can have a destabilizing effect as well as a stabilizing one. He~\cite{He2018} studied strictly monotone shear flows in $\mathbb T\times\mathbb R$ under a smallness assumption on the initial chemical gradient. In $\mathbb R^3$, Deng--Shi--Wang~\cite{DengShiWang2025} established blow-up suppression via Couette flow. Their Green's function approach covers both parabolic--elliptic and parabolic--parabolic cases under suitable spatial decay assumptions. The latter case additionally requires a smallness condition on the initial chemical gradient. More recently, He \cite{He2025Alternating,He2025Shear} obtained large-data results on $\mathbb T^3$ by using time-dependent flows. In particular, under a restriction on the total mass, the shear-flow result in~\cite{He2025Shear} combines short-time estimates with a smallness argument to obtain long-time bounds.
	
	{\bf The coupled PKS--NS system.} Zeng--Zhang--Zi~\cite{ZengZhangZi2021} proved blow-up suppression near Couette flow in $\mathbb T\times\mathbb R$ in both the parabolic--elliptic and parabolic--parabolic cases. In the latter case, they imposed the additional anisotropic smallness condition
	\begin{align*}
		A^{\frac12}\|\partial_x(c_{\rm in})_{\neq}\|_{L^2}
		+A^{\frac16}\|\partial_y(c_{\rm in})_{\neq}\|_{L^2}
		\leq1,\alabel{small1}
	\end{align*}
	where $(c_{\rm in})_{\neq}$ denotes the nonzero Fourier modes in $x$. This condition compensates for the shear-induced transient growth of the chemical gradient. However, for fixed initial data with a nonzero chemical gradient in these modes, the condition becomes more restrictive as $A$ increases.
	For the coupled system in the whole space, Wang--Wang--Wu \cite{WangWangWu2026} studied blow-up suppression for the two-dimensional PKS--NS system near the Couette flow using Green's function estimates.
	
	{\bf Related results have been obtained for other flows and coupling mechanisms.} He~\cite{He2023} studied enhanced dissipation and blow-up suppression in a chemotaxis--fluid system with buoyancy coupling. For the Poiseuille flow, Li--Xiang--Xu~\cite{LiXiangXu2025} considered the problem in $\mathbb T\times\mathbb R$, while Cui--Wang~\cite{CuiWang2024} studied a multispecies system in a finite channel. Suppression by strong buoyancy was investigated by Hu~\cite{Hu2024} and Hu--Kiselev--Yao~\cite{HuKiselevYao2025}. In three dimensions, Cui--Wang--Wang~\cite{CuiWangWang2026} established blow-up suppression for the parabolic--elliptic PKS--NS system near the Couette flow under a mass restriction and suitable assumptions on the perturbation. These results reflect the different roles of the chemical equation, the background flow and the fluid coupling.
	
	The purpose of this paper is to remove the chemical-gradient smallness condition \eqref{small1} for the fully parabolic system \eqref{ini} by Zeng--Zhang--Zi~\cite{ZengZhangZi2021}, with a stationary Couette background and the full Navier--Stokes coupling. More precisely, we address the following question:
	\begin{quote}
		\itshape Can a sufficiently strong stationary Couette flow suppress blow-up in the fully parabolic PKS--NS system for arbitrary fixed, sufficiently regular initial density, chemical concentration and velocity perturbation?
	\end{quote}
	{We give a positive answer by combining short-time estimates in Kelvin coordinates with long-time estimates in physical coordinates. The contribution of the large initial chemical profile is controlled through a time-integrable coefficient in a coupled energy inequality, rather than absorbed as a small perturbation. By the end of the preparation interval, enhanced dissipation has compensated for the shear-induced transient growth and provides the bounds needed for the long-time argument.}
	
	{To state our main result, we introduce the velocity perturbation around the Couette flow,}
	\[
	u(t,x,y)=v(t,x,y)-U(y),\qquad U(y)=(Ay,0).
	\]
	Then \eqref{ini} becomes
	\begin{equation*}
		\begin{cases}
			\partial_t n + Ay\partial_x n + {u}\cdot\nabla n - \Delta n
			= -\nabla\cdot(n\nabla c),\\
			\partial_t c + Ay\partial_x c + {u}\cdot\nabla c-\Delta c
			= n-c,\\
			\partial_t {u} + Ay\partial_x {u}
			+ \begin{pmatrix} Au_2 \\ 0 \end{pmatrix}
			+ {u}\cdot\nabla {u}
			- \Delta {u} + \nabla p
			= \begin{pmatrix} 0 \\ n \end{pmatrix},\\
			\nabla\cdot{u} =0.
		\end{cases}
	\end{equation*}
	Define the vorticity and stream function by
	\[
	\omega=\partial_yu_1-\partial_xu_2,\qquad
	u=\nabla^\perp\Phi=(\partial_y\Phi,-\partial_x\Phi),\qquad
	\Delta\Phi=\omega.
	\]
	The vorticity satisfies
	\[
	\partial_t\omega+Ay\partial_x\omega-\Delta\omega+u\cdot\nabla\omega
	=-\partial_xn.
	\]
	{Rescaling time by replacing the original time variable with $\frac{t}{A}$, we obtain}
	\begin{equation}\label{eq:1.7}
		\begin{cases}
			\partial_t n+y\partial_x n-\frac{1}{A}\Delta n
			=-\frac{1}{A}(\nabla\cdot(n\nabla c)+{u}\cdot\nabla n),\\
			\partial_t c+y\partial_x c+\frac{1}{A}{u}\cdot\nabla c
			-\frac{1}{A}\Delta c
			=\frac{1}{A}(n-c),\\
			\partial_t\omega+y\partial_x\omega-\frac{1}{A}\Delta\omega
			=-\frac{1}{A}(\partial_x n+{u}\cdot\nabla\omega),\\
			{u}=\nabla^\perp\Delta^{-1}\omega.
		\end{cases}
	\end{equation}
	{For any function $f=f(t,x,y)$, we define its zero and nonzero modes in the $x$-direction by}
	\[
	P_0f=f_0(t,y)
	=
	\frac{1}{2\pi}\int_{\mathbb T}f(t,x,y)\,dx,
	\qquad
	P_{\neq}f=f_{\neq}:=f-f_0.
	\]
	The main result is stated as follows. 
	\begin{theorem}\label{thm:main}
		{Assume that $0\leq n_{\rm in}\in L^1\cap H^2(\mathbb T\times\mathbb R)$, $0\leq c_{\rm in}\in H^2\cap W^{1,\infty}(\mathbb T\times\mathbb R)$, and $u_{\rm in}\in H^3(\mathbb T\times\mathbb R)$ with $\nabla\cdot u_{\rm in}=0$. Then there exists a positive constant $\bar A$, depending only on $M=\|n_{\rm in}\|_{L^1(\mathbb T\times\mathbb R)}$, $\|n_{\rm in}\|_{H^2}$, $\|c_{\rm in}\|_{H^2}$ and $\|u_{\rm in}\|_{H^3}$, such that if $A\geq\bar A$, \rev{the unique local strong solution of \eqref{eq:1.7} extends globally in time and satisfies}}
		\begin{align*}
			\|n\|_{L^{\infty}L^{\infty}}+\|c\|_{L^{\infty}W^{1,\infty}}+\|u\|_{L^{\infty}L^{\infty}}
			\leq C\left(A,M,\|n_{\rm in}\|_{H^2},\|c_{\rm in}\|_{H^2},\|c_{\rm in}\|_{W^{1,\infty}},\|u_{\rm in}\|_{H^3}\right).
		\end{align*}
	\end{theorem}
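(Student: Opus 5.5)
The plan is to argue by continuity (bootstrap) on the rescaled system \eqref{eq:1.7}, in two stages separated by a preparation time $T_0 = A^{\alpha}$ for a suitable $0<\alpha<1$, chosen so that $T_0 \gg A^{1/3}$ while $T_0/A \to 0$. Local well-posedness gives a strong solution on some interval. We then show that the a priori bounds below hold with constants independent of $A$ (for $A\ge \bar A$), improved by a factor $1/2$. This closes the continuity argument and yields global existence.

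\textbf{Stage one, $t\le T_0$: estimates in Kelvin coordinates.} Pass to the moving frame $X = x - ty$, $Y = y$, so that the transport $y\partial_x$ disappears and $\partial_y$ becomes $\partial_Y - t\partial_X$.

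For the nonzero chemical modes we split
\[
c_{\neq} = c^{(1)} + c^{(2)}.
\]
Here $c^{(1)}$ solves the linear transport--diffusion equation, including the nonlinear transport term $\frac1A u\cdot\nabla c$, with initial data $(c_{\rm in})_{\neq}$. The remainder $c^{(2)}$ is forced by $\frac1A(n-c)_{\neq}$ and has zero initial data.

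The transient growth of $\nabla c^{(1)}$ is at most linear in $t$ (through the factor $t\partial_X$), while the Fourier multiplier $e^{-\frac1A\int_0^t |k|^2 (\ldots)^2}$ produces decay on the scale $A^{1/3}$. Consequently $\|\nabla c^{(1)}\|_{L^2}$, and via $H^2$ and Gagliardo--Nirenberg also its $L^\infty$ norm, is bounded by $C\langle t\rangle e^{-\delta t/A^{1/3}}$. This quantity is integrable in $t$ with integral $O(A^{2/3})$, so after the factor $\frac1A$ it contributes $O(A^{-1/3})$.

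We then build a weighted coupled energy for $(n,\nabla c^{(2)},\omega)$ in $H^1$- or $H^2$-type norms with enhanced-dissipation weights. The term $\frac1A\nabla\cdot(n\nabla c^{(1)})$ enters the Gronwall inequality only through a coefficient $\frac1A\|\nabla c^{(1)}\|_{L^\infty}$, whose time integral is small. The forced part $c^{(2)}$ is controlled by $n$, with gain $A^{-1}T_0$, and is absorbed without enlarging the initial energy.

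The zero modes $n_0, c_0, \omega_0$ on $[0,T_0]$ barely move, because their evolution has coefficient $\frac1A$ and $T_0/A \ll 1$. Their bounds therefore follow from crude one-dimensional energy estimates.

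At time $T_0$, enhanced dissipation has made the nonzero modes of $n$, $\nabla c$ and $\omega$ of size $e^{-cA^{\alpha-1/3}}$, which is tiny. The state at $T_0$ is thus close to a shear profile, and all norms are controlled by the data.

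\textbf{Stage two, $t\ge T_0$: estimates in physical coordinates.} We run the standard Zeng--Zhang--Zi type bootstrap with enhanced dissipation norms such as $\|e^{aA^{-1/3}t}(\cdot)_{\neq}\|$ for the nonzero modes. The key point is that the smallness assumption \eqref{small1} is now satisfied at the new initial time $T_0$, with large room to spare.

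The zero modes satisfy a one-dimensional parabolic--parabolic Keller--Segel system perturbed by quadratic terms in the small nonzero modes. In one dimension there is no critical mass, so an $L^2$ bound on $n_0$ closes by Nash/Gagliardo--Nirenberg for arbitrary $M$. The velocity zero mode $u_{1,0}$ satisfies a heat equation with small forcing, and $u_{2,0}=0$ by incompressibility.

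Finally, Moser iteration on the $n$ equation, using $\nabla c\in L^\infty$ (obtained from the $c$ equation via heat-semigroup bounds and the $W^{1,\infty}$ data), upgrades the $L^2$ bounds to the stated $L^\infty$ bound. The bound on $\|u\|_{L^\infty}$ comes from the $H^2$ vorticity control.

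\textbf{Main obstacle.} The hardest step is the stage-one coupled energy. We must show that the feedback of $c^{(2)}$ into the $n$ equation, together with the nonlinear transport of $c^{(1)}$ by $u$ (which itself depends on $n$ through $\omega$), does not destroy the integrable Gronwall coefficient. Concretely, the energy weights must be chosen so that every product involving $\nabla c^{(1)}$ appears with a factor $\langle t\rangle e^{-\delta t A^{-1/3}}$ times the energy. All remaining cross terms should carry an explicit $A^{-\epsilon}$, so that they can be absorbed once $A$ is large. I would first try weights of the form $(1+t^2)^{-1}$ on the $\partial_X$ derivatives to neutralize the linear growth.
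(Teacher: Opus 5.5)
Your architecture matches the paper's. You use a preparation interval of length between $A^{1/3}$ and $A$ (the paper takes $T^*=A^{1/3+\delta}$), treated in Kelvin coordinates, and you split $c_{\neq}$ into an initial-data part and a zero-data forced part. You then run Gronwall with an $A$-independent integrable coefficient, a Zeng--Zhang--Zi type bootstrap from $T^*$ where \eqref{small1} holds with room to spare, one-dimensional Nash for $n_0$, and Moser iteration. However, two of your steps rely on pointwise control of $\nabla c$. That control is not available under the hypotheses, and the paper is built to avoid needing it.

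\textbf{Stage-one Gronwall coefficient.} You take $\frac1A\|\nabla c^{(1)}\|_{L^\infty}$ with $\|\nabla c^{(1)}(t)\|_{L^\infty}\lesssim\langle t\rangle e^{-\delta tA^{-1/3}}$ ``via $H^2$ and Gagliardo--Nirenberg''.
\begin{itemize}
\item In two dimensions, $H^2$ control does not give $\nabla c\in L^\infty$. You would need a third, mixed derivative, which is available only through parabolic smoothing at rate $A^{-1}$, hence with $A$-dependent constants.
\item Your $c^{(1)}$ is transported by $u$, so there is no multiplier formula. Propagating a Lipschitz bound for it would need $\nabla u\in L^\infty$, which $H^1$-level vorticity bounds do not provide.
\item Falling back on $\|c_{\rm in}\|_{W^{1,\infty}}$ and the maximum principle gives only $\|\nabla c^{(1)}\|_{L^\infty}\lesssim\langle t\rangle$, with no decay. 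Then $\frac1A\int_0^{T_0}\langle t\rangle^2\,dt\sim T_0^3/A\to\infty$. It would also make $\bar A$ depend on $\|c_{\rm in}\|_{W^{1,\infty}}$, which the statement excludes.
\item Your smallness count is off. In an $L^2$-type energy, the term $\frac1A\int n\nabla c^{(1)}\cdot\nabla n$ yields the quadratic coefficient $\frac1A\|\nabla c^{(1)}\|^2$, or $\frac1A\|\Delta c^{(1)}\|$ after integrating by parts. Its time integral is $O(1)$, not $O(A^{-1/3})$. This is harmless for Gronwall, but it is why $E_1$ carries the factor $e^{C(\|c_{\rm in}\|_{H^2}^2+1)}$.
\end{itemize}
The missing idea is to close the energy for $N_0$, $\partial_XN_{\neq}$, $\partial_XC_{1,\neq}$, $\partial_XC_{2,\neq}$ and $\Omega_{\neq}$ in $H^1_{X,Y}$. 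The anisotropic products of Lemma~\ref{lem:embedding} trade $L^\infty$ for an extra $\partial_X$. The coefficient then becomes $\frac1A\|e^{aA^{-1/3}t}\partial_X\nabla_LC_{1,\neq}\|_{H^1}^2$. By Lemma~\ref{prop:kelvin-timespace}, its time integral is at most $\|\partial_XC_{1,\neq}\|_{Z_a^1}^2\le C\|c_{\rm in}\|_{H^2}^2$. This is an $L^2_t$ dissipation bound, not pointwise decay. The feedback of $C_{2,\neq}$ into the density is absorbed by the weight $\kappa$.

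\textbf{Pointwise bounds.} Running Moser on $n$ ``using $\nabla c\in L^\infty$ obtained from the $c$ equation'' is circular as stated. Uniformly in time, the long-time energy controls $n$ only in $L^2$. A uniform $W^{1,\infty}$ bound for $c$ needs $n\in L^p$ for some $p>2$. To handle $\nabla u\cdot\nabla c$ it also needs bounds on $\|c\|_{L^\infty}$ and $\|\nabla u\|_{L^4}$, which the paper derives from $\|n\|_{L^\infty}$. These are exactly the outputs the iteration is supposed to produce. The paper breaks the loop with
\[
\|n^p\nabla c\|_{L^2}^2\le C\|n^p\|_{L^2}\|\partial_yn^p\|_{L^2}\bigl(\|\partial_yc_0\|_{L^2}+\|\partial_x\nabla c_{\neq}\|_{L^2}\bigr)^2,
\]
which involves only quantities already controlled by the energy estimates. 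It then bounds $c$ by the maximum principle, then $\|\omega\|_{L^\infty L^4}$, and only afterwards iterates on $\partial_xc$ and $\partial_yc$. That last step is the only place where $\|c_{\rm in}\|_{W^{1,\infty}}$ enters.
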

	\begin{remark}
		The local strong solution of \eqref{eq:1.7} has the regularity stated in Lemma~\ref{lem:local-continuation}.
		{The initial density, chemical concentration and velocity perturbation may be arbitrarily large but fixed, with $\bar A$ chosen in terms of the norms specified in Theorem~\ref{thm:main}. In particular, neither a mass restriction nor the chemical-gradient smallness condition \eqref{small1} in~\cite{ZengZhangZi2021} is required. 
			We use the higher Sobolev regularity stated in Theorem~\ref{thm:main} without seeking to optimize the regularity assumptions on the initial data.
		}
\end{remark}

\begin{remark}
	{The threshold $\bar A$ is independent of $\|c_{\rm in}\|_{W^{1,\infty}}$, which enters only the final pointwise bound. The weighted energy estimates used to determine $\bar A$ are uniform in $A$, whereas the final uniform-in-time bounds may depend on $A$. Also, the bounded velocity in the theorem is the perturbation $u=v-(Ay,0)$, not the total velocity $v$.}
\end{remark}

\begin{remark}
	In the present two-dimensional setting, the $x$-average depends only on $y$. One-dimensional heat-kernel and Nash estimates therefore allow us to control the density zero mode without a mass threshold. By contrast, the shear-flow problem on $\mathbb T^3$ in~\cite{He2025Shear} has a two-dimensional average and retains a mass restriction. Moreover, our background flow is stationary, and the velocity perturbation evolves through the coupled Navier--Stokes equations. The prescribed-flow results therefore do not directly yield the present theorem.
\end{remark}
\noindent\textbf{Notations}:
\begin{itemize}
	\item {For a function $f=f(t,x,y)$, we use the Fourier series} 
	\begin{equation*}
		\begin{aligned}
			f(t,x,y)=\sum_{k\in\mathbb Z}\widehat f_k(t,y)e^{ikx}, \quad
			\widehat f_k(t,y)
			=\frac{1}{|\mathbb T|}\int_{\mathbb T}f(t,x,y)e^{-ikx}dx,
		\end{aligned}
	\end{equation*}
	and 
	\begin{align*}
		f(t,x,y)
		=\frac{1}{|\mathbb{T}|}\sum_{k\in\mathbb Z}\int_{\mathbb R}\widehat f_k(t,\eta)e^{i(kx+\eta y)} d\eta,\quad 
		\widehat f_k(t,\eta)
		=\frac{1}{|\mathbb T|}\int_{\mathbb T\times\mathbb R}
		f(t,x,y)e^{-i(kx+\eta y)}dxdy.
	\end{align*}

	\item Unless otherwise stated, $C$ denotes a positive constant independent of $A$, time and the initial data, and may vary from line to line.
	The constant $C$ may depend on the fixed parameters $a$ and $\delta$.

	\item Sobolev norms in Kelvin coordinates are taken with respect to $(X,Y)$; those in physical coordinates are taken with respect to $(x,y)$. The time interval in a norm such as $L^qL^p$ is specified in the corresponding section. \rev{ Unless otherwise stated, spatial norms of zero modes are taken over $\mathbb R$.
	}
\end{itemize}
\noindent\textbf{Outline of the paper.}
{The remainder of the paper is organized as follows. Section~\ref{sec:key-ideas} presents the main ideas, the energy functionals and the bootstrap argument. Section~\ref{sec:preliminaries} collects the space-time and anisotropic Sobolev estimates. Sections~\ref{sec:short-time} and~\ref{sec:long-time} establish the short-time estimates in Kelvin coordinates and the long-time estimates in physical coordinates, respectively. The uniform-in-time pointwise bounds are proved in Section~\ref{sec:uniform-bounds}. \rev{Appendix~\ref{sec_A} contains the Moser--Alikakos iteration lemma. Appendix \ref{sec:local-continuation} establishes local well-posedness and the blow-up criterion used in the proof of Theorem \ref{thm:main}.}}
\section{Key ideas and proof strategy}\label{sec:key-ideas}
\subsection{Main difficulties and new ideas}
\label{subsec:main difficulties}
The main difficulty is to control the transient growth of the chemical gradient before Couette enhanced dissipation becomes effective. We first examine the zero and nonzero modes equations and use a simplified model to identify this difficulty and the relevant time scales. These observations guide the construction of our energy functionals.

We begin by decomposing \eqref{eq:1.7} into two coupled subsystems. Applying $P_0$ to \eqref{eq:1.7}, we obtain
\begin{equation}\label{eq:zero-mode-system}
	\begin{cases}
		\partial_t n_0-\frac1A\partial_y^2n_0
		=-\frac1A\partial_y\big[n_0\partial_yc_0
		+(n_{\neq}\partial_yc_{\neq})_0
		+(u_{2,\neq}n_{\neq})_0\big],\\
		\partial_t c_0-\frac1A\partial_y^2c_0
		=\frac1A(n_0-c_0)
		-\frac1A\partial_y(u_{2,\neq}c_{\neq})_0,\\
		\partial_t\omega_0-\frac1A\partial_y^2\omega_0
		=-\frac1A\partial_y(u_{2,\neq}\omega_{\neq})_0,\\ 
		u_0=(u_{1,0},0), \quad 
		\partial_yu_{1,0}=\omega_0.
	\end{cases}
\end{equation}
For the nonzero modes, let $\mathcal L_A=\partial_t+y\partial_x-\frac1A\Delta$. Applying $P_{\neq}$ to \eqref{eq:1.7} yields
\begin{equation}\label{eq:non1}
	\begin{cases}
		\mathcal L_A n_{\neq}
		=-\frac1A\nabla\cdot\big[n_0\nabla c_{\neq}
		+n_{\neq}\nabla c_0
		+(n_{\neq}\nabla c_{\neq})_{\neq}\big]\\
		\qquad\qquad-\frac1A \nabla\cdot
		\big[u_{\neq}n_{0} 
		+u_{0}n_{\neq} 
		+(u_{\neq} n_{\neq})_{\neq}\big],\\
		\mathcal L_Ac_{\neq}
		=\frac1A(n_{\neq}-c_{\neq})
		-\frac1A\big[u_{1,0}\partial_xc_{\neq}
		+u_{2,\neq}\partial_yc_0
		+(u_{\neq}\cdot\nabla c_{\neq})_{\neq}\big],\\
		\mathcal L_A\omega_{\neq}
		=-\frac1A\big[\partial_xn_{\neq}
		+u_{1,0}\partial_x\omega_{\neq}
		+u_{2,\neq}\partial_y\omega_0
		+(u_{\neq}\cdot\nabla\omega_{\neq})_{\neq}\big],\\
		u_{\neq}
		=\nabla^\perp\Delta^{-1}\omega_{\neq}.
	\end{cases}
\end{equation}
To isolate the effect of the initial chemical profile, we construct a simplified toy model, retaining the Couette transport, diffusion and chemical production in \eqref{eq:non1}:
\begin{equation*}
	\begin{cases}
		\mathcal L_A n_{\neq}=0,\\
		\left(\mathcal L_A+\frac1A\right)c_{\neq}
		=\frac1A n_{\neq},\\
		n_{\neq}|_{t=0}=(n_{\rm in})_{\neq},\quad
		c_{\neq}|_{t=0}=(c_{\rm in})_{\neq}.
	\end{cases}
\end{equation*}
We separate the contribution of the initial chemical profile from the response to the density by writing $c_{\neq}=c_{1,\neq}+c_{2,\neq}$, where
\begin{equation*}\label{nc1}
	\begin{cases}
		\left(\mathcal L_A+\frac1A\right)c_{1,\neq}=0,\\
		\left(\mathcal L_A+\frac1A\right)c_{2,\neq}
		=\frac1A n_{\neq},\\
		c_{1,\neq}|_{t=0}=(c_{\rm in})_{\neq},\quad
		c_{2,\neq}|_{t=0}=0,
	\end{cases}
\end{equation*}
where $c_{1,\neq}$ carries the initial chemical data, while $c_{2,\neq}$ has zero initial data and is driven by forcing $\frac{n_{\neq}}{A}$.

Since both equations have the same transport and diffusion operators, we have the following exact identity
\begin{equation}\label{eq:toy-identity}
	c_{2,\neq}(t)
	=\big(1-e^{-\frac{t}{A}}\big)n_{\neq}(t).
\end{equation}
The Fourier representation gives
\begin{align*}
	\|n_{\neq}(t)\|_{L^2}
	&\leq e^{-\frac{t}{A}-\frac{t^3}{12A}}
	\|(n_{\rm in})_{\neq}\|_{L^2},\\
	\|\partial_x c_{1,\neq}(t)\|_{L^2}
	&\leq e^{-\frac{2t}{A}-\frac{t^3}{12A}}
	\|(\partial_xc_{\rm in})_{\neq}\|_{L^2},\\
	\|\partial_y c_{1,\neq}(t)\|_{L^2}
	&\leq e^{-\frac{2t}{A}-\frac{t^3}{12A}}
	\left(
	\|(\partial_yc_{\rm in})_{\neq}\|_{L^2}
	+t\|(\partial_xc_{\rm in})_{\neq}\|_{L^2}
	\right).
\end{align*}
The factor $e^{-\frac{t}{A}}$ in the density estimate comes from ordinary diffusion in the nonzero $x$-modes, while $c_{1,\neq}$ has the additional chemical damping $e^{-\frac{t}{A}}$.
\begin{revision}
	Moreover, the same Fourier representation yields
\end{revision}
\[
\|\nabla n_{\neq}(t)\|_{L^2}
\leq C\left(\frac At\right)^{\frac12}
e^{-\frac{t^3}{24A}}\|(n_{\rm in})_{\neq}\|_{L^2},
\qquad t>0.
\]
Using \eqref{eq:toy-identity} and $1-e^{-\frac{t}{A}}\leq \frac{t}{A}$, we obtain
\[
\begin{aligned}
	\|\nabla c_{2,\neq}(t)\|_{L^2}
	=\bigl(1-e^{-\frac{t}{A}}\bigr)\|\nabla n_{\neq}(t)\|_{L^2}
	\leq C\left(\frac tA\right)^{\frac12}
	e^{-\frac{t^3}{24A}}\|(n_{\rm in})_{\neq}\|_{L^2}.
\end{aligned}
\]
Taking the supremum in time and using $t=A^{\frac13}s$ in the scalar factors, we conclude that, for $A\geq1$,
\begin{align*}
	\sup_{t\geq0}\|\nabla c_{1,\neq}(t)\|_{L^2}
	&\leq C(\|(\nabla c_{\rm in})_{\neq}\|_{L^2}+
	A^{\frac13}
	\|(\partial_x c_{\rm in})_{\neq}\|_{L^2}),\\
	\sup_{t\geq0}\|\nabla c_{2,\neq}(t)\|_{L^2}
	&\leq CA^{-\frac13}
	\|(n_{\rm in})_{\neq}\|_{L^2}.
\end{align*}
Thus, in this simplified model, the initial chemical profile may produce a gradient of order $A^{\frac13}$, whereas the density-induced response is bounded by $CA^{-\frac13}\|(n_{\rm in})_{\neq}\|_{L^2}$. This distinction motivates our quasi-linear decomposition of the full system. The identity \eqref{eq:toy-identity} is specific to the simplified model; the nonlinear chemical components must instead be estimated through their coupled equations.

\begin{itemize}
	\item \textbf{Characteristic times.} In the rescaled time variable, set
	\[
	t_0=0,\qquad t_1=A^{\frac13},\qquad t_2=A.
	\]
	Here $t_0$ is the initial time, $t_1$ is the enhanced-dissipation time scale, and $t_2$ is the ordinary-diffusion time scale at fixed spatial scales. The quantities $A^{-\frac13}$ and $A^{-1}$ are the corresponding rates, not the time scales.
	
	\item \textbf{Time intervals.} Choose
	\[
	T^*=A^{\frac13+\delta},\qquad 0<\delta<\frac23.
	\]
	For $A>1$, this gives the four successive intervals
	\begin{equation*}\label{interbal}
		[t_0,t_1],\qquad [t_1,T^*],\qquad
		[T^*,t_2],\qquad [t_2,\infty).
	\end{equation*}
	
	\item \textbf{Dynamics and estimates.} These intervals distinguish the following mechanisms:
	\begin{equation*}
		\begin{cases}
			[t_0,t_1]: & \text{possible transient amplification of the chemical gradient},\\
			[t_1,T^*]: & \text{enhanced dissipation offsets the transient growth},\\
			[T^*,t_2]: & \text{small nonzero modes coupled to bounded zero modes},\\
			[t_2,\infty): & \text{continued control of the slowly evolving zero modes}.
		\end{cases}
	\end{equation*}
	This is a description of the relevant mechanisms, not an assertion of monotone decay on each interval. In particular, the late-time solution need not be exactly independent of $x$, and no exponential decay of the zero modes is assumed.
\end{itemize}

Therefore, the main difficulty is to control the solution on the preparation interval $[0,T^*]$. The equation for $\partial_yc$ contains the source $-\partial_xc$ without an $A^{-1}$ factor, so the large initial chemical gradient cannot be treated directly as a small perturbation. Our choice of $T^*$ satisfies
\[
\frac{T^*}{A^{\frac13}}=A^\delta\longrightarrow\infty,
\qquad
\frac{T^*}{A}=A^{-\frac23+\delta}\longrightarrow0
\qquad\text{as }A\to\infty.
\]
The first relation allows enhanced dissipation to produce the small factor $e^{-aA^\delta}$ at $T^*$, once the short-time estimates have been closed. The second makes the accumulation of the slow terms small after their coefficients have been controlled. The main contribution of the large initial chemical data is handled instead by a time-integrable Gronwall coefficient.

Moreover, the proof is divided into short-time and long-time estimates. On $[0,T^*]$, we work in Kelvin coordinates and combine the quasi-linear decomposition with a coupled energy estimate. This energy includes the density zero mode, since the zero and nonzero modes interact and the density also feeds back into the fluid. At $T^*$, the controlled nonzero modes are exponentially small, while the zero modes remain bounded. On $[T^*,\infty)$, we return to physical coordinates and use these prepared data to close the long-time energy estimates. Finally, Moser iteration yields the uniform-in-time pointwise bounds.

\subsection{Construction of the energy functionals}
\label{subsec:energy-construction}

We use different energy functionals in the two time regimes. In Kelvin coordinates, the short-time energy controls the large initial data without requiring an $A$-uniform bound for the physical transverse gradient. After the preparation interval, the physical-coordinate energy provides the space-time bounds needed to close the long-time estimates.

\noindent$\bullet$\quad
\textbf{Short-time energy functionals in Kelvin coordinates.}

We introduce the Kelvin coordinates
\begin{equation*}
	X=x-ty,\quad Y=y,
\end{equation*}
for a function $f=f(t,x,y)$, its Kelvin representation is defined by
\begin{equation*}
	F(t,X,Y):=f(t,X+tY,Y).
\end{equation*}
The chain rule yields
\begin{equation}\label{relation}
	(\partial_t+y\partial_x)f=\partial_tF,
	\quad
	\partial_xf=\partial_XF,
	\quad
	\partial_yf=(\partial_Y-t\partial_X)F.
\end{equation}
In these coordinates, \eqref{eq:zero-mode-system} and \eqref{eq:non1} become
\begin{equation}\label{eq:zero-mode-system-kelvin}
	\begin{cases}
		\partial_tN_0-\frac1A\partial_Y^2N_0
		=-\frac1A\partial_Y\big[N_0\partial_YC_0
		+(N_{\neq}(\partial_Y-t\partial_X)C_{\neq})_0
		+(U_{2,\neq}N_{\neq})_0\big],\\
		\partial_tC_0-\frac1A\partial_Y^2C_0
		=\frac1A(N_0-C_0)-\frac1A\partial_Y(U_{2,\neq}C_{\neq})_0,\\
		\partial_t\Omega_0-\frac1A\partial_Y^2\Omega_0
		=-\frac1A\partial_Y\big(U_{2,\neq}\Omega_{\neq}\big)_0,\\
		U_0=(U_{1,0},0),\quad \partial_YU_{1,0}=\Omega_0,
	\end{cases}
\end{equation}
and
\begin{equation}\label{eq:nonzero-mode-system-kelvin}
	\begin{cases}
		\left(\partial_t-\frac1A\Delta_L\right)N_{\neq}
		=-\frac1A\nabla_L\cdot\big[N_0\nabla_LC_{\neq}+N_{\neq}\nabla_LC_0
		+(N_{\neq}\nabla_LC_{\neq})_{\neq}\big]\\
		\qquad\qquad\qquad
		-\frac1A\nabla_L\cdot\big[U_{\neq}N_0+U_0N_{\neq}+(U_{\neq}N_{\neq})_{\neq}\big],\\
		\left(\partial_t-\frac1A\Delta_L\right)C_{\neq}
		=\frac1A(N_{\neq}-C_{\neq})-\frac1A\big[U_{1,0}\partial_XC_{\neq}
		+U_{\neq}\cdot\nabla_L C_0
		+(U_{\neq}\cdot\nabla_LC_{\neq})_{\neq}\big],\\ \left(\partial_t-\frac1A\Delta_L\right)\Omega_{\neq}
		=-\frac1A\partial_XN_{\neq}-\frac1A \nabla_L\cdot\big[U_{0}\Omega_{\neq}
		+U_{\neq}\Omega_0
		+(U_{\neq}\Omega_{\neq})_{\neq}\big],\\
		U_{\neq}=\nabla_L^\perp\Delta_L^{-1}\Omega_{\neq},
	\end{cases}
\end{equation}
where 
\begin{equation*}
	\nabla_L=(\partial_X,\partial_Y-t\partial_X)^T, \quad 
	\Delta_L=\partial_X^2+(\partial_Y-t\partial_X)^2.
\end{equation*}

The Kelvin transformation removes the Couette transport term, while the growth of the physical transverse derivative is retained in $\partial_Y-t\partial_X$. Accordingly, the $H^1$ norms below are defined using the ordinary derivatives $\partial_X$ and $\partial_Y$, and the dissipative terms involve $\nabla_L$. We use a time-dependent multiplier to capture both enhanced dissipation and the space-time control of $\partial_X\nabla_L\Delta_L^{-1}$. This gives a Kelvin-coordinate formulation of the Couette space-time framework in~\cite{WeiZhang2021,ZengZhangZi2021}.

Inspired by previous works, we employ the Fourier multiplier method to establish short-time estimates for the nonzero modes \cite{BGM2017,LZ2024}. For $A\geq1$, $a\geq0$, $k\in\mathbb Z\setminus\{0\}$, and $\eta\in\mathbb R$, define
\begin{equation*}
	\begin{gathered}
		M_k^{(1)}(t,\eta)
		=\arctan\left(
		A^{-\frac13}\frac{\eta-kt}{k}
		\right), \quad
		M_k^{(2)}(t,\eta)
		:=\arctan\left(
		\frac{\eta-kt}{k}
		\right),\\
		M_k(t,\eta)
		=M_k^{(1)}(t,\eta)+M_k^{(2)}(t,\eta)+\pi+1, \quad 
		\mathcal M_k(t,\eta)
		=\exp\left(aA^{-\frac13}t+M_k(t,\eta)-1\right).
	\end{gathered}
\end{equation*}
Since
\[
-\frac{\pi}{2}<M_k^{(j)}(t,\eta)<\frac{\pi}{2},
\qquad j=1,2,
\]
we have
\begin{equation*}
	1\leq M_k(t,\eta)\leq 2\pi+1.
\end{equation*}
Consequently,
\begin{equation}\label{eq:weighted-M-bounded}
	e^{aA^{-\frac13}t}
	\leq\mathcal M_k(t,\eta)
	\leq e^{2\pi}e^{aA^{-\frac13}t}.
\end{equation}
For a function $F_{\neq}$ with zero $X$-average, we define
the time-dependent Fourier multiplier $\mathcal M(t,D)$ by
\[
\mathcal M(t,D)F_{\neq}(t,X,Y)
=\frac{1}{2\pi}\sum_{k\neq0}\int_{\mathbb R}\mathcal M_k(t,\eta)\widehat F_k(t,\eta)e^{i(kX+\eta Y)}\,d\eta .
\]	
For simplicity, we write $\mathcal M F_{\neq}$
for $\mathcal M(t,D)F_{\neq}$.
For every $s\geq0$, it follows that
\begin{equation}\label{mul1}
	\begin{aligned}
		e^{aA^{-\frac13}t}\|F_{\neq}(t)\|_{H^s}
		\leq\|\mathcal M F_{\neq}(t)\|_{H^s}
		\leq e^{2\pi}e^{aA^{-\frac13}t}
		\|F_{\neq}(t)\|_{H^s},
	\end{aligned}
\end{equation}
where the Sobolev norms are taken with respect to $(X,Y)$.
The parameter $a>0$ will be chosen sufficiently small
according to Lemma~\ref{prop:kelvin-timespace}.

On a short-time interval $[0,T]$ with $T\leq T^*$, we define, for a function $F$ with zero $X$-average and $s\geq0$,
\begin{align*}
	\|F\|_{Z_a^s}^2:=\|e^{aA^{-\frac13}t}F\|_{L^\infty H^{s}}^2
	+\frac{\|e^{aA^{-\frac13}t}F\|_{L^2H^{s}}^2}{A^{\frac13}}
	+\frac{\|e^{aA^{-\frac13}t}\nabla_LF\|_{L^2H^{s}}^2}A
	+\|e^{aA^{-\frac13}t}\partial_X\nabla_L\Delta_L^{-1}F\|_{L^2H^{s}}^2.
\end{align*}
The corresponding instantaneous energy and dissipation are
\begin{align*}
	\mathcal{C}_{F}(t)=\|\mathcal{M}F_{\neq }(t)\|_{H^1}^2,\quad
	\mathcal{D}_{F}(t)
	=\frac{\|\mathcal{M}F_{\neq}(t)\|_{H^1}^2}{A^{\frac13}}
	+\frac{\|\mathcal{M}\nabla_LF_{\neq}(t)\|_{H^1}^2}{A}
	+\|\mathcal{M}\partial_X\nabla_L\Delta_L^{-1}F_{\neq}(t)\|_{H^1}^2.
\end{align*}
\noindent\textbf{Key observation 1. Separating the large chemical initial data from the forced response.}
We decompose
\[
C_{\neq}=C_{1,\neq}+C_{2,\neq},\qquad
C_{1,\neq}(0)=(C_{\rm in})_{\neq},\qquad C_{2,\neq}(0)=0.
\]
The components solve \eqref{equ:c1} and \eqref{equ:c2}, respectively. 
The first carries the initial chemical profile, while the second receives the density source and the interaction with $C_0$. Thus, unlike the simplified model, this decomposition retains the nonlinear fluid coupling.

For sufficiently large $A$, the estimates in Section~\ref{sec:short-time} give, on every interval $[0,T]$ of existence with $T\le T^*$,
\[
\frac1A\int_0^{T}
\|e^{aA^{-\frac13}t}\partial_X\nabla_LC_{1,\neq}(t)\|_{H^1}^2\,dt
\leq C\|(C_{\rm in})_{\neq}\|_{H^2}^2.
\]
Since the upper bound is independent of $A$, we can control the contribution of $C_{1,\neq}$ by Gronwall's inequality without imposing a smallness condition on the initial chemical data.

The choice of derivatives is specially important. The anisotropic product estimates in \eqref{embedding} use an additional $X$ derivative and do not require $H^1(\mathbb T\times\mathbb R)$ to be an algebra. We therefore estimate $\partial_XN_{\neq}$ and $\partial_XC_{j,\neq}$, $j=1,2$, at the same $H^1$ level, together with $N_0$ and $\Omega_{\neq}$.
This leads to the following short-time energies and dissipation:
\begin{align*}
	\mathcal E_0(t)
	&=\|N_0(t)\|_{H^1}^2,\\
	\mathcal E_1(t)
	&=\mathcal C_{\partial_XN}(t)
	+\mathcal C_{\partial_XC_1}(t)
	+\mathcal C_{\partial_XC_2}(t)
	+\mathcal C_{\Omega}(t),\\
	\mathcal E_2(t)
	&=\mathcal D_{\partial_XN}(t)
	+\mathcal D_{\partial_XC_1}(t)
	+\mathcal D_{\partial_XC_2}(t)
	+\mathcal D_{\Omega}(t).
\end{align*}
Let $[0,T]$ be a finite interval of existence of the solution
with $0<T\leq T^*$. On this interval,
we impose the bootstrap assumption
\begin{equation}\label{assume}
	\mathcal E_0(t)+\mathcal E_1(t)\leq 2E_1,
	\qquad {t\in[0,T].}
\end{equation}

\noindent\textbf{Key observation 2. Closing the density feedback by a weighted coupled energy.}
The main chemical terms in the density estimates are
\[
\frac1A\|e^{aA^{-\frac13}t}N_0\partial_X\nabla_LC_{\neq}\|_{H^1}^2,
\qquad
\frac1A\|e^{aA^{-\frac13}t}\partial_X(N_{\neq}\nabla_LC_{\neq})\|_{H^1}^2.
\]
After the decomposition, the contributions involving $C_{1,\neq}$ are bounded by
\[
C\bigl(\mathcal E_0+\mathcal C_{\partial_XN}\bigr)
\frac{\|e^{aA^{-\frac13}t}\partial_X\nabla_LC_{1,\neq}\|_{H^1}^2}{A},
\]
whereas those involving $C_{2,\neq}$ contain $CE_1\mathcal D_{\partial_XC_2}$. The former is handled by Gronwall's inequality, while the latter cannot be absorbed merely by increasing $A$, so we assign an additional weight to the energy of $C_{2,\neq}$:
\begin{equation}\label{modified E1}
	\begin{aligned}
		&\widetilde{\mathcal{E}_{1}}(t)=\mathcal{C}_{\partial_X N}(t)
		+\mathcal{C}_{\partial_X C_1}(t)
		+\kappa \mathcal{C}_{\partial_X C_2}(t) +\mathcal{C}_{\Omega}(t),
	\end{aligned}
\end{equation}
where 
\begin{equation*}
	\kappa= 1+6CE_1,
\end{equation*}
and $C$ is a fixed sufficiently large constant chosen in the absorption step.
Since $C_{2,\neq}(0)=0$, we have
\[
\widetilde{\mathcal E_1}(0)=\mathcal E_1(0).
\]
Thus, the large weight $\kappa$ strengthens the dissipation without enlarging the initial energy. We first fix $E_1$ in terms of the initial data and then choose $A$ sufficiently large to absorb the remaining errors, including those multiplied by $\kappa$. 

We also use the decomposition $C_0=C_{1,0}+C_{2,0}$
defined by \eqref{equ:c10} and \eqref{equ:c20}.
Estimating $N_0$ together with the nonzero modes and absorbing
the remaining errors, we obtain
\[
\frac{d}{dt}\bigl(\mathcal E_0+\widetilde{\mathcal E_1}\bigr)
+\frac12\mathcal E_2
\leq C G(t)\bigl(\mathcal E_0+\widetilde{\mathcal E_1}\bigr),
\]
{where}
\[
G(t)=\frac{
	\|e^{aA^{-\frac13}t}\partial_X\nabla_LC_{1,\neq}(t)\|_{H^1}^2
	+\|\partial_YC_{2,0}(t)\|_{H^1}^2
	+\|C_{1,0}(t)\|_{H^2}^2}{A}.
\]
Using the estimates for the chemical components and the fact
that $\frac{T^*}{A}\to0$ as $A\to\infty$, we obtain
\[
\int_0^T G(t)\,dt\leq C(1+\|C_{\rm in}\|_{H^2}^2),
\]
for sufficiently large $A$.
Applying Gronwall's inequality gives an energy bound independent
of $A$. This improves the short-time bootstrap assumption
without imposing any smallness condition on
$(\nabla c_{\mathrm{in}})_{\neq}$.

\noindent$\bullet$\quad
\textbf{Long-time energy functionals in physical coordinates.}
At $T^*$, the factor $e^{-aA^{-\frac13}T^*}=e^{-aA^\delta}$ compensates for every fixed polynomial loss in $A$ and $T^*$. We now estimate the full component $c_{\neq}$ in physical coordinates, without further decomposition. The long-time estimates concern the same solution, with an exponential weight based at $T^*$.

For a finite interval $[0,T]$ of existence with $T>T^*$,
we define the space-time norm on $[T^*,T]$ as follows
\begin{align*}
	\|f\|_{X_a}^2:=&\|e^{aA^{-\frac13}(t-T^*)}f\|_{L^\infty L^2}^2
	+\frac{\|e^{aA^{-\frac13}(t-T^*)}f\|_{L^2L^2}^2}{A^{\frac13}}
	+\frac{\|e^{aA^{-\frac13}(t-T^*)}\nabla f\|_{L^2L^2}^2}{A}\\
	&+\|e^{aA^{-\frac13}(t-T^*)}\partial_x\nabla^{\perp}\Delta^{-1}f\|_{L^2L^2}^2,
\end{align*}
and set
\begin{align*}
	\mathcal{E}_3(t):=\|\partial_x n_{\neq}\|_{X_a}
	+\|\partial_x\nabla c_{\neq}\|_{X_a}
	+\|\partial_x\omega_{\neq}\|_{X_a}.
\end{align*}
{We write the initial-data bound as}
\begin{align*}
	\mathcal{E}_{3,\rm in}=\left(\|n_{\rm in}\|_{H^2} +\|(u_{\rm in})_{\neq}\|_{H^3}
	+\|(c_{\rm in})_{\neq}\|_{H^2}\right) 
	e^{C\left(\|c_{\rm in}\|_{H^2}^2+1\right)}. 
\end{align*}
On $[T^*,T]$, the long-time bootstrap assumption is
\begin{equation}\label{assume1}
	\mathcal{E}_3(t)\leq 2E_2,\quad T^*\leq t\leq T.
\end{equation}
The time norms in $\mathcal E_3(t)$ are taken over $[T^*,t]$, with \(\mathcal E_3(T^*)\) defined by the corresponding \(L^2\) norms at \(T^*\).

To estimate the data at $T^*$, we use the following elementary bound: for fixed $a>0$, $\delta>0$ and every $\beta>0$,
\begin{equation}\label{Abeta}
	\begin{aligned}
		\sup_{A\geq 1}\sup_{t\geq A^{\frac13+\delta}} (1+t^2)A^{\beta}
		e^{-2aA^{-\frac13}t}\leq 2\Big(\frac{2+\frac{\beta+2/3}{\delta}}{2a e}\Big)^{2+\frac{\beta+2/3}{\delta}}< \infty.
	\end{aligned}
\end{equation} 
\rev{If the solution exists on $[0,T^*]$, Proposition~\ref{prop:short-time}
	and \eqref{Abeta} imply
}
\begin{align*}
	\|\partial_xn_{\neq}(T^*)\|_{L^2}+\|\partial_x\omega_{\neq}(T^*)\|_{L^2}+\|\partial_x\partial_yc_{\neq}(T^*)\|_{L^2}+A^{\frac13}\|\partial_x^2c_{\neq}(T^*)\|_{L^2}\leq C\mathcal{E}_{3,\rm in}.\alabel{nwc1}
\end{align*}
The weight $A^{\frac13}$ in front of $\|\partial_x^2c_{\neq}(T^*)\|_{L^2}$ accounts for the source $-\partial_x^2c_{\neq}$ in the equation for $\partial_x\partial_yc_{\neq}$. Its space-time estimate introduces precisely this factor. 

\noindent\textbf{Key observation 3. Closing the energy estimates before the pointwise estimates.}
We estimate the long-time nonlinear terms using the anisotropic product inequalities and the dissipation in $X_a$, without using a prior bound for $\|n\|_{L^\infty L^\infty}$. 
Combining these estimates with the one-dimensional zero-mode bounds, we obtain \eqref{E3} and close the long-time energy estimate for $\mathcal E_3$.

The pointwise bounds are derived after the energy estimates have been closed. In the Moser iteration for $n$, we use
\[
\|n^p\nabla c\|_{L^2}^2
\leq C\|n^p\|_{L^2}\|\partial_yn^p\|_{L^2}
\bigl(\|\partial_yc_0\|_{L^2}
+\|\partial_x\nabla c_{\neq}\|_{L^2}\bigr)^2.
\]
Thus, an $A$-uniform bound for $\|\nabla c\|_{L^\infty L^\infty}$ is not needed to close the nonlinear energy estimates. 
Once these estimates are closed, we apply Moser--Alikakos
iteration~\cite{Alikakos1979} and use the estimates for $c$ and $u$
to obtain uniform-in-time pointwise bounds for each fixed
admissible $A$. These bounds may depend on $A$ but are not
used to choose the shear threshold.

\begin{revision}
	For the estimates in Section~\ref{sec:long-time}, we denote the data at $T^*$ by
	\begin{equation*}
		\begin{aligned}
			(n_{\rm in})_{\neq}&=n_{\neq}|_{t=T^*}, \quad
			(c_{\rm in})_{\neq}= c_{\neq}|_{t=T^*},\quad 
			(\omega_{\rm in})_{\neq}=\omega_{\neq}|_{t=T^*},\\
			(n_{\rm in})_{0}&=n_{0}|_{t=T^*}, \quad\;
			(c_{\rm in})_{0}= c_{0}|_{t=T^*},\quad\;
			(\omega_{\rm in})_{0}=\omega_{0}|_{t=T^*}.
		\end{aligned}
	\end{equation*}
	This notation is used only in these estimates: the data in
	Theorem~\ref{thm:main}, the propositions below and the amplitude
	thresholds always refer to $t=0$.
\end{revision}

\subsection{Main steps of the proof}
\label{subsec:main-steps}
We organize the proof into three steps: short-time energy control, long-time energy closure, and uniform-in-time bounds. All estimates are first established on compact intervals of smooth existence, with constants independent of the terminal time. The two energy arguments are combined with the pointwise estimates to continue the same solution globally.

\noindent\textbf{Step 1. Short-time energy estimates.} 
We first control the short-time energy in Kelvin coordinates and derive the corresponding bounds in physical coordinates.

\begin{proposition}\label{prop:short-time}\label{prop:small time-main}
	Under the hypotheses of Theorem~\ref{thm:main}.
	Fix $0<\delta<\frac23$ and $a\in(0,a_0]$, where $a_0$
	is given by Lemma~\ref{prop:kelvin-timespace}, and set
	$T^*=A^{\frac13+\delta}$. Let $[0,T]$ be an interval of existence of the solution with $T\leq T^*$. There exists a positive constant $C$ such that, if $A\geq A^*$, 
	then
	\[
	\mathcal E_0(t)+\mathcal E_1(t)\leq E_1,
	\qquad \rev{t\in[0,T]},
	\]
	where
	\begin{align*}
		A^*:=C\left(
		1+M^4+\|u_{\rm in}\|_{H^3}^4
		+\|c_{\rm in}\|_{H^2}^4+\|n_{\rm in}\|_{H^2}^4
		\right)^{\frac{3}{2-3\delta}}
		e^{C\frac{\|c_{\rm in}\|_{H^2}^2+1}{2-3\delta}},
	\end{align*}
	and
	\begin{align*}
		E_1:=C\left(\|N_{\rm in}\|_{H^2}^2
		+\|(\Omega_{\rm in})_{\neq}\|_{H^2}^2
		+\|(C_{\rm in})_{\neq}\|_{H^2}^2
		\right)e^{C(\|C_{\rm in}\|_{H^2}^2+1)}.
	\end{align*}
	Consequently, the following estimates hold in physical coordinates
	\rev{for every $t\in[0,T]$}:
	\begin{align*}
		&\|n_0(t)\|_{L^2}\leq CE_1^{\frac12},\\
		&\|\partial_xn_{\neq}(t)\|_{L^2}
		+\|\partial_xc_{\neq}(t)\|_{L^2}
		+\|\omega_{\neq}(t)\|_{L^2}
		+\|\partial_x\omega_{\neq}(t)\|_{L^2}
		\leq CE_1^{\frac12}e^{-aA^{-\frac13}t},\alabel{prop-smalltime}\\
		&\|\partial_x\nabla c_{\neq}(t)\|_{L^2}
		\leq CE_1^{\frac12}(1+t^2)^{\frac12}e^{-aA^{-\frac13}t}.
	\end{align*}
	For the fixed parameters $a$ and $\delta$, the constants are
	independent of $A$, $T$ and the initial data.
\end{proposition}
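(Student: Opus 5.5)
We argue by bootstrap on $[0,T]$, $T\le T^*$: assume \eqref{assume}, namely $\mathcal E_0+\mathcal E_1\le 2E_1$, and improve it to $\mathcal E_0+\mathcal E_1\le E_1$. Since all quantities are continuous in time for the local strong solution (Lemma~\ref{lem:local-continuation}), this closes the bootstrap on every existence interval with $T\le T^*$.

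\textbf{Step 1: linear estimates for the chemical components carrying the data.} We treat $C_{1,\neq}$ first. It solves a damped transport--diffusion equation with data $(C_{\rm in})_{\neq}$ and only the transport terms $U\cdot\nabla_L C_{1,\neq}$ as perturbation. We apply the multiplier $\mathcal M$ and the space-time lemma (Lemma~\ref{prop:kelvin-timespace}, where $a\le a_0$ enters) to $\partial_X C_{1,\neq}$ in $H^1$. This should give
\[
\|\partial_XC_{1,\neq}\|_{Z_a^1}^2\le C\|(C_{\rm in})_{\neq}\|_{H^2}^2 .
\]
Since $\|\partial_X\nabla_L F\|\lesssim \|\nabla_L \partial_X F\|$, this is precisely the bound $\frac1A\int_0^T\|e^{aA^{-1/3}t}\partial_X\nabla_LC_{1,\neq}\|_{H^1}^2\le C\|C_{\rm in}\|_{H^2}^2$. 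The transport errors carry a factor $A^{-1}$ and are bounded by the bootstrap; they are absorbed for $A$ large.

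For the zero-mode components, $C_{1,0}$ solves the one-dimensional heat equation with damping and data $(C_{\rm in})_0$. Hence $\|C_{1,0}\|_{H^2}\le \|C_{\rm in}\|_{H^2}$, and $\frac1A\int_0^{T^*}\|C_{1,0}\|_{H^2}^2\le (T^*/A)\|C_{\rm in}\|_{H^2}^2$, which is small. The component $C_{2,0}$ is driven by $N_0/A$ and by quadratic fluxes. A standard $H^1$ energy estimate for $\partial_YC_{2,0}$ gives $\frac1A\int_0^T\|\partial_Y C_{2,0}\|_{H^1}^2\le C\frac{T^*}{A}\sup(\mathcal E_0+\dots)$, which is also small. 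Together these give $\int_0^TG\,dt\le C(1+\|C_{\rm in}\|_{H^2}^2)$.

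\textbf{Step 2: coupled energy for $N_0$, $\partial_XN_{\neq}$, $\partial_XC_{2,\neq}$ and $\Omega_{\neq}$.} We differentiate $\mathcal E_0+\widetilde{\mathcal E_1}$. The time derivatives of $M_k^{(1)}$ and $M_k^{(2)}$ give the good terms $A^{-1/3}$ and $\partial_X\nabla_L\Delta_L^{-1}$ in $\mathcal D$. These are the Wei--Zhang ghost-weight mechanism, and after taking $a$ small they leave $\frac12\mathcal E_2$. The linear couplings are handled as follows. The term $\frac1A\partial_XN_{\neq}$ in the $\Omega$ equation is bounded by $A^{-1}\|\cdot\|$ and absorbed using the $A^{-1/3}$ dissipation. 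The source $\frac1A N_{\neq}$ in $C_{2,\neq}$ is bounded by $\kappa A^{-1}\mathcal C_{\partial_XN}\le \kappa A^{-2/3}\mathcal D_{\partial_XN}$, which is absorbable for $A\gg\kappa^{3/2}$. The key point is the handling of the nonlinear chemotactic terms. Using the anisotropic product estimates \eqref{embedding}, $\nabla_L$ is split between dissipation factors. Terms containing $C_{1,\neq}$ or $C_{1,0}$ are bounded by $CG(t)(\mathcal E_0+\mathcal C_{\partial_XN})$ plus small multiples of $\mathcal E_2$. Terms containing $C_{2,\neq}$ produce $CE_1\mathcal D_{\partial_XC_2}$, and these are absorbed by $\kappa\mathcal D_{\partial_XC_2}/2$ because $\kappa=1+6CE_1$. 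In the $\kappa$-weighted $C_{2,\neq}$ equation, the fluid and chemical feedback is estimated by $\kappa A^{-\theta}E_1^{1/2}\mathcal E_2$ with some $\theta>0$, using the $A^{-1}$ prefactor against the $A^{-1/3}$ loss. The fluid transport terms are treated the same way, using $\|U_{\neq}\|\lesssim\|\partial_X\nabla_L\Delta_L^{-1}\Omega\|$. For $\mathcal E_0$, the one-dimensional structure gives the term $\frac1A\|N_0\partial_YC_0\|$, which is controlled by $G\,\mathcal E_0$ together with $\frac1A\|\partial_YN_0\|^2$. Collecting everything yields the differential inequality in Key observation 2. Gronwall's inequality then gives $\mathcal E_0+\mathcal E_1\le C(\mathcal E_0+\mathcal E_1)(0)e^{C(1+\|C_{\rm in}\|_{H^2}^2)}\le E_1/2$, once $E_1$ is defined with a large $C$. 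The thresholds $A^{-\theta}\kappa E_1^{1/2}\ll1$ and $T^*/A=A^{-(2-3\delta)/3}\ll 1$ lead to the stated form of $A^*$.

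\textbf{Step 3: physical-coordinate bounds.} The bounds \eqref{prop-smalltime} follow from \eqref{mul1} and \eqref{relation}. We use $\|\partial_x f_{\neq}\|_{L^2}=\|\partial_XF_{\neq}\|$ and $\|\omega_{\neq}\|\le\|\partial_X\Omega_{\neq}\|$ (since $|k|\ge1$). For the transverse derivative, $\|\partial_y\partial_xc_{\neq}\|\le\|\partial_Y\partial_XC\|+t\|\partial_X^2C\|$, which produces the factor $(1+t^2)^{1/2}$. Finally, $\|n_0\|_{L^2}\le\mathcal E_0^{1/2}$.

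The main obstacle is Step 2: making every $C_{2,\neq}$-feedback term carry either a small power of $A$ or the absorbable coefficient, so that $\kappa$ never multiplies an unabsorbed $O(1)$ term.
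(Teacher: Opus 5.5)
Your argument follows the paper's scheme: a bootstrap on $\mathcal E_0+\mathcal E_1$, the $Z_a^1$ bound for $\partial_XC_{1,\neq}$ from Lemma~\ref{prop:kelvin-timespace}, the $\kappa$-weighted energy, Gronwall with an integrable $G$, and the transfer back via \eqref{relation}. The one genuine difference is how you split $C_0$.

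\textbf{The zero-mode split.} The paper lets $C_{1,0}$ be the response to $N_0$ with zero data, see \eqref{equ:c10}, and puts $(C_{\rm in})_0$ together with the quadratic flux into $C_{2,0}$, see \eqref{equ:c20}. You reverse the roles, and your version works:
\begin{itemize}
\item The free damped heat flow is contractive in $H^2$, so $\frac1A\int_0^T\|C_{1,0}\|_{H^2}^2\le C\frac{T^*}{A}\|c_{\rm in}\|_{H^2}^2$.
\item An $H^1$ estimate for your forced $C_{2,0}$ gives $\frac1A\|\partial_YC_{2,0}\|_{L^2H^1}^2\lesssim \frac{T^*}{A}E_1+A^{-\frac23}E_1^2$.
\end{itemize}
So the zero modes contribute only $o(1)$ to $\int_0^TG$. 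The Gronwall exponent then comes from $C_{1,\neq}$ alone, and the mass $M$ is not needed at this stage.

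The paper instead pays an $O(\|(C_{\rm in})_0\|_{H^1}^2)$ contribution from the dissipation of its $C_{2,0}$, and bounds its $C_{1,0}$ by heat-kernel/$L^1$ estimates. That split is the one the long-time argument requires, see \eqref{c10} and \eqref{c10_y}. There $T/A$ is no longer small, and $\|\partial_yc_0\|$ must be controlled through $M$, independently of $\|n_0\|_{L^2}$, for the Nash comparison argument to close.

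\textbf{Two points to repair.}

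First, the phrase ``an $H^1$ energy estimate for $\partial_YC_{2,0}$'' cannot be taken literally. At the $H^2$ level for $C_{2,0}$, the flux term needs $(U_{2,\neq}C_{\neq})_0\in L^2H^2$, which the bootstrap does not provide. Run the $H^1$ estimate for $C_{2,0}$ itself, whose dissipation is $\frac1A\|\partial_YC_{2,0}\|_{H^1}^2$. Then use $\|(U_{2,\neq}C_{\neq})_0\|_{H^1}\le C\|\Omega_{\neq}\|_{H^1}\|\partial_XC_{\neq}\|_{H^1}$ from $\eqref{embedding}_4$.

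Second, the velocity zero modes $U_{1,0}$ and $\Omega_0$ are not part of $\mathcal E_0+\mathcal E_1$. So the terms $U_{1,0}\partial_X(\cdot)$, which already appear in your $C_{1,\neq}$ step, and $U_{\neq}\Omega_0$ are not ``bounded by the bootstrap''. You need the separate estimate
\[
\|U_{1,0}\|_{L^\infty H^1}+\|\Omega_0\|_{L^\infty H^1}\le C(\|u_{\rm in}\|_{H^2}+1)\qquad\text{for } A\ge E_1^3 .
\]
It follows as in parts I--II of Lemma~\ref{lem:zero mode-kelxin}: $\Omega_0$ receives no density forcing, and its forcing $(U_{2,\neq}\Omega_{\neq})_0$ is quadratic in the decaying nonzero modes. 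These bounds are where the velocity zero-mode data enter the absorption threshold $A^*$.
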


\noindent\textbf{Step 2. Long-time energy estimates.} 
The short-time estimates control the data at $T^*$, including the weighted chemical derivative required by the shear source. We then close $\mathcal E_3$ in terms of the original initial data.

\begin{proposition}\label{prop:long-time}
	Assume the hypotheses of Theorem~\ref{thm:main}, and suppose that the solution exists on $[0,T]$ for some $T>T^*$.
	For the fixed parameters $a$ and $\delta$, there exists
	a positive constant $\tilde A$, depending only on
	$M$, $\|n_{\rm in}\|_{H^2}$, $\|c_{\rm in}\|_{H^2}$
	and $\|u_{\rm in}\|_{H^3}$, such that, if
	$A\geq\max\{A^*,\tilde A\}$, then
	\[
	\mathcal E_3(t)\leq E_2,\qquad t\in[T^*,T],
	\]
	where
	\begin{align*}
		E_2:=C\left(\|n_{\rm in}\|_{H^2}+\|(u_{\rm in})_{\neq}\|_{H^3}+\|(c_{\rm in})_{\neq}\|_{H^2}\right)e^{C(\|c_{\rm in}\|_{H^2}^2+1)}.
	\end{align*}
	Here the time norms in $\mathcal E_3(t)$ are taken over $[T^*,t]$,
	and $C>0$ is independent of $A$, $T$ and the initial data.
	The initial data in this statement are those of the original problem
	at $t=0$.
\end{proposition}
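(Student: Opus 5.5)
The proof of Proposition~\ref{prop:long-time} will be a continuity (bootstrap) argument on $[T^*,T]$, run under the assumption \eqref{assume1}. The goal is to improve $2E_2$ to $E_2$ once $A$ is large. First I will fix the data at $T^*$. By Proposition~\ref{prop:short-time} on $[0,T^*]$, together with \eqref{nwc1} and \eqref{Abeta}, the quantity $\mathcal E_3(T^*)$ is bounded by $C\mathcal E_{3,\rm in}$. The same bound holds for the weighted quantity $A^{\frac13}\|\partial_x^2c_{\neq}(T^*)\|_{L^2}$. For the zero modes at $T^*$, Proposition~\ref{prop:short-time} gives $\|n_0(T^*)\|_{L^2}\le CE_1^{1/2}$. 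Parabolic estimates for $c_0$ and $\omega_0$ over $[0,T^*]$ give comparable bounds, since $T^*/A\to0$ and the forcings involve products of nonzero modes, which are already controlled.

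Next, on $[T^*,T]$ I will establish $A$-uniform bounds for the zero modes: $\|n_0\|_{L^\infty}$, $\|\partial_yc_0\|_{L^2\cap L^\infty}$, and $\|u_{1,0}\|_{L^\infty}$, $\|\omega_0\|_{L^2}$. The zero-mode system \eqref{eq:zero-mode-system} is one-dimensional. For $n_0$ I will use an $L^2$ energy estimate with the Nash inequality $\|f\|_{L^2}^3\le C\|f\|_{L^1}^2\|\partial_yf\|_{L^2}$ and mass conservation. The drift term $n_0\partial_yc_0$ is handled through the 1D Gagliardo--Nirenberg inequality, and $\partial_yc_0$ is controlled by Duhamel for the damped heat equation. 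This works for any mass $M$, because in 1D the aggregation is subcritical. The interaction terms $(n_{\neq}\partial_yc_{\neq})_0$ and $(u_{2,\neq}n_{\neq})_0$ enter with the factor $A^{-1}$. Under \eqref{assume1} they are bounded by $CA^{-1}E_2^2$ in integrated form, using the $X_a$ dissipation.

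Then come the nonzero-mode estimates. I will apply the Couette space-time estimate in physical coordinates (the physical analogue of Lemma~\ref{prop:kelvin-timespace}, as in Wei--Zhang and Zeng--Zhang--Zi) to $\partial_xn_{\neq}$, $\partial_x\nabla c_{\neq}$ and $\partial_x\omega_{\neq}$. Each is bounded by its data at $T^*$ plus $A^{-1/2}$ times a suitable norm of the forcing. The delicate term is the shear source $-\partial_x^2c_{\neq}$ in the equation for $\partial_x\partial_yc_{\neq}$. I will treat it by first estimating $\partial_x^2c_{\neq}$ in $X_a$: the bound $\|e^{\cdot}\partial_x^2c_{\neq}\|_{L^2L^2}\le A^{\frac16}\|\partial_x^2c_{\neq}\|_{X_a}$, combined with the enhanced-dissipation rate, produces exactly the factor $A^{\frac13}$ that is carried at $T^*$. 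This is consistent with \eqref{nwc1}. The source $n_{\neq}$ in the $c$-equation contributes $A^{-1}\|n_{\neq}\|_{L^2L^2}\lesssim A^{-2/3}\|\partial_xn_{\neq}\|_{X_a}$. The nonlinear terms (products such as $n_0\nabla c_{\neq}$, $n_{\neq}\nabla c_0$, $(n_{\neq}\nabla c_{\neq})_{\neq}$, the velocity transports, and $u_{2,\neq}\partial_y\omega_0$) are estimated with the anisotropic product inequalities \eqref{embedding}. The zero-mode bounds from the previous step and the $\partial_x\nabla_L\Delta^{-1}$ space-time control of $u_{\neq}$ are used throughout. Each such term is bounded by $CA^{-\gamma}(1+E_2+\text{zero-mode bounds})E_2$ for some $\gamma>0$.

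Finally I close the bootstrap. Summing the estimates gives
\[
\mathcal E_3\le C\mathcal E_{3,\rm in}+CA^{-\gamma}P(E_2,M,E_1),
\]
with $P$ a polynomial. Taking $E_2=2C\mathcal E_{3,\rm in}$ and $A\ge\tilde A$ large makes the right side at most $E_2$, which closes the argument. I expect the main obstacle to be the zero-mode chemical term $n_{\neq}\partial_yc_0$, together with $n_0\partial_x\nabla c_{\neq}$. The first needs $\|\partial_yc_0\|_{L^\infty}$ bounded uniformly on the long interval $[T^*,\infty)$, a regime where no exponential decay of the zero modes is available. I will try to get this from the uniform-in-time Nash bound on $\|n_0\|_{L^2}$ and from the damping $-c_0$ in the $c_0$ equation.
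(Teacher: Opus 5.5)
Your architecture matches the paper's proof. You run a bootstrap on $\mathcal E_3$ over $[T^*,T]$, take the data at $T^*$ from Proposition~\ref{prop:short-time} and \eqref{Abeta} (including $A^{\frac13}\|\partial_x^2c_{\neq}(T^*)\|_{L^2}$), and use one-dimensional Nash estimates for the zero modes. The part of $\partial_yc_0$ driven by $n_0$ is controlled by the mass through the heat kernel, and Corollary~\ref{cor:physical-timespace} handles $\partial_xn_{\neq}$, $\partial_x^2c_{\neq}$, $\partial_x\partial_yc_{\neq}$ and $\partial_x\omega_{\neq}$.

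Two things need fixing. The first concerns bounds you claim but do not need. Your Nash argument is an $L^2$ estimate and gives only $\|n_0\|_{L^\infty L^2}$. So the $A$-uniform bound on $\|n_0\|_{L^\infty L^\infty}$ you list is never supplied. The Kelvin inequalities \eqref{embedding} you cite would need $\|n_0\|_{H^1}$, which is not controlled on $[T^*,T]$. Neither this bound nor $\|\partial_yc_0\|_{L^\infty L^\infty}$ is needed. The paper uses only $\|n_0\|_{L^\infty L^2}$, $\|\partial_yc_0\|_{L^\infty L^2}$, $\|\omega_0\|_{L^\infty L^2}$ and $\|u_{1,0}\|_{L^\infty H^1}$, together with the first line of \eqref{fg-L2}, $\|f_0g_{\neq}\|_{L^2}\le C\|f_0\|_{L^2}\|g_{\neq}\|_{L^2}^{\frac12}\|\nabla g_{\neq}\|_{L^2}^{\frac12}$. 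With the exponential weight inserted, $\|\partial_x\nabla c_{\neq}\|_{L^2L^2}\le A^{\frac16}\|\partial_x\nabla c_{\neq}\|_{X_a}$ and $\|\nabla\partial_x\nabla c_{\neq}\|_{L^2L^2}\le A^{\frac12}\|\partial_x\nabla c_{\neq}\|_{X_a}$. Hence $A^{-1}\|e^{aA^{-\frac13}(t-T^*)}n_0\partial_x\nabla c_{\neq}\|_{L^2L^2}^2\le CA^{-\frac13}\|n_0\|_{L^\infty L^2}^2\|\partial_x\nabla c_{\neq}\|_{X_a}^2$, and likewise for $\partial_xn_{\neq}\partial_yc_0$. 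So the obstacle you single out at the end does not arise. Also, the interaction terms in the $n_0$ estimate are quartic, of size $CA^{-\frac13}E_2^4$ rather than $CA^{-1}E_2^2$; this only changes the threshold.

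The real gap is in the closure. Writing every error as $CA^{-\gamma}(\cdots)E_2$ with some $\gamma>0$ and summing fails because of the shear source. The estimate for $\partial_x\partial_yc_{\neq}$ contains $A^{\frac13}\|\partial_x^2c_{\neq}\|_{X_a}$. So every term in the estimate of $\|\partial_x^2c_{\neq}\|_{X_a}$ is multiplied by $A^{\frac13}$, not only its datum. Two of those terms carry too little smallness:
$A^{-1}\|e^{aA^{-\frac13}(t-T^*)}\partial_x^2(u_{\neq}c_{\neq})\|_{L^2L^2}^2\le CA^{-\frac13}E_2^2\|\partial_x^2c_{\neq}\|_{X_a}^2$ and
$A^{-\frac53}\|e^{aA^{-\frac13}(t-T^*)}u_{1,0}\partial_x^3c_{\neq}\|_{L^2L^2}^2\le CA^{-\frac23}\|u_{1,0}\|_{L^\infty H^1}^2\|\partial_x^2c_{\neq}\|_{X_a}^2$.
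Insert the bootstrap bound $\|\partial_x^2c_{\neq}\|_{X_a}\le 2E_2$ and multiply by $A^{\frac13}$. The contributions to $\mathcal E_3$ are then of order $A^{\frac16}E_2^2$ and $\|u_{1,0}\|_{L^\infty H^1}E_2$, which do not become small as $A\to\infty$.

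You must close the $\partial_x^2c_{\neq}$ estimate on its own before amplifying it, as in \eqref{cx2}:
\begin{itemize}
\item Keep the transport term in divergence form $\partial_x^2\nabla\cdot(u_{\neq}c_{\neq})$, so that only $x$-derivatives of $c_{\neq}$ appear.
\item Bound these two terms by $\|\partial_x^2c_{\neq}\|_{X_a}$ alone, not by $\|\partial_x\nabla c_{\neq}\|_{X_a}$, since the latter could not be absorbed after amplification. Then absorb them for $A\ge A_4$.
\item Check that the remaining cross terms survive the factor $A^{\frac13}$. The density source written as $\partial_x(A^{-1}\partial_xn_{\neq})$ gives $A^{-\frac12}\|\partial_xn_{\neq}\|_{X_a}$. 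The term $\partial_x^2u_{2,\neq}\partial_yc_0$ gives $A^{-\frac7{12}}\|\partial_yc_0\|_{L^\infty L^2}\|\partial_x\omega_{\neq}\|_{X_a}$.
\end{itemize}
After multiplication by $A^{\frac13}$ these carry factors $A^{-\frac16}$ and $A^{-\frac14}$, and the final absorption in \eqref{E3} goes through.
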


\noindent\textbf{Step 3. Uniform-in-time pointwise estimates.}
We establish uniform-in-time bounds for $n$, $u$, $c$, and $\nabla c$ needed for global continuation.
\begin{proposition}\label{prop:infty-estimate}
	Assume the hypotheses of Theorem~\ref{thm:main}, and let $[0,T]$ be a finite interval of existence of the solution. If $A\geq\max\{A^*,\tilde A\}$, then
	\begin{align*}
		\|n\|_{L^\infty L^\infty}
		+\|u\|_{L^\infty L^\infty}
		+\|c\|_{L^\infty W^{1,\infty}}
		\leq C\left(A,M,\|n_{\rm in}\|_{H^2},\|c_{\rm in}\|_{H^2},\|c_{\rm in}\|_{W^{1,\infty}},\|u_{\rm in}\|_{H^3}\right),
	\end{align*}
	where the constant is independent of $T$.
\end{proposition}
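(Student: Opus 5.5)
We plan to prove Proposition~\ref{prop:infty-estimate} by splitting $[0,T]$ into $[0,\min(T,T^*)]$ and $[T^*,T]$. On each piece we first obtain $L^\infty_tL^2$ and space-time bounds for $n,c,\omega$ from Propositions~\ref{prop:short-time} and \ref{prop:long-time}. We then upgrade these to $L^\infty$ bounds: Moser--Alikakos iteration for $n$, and parabolic smoothing for $c$ and $u$. All constants may depend on $A$ but not on $T$.

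\textbf{Step 1: integral bounds on $c$ and $u$.} For the chemical zero mode we use the one-dimensional equation in \eqref{eq:zero-mode-system}. The damping $-\frac1A c_0$ and the 1D heat kernel give $\|c_0(t)\|_{L^2}+\|\partial_yc_0(t)\|_{L^2}\le C(A)$ uniformly in $t$. The ingredients are:
\begin{itemize}
\item the bound $\|n_0\|_{L^2}\le CE_1^{1/2}$ on the short interval;
\item on the long interval, a 1D Nash-type estimate for $n_0$ that uses only the conserved mass $M$ and the smallness of the nonzero modes in $\mathcal E_3$;
\item the nonzero-mode fluxes $(u_{2,\neq}c_{\neq})_0$, which are integrable in time thanks to the $e^{-aA^{-1/3}t}$ weights.
\end{itemize}
Together with \eqref{prop-smalltime} and the bound $\mathcal E_3\le E_2$, this yields
\[
\sup_t\bigl(\|\partial_yc_0\|_{L^2}+\|\partial_x\nabla c_{\neq}\|_{L^2}\bigr)\le C(A,\dots).
\]
The polynomial factor $(1+t^2)^{1/2}$ is absorbed by the exponential decay. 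For the velocity we estimate $\omega_0$ through its heat equation with flux forcing. For $\omega_{\neq}$ we use $\|\omega_{\neq}\|_{H^1}$ from both propositions, together with the $H^3$ assumption on $u_{\rm in}$. This gives $\|u\|_{L^\infty}\le C\|\omega\|_{L^2}^{1/2}\|\nabla\omega\|_{L^2}^{1/2}$-type bounds, provided we additionally propagate $\|\nabla\omega_0\|_{L^2}$ by a standard 1D energy estimate.

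\textbf{Step 2: Moser iteration for $n$.} Multiply the $n$-equation by $n^{2p-1}$. The transport terms $y\partial_x$ and $u\cdot\nabla$ drop out by incompressibility, leaving
\[
\frac{d}{dt}\|n^p\|_{L^2}^2+\frac{2p-1}{pA}\|\nabla n^p\|_{L^2}^2\le \frac{Cp}{A}\|n^p\nabla c\|_{L^2}\|\nabla n^p\|_{L^2}.
\]
We bound $\|n^p\nabla c\|_{L^2}^2$ by the anisotropic inequality from Key observation 3, which costs only $\|n^p\|_{L^2}\|\partial_y n^p\|_{L^2}$ times the Step 1 quantity. Young's inequality then absorbs the gradient. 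A 2D Nash/Gagliardo--Nirenberg inequality relates $\|n^p\|_{L^2}$ to $\|n^p\|_{L^1}$ and $\|\nabla n^p\|_{L^2}$. The Alikakos lemma of Appendix~\ref{sec_A} then gives $\sup_t\|n\|_{L^\infty}\le C\max\{\|n_{\rm in}\|_{L^\infty},\sup_t\|n\|_{L^2}\}$. The $L^2$ base level follows from the $p=1$ case together with $\|n_0\|_{L^2}$ and $\|n_{\neq}\|_{L^2}$ from the two propositions; note $\|n_{\rm in}\|_{L^\infty}\lesssim\|n_{\rm in}\|_{H^2}$.

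\textbf{Step 3: pointwise bounds on $c$ and $\nabla c$.} Write the $c$ equation in Duhamel form with the heat semigroup $e^{t(\Delta-1)/A}$ composed with the Couette flow. Maximum-principle arguments then give $\|c\|_{L^\infty}\le\|c_{\rm in}\|_{L^\infty}+\sup\|n\|_{L^\infty}$. For $\nabla c$ we cannot use plain heat-kernel bounds, because the shear produces the term $-\partial_x c$ in the $\partial_y c$ equation, which can grow linearly in time. Instead we split $c=c_0+c_{\neq}$:
\begin{itemize}
\item $\partial_yc_0$ is handled by 1D smoothing with $L^\infty$ forcing, giving a time-uniform bound thanks to the damping.
\item $\nabla c_{\neq}$ is bounded in $L^\infty$ via $\|\nabla c_{\neq}\|_{L^\infty}\lesssim\|\nabla c_{\neq}\|_{H^{1}}^{1/2}\|\partial_x\nabla c_{\neq}\|_{H^{1}}^{1/2}$-type anisotropic interpolation, after propagating one more derivative by an energy estimate. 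The exponential decay again beats the polynomial shear growth.
\end{itemize}
This is where $\|c_{\rm in}\|_{W^{1,\infty}}$ enters, through the short-time contribution.

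\textbf{Main obstacle.} We expect the hardest step to be the $T$-uniform $L^\infty$ control of $\nabla c_{\neq}$, and the matching $H^2$-level control needed for it, near $t\sim T^*$ and during the transient-growth period. There the bounds are only exponentially weighted with growing polynomial factors. We would first try crude $A$-dependent bounds on $[0,T^*]$, which is a finite interval for fixed $A$. On $[T^*,\infty)$ we would rely on the $X_a$ decay. Since dependence on $A$ is allowed, the only genuine requirement is uniformity in $T$.
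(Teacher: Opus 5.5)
Your Steps 1--2, and the bound $\|c\|_{L^\infty}\le\|c_{\rm in}\|_{L^\infty}+\|n\|_{L^\infty L^\infty}$, are essentially what the paper does. Step 3 for $\nabla c_{\neq}$ has a genuine gap, and it is exactly the step you flag as the main obstacle.

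\textbf{The gap.} Your interpolation bounds $\|\nabla c_{\neq}\|_{L^\infty}$ by $\|\partial_x\nabla c_{\neq}\|_{H^1}$, so it needs $\sup_t\|\partial_x\nabla^2c_{\neq}(t)\|_{L^2}$. That is one derivative more than Propositions~\ref{prop:short-time} and \ref{prop:long-time} control pointwise in time. On $[T^*,T]$ the norm $\|\partial_x\nabla c_{\neq}\|_{X_a}$ only gives
\[
\|e^{aA^{-\frac13}(t-T^*)}\partial_x\nabla^2c_{\neq}\|_{L^2L^2}\le A^{\frac12}E_2 .
\]
This is an $L^2_t$ bound, so it yields $\nabla c_{\neq}\in L^2_tL^\infty$, not $L^\infty_tL^\infty$. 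Relying on the $X_a$ decay therefore does not suffice.

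Propagating the extra derivative would be a new bootstrap. A naive $H^2$ energy estimate for $c$ grows like $e^{Ct}$ because of the commutator $[\partial_y^2,y\partial_x]=2\partial_x\partial_y$. Treating that commutator as a decaying source would still require $T$-uniform control of $\partial_x\nabla^2(u\cdot\nabla c)$, of higher derivatives of $n_{\neq}$ and $\omega$, and of $\partial_y^2c_0$. None of this is supplied.

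\textbf{The missing idea.} The paper avoids higher regularity altogether. It runs the Moser--Alikakos iteration of Lemma~\ref{lem:uniform-iteration} directly on the full functions $\partial_xc$ and $\partial_yc$. The only inputs are:
\begin{itemize}
\item $\|n\|_{L^\infty L^4}$, obtained by interpolating $\|n\|_{L^\infty L^2}$ and $\|n\|_{L^\infty L^\infty}$;
\item $\|c\|_{L^\infty L^\infty}$;
\item $\|\nabla u\|_{L^\infty L^4}\lesssim\|\omega\|_{L^\infty L^4}$. The $L^4$ vorticity bound comes from testing with $\omega^3$ and using Nash together with $\|n\|_{L^\infty}$; Calder\'on--Zygmund then gives $\nabla u$.
\end{itemize}

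The shear source $-\partial_xc$ in the $\partial_yc$ equation has coefficient one rather than $A^{-1}$. The paper bounds it by
\[
I_2\le 2p\|\partial_xc\|_{L^2}\|\partial_yc\|_{L^{4p-2}}^{2p-1},
\]
then applies Gagliardo--Nirenberg and Young against $\frac1A\|\nabla(\partial_yc)^p\|_{L^2}^2$. This costs a power of $A$ in the constant $K$ of the iteration lemma. That is admissible, since only $T$-uniformity is required. It needs nothing beyond $\sup_t\|\partial_xc\|_{L^2}\le CQ$, which both propositions provide.

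\textbf{A smaller point in Step 1.} The bound $\|u\|_{L^\infty}\lesssim\|\omega\|_{L^2}^{1/2}\|\nabla\omega\|_{L^2}^{1/2}$ fails for the zero mode $u_{1,0}$. For spread-out profiles, $\|\omega_0\|_{L^2}\|\partial_y\omega_0\|_{L^2}\to0$ while $\|u_{1,0}\|_{L^\infty}$ stays fixed. For the nonzero modes it would need $\sup_t\|\nabla\omega_{\neq}\|_{L^2}$, which Proposition~\ref{prop:long-time} does not give. Use $\|u_{\neq}\|_{L^\infty}\lesssim\|\partial_x\omega_{\neq}\|_{L^2}$ and $\|u_{1,0}\|_{H^1_y}$ instead, as the paper does.
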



\begin{revision}
	\begin{proof}[Proof of Theorem~\ref{thm:main}]
		Fix $a\in(0,a_0]$ and $\delta\in(0,\frac23)$, choose $\bar A=\max\{A^*,\widetilde A\}$, and let $A\ge\bar A$.
		By Lemma~\ref{lem:local-continuation}, there exists a unique local strong solution with maximal existence time $T_{\max}\in(0,\infty]$.
		
		We first show that $T_{\max}>T^*$. Suppose that $T_{\max}\leq T^*$. For every $0<S<T_{\max}$, Proposition~\ref{prop:short-time} and the short-time estimates in Proposition~\ref{prop:infty-estimate} imply
		\begin{align*}
			\sup_{0\leq t\leq S}
			\left(
			\|n(t)\|_{L^\infty}
			+\|u(t)\|_{L^\infty}
			+\|c(t)\|_{W^{1,\infty}}
			\right)
			\leq C,
		\end{align*}
		where $C$ depends only on $A$ and the initial data.	
		As $S$ tends to $T_{\max}$ from below, we obtain a uniform bound on $[0,T_{\max})$, which contradicts the blow-up criterion in Lemma~\ref{lem:local-continuation}. 
		Thus $T_{\max}>T^*$, and the data at $T^*$ satisfy \eqref{nwc1}.
		

		For any $T^*<S<T_{\max}$, Proposition~\ref{prop:long-time} gives
		\begin{align*}
			\mathcal E_3(t)\leq E_2,\qquad t\in[T^*,S].
		\end{align*}
		Applying Proposition~\ref{prop:infty-estimate} on $[0,S]$, we obtain
		\begin{align*}
			\|n\|_{L^\infty(0,S;L^\infty)}
			+\|u\|_{L^\infty(0,S;L^\infty)}
			+\|c\|_{L^\infty(0,S;W^{1,\infty})}
			\leq C\left(A,M,\|n_{\rm in}\|_{H^2},\|c_{\rm in}\|_{H^2},\|c_{\rm in}\|_{W^{1,\infty}},\|u_{\rm in}\|_{H^3}\right).
		\end{align*}
		Since the bound is independent of $S$, the blow-up criterion in Lemma~\ref{lem:local-continuation} implies that $T_{\max}=\infty$. Letting $S\to\infty$ proves the asserted uniform-in-time bounds.
	\end{proof}

\end{revision}

\section{Preliminaries}\label{sec:preliminaries}

\subsection{Estimates in Kelvin coordinates}
\label{subsec:kelvin-preliminaries}
We establish space-time estimates and anisotropic Sobolev estimates in the Kelvin-coordinate analysis.

\medskip
\noindent$\bullet$\quad
\textbf{Space-time estimates.}


\begin{lemma}
	\label{prop:kelvin-timespace}
	Assume that $F=F(t,X,Y)$ solves
	\begin{equation}\label{eq:kelvin-linear}
		\partial_tF-\frac1A\Delta_LF
		=\partial_XF^{(1)}
		+F^{(2)}
		+\nabla_L\cdot F^{(3)},
		\quad
		F|_{t=0}=F_{\rm in},
	\end{equation}
	on $[0,T]\times\mathbb T\times\mathbb R$ and 
	$P_0F=P_0F^{(1)}=P_0F^{(2)}=P_0F^{(3)}=0.$
	Then there exist universal constants $0<a_0\leq\frac18$ and $C>0$,
	independent of $A\geq1$ and $T>0$, such that, for every
	$a\in[0,a_0]$,
	\begin{align}
		&\quad \|e^{aA^{-\frac13}t}F\|_{L^\infty L^2}^2
		+\frac1{A^{\frac13}}\|e^{aA^{-\frac13}t}F\|_{L^2L^2}^2
		+\frac1A\|e^{aA^{-\frac13}t}\nabla_LF\|_{L^2L^2}^2
		+\|e^{aA^{-\frac13}t}\partial_X\nabla_L\Delta_L^{-1}F\|_{L^2L^2}^2
		\nonumber\\
		&\leq
		C\Big(
		\|F_{\rm in}\|_{L^2}^2
		+\|e^{aA^{-\frac13}t}\nabla_LF^{(1)}\|_{L^2L^2}^2
		+A^{\frac13}\|e^{aA^{-\frac13}t}F^{(2)}\|_{L^2L^2}^2
		+A\|e^{aA^{-\frac13}t}F^{(3)}\|_{L^2L^2}^2
		\Big).
		\label{eq:kelvin-timespace-estimate}
	\end{align}
\end{lemma}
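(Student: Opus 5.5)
We work mode by mode on the Fourier side. Since $P_0F=0$, we may fix $k\neq0$ and $\eta\in\mathbb R$. Writing $\widehat F=\widehat F_k(t,\eta)$, the equation \eqref{eq:kelvin-linear} becomes the ODE
\[
\partial_t\widehat F+\frac{k^2+(\eta-kt)^2}{A}\widehat F
=ik\widehat{F^{(1)}}+\widehat{F^{(2)}}+i\bigl(k,\eta-kt\bigr)\cdot\widehat{F^{(3)}}.
\]
Set $p(t)=k^2+(\eta-kt)^2$, so that $|\widehat{\nabla_L F}|^2=p|\widehat F|^2$. We multiply by $\mathcal M_k^2\overline{\widehat F}$, where $\mathcal M_k$ is the multiplier defined before the lemma, and take real parts. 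The weight $e^{aA^{-1/3}t}$ produces a bad term $aA^{-1/3}|\mathcal M_k\widehat F|^2$. The two ghost factors supply good terms through $-\partial_t M_k^{(j)}$:
\[
-\partial_tM_k^{(2)}=\frac{k^2}{k^2+(\eta-kt)^2}=\frac{k^2}{p},\qquad
-\partial_tM_k^{(1)}=\frac{A^{-1/3}}{1+A^{-2/3}(\eta/k-t)^2}.
\]
The factor $k^2/p$ gives exactly $|\partial_X\nabla_L\Delta_L^{-1}F|^2$ on the Fourier side, since $k^2p/p^2=k^2/p$.

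The key step is a pointwise-in-frequency bound:
\[
\frac{p}{A}-\partial_tM_k^{(1)}\ \ge\ c_0A^{-1/3}.
\]
If $|\eta/k-t|\le A^{1/3}$, then $-\partial_tM_k^{(1)}\ge \tfrac12A^{-1/3}$. Otherwise $p\ge k^2(\eta/k-t)^2\ge A^{2/3}$, so $p/A\ge A^{-1/3}$. Hence $c_0=\tfrac12$ works, uniformly in $k$ with $|k|\ge1$. Choosing $a_0\le\tfrac18$ small, the bad term is absorbed by half of this, and we obtain
\[
\frac{d}{dt}|\mathcal M_k\widehat F|^2+\Big(\frac{p}{A}+\frac{c_0}{A^{1/3}}+\frac{k^2}{p}\Big)|\mathcal M_k\widehat F|^2\le 2\,\mathrm{Re}\,\mathcal M_k^2\,\overline{\widehat F}\,(\text{RHS}).
\]
We keep the full $p/A$ on the left by splitting it into two halves.

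For the forcing terms we use Young's inequality, each term against the matching dissipation:
\begin{itemize}
\item $|k\widehat{F^{(1)}}\widehat F|\le \frac{k^2}{4p}|\widehat F|^2+p|\widehat{F^{(1)}}|^2$, and $p|\widehat{F^{(1)}}|^2=|\widehat{\nabla_LF^{(1)}}|^2$;
\item $|\widehat{F^{(2)}}\widehat F|\le \frac{c_0}{4A^{1/3}}|\widehat F|^2+CA^{1/3}|\widehat{F^{(2)}}|^2$;
\item $|(k,\eta-kt)\cdot\widehat{F^{(3)}}\,\widehat F|\le \frac{p}{4A}|\widehat F|^2+A|\widehat{F^{(3)}}|^2$.
\end{itemize}
Integrating in time, then summing over $k$ and integrating over $\eta$ (Plancherel), and finally using \eqref{eq:weighted-M-bounded} to replace $\mathcal M_k$ by $e^{aA^{-1/3}t}$ up to the factor $e^{2\pi}$, gives \eqref{eq:kelvin-timespace-estimate}. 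The $L^\infty L^2$ part comes from taking the supremum over $t$ before summing.

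The main obstacle is the uniform enhanced-dissipation lower bound above. It must be verified carefully so that $a_0$ and $C$ do not depend on $k$, $A$, or $T$. The remaining steps are routine Young-inequality bookkeeping.
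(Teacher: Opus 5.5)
Your proposal is correct and is essentially the paper's proof: the same mode-by-mode energy identity with the multiplier $\mathcal M_k$, the same use of $-\partial_tM_k^{(2)}=k^2/(k^2+(\eta-kt)^2)$ to produce the $\partial_X\nabla_L\Delta_L^{-1}F$ term, and the same three Young's inequalities, followed by summing in $k$, integrating in $\eta$, and using the two-sided bound $e^{aA^{-\frac13}t}\le\mathcal M_k\le e^{2\pi}e^{aA^{-\frac13}t}$. The only cosmetic difference is in the enhanced-dissipation lower bound: you obtain it by splitting into the cases $|\eta/k-t|\le A^{1/3}$ and $|\eta/k-t|>A^{1/3}$, whereas the paper writes $r=A^{-2/3}\xi^2/k^2$ and uses $\frac r2+\frac{2}{1+r}\ge\frac32$.
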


\begin{proof}
	Since $P_0F=0$, it suffices to consider the Fourier modes
	$k\in\mathbb Z\setminus\{0\}$. Taking the Fourier transform in
	$(X,Y)$, we denote
	$\widehat F_k(t,\eta)=\mathcal F_{X,Y}F(t,k,\eta),$
	and set
	\[
	\xi=\eta-kt,
	\qquad
	q_k(t,\eta)=k^2+\xi^2.
	\]
	Then
	\[
	\widehat{\nabla_LF}_k
	=(ik,i\xi)\widehat F_k,
	\qquad
	\widehat{-\Delta_LF}_k
	=q_k\widehat F_k.
	\]
	Hence \eqref{eq:kelvin-linear} becomes
	\begin{equation}\label{eq:kelvin-fourier-proof}
		\partial_t\widehat F_k
		+\frac1A q_k\widehat F_k
		=ik\widehat F^{(1)}_k
		+\widehat F^{(2)}_k
		+i(k,\xi)\cdot\widehat F^{(3)}_k .
	\end{equation}
	Recalling the definitions of the multipliers in
	Section~\ref{subsec:energy-construction}, we compute
	\begin{equation*}
		\partial_tM_k^{(1)}
		=-\frac{A^{-\frac13}k^2}{k^2+A^{-\frac23}(\eta-kt)^2},
		\qquad
		\partial_tM_k^{(2)}
		=-\frac{k^2}{k^2+(\eta-kt)^2}.
	\end{equation*}
	By the definition of $\mathcal M_k$, we have
	\[
	\frac{\partial_t\mathcal M_k}{\mathcal M_k}
	=aA^{-\frac13}+\partial_tM_k,
	\qquad
	-\partial_tM_k
	=
	\frac{A^{-\frac13}k^2}
	{k^2+A^{-\frac23}\xi^2}
	+\frac{k^2}{q_k}.
	\]
	Notice that
	\begin{align*}
		2\Re\left(\mathcal{M}_k^2\overline{\widehat F_k}\partial_t\widehat F_k\right)=\frac{d}{dt}|\mathcal{M}_k\widehat F_k|^2-2\frac{\partial_t\mathcal{M}_k}{\mathcal{M}_k}|\mathcal{M}_k\widehat{F}_k|^2.
	\end{align*}
	Multiplying \eqref{eq:kelvin-fourier-proof} by
	$2\mathcal M_k^2\overline{\widehat F_k}$
	and taking the real part, we obtain
	\begin{align*}
		&\frac{d}{dt}|\mathcal M_k\widehat F_k|^2
		+\left(\frac{2q_k}{A}-2\partial_tM_k-2aA^{-\frac13}\right)|\mathcal M_k\widehat F_k|^2\\
		\leq&2|\mathcal M_k k\widehat F_k^{(1)}||\mathcal M_k\widehat F_k|
		+2|\mathcal M_k\widehat F_k^{(2)}||\mathcal M_k\widehat F_k|
		+2|\mathcal M_k(k,\xi)\cdot\widehat F_k^{(3)}||\mathcal M_k\widehat F_k|.
	\end{align*}
	Since $k^2\geq1$, setting
	$r=A^{-\frac23}\xi^2/k^2\geq0$ gives
	\begin{align*}
		\frac{q_k}{2A}
		+\frac{2A^{-\frac13}k^2}
		{k^2+A^{-\frac23}\xi^2}
		&\geq A^{-\frac13}
		\left(\frac r2+\frac{2}{1+r}\right)
		=A^{-\frac13}
		\left(\frac{r+1}{2}+\frac{2}{1+r}-\frac{1}{2}\right)
		\geq\frac32A^{-\frac13}.
	\end{align*}
	Consequently, for $0\leq a\leq a_0\leq1/8$, we have
	\begin{equation}\label{eq:qk-estimate}
		\frac{2q_k}{A}-2\partial_tM_k-2aA^{-\frac13}
		\geq\frac{3q_k}{2A}
		+\frac54A^{-\frac13}
		+\frac{2k^2}{q_k}.
	\end{equation}
	For the forcing terms, Young's inequality yields
	\begin{align*}
		&2|\mathcal M_k k\widehat F_k^{(1)}|
		|\mathcal M_k\widehat F_k|
		\leq\frac{k^2}{q_k}|\mathcal M_k\widehat F_k|^2
		+q_k|\mathcal M_k\widehat F_k^{(1)}|^2,\\
		&2|\mathcal M_k\widehat F_k^{(2)}|
		|\mathcal M_k\widehat F_k|
		\leq\frac14A^{-\frac13}|\mathcal M_k\widehat F_k|^2
		+4A^{\frac13}|\mathcal M_k\widehat F_k^{(2)}|^2,\\
		&2|\mathcal M_k(k,\xi)\cdot\widehat F_k^{(3)}|
		|\mathcal M_k\widehat F_k|
		\leq\frac{q_k}{2A}|\mathcal M_k\widehat F_k|^2
		+2A|\mathcal M_k\widehat F_k^{(3)}|^2.
	\end{align*}
	Combining these estimates with \eqref{eq:qk-estimate}, we obtain
	\begin{align*}
		&\frac{d}{dt}|\mathcal M_k\widehat F_k|^2
		+A^{-\frac13}|\mathcal M_k\widehat F_k|^2
		+\frac{q_k}{A}|\mathcal M_k\widehat F_k|^2
		+\frac{k^2}{q_k}|\mathcal M_k\widehat F_k|^2\\
		\leq&
		q_k|\mathcal M_k\widehat F_k^{(1)}|^2
		+4A^{\frac13}|\mathcal M_k\widehat F_k^{(2)}|^2
		+2A|\mathcal M_k\widehat F_k^{(3)}|^2.
	\end{align*}
	Summing over $k\neq0$, and integrating with respect to $\eta$,
	we use Plancherel's theorem and
	\eqref{eq:weighted-M-bounded}. Observe that
	\[
	q_k|\widehat F_k|^2
	=|\widehat{\nabla_LF}_k|^2, \quad 
	\big|\widehat{\partial_X\nabla_L\Delta_L^{-1}F}_k\big|^2
	=\frac{k^2}{k^2+(\eta-kt)^2}|\widehat F_k|^2.
	\]
	Therefore,
	\begin{equation}\label{mui2}
		\begin{aligned}
			&\frac{d}{dt}\|\mathcal MF\|_{L^2}^2
			+\frac{1}{A^{\frac13}}\|\mathcal MF\|_{L^2}^2
			+\frac{1}{A}\|\mathcal M\nabla_LF\|_{L^2}^2
			+\|\mathcal M\partial_X\nabla_L
			\Delta_L^{-1}F\|_{L^2}^2\\
			&\quad\leq C\left(
			\|\mathcal M\nabla_LF^{(1)}\|_{L^2}^2
			+A^{\frac13}\|\mathcal MF^{(2)}\|_{L^2}^2
			+A\|\mathcal MF^{(3)}\|_{L^2}^2
			\right).
		\end{aligned}
	\end{equation}
	Integrating \eqref{mui2} over $[0,T]$ and using \eqref{mul1}, we complete the proof of \eqref{eq:kelvin-timespace-estimate}.

\end{proof}
\begin{remark}
	The estimate \eqref{eq:kelvin-timespace-estimate} remains valid if an additional damping term $\frac{1}{A}F$ is added to the left-hand side of \eqref{eq:kelvin-linear}, namely,
	\begin{align*}
		\partial_tF-\frac{1}{A}\Delta_LF+\frac{1}{A}F
		=\partial_XF^{(1)}+F^{(2)}+\nabla_L\cdot F^{(3)}.
	\end{align*}
	Indeed, the additional term $\frac{1}{A}F$ is dissipative and only improves the corresponding energy estimates. Hence, the proof of \eqref{eq:kelvin-timespace-estimate} carries over without any essential modification.
\end{remark}

\medskip
\noindent$\bullet$\quad
\textbf{Anisotropic Sobolev  estimates.}

\begin{lemma}\label{Poincare}
	Let $F_{\neq}\in H^1(\mathbb{T}\times\mathbb{R})$, we have
	\begin{equation}\label{F-L2}
		\begin{aligned}
			\|F_{\neq}\|_{L^2(\mathbb{T}\times\mathbb{R})}\leq\|\partial_XF_{\neq}\|_{L^2(\mathbb{T}\times\mathbb{R})}\leq\|\nabla_LF_{\neq}\|_{L^2(\mathbb{T}\times\mathbb{R})}.
		\end{aligned}
	\end{equation}
\end{lemma}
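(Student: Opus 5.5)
The plan is to use Plancherel in $(X,Y)$ and compare the Fourier symbols mode by mode. Since $F_{\neq}$ has zero $X$-average, its Fourier expansion $F_{\neq}=\sum_{k\neq0}\int_{\mathbb R}\widehat F_k(\eta)e^{i(kX+\eta Y)}\,d\eta$ (up to the normalization fixed in the Notations) involves only $k\in\mathbb Z\setminus\{0\}$. Plancherel then identifies each of the three norms with a weighted $\ell^2_kL^2_\eta$ norm of $\widehat F_k(\eta)$, with the same normalization constant in all three cases:
\[
\|F_{\neq}\|_{L^2}^2=c\sum_{k\neq0}\int_{\mathbb R}|\widehat F_k(\eta)|^2\,d\eta,
\qquad
\|\partial_XF_{\neq}\|_{L^2}^2=c\sum_{k\neq0}\int_{\mathbb R}k^2|\widehat F_k(\eta)|^2\,d\eta.
\]

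For the first inequality, every $k\neq0$ is an integer, so $k^2\geq1$. Comparing the two integrands pointwise gives $\|F_{\neq}\|_{L^2}\leq\|\partial_XF_{\neq}\|_{L^2}$.

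For the second inequality, I use that $\nabla_L=(\partial_X,\partial_Y-t\partial_X)^T$ has symbol $(ik,\,i(\eta-kt))$, as already computed in the proof of Lemma~\ref{prop:kelvin-timespace}. This gives
\[
\|\nabla_LF_{\neq}\|_{L^2}^2=c\sum_{k\neq0}\int_{\mathbb R}\bigl(k^2+(\eta-kt)^2\bigr)|\widehat F_k(\eta)|^2\,d\eta\geq\|\partial_XF_{\neq}\|_{L^2}^2.
\]
Equivalently, $\|\nabla_LF_{\neq}\|_{L^2}^2=\|\partial_XF_{\neq}\|_{L^2}^2+\|(\partial_Y-t\partial_X)F_{\neq}\|_{L^2}^2$, so this step needs no Fourier analysis at all.

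There is no real obstacle here. The only point to check is that $F_{\neq}\in H^1$ makes all three quantities finite, so that Plancherel applies to $\partial_XF_{\neq}$ and $\nabla_LF_{\neq}$. This holds because, for fixed $t$, $\partial_Y-t\partial_X$ is a bounded combination of first derivatives.
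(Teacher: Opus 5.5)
Your proof is correct and is essentially the paper's argument. The paper applies the one-dimensional Poincar\'e inequality on $\mathbb T=[0,2\pi]$ at each fixed $Y$ and integrates in $Y$; that inequality has constant $1$ for exactly your reason, $k^2\ge1$ for $k\neq0$, so your full $(X,Y)$ Fourier transform carries out the same comparison. The second inequality is the same pointwise identity $|\nabla_LF_{\neq}|^2=|\partial_XF_{\neq}|^2+|(\partial_Y-t\partial_X)F_{\neq}|^2$ in both proofs.
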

\begin{proof}
	Since 
	$\frac{1}{|T|}\int_{\mathbb{T}}F_{\neq}(t,X,Y)dX
	=\frac{1}{|T|}\int_{\mathbb{T}}(F-F_{0})dX=0,$
	the Poincar\'e inequality  gives
	\[
	\|F_{\neq}(\cdot,Y)\|_{L_X^2(\mathbb T)}
	\leq
	\|\partial_XF_{\neq}(\cdot,Y)\|_{L_X^2(\mathbb T)}.
	\]
	Integrating with respect to \(Y\), we obtain
	\[
	\|F_{\neq}\|_{L^2(\mathbb T\times\mathbb R)}
	\leq
	\|\partial_XF_{\neq}\|_{L^2(\mathbb T\times\mathbb R)}
	\leq
	\|\nabla_LF_{\neq}\|_{L^2(\mathbb T\times\mathbb R)},
	\]
	where we used 
	$
	|\nabla_LF_{\neq}|^2
	=
	|\partial_XF_{\neq}|^2
	+
	|(\partial_Y-t\partial_X)F_{\neq}|^2
	\geq
	|\partial_XF_{\neq}|^2.$
	This completes the proof.
\end{proof}

\begin{lemma}\label{lem:U}
	Let $\Omega_{\neq}\in H^1(\mathbb{T}\times\mathbb{R})$ and $U_{\neq}=\nabla_L^\perp\Delta_L^{-1}\Omega_{\neq}$.
	Then
	\begin{align*}
		\|\nabla_LU_{\neq}\|_{L^2}
		=\|\Omega_{\neq}\|_{L^2}.
	\end{align*}
\end{lemma}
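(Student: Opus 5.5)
The plan is to verify the identity on the Fourier side. We work mode by mode in $k\neq0$, with $\xi=\eta-kt$ and $q_k=k^2+\xi^2$ as in the proof of Lemma~\ref{prop:kelvin-timespace}. Since $P_0\Omega_{\neq}=0$, only the modes $k\neq0$ appear, so $q_k\geq k^2\geq1>0$. Hence $\Delta_L^{-1}$ is the well-defined bounded Fourier multiplier $-q_k^{-1}$. The stream function $\Psi_{\neq}=\Delta_L^{-1}\Omega_{\neq}$ then has $\widehat\Psi_k=-q_k^{-1}\widehat\Omega_k$.

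We then compute $U_{\neq}=\nabla_L^\perp\Psi_{\neq}$. With the convention $\nabla^\perp=(\partial_y,-\partial_x)$ carried into Kelvin coordinates via \eqref{relation}, this reads $U_{\neq}=((\partial_Y-t\partial_X)\Psi_{\neq},-\partial_X\Psi_{\neq})$. On the Fourier side this gives $\widehat U_k=(i\xi,-ik)\widehat\Psi_k$. Applying $\nabla_L$, whose symbol is $(ik,i\xi)$, gives $\widehat{\nabla_L U}_k$ as the $2\times2$ matrix with entries $(ik)(i\xi),(ik)(-ik),(i\xi)(i\xi),(i\xi)(-ik)$, each multiplied by $\widehat\Psi_k$. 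The sum of the squared moduli is
\[
k^2\xi^2+k^4+\xi^4+\xi^2k^2=(k^2+\xi^2)^2=q_k^2 .
\]
Therefore $|\widehat{\nabla_L U}_k|^2=q_k^2|\widehat\Psi_k|^2=|\widehat\Omega_k|^2$.

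Summing over $k\neq0$, integrating in $\eta$, and applying Plancherel's theorem yields $\|\nabla_LU_{\neq}\|_{L^2}=\|\Omega_{\neq}\|_{L^2}$. The assumption $\Omega_{\neq}\in H^1$ guarantees that all these quantities are finite.

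The only delicate point is well-definedness, namely that $\Delta_L^{-1}$ makes sense on the nonzero modes. This follows from $q_k\geq1$. Apart from that, the identity is a pointwise algebraic fact on the Fourier side: the Kelvin symbols satisfy the same relation $|\nabla\nabla^\perp\Delta^{-1}|=1$ as in physical coordinates, with $\eta$ replaced by $\xi$.
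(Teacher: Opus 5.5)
Your argument is correct, but it is not the route the paper takes.

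The paper works in physical space. It uses only $\nabla_L\cdot U_{\neq}=0$ and the relation $\Omega_{\neq}=(\partial_Y-t\partial_X)U_{1,\neq}-\partial_XU_{2,\neq}$. It integrates by parts, exploiting that $\partial_X$ and $\partial_Y-t\partial_X$ commute and are skew-adjoint. This identifies the cross term $-2\int(\partial_Y-t\partial_X)U_{1,\neq}\,\partial_XU_{2,\neq}$ with $\|\partial_XU_{1,\neq}\|_{L^2}^2+\|(\partial_Y-t\partial_X)U_{2,\neq}\|_{L^2}^2$. It then expands $\|\Omega_{\neq}\|_{L^2}^2$. This is the Kelvin version of the div--curl identity $\|\nabla u\|_{L^2}=\|\mathrm{curl}\,u\|_{L^2}$ for divergence-free fields.

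You instead make $\Delta_L^{-1}$ explicit as the multiplier $-q_k^{-1}$. You then check that the symbol of $\nabla_L\nabla_L^\perp\Delta_L^{-1}$ has Frobenius norm exactly $1$ at every $(k,\eta)$.

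What your version buys:
\begin{itemize}
\item It settles the well-definedness of $\Delta_L^{-1}$ explicitly.
\item It works for $\Omega_{\neq}\in L^2$. The $H^1$ hypothesis plays no role, since boundedness of the symbol already gives finiteness; in the paper, $H^1$ conveniently justifies the integrations by parts.
\item Being an identity pointwise in frequency, it gives at once $\|\nabla_LU_{\neq}\|_{H^s}=\|\Omega_{\neq}\|_{H^s}$ for all $s\ge0$ in the $(X,Y)$-Sobolev norms. This $H^1$ form is what the paper actually invokes later, e.g.\ in the proof of Lemma~\ref{lem:zero mode-kelxin}.
\end{itemize}

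What the paper's argument buys: it never computes the inverse or any symbols. It uses only the structural facts (divergence-free, with curl equal to $\Omega_{\neq}$), so it applies to any such field and is repeated verbatim for \eqref{u=w-xy}.
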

\begin{proof}		
	Recall that 
	$
	\nabla_L \cdot U_{\neq}
	=\partial_X U_{1,\neq}+(\partial_Y-t\partial_X)U_{2,\neq}
	=0.
	$
	Moreover, integration by parts yields
	\[
	\begin{aligned}
		\int_{\mathbb{T}\times\mathbb{R}}
		(\partial_Y-t\partial_X)U_{1,\neq}\,\partial_X U_{2,\neq}\,dXdY
		&=
		\int_{\mathbb{T}\times\mathbb{R}}
		\partial_X U_{1,\neq}\,(\partial_Y-t\partial_X)U_{2,\neq}\,dXdY \\
		&=-\|\partial_X U_{1,\neq}\|_{L^2}^2
		=-\|(\partial_Y-t\partial_X)U_{2,\neq}\|_{L^2}^2.
	\end{aligned}
	\]
	Therefore,
	\begin{align*}
		\|\Omega_{\neq}\|_{L^2}^2
		&=\|(\partial_Y-t\partial_X)U_{1,\neq}\|_{L^2}^2
		+\|\partial_X U_{2,\neq}\|_{L^2}^2
		-2\int_{\mathbb{T}\times\mathbb{R}}
		(\partial_Y-t\partial_X)U_{1,\neq}\,\partial_X U_{2,\neq}\,dXdY \\
		&=
		\|(\partial_Y-t\partial_X)U_{1,\neq}\|_{L^2}^2
		+\|\partial_X U_{2,\neq}\|_{L^2}^2
		+\|\partial_X U_{1,\neq}\|_{L^2}^2
		+\|(\partial_Y-t\partial_X)U_{2,\neq}\|_{L^2}^2 
		=\|\nabla_L U_{\neq}\|_{L^2}^2.\alabel{u=w}
	\end{align*}
\end{proof}

\begin{lemma}\label{lem:embedding}
	Let $F_0\in H_Y^1(\mathbb{R})$, and let $F_{\neq},G_{\neq}\in H^1(\mathbb{T}\times\mathbb{R})$ with
	$\partial_XF_{\neq},\partial_XG_{\neq}\in H^1(\mathbb{T}\times\mathbb{R})$.
	Then
	\begin{equation}\label{embedding}
		\begin{aligned}
			\|F_0G_{\neq}\|_{H^1}
			\leq& ~C\|F_0\|_{H_Y^1}\|G_{\neq}\|_{H^1},\\
			\|\partial_X(F_{\neq}G_{\neq})\|_{H^1}
			\leq& ~C\|\partial_XF_{\neq}\|_{H^1}\|\partial_XG_{\neq}\|_{H^1},\\
			\|F_{\neq}G_{\neq}\|_{H^1}
			\leq& ~C\|\partial_XF_{\neq}\|_{H^1}\|G_{\neq}\|_{H^1},\\
			\|(F_{\neq}G_{\neq})_0\|_{H_Y^1}
			\leq& ~C\|F_{\neq}\|_{H^1}\|G_{\neq}\|_{H^1}
			\leq C\|\partial_XF_{\neq}\|_{H^1}\|\partial_XG_{\neq}\|_{H^1}.
		\end{aligned}
	\end{equation}
\end{lemma}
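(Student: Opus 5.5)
The plan is to prove the four inequalities of Lemma~\ref{lem:embedding} with one-dimensional Sobolev embeddings, exploiting the fact that nonzero modes gain a derivative in $X$ through the Poincar\'e inequality of Lemma~\ref{Poincare}. The basic tools are:
\begin{itemize}
\item $\|F_0\|_{L^\infty_Y}\le C\|F_0\|_{H^1_Y}$ on $\mathbb R$;
\item for a nonzero mode, the anisotropic bound
\[
\|G_{\neq}\|_{L^\infty}\le C\|G_{\neq}\|_{L^2}^{1/4}\|\partial_XG_{\neq}\|_{L^2}^{1/4}\|\partial_YG_{\neq}\|_{L^2}^{1/4}\|\partial_X\partial_YG_{\neq}\|_{L^2}^{1/4},
\]
which gives $\|G_{\neq}\|_{L^\infty}\le C\|\partial_XG_{\neq}\|_{H^1}$;
\item the mixed bound $\|G_{\neq}\|_{L^\infty_XL^2_Y}\le C\|\partial_X G_{\neq}\|_{L^2}$, and similarly $L^\infty_YL^2_X$ controlled by $\|G\|_{H^1}$ through 1D Agmon in $Y$.
\end{itemize}
The first bound is standard. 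For the second, write $G_{\neq}=\sum_{k\neq0}\widehat G_k(Y)e^{ikX}$ and sum $\|\widehat G_k\|_{L^\infty_Y}$ using Agmon in $Y$. Cauchy--Schwarz against $\sum_k k^{-2}$ then requires only the $\partial_X$-weighted norms.

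For the first inequality, expand $\nabla(F_0G_{\neq})=(\nabla F_0)G_{\neq}+F_0\nabla G_{\neq}$. The term $F_0\nabla G_{\neq}$ is bounded by $\|F_0\|_{L^\infty}\|G_{\neq}\|_{H^1}$. For $\partial_YF_0\,G_{\neq}$, use $\|\partial_YF_0\|_{L^2_Y}\|G_{\neq}\|_{L^\infty_YL^2_X}$, and bound the latter by $C\|G_{\neq}\|_{H^1}$ via 1D Agmon in $Y$ applied to $\|G_{\neq}(\cdot,Y)\|_{L^2_X}$.

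For the second and third inequalities, write $\nabla(F_{\neq}G_{\neq})=F\nabla G+G\nabla F$. For the third, bound $\|F_{\neq}\nabla G_{\neq}\|_{L^2}\le\|F_{\neq}\|_{L^\infty}\|G_{\neq}\|_{H^1}$ with $\|F_{\neq}\|_{L^\infty}\le C\|\partial_XF_{\neq}\|_{H^1}$. The term $G_{\neq}\nabla F_{\neq}$ is placed in $L^\infty_YL^2_X\times L^2_YL^\infty_X$: $\|\nabla F_{\neq}\|_{L^2_YL^\infty_X}\le C\|\partial_X\nabla F_{\neq}\|_{L^2}$ by 1D Sobolev in $X$ together with the mean-zero property, and $\|G_{\neq}\|_{L^\infty_YL^2_X}\le C\|G_{\neq}\|_{H^1}$. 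The $L^2$ part is immediate.

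The second inequality follows by applying $\partial_X$ through the Leibniz rule to get $\partial_XF\,G+F\,\partial_XG$, then applying the third inequality (with roles swapped as needed) to each term, using $\|G_{\neq}\|_{H^1}\le\|\partial_XG_{\neq}\|_{H^1}$ from Lemma~\ref{Poincare}. Care is needed here: $\partial_XF_{\neq}$ still has zero mean, so the third inequality applies to $(\partial_XF_{\neq})G_{\neq}$ with $G$ carrying the $\partial_X$ gain, and symmetrically for $F_{\neq}\partial_XG_{\neq}$.

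For the fourth inequality, $(FG)_0=\frac1{2\pi}\int_{\mathbb T}FG\,dX$. Cauchy--Schwarz in $X$ gives $|(FG)_0|(Y)\le\|F(\cdot,Y)\|_{L^2_X}\|G(\cdot,Y)\|_{L^2_X}$, hence the $L^2_Y$ bound $\|F\|_{L^\infty_YL^2_X}\|G\|_{L^2}$. The derivative $\partial_Y(FG)_0$ is handled the same way, using $L^\infty_YL^2_X$ for the undifferentiated factor. The last step is Lemma~\ref{Poincare}.

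The main obstacle is the $L^\infty$ bound for nonzero modes via $\|\partial_X\cdot\|_{H^1}$; the Fourier-sum argument with $\sum k^{-2}<\infty$ should handle it, and everything else is bookkeeping.
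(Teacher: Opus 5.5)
Your argument is correct and uses the same ingredients as the paper's proof: one-dimensional Agmon/Sobolev bounds in each variable, the Poincar\'e gain for nonzero modes from Lemma~\ref{Poincare}, and mixed-norm H\"older estimates. You deviate in two small ways: you obtain the second inequality from the third via the Leibniz rule, applying it to $G_{\neq}\,\partial_XF_{\neq}$ and $F_{\neq}\,\partial_XG_{\neq}$, which is what the paper's direct computation amounts to, and you prove the fourth by Cauchy--Schwarz in $X$ plus Agmon in $Y$ rather than by the paper's Fourier-mode sum $\sum_{k\neq0}\widehat F_k\widehat G_{-k}$.
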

\begin{proof}	
	For a nonzero mode \(F_{\neq}\), we write its Fourier expansion in \(X\) as
	\begin{align*}
		F_{\neq}(t,X,Y)=\sum_{k\in\mathbb{Z} k\neq 0}\widehat{F_{k}}(t,Y)e^{ikX},
	\end{align*} 
	where $\widehat{F_k}(t,Y)=\frac{1}{2\pi}\int_{\mathbb{T}}F_{\neq}(t,X,Y)e^{-ikX}dX.$\\
	\textbf{Estimate of $\eqref{embedding}_1$.} Thanks to $F_0G_{\neq}=\sum_{k\neq0}F_0\widehat{G}_k(t,Y)e^{ikX}$ and Plancherel theorem, we have
	\begin{align*}
		\|F_0G_{\neq}\|_{H^1}^2=2\pi\sum_{k\neq0}\left((1+k^2)\|F_0\widehat{G}_k\|_{L_Y^2}^2+\|\partial_Y(F_0\widehat{G}_k)\|_{L_Y^2}^2\right). \alabel{f0g}
	\end{align*}
	By the Sobolev embedding and the Holder inequality, we obtain
	\begin{align*}
		\|F_0\widehat{G}_k\|_{L_Y^2}\leq&~\|F_0\|_{L_Y^{\infty}}\|\widehat{G}_k\|_{L_Y^2}
		\leq C\|F_0\|_{H_Y^1}\|\widehat{G}_k\|_{L_Y^2},\\
		\|\partial_Y(F_0\widehat{G}_k)\|_{L_Y^2}\leq&~\|\partial_YF_0\|_{L_Y^2}\|\widehat{G}_k\|_{L_Y^{\infty}}+\|\partial_Y\widehat{G}_k\|_{L_Y^2}\|F_0\|_{L_Y^{\infty}}
		\leq C\|F_0\|_{H_Y^1}\|\widehat{G}_k\|_{H_Y^1}.
	\end{align*}
	Substituting the above estimates into \eqref{f0g}, there holds
	\begin{align*}
		\|F_0G_{\neq}\|_{H^1}^2
		\leq&~C\sum_{k\neq0}\left((1+k^2)\|F_0\|_{H_Y^1}^2\|\widehat{G}_k\|_{L_Y^2}^2+\|F_0\|_{H_Y^1}^2\|\widehat{G}_k\|_{H_Y^1}^2\right)\\
		\leq&~C\|F_0\|_{H_Y^1}^2\sum_{k\neq0}\left((1+k^2)\|\widehat{G}_k\|_{L_Y^2}^2+\|\partial_Y\widehat{G}_k\|_{L_Y^2}^2\right)
		\leq C\|F_0\|_{H_Y^1}^2\|G_{\neq}\|_{H^1}^2.
	\end{align*}
	\textbf{Estimate of $\eqref{embedding}_2$.} 
	By the product rule, we have
	\begin{align*}
		\|\partial_X(F_{\neq}G_{\neq})\|_{H^1}
		\leq\|F_{\neq}\partial_XG_{\neq}\|_{H^1}+\|G_{\neq}\partial_XF_{\neq}\|_{H^1}.
	\end{align*}
	By Hölder's inequality, the Sobolev embedding $H^1(\mathbb{R})\hookrightarrow L^{\infty}(\mathbb{R})$ and Lemma \ref{Poincare}, we obtain
	\begin{align*}
		\|F_{\neq}\partial_XG_{\neq}\|_{L^2}
		\leq&~\|F_{\neq}\|_{L^{\infty}}\|\partial_XG_{\neq}\|_{L^2}
		\leq~ C\|F_{\neq}\|_{L_X^2L_Y^{\infty}}^{\frac12}\|\partial_XF_{\neq}\|_{L_X^2L_Y^{\infty}}^{\frac12}\|\partial_XG_{\neq}\|_{L^2}\\
		\leq&~C\|F_{\neq}\|_{L_X^2L_Y^2}^{\frac14}\|\partial_YF_{\neq}\|_{L_X^2L_Y^2}^{\frac14}\|\partial_XF_{\neq}\|_{L_X^2L_Y^2}^{\frac14}\|\partial_X\partial_YF_{\neq}\|_{L_X^2L_Y^2}^{\frac14}\|\partial_XG_{\neq}\|_{L^2}\\
		\leq&~C\|\partial_XF_{\neq}\|_{L^2}^{\frac12}\|\partial_X\partial_YF_{\neq}\|_{L^2}^{\frac12}\|\partial_XG_{\neq}\|_{L^2}
		\leq C\|\partial_XF_{\neq}\|_{H^1}\|\partial_XG_{\neq}\|_{L^2},
	\end{align*}
	and
	\begin{align*}
		\|\partial_X(F_{\neq}\partial_XG_{\neq})\|_{L^2}
		\leq&~\|\partial_XF_{\neq}\|_{L_X^2L_Y^{\infty}}\|\partial_XG_{\neq}\|_{L_X^{\infty}L_Y^2}+\|F_{\neq}\|_{L^{\infty}}\|\partial_X^2G_{\neq}\|_{L^2}\\
		\leq&~C\big(\|\partial_XF_{\neq}\|_{L_X^2L_Y^2}^{\frac12}\|\partial_X\partial_YF_{\neq}\|_{L_X^2L_Y^2}^{\frac12}\|\partial_XG_{\neq}\|_{L_X^2L_Y^2}^{\frac12}\|\partial_X^2G_{\neq}\|_{L_X^2L_Y^2}^{\frac12}\\
		&+\|\partial_XF_{\neq}\|_{H^1}\|\partial_XG_{\neq}\|_{H^1}\big)\\
		\leq&~C\|\partial_XF_{\neq}\|_{H^1}\|\partial_XG_{\neq}\|_{H^1}.
	\end{align*}
	Similarly, we have
	\begin{align*}
		\|\partial_Y(F_{\neq}\partial_XG_{\neq})\|_{L^2}
		\leq&~\|\partial_YF_{\neq}\|_{L_X^{\infty}L_Y^2}\|\partial_XG_{\neq}\|_{L_X^2L_Y^{\infty}}+\|F_{\neq}\|_{L^{\infty}}\|\partial_X\partial_YG_{\neq}\|_{L^2}\\
		\leq&~C\big(\|\partial_YF_{\neq}\|_{L_X^2L_Y^2}^{\frac12}\|\partial_X\partial_YF_{\neq}\|_{L_X^2L_Y^2}^{\frac12}\|\partial_XG_{\neq}\|_{L_X^2L_Y^2}^{\frac12}\|\partial_Y\partial_XG_{\neq}\|_{L_X^2L_Y^2}^{\frac12}\\
		&+\|\partial_XF_{\neq}\|_{H^1}\|\partial_XG_{\neq}\|_{H^1}\big)\\
		\leq&~C\|\partial_XF_{\neq}\|_{H^1}\|\partial_XG_{\neq}\|_{H^1}.
	\end{align*}
	Combining the above estimates yields
	\begin{align*}
		\|F_{\neq}\partial_XG_{\neq}\|_{H^1}
		\leq\|F_{\neq}\partial_XG_{\neq}\|_{L^2}+\|\partial_X(F_{\neq}\partial_XG_{\neq})\|_{L^2}+\|\partial_Y(F_{\neq}\partial_XG_{\neq})\|_{L^2}
		\leq C\|\partial_XF_{\neq}\|_{H^1}\|\partial_XG_{\neq}\|_{H^1}.
	\end{align*}
	By symmetry, we also have
	\begin{align*}
		\|G_{\neq}\partial_XF_{\neq}\|_{H^1}
		\leq C\|\partial_XF_{\neq}\|_{H^1}\|\partial_XG_{\neq}\|_{H^1},
	\end{align*}
	which completes the proof of $\eqref{embedding}_2$.\\
	\textbf{Estimate of $\eqref{embedding}_3$.}
	By the Sobolev embedding and Lemma \ref{Poincare}, we have
	\begin{align*}
		\|F_{\neq}\|_{L^\infty}
		+\|\partial_XF_{\neq}\|_{L_X^\infty L_Y^2}
		+\|\partial_YF_{\neq}\|_{L_X^\infty L_Y^2}
		\leq C\|\partial_XF_{\neq}\|_{H^1},\quad 
		\|G_{\neq}\|_{L_X^2L_Y^\infty}
		\leq C\|G_{\neq}\|_{H^1}.
	\end{align*}
	Hence,
	$\|F_{\neq}G_{\neq}\|_{L^2}\leq
	\|F_{\neq}\|_{L^\infty}\|G_{\neq}\|_{L^2}
	\leq
	C\|\partial_XF_{\neq}\|_{H^1}\|G_{\neq}\|_{H^1}.$
	Moreover, we have
	\begin{align*}
		\|\partial_Y(F_{\neq}G_{\neq})\|_{L^2}
		\leq&~
		\|\partial_YF_{\neq}\|_{L_X^\infty L_Y^2}
		\|G_{\neq}\|_{L_X^2L_Y^\infty}
		+\|F_{\neq}\|_{L^\infty}\|\partial_YG_{\neq}\|_{L^2}
		\leq
		C\|\partial_XF_{\neq}\|_{H^1}\|G_{\neq}\|_{H^1},\\
		\|\partial_X(F_{\neq}G_{\neq})\|_{L^2}
		\leq&~
		\|\partial_XF_{\neq}\|_{L_X^\infty L_Y^2}
		\|G_{\neq}\|_{L_X^2L_Y^\infty}
		+\|F_{\neq}\|_{L^\infty}\|\partial_XG_{\neq}\|_{L^2}
		\leq
		C\|\partial_XF_{\neq}\|_{H^1}\|G_{\neq}\|_{H^1}.
	\end{align*}
	Combining the above estimates gives
	\begin{align*}
		\|F_{\neq}G_{\neq}\|_{H^1}
		\leq
		C\|\partial_XF_{\neq}\|_{H^1}\|G_{\neq}\|_{H^1},
	\end{align*}
	which proves $\eqref{embedding}_3$.\\
	\textbf{Estimate of $\eqref{embedding}_4$.}
	Since $(F_{\neq}G_{\neq})_0=\sum_{k\neq0}\widehat{F}_k(t,Y)\widehat{G}_{-k}(t,Y)$, the Sobolev embedding $H^1(\mathbb{R})\hookrightarrow L^\infty(\mathbb{R})$, the Plancherel's theorem and the Cauchy--Schwarz inequality yield
	\begin{align*}
		\|(F_{\neq}\textsc{G}_{\neq})_0\|_{H_Y^1}
		&\leq
		C\sum_{k\neq0}
		\|\widehat{F}_k\|_{H_Y^1}
		\|\widehat{G}_{-k}\|_{H_Y^1}\\
		&\leq
		C\Big(\sum_{k\neq0}
		\|\widehat{F}_k\|_{H_Y^1}^2\Big)^{\frac12}
		\Big(\sum_{k\neq0}
		\|\widehat{G}_{-k}\|_{H_Y^1}^2\Big)^{\frac12}
		\leq
		C\|F_{\neq}\|_{H^1}\|G_{\neq}\|_{H^1}.
	\end{align*}
	By Lemma \ref{Poincare}, we further obtain
	\begin{align*}
		\|(F_{\neq}G_{\neq})_0\|_{H_Y^1}
		\leq C\|F_{\neq}\|_{H^1}\|G_{\neq}\|_{H^1}
		\leq C\|\partial_XF_{\neq}\|_{H^1}\|\partial_XG_{\neq}\|_{H^1}.
	\end{align*}
\end{proof}	

\subsection{Estimates in physical coordinates}
\label{subsec:physical-preliminaries}

We collect the corresponding estimates in physical coordinates.

\medskip
\noindent$\bullet$\quad
\textbf{Space-time estimates.}


The following consequence of Lemma~\ref{prop:kelvin-timespace}
provides the corresponding space-time estimate in physical
coordinates.

\begin{corollary}\label{cor:physical-timespace}
	Let $A\geq1$ and $T>T^*$. Assume that $f=f(t,x,y)$ solves
	\begin{equation}\label{eq:physical-linear}
		\partial_t f+y\partial_x f-\frac{1}{A}\Delta f
		=\partial_x f^{(1)}+f^{(2)}+\nabla\cdot f^{(3)}
	\end{equation}
	on $[T^*,T]\times\mathbb T\times\mathbb R$, and $P_0f=P_0f^{(1)}=P_0f^{(2)}=P_0f^{(3)}=0.$
	Then, for every $a\in[0,a_0]$, where $a_0$ is given by
	Lemma~\ref{prop:kelvin-timespace}, we have
	\begin{equation}\label{eq:physical-timespace-estimate}
		\begin{aligned}
			\|f\|_{X_a}^2
			\leq&~C\Big(\|f(T^*)\|_{L^2}^2
			+\|e^{aA^{-\frac13}(t-T^*)}\nabla f^{(1)}\|_{L^2L^2}^2\\
			&\qquad+A^{\frac13}\|e^{aA^{-\frac13}(t-T^*)}f^{(2)}\|_{L^2L^2}^2
			+A\|e^{aA^{-\frac13}(t-T^*)}f^{(3)}\|_{L^2L^2}^2
			\Big),
		\end{aligned}
	\end{equation}
	where all time norms are taken over $[T^*,T]$, and
	$C>0$ is independent of $A$, $T^*$ and $T$.
	The same estimate holds if an additional damping term
	$A^{-1}f$ is added to the left-hand side of
	\eqref{eq:physical-linear}.
\end{corollary}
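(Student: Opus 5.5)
The plan is to reduce Corollary~\ref{cor:physical-timespace} to Lemma~\ref{prop:kelvin-timespace} by a time translation followed by a change to Kelvin coordinates adapted to the new origin.

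\textbf{Step 1 (time shift and Kelvin change).} Set $s=t-T^*\in[0,T-T^*]$ and define
\[
F(s,X,Y)=f(s+T^*,\,X+(s+T^*)Y,\,Y).
\]
The shift by $T^*$ inside the Kelvin map matters. The Couette operator $\partial_t+y\partial_x$ commutes with $x\mapsto x+T^*y$, so one may either use the Kelvin map with $t$ itself or first straighten at time $T^*$. I will use the map $X=x-(s+T^*)y$. By \eqref{relation},
\[
\partial_sF=\big((\partial_t+y\partial_x)f\big)\circ\Phi,\qquad \partial_xf\mapsto\partial_XF,\qquad \partial_yf\mapsto(\partial_Y-(s+T^*)\partial_X)F.
\]
Here $\nabla_L$ carries the shifted time $s+T^*$. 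This is harmless, because the multiplier proof of Lemma~\ref{prop:kelvin-timespace} never uses that the shear time starts at $0$. Writing $\xi=\eta-k(s+T^*)$, the identities $\partial_sM_k^{(j)}$ and the lower bound \eqref{eq:qk-estimate} are unchanged, and the bounds \eqref{eq:weighted-M-bounded} hold with weight $e^{aA^{-1/3}s}$.

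\textbf{Step 2 (transported equation).} Under this change, \eqref{eq:physical-linear} becomes
\[
\partial_sF-\tfrac1A\Delta_LF=\partial_XF^{(1)}+F^{(2)}+\nabla_L\cdot F^{(3)},
\]
with the zero-mode conditions preserved, since the map is a shear in $X$ at fixed $Y$. I then rerun the proof of Lemma~\ref{prop:kelvin-timespace} with time variable $s$ and the shifted $\xi$, obtaining \eqref{mui2} and integrating over $[0,T-T^*]$.

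\textbf{Step 3 (back to physical variables).} The Kelvin map is measure preserving, so Plancherel gives exact identities:
\[
\|F(s)\|_{L^2}=\|f(t)\|_{L^2},\qquad \|\nabla_LF\|_{L^2}=\|\nabla f\|_{L^2},\qquad \|\partial_X\nabla_L\Delta_L^{-1}F\|_{L^2}=\|\partial_x\nabla\Delta^{-1}f\|_{L^2}.
\]
The same holds for $\nabla^\perp$ in place of $\nabla$, with equal modulus. The weight $e^{aA^{-1/3}s}$ equals $e^{aA^{-1/3}(t-T^*)}$, so the left side is exactly $\|f\|_{X_a}^2$ and the right side is \eqref{eq:physical-timespace-estimate}. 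The constant $C$ is the universal one from the lemma, so it is independent of $A$, $T^*$ and $T$.

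\textbf{Damping and the main check.} The added damping $A^{-1}f$ carries over verbatim, as in the remark after Lemma~\ref{prop:kelvin-timespace}. The main point to verify is the shifted-time validity of the multiplier argument. In particular, $M_k$ must stay in $[1,2\pi+1]$ uniformly in $T^*$, which holds because arctan is bounded. The derivative computation must also remain pointwise in $\xi$, so no dependence on the starting time enters.
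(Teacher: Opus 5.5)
Your proposal is correct and is essentially the paper's argument: the paper also uses the Kelvin map $X=x-ty$ with the unshifted time, integrates the pointwise-in-time inequality \eqref{mui2} over $[T^*,t]$, and then uses the same norm identities between $F$ and $f$. The paper multiplies by the constant $e^{-2aA^{-\frac13}T^*}$, which is exactly your re-basing of the multiplier weight to $e^{aA^{-\frac13}(t-T^*)}$, and the damping term is handled the same way.
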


\begin{proof}
	By the Kelvin transformation and \eqref{relation}, we have
	\begin{align*}
		&\|F(t)\|_{L^2}=\|f(t)\|_{L^2},\quad
		\|\nabla_LF(t)\|_{L^2}=\|\nabla f(t)\|_{L^2},\\
		&\|\partial_X\nabla_L\Delta_L^{-1}F(t)\|_{L^2}=\|\partial_x\nabla^\perp\Delta^{-1}f(t)\|_{L^2}.
	\end{align*}
	Integrating \eqref{mui2} over $[T^*,t]$ for $T^*\leq t\leq T$,
	multiplying by $e^{-2aA^{-\frac13}T^*}$, and using
	\eqref{mul1} together with the same norm identities
	for the forcing terms, we obtain
	\eqref{eq:physical-timespace-estimate}.
	The additional damping term contributes a nonnegative
	term to the energy estimate.
\end{proof}

\begin{lemma}\label{Poincare2}
	Let $f$ be a function such that $f_{\neq}\in H^1(\mathbb{T}\times\mathbb{R})$, we have
	\begin{align*}
		\|f_{\neq}\|_{L^2(\mathbb{T}\times\mathbb{R})}
		\leq C\|\partial_xf_{\neq}\|_{L^2(\mathbb{T}\times\mathbb{R})}
		\leq C\|\nabla f_{\neq}\|_{L^2(\mathbb{T}\times\mathbb{R})}.
	\end{align*}
\end{lemma}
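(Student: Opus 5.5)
We argue exactly as in the proof of Lemma~\ref{Poincare}, now in physical coordinates. The second inequality is pointwise in $(x,y)$, since $|\nabla f_{\neq}|^2=|\partial_xf_{\neq}|^2+|\partial_yf_{\neq}|^2\geq|\partial_xf_{\neq}|^2$. Integrating over $\mathbb T\times\mathbb R$ gives $\|\partial_xf_{\neq}\|_{L^2}\leq\|\nabla f_{\neq}\|_{L^2}$.

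For the first inequality we fix $y\in\mathbb R$. By definition of $P_{\neq}$,
\[
\frac{1}{2\pi}\int_{\mathbb T}f_{\neq}(t,x,y)\,dx=\frac{1}{2\pi}\int_{\mathbb T}\big(f-f_0\big)(t,x,y)\,dx=0 .
\]
Hence $f_{\neq}(\cdot,y)$ has zero mean on $\mathbb T$. We expand it as $f_{\neq}(x,y)=\sum_{k\neq0}\widehat f_k(y)e^{ikx}$ and apply Parseval in $x$:
\[
\|f_{\neq}(\cdot,y)\|_{L^2_x}^2=2\pi\sum_{k\neq0}|\widehat f_k(y)|^2\leq2\pi\sum_{k\neq0}k^2|\widehat f_k(y)|^2=\|\partial_xf_{\neq}(\cdot,y)\|_{L^2_x}^2,
\]
using $k^2\geq1$ for $k\neq0$. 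This is the one-dimensional Poincaré inequality on $\mathbb T$, and it holds with constant $1$.

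Integrating in $y$ over $\mathbb R$ (Fubini, justified since $f_{\neq}\in H^1$) yields $\|f_{\neq}\|_{L^2}\leq\|\partial_xf_{\neq}\|_{L^2}$. Both inequalities therefore hold with $C=1$.

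Alternatively, one can transfer Lemma~\ref{Poincare} directly. By the norm identities in the proof of Corollary~\ref{cor:physical-timespace}, $\|F\|_{L^2}=\|f\|_{L^2}$ and $\|\partial_XF\|_{L^2}=\|\partial_xf\|_{L^2}$. The Kelvin map preserves the zero $x$-average. There is no real obstacle here; the only point to check is the justification of Fubini and Parseval, which is routine for $H^1$ functions.
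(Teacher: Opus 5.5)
Your proof is correct and is essentially the argument the paper intends: it omits the proof with a reference to Lemma~\ref{Poincare}, whose proof combines the zero-mean Poincar\'e inequality on $\mathbb T$ for each fixed $Y$, integration in $Y$, and the pointwise bound $|\nabla_L F|\geq|\partial_X F|$. Your Parseval computation makes the constant $C=1$ explicit, and your alternative transfer through the Kelvin map is also valid.
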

\begin{proof}
	Similar to the proof of Lemma \ref{Poincare} and we omit it here.
\end{proof}
\noindent$\bullet$\quad
\textbf{Anisotropic Sobolev  estimates.}
\begin{lemma}
	Let $f_0\in H^1(\mathbb{R})$, $f_{\neq},g_{\neq}\in H^1(\mathbb{T}\times\mathbb{R})$, and $\partial_xf_{\neq}\in H^1(\mathbb{T}\times\mathbb{R})$. Then
	\begin{align*}
		\|f_0g_{\neq}\|_{L^2}
		\leq&~C\|f_0\|_{L^2}\|g_{\neq}\|_{L^2}^{\frac12}\|\nabla g_{\neq}\|_{L^2}^{\frac12},\\
		\|f_0g_{\neq}\|_{L^2}
		\leq&~C\|f_0\|_{H^1}\|g_{\neq}\|_{L^2},\\
		\|f_{\neq}g_{\neq}\|_{L^2}
		\leq&~C\|\partial_xf_{\neq}\|_{L^2}^{\frac12}\|\partial_x\nabla f_{\neq}\|_{L^2}^{\frac12}\|g_{\neq}\|_{L^2}.\alabel{fg-L2}
	\end{align*}
\end{lemma}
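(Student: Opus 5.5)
We argue fiberwise in the Fourier variable $x$, in parallel with the proof of Lemma~\ref{lem:embedding}, using only the one-dimensional embedding $\|h\|_{L^\infty(\mathbb R)}\leq C\|h\|_{L^2(\mathbb R)}^{\frac12}\|h'\|_{L^2(\mathbb R)}^{\frac12}$, the embedding $H^1(\mathbb R)\hookrightarrow L^\infty(\mathbb R)$, and Lemma~\ref{Poincare2}. The common mechanism is that each factor carries an $L^\infty$ bound in one variable and an $L^2$ bound in the other, so that the product can be taken in $L^2$ without assuming that $H^1(\mathbb T\times\mathbb R)$ is an algebra.

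\textbf{First two inequalities.} Since $f_0$ depends only on $y$, we estimate the other factor in $L^\infty_y$. We write
\[
\|f_0g_{\neq}\|_{L^2}\leq \|f_0\|_{L^2_y}\,\|g_{\neq}\|_{L^2_xL^\infty_y}.
\]
For each fixed $x$, the one-dimensional Gagliardo--Nirenberg inequality gives $\|g_{\neq}(x,\cdot)\|_{L^\infty_y}^2\leq C\|g_{\neq}(x,\cdot)\|_{L^2_y}\|\partial_yg_{\neq}(x,\cdot)\|_{L^2_y}$. Integrating in $x$ and applying Cauchy--Schwarz yields $\|g_{\neq}\|_{L^2_xL^\infty_y}^2\leq C\|g_{\neq}\|_{L^2}\|\nabla g_{\neq}\|_{L^2}$, which is the first bound. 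For the second bound we reverse the roles of the factors:
\[
\|f_0g_{\neq}\|_{L^2}\leq\|f_0\|_{L^\infty_y}\|g_{\neq}\|_{L^2}\leq C\|f_0\|_{H^1}\|g_{\neq}\|_{L^2},
\]
using $H^1(\mathbb R)\hookrightarrow L^\infty(\mathbb R)$.

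\textbf{Third inequality.} We put $f_{\neq}$ in $L^\infty$, so that $\|f_{\neq}g_{\neq}\|_{L^2}\leq\|f_{\neq}\|_{L^\infty}\|g_{\neq}\|_{L^2}$, and bound $\|f_{\neq}\|_{L^\infty}$ in two steps.

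In $x$, for mean-zero functions on $\mathbb T$, the one-dimensional Agmon/Gagliardo--Nirenberg inequality together with Poincaré's inequality gives
\[
\|f_{\neq}(\cdot,y)\|_{L^\infty_x}\leq C\|f_{\neq}(\cdot,y)\|_{L^2_x}^{\frac12}\|\partial_xf_{\neq}(\cdot,y)\|_{L^2_x}^{\frac12}.
\]
Hence $\|f_{\neq}\|_{L^\infty}\leq C\|f_{\neq}\|_{L^\infty_yL^2_x}^{\frac12}\|\partial_xf_{\neq}\|_{L^\infty_yL^2_x}^{\frac12}$.

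In $y$, we apply the one-dimensional Gagliardo--Nirenberg inequality to the $L^2_x$-valued functions of $y$ (equivalently, mode by mode with Plancherel in $x$):
\[
\|h\|_{L^\infty_yL^2_x}^2\leq C\|h\|_{L^2}\|\partial_yh\|_{L^2}.
\]
We use this with $h=f_{\neq}$ and with $h=\partial_xf_{\neq}$. Then Lemma~\ref{Poincare2} gives $\|f_{\neq}\|_{L^2}\leq C\|\partial_xf_{\neq}\|_{L^2}$ and $\|\partial_yf_{\neq}\|_{L^2}\leq C\|\partial_x\partial_yf_{\neq}\|_{L^2}$, since $\partial_y f_{\neq}$ also has zero $x$-average. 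Combining the pieces,
\[
\|f_{\neq}\|_{L^\infty}\leq C\|\partial_xf_{\neq}\|_{L^2}^{\frac14}\|\partial_x\partial_yf_{\neq}\|_{L^2}^{\frac14}\|\partial_xf_{\neq}\|_{L^2}^{\frac14}\|\partial_x\partial_yf_{\neq}\|_{L^2}^{\frac14}\leq C\|\partial_xf_{\neq}\|_{L^2}^{\frac12}\|\partial_x\nabla f_{\neq}\|_{L^2}^{\frac12}.
\]

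\textbf{Main obstacle.} The only delicate point is the third inequality: we must extract exactly the powers $\tfrac12,\tfrac12$ of $\|\partial_xf_{\neq}\|_{L^2}$ and $\|\partial_x\nabla f_{\neq}\|_{L^2}$, rather than a full $H^1$ norm. This requires applying Poincaré in $x$ to both $f_{\neq}$ and $\partial_yf_{\neq}$, and using the anisotropic interpolation above in place of a two-dimensional Sobolev embedding.
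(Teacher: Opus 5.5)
Your proposal is correct and follows the paper's proof essentially step for step: the same H\"older splittings combined with the one-dimensional Gagliardo--Nirenberg inequality and $H^1(\mathbb R)\hookrightarrow L^\infty(\mathbb R)$ give the first two bounds. For the third, you bound $\|f_{\neq}\|_{L^\infty}$ by interpolating first in $x$, then in $y$, and then apply Poincar\'e to both $f_{\neq}$ and $\partial_yf_{\neq}$, as the paper does. The only cosmetic difference is that you pass through $L^\infty_yL^2_x$ using the vector-valued one-dimensional inequality, whereas the paper passes through the larger norm $L^2_xL^\infty_y$, implicitly via Minkowski; both routes give the same exponents.
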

\begin{proof}
	Since $f_0$ is independent of $x$, we have
	\begin{align*}
		\|f_0g_{\neq}\|_{L^2}^2
		=\int_{\mathbb{T}}\int_{\mathbb{R}}|f_0(t,y)|^2|g_{\neq}(t,x,y)|^2dydx
		\leq\|f_0\|_{L_y^2}^2\|g_{\neq}\|_{L_x^2L_y^{\infty}}^2,
	\end{align*}
	By the one-dimensional Gagliardo--Nirenberg inequality and the Cauchy--Schwarz inequality, we obtain
	\begin{align*}
		\|f_0g_{\neq}\|_{L^2}^2
		\leq C\|f_0\|_{L^2}^2\|g_{\neq}\|_{L^2}\|\partial_yg_{\neq}\|_{L^2}
		\leq C\|f_0\|_{L^2}^2\|g_{\neq}\|_{L^2}\|\nabla g_{\neq}\|_{L^2}.
	\end{align*}
	According to the Sobolev embedding $H^1(\mathbb{R})\hookrightarrow L^\infty(\mathbb{R})$, there holds
	\begin{align*}
		\|f_0g_{\neq}\|_{L^2}^2
		\leq C\|f_0\|_{L_y^{\infty}}^2\|g_{\neq}\|_{L_x^2L_y^2}^2
		\leq C\|f_0\|_{H^1}^2\|g_{\neq}\|_{L^2}^2.   
	\end{align*}
	For the product of two nonzero modes, we use one-dimensional interpolation in each variable. The Cauchy--Schwarz inequality and Lemma~\ref{Poincare2} give
	\begin{align*}
		\|f_{\neq}g_{\neq}\|_{L^2}^2
		\leq&~C\|f_{\neq}\|_{L^{\infty}}^2\|g_{\neq}\|_{L^2}^2
		\leq C\|f_{\neq}\|_{L_x^2L_y^{\infty}}\|\partial_xf_{\neq}\|_{L_x^2L_y^{\infty}}\|g_{\neq}\|_{L^2}^2\\
		\leq&~C\|f_{\neq}\|_{L_x^2L_y^2}^{\frac12}\|\partial_yf_{\neq}\|_{L_x^2L_y^2}^{\frac12}\|\partial_xf_{\neq}\|_{L_x^2L_y^2}^{\frac12}\|\partial_x\partial_yf_{\neq}\|_{L_x^2L_y^2}^{\frac12}\|g_{\neq}\|_{L^2}^2\\
		\leq&~C\|\partial_xf_{\neq}\|_{L^2}\|\partial_x\nabla f_{\neq}\|_{L^2}\|g_{\neq}\|_{L^2}^2.
	\end{align*}
\end{proof}

\section{Proof of Proposition \ref{prop:short-time}}\label{sec:short-time}
\begin{revision}
	
	Throughout this section, let $[0,T]$ be an interval of existence
	with $0<T\leq T^*$.
	We first estimate the zero and nonzero modes and then use the
	modified weighted energy functional to close the short-time bootstrap.
	
\end{revision}


\subsection{Zero-mode estimates}
\label{subsec:short-zero-modes}
To close the coupled energy estimate, we need an $H^1$ estimate for the density zero mode.

\begin{lemma}\label{lem:zero mode-kelxin}
	\rev{Under the assumption \eqref{assume}, there exists a positive constant $A_0$ depending on $E_1$ such that if $A\geq A_0$, the following estimates hold on $[0,T]$:
	}
	\begin{equation}\label{zero-mode eatimate-small}
		\begin{gathered}
			\|U_{1,0}\|_{L^{\infty}H^1}
			+\frac{\|\partial_Y U_{1,0}\|_{L^2H^1}}{A^{\frac12}} 
			\leq C\Big(\|(U_{1,\rm in})_0\|_{H^1}+1\Big)=:J_1, \\
			\|\Omega_0\|_{L^{\infty}H^1}
			+\frac{\|\partial_Y \Omega_0\|_{L^2H^1}}{A^{\frac12}} 
			\leq C\Big(\|(\Omega_{\rm in})_0\|_{H^1}+1\Big)=:J_2, \\
			\|C_{1,0}\|_{L^{\infty}H^1}
			\leq CM=:J_3,\\
			\|C_{1,0}\|_{L^{\infty}H^2}
			\leq C\big(M+\|N_0\|_{L^{\infty}H^1}\big)=:J_4,\\
			\|C_{1,0}\|_{L^2H^2}
			\leq CA^{\frac16+\frac{\delta}{2}}E_1^{\frac12}=:J_5,\\
			\|C_{2,0}\|_{L^{\infty}H^1}+\frac{\| C_{2,0}\|_{L^{2}H^2}}{A^{\frac12}} 
			\leq C\Big(\|(C_{\rm in})_0\|_{H^1}+1\Big)=:J_6.
		\end{gathered}
	\end{equation}
	
\end{lemma}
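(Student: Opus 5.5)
The plan is to treat each zero-mode quantity through its one-dimensional parabolic equation \eqref{eq:zero-mode-system-kelvin}, posed on $\mathbb R$ with diffusion $\frac1A\partial_Y^2$. The quadratic forcings coming from nonzero modes are made small using the bootstrap \eqref{assume} and the space-time dissipation contained in $\mathcal E_2$. The key source of smallness is the following observation. Since $U_{2,\neq}=-\partial_X\Delta_L^{-1}\Omega_{\neq}$, we have
\[
\|U_{2,\neq}\|_{H^1}\le\|\partial_XU_{2,\neq}\|_{H^1}\le \|\partial_X\nabla_L\Delta_L^{-1}\Omega_{\neq}\|_{H^1},
\]
using Lemma~\ref{Poincare} for the first inequality. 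Integrating the energy inequality \eqref{mui2} for $\Omega_{\neq}$, as encoded in the bootstrap, I then expect
\[
\int_0^T\|U_{2,\neq}\|_{H^1}^2\,dt\le CE_1 .
\]
By contrast, $\|\Omega_{\neq}\|_{H^1}$, $\|C_{\neq}\|_{H^1}\le \|\partial_XC_{\neq}\|_{H^1}$ and $\|\partial_XN_{\neq}\|_{H^1}$ are bounded pointwise in time by $CE_1^{1/2}$. I use the decomposition $C_{\neq}=C_{1,\neq}+C_{2,\neq}$ together with the weight bound $\mathcal M\ge1$ for the last two.

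\textbf{Estimates for $\Omega_0$ and $U_{1,0}$.} I test the $\Omega_0$ equation with $\Omega_0$ and with $-\partial_Y^2\Omega_0$. After integrating by parts once in the forcing term, this gives
\[
\frac{d}{dt}\|\Omega_0\|_{H^1}^2+\frac1A\|\partial_Y\Omega_0\|_{H^1}^2\le \frac CA\|(U_{2,\neq}\Omega_{\neq})_0\|_{H^1}^2 .
\]
By \eqref{embedding}$_4$ the right side is at most $\frac CA\|U_{2,\neq}\|_{H^1}^2\|\Omega_{\neq}\|_{H^1}^2$. Integrating in time then gives at most $CE_1^2/A\le1$ once $A\ge A_0(E_1)$, which yields $J_2$. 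For $U_{1,0}$ I note that $\partial_YU_{1,0}=\Omega_0$, so $U_{1,0}$ solves the same equation with forcing $-\frac1A(U_{2,\neq}\Omega_{\neq})_0$, without the derivative. The identical argument gives $J_1$.

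\textbf{Estimates for $C_{1,0}$.} I expect $C_{1,0}$ to be the zero-data solution of $\partial_tC_{1,0}-\frac1A\partial_Y^2C_{1,0}+\frac1AC_{1,0}=\frac1AN_0$; this is what the $M$-dependence of $J_3$ suggests. I would use Duhamel's formula with the one-dimensional heat kernel and the substitution $\tau=(t-s)/A$. The $L^1\to L^2$ smoothing estimate $\|\partial_Y e^{\tau\partial_Y^2}f\|_{L^2}\le C\tau^{-3/4}\|f\|_{L^1}$, combined with $\|N_0\|_{L^1}\le M$ (from $n\ge0$ and mass conservation), gives
\[
\|C_{1,0}(t)\|_{H^1}\le CM\int_0^\infty e^{-\tau}(1+\tau^{-3/4})\,d\tau\le CM,
\]
which is $J_3$. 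For the $H^2$ bound I place one derivative on the kernel and one on $N_0$. Since $\int_0^\infty e^{-\tau}\tau^{-1/2}\,d\tau<\infty$, this gives $J_4$. Finally $J_5$ follows from $\|C_{1,0}\|_{L^2H^2}^2\le T^*J_4^2$ together with $T^*=A^{1/3+\delta}$ and the bootstrap bound $\|N_0\|_{H^1}^2\le 2E_1$, absorbing $M^2$ into $E_1$.

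\textbf{Estimates for $C_{2,0}$.} I expect $C_{2,0}$ to carry the initial datum $(C_{\rm in})_0$ and the forcing $-\frac1A\partial_Y(U_{2,\neq}C_{\neq})_0$, with the damping term $\frac1AC_{2,0}$. An $H^1$ energy estimate as in the $\Omega_0$ case gives the dissipation $\frac1A\|\partial_YC_{2,0}\|_{H^1}^2$, hence the stated $L^2H^2$ control, while the damping term supplies the $L^2$ part. The forcing is bounded by
\[
\frac CA\int_0^T\|U_{2,\neq}\|_{H^1}^2\|C_{\neq}\|_{H^1}^2\,dt\le \frac{CE_1^2}{A},
\]
and this yields $J_6$. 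The main obstacle is confirming that the time-integrated bound on $\|U_{2,\neq}\|_{H^1}^2$ really follows from \eqref{assume}. The bootstrap controls only $\mathcal E_1$, not $\int\mathcal E_2$. So I would first run the Lemma~\ref{prop:kelvin-timespace} energy argument for $\Omega_{\neq}$ at the $H^1$ level under \eqref{assume}. Its forcing $\partial_XN_{\neq}$ is controlled by $\frac1A$ times quantities bounded by $E_1$, and the transport terms are quadratic with an $A^{-1}$ prefactor. This gives $\int_0^T\mathcal D_\Omega\,dt\le C E_1$ for $A$ large, and that bound must be established before the zero-mode estimates above.
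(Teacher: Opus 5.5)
Your overall structure matches the paper's. You use one-dimensional $H^1$ energy estimates for $\Omega_0$, $U_{1,0}$ and $C_{2,0}$, with the forcing handled by \eqref{embedding}$_4$, and Duhamel heat-kernel bounds for $C_{1,0}$ using $\|N_0\|_{L^1}\le CM$. The problem is the step you call the key source of smallness.

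The bound $\int_0^T\|U_{2,\neq}\|_{H^1}^2\,dt\le CE_1$ needs $\int_0^T\mathcal D_\Omega\,dt$, which \eqref{assume} does not give. Your plan to obtain it first, by applying Lemma~\ref{prop:kelvin-timespace} to $\Omega_{\neq}$, is circular. The $\Omega_{\neq}$ equation in \eqref{eq:nonzero-mode-system-kelvin} contains $\nabla_L\cdot(U_0\Omega_{\neq})=U_{1,0}\partial_X\Omega_{\neq}$ and $\nabla_L\cdot(U_{\neq}\Omega_0)$. Bounding these requires $\|U_{1,0}\|_{L^\infty H^1}$ and $\|\Omega_0\|_{L^\infty H^1}$. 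Those quantities are not part of \eqref{assume}; they are exactly the outputs $J_1,J_2$ of this lemma, which is why \eqref{result:U} can only be proved afterwards. So the ordering you describe cannot be carried out without an extra joint continuity argument.

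None of this is needed. By \eqref{mul1}, $\mathcal M\ge e^{aA^{-\frac13}t}$, so \eqref{assume} already gives the pointwise decay
$\|\Omega_{\neq}(t)\|_{H^1}^2+\|\partial_XC_{\neq}(t)\|_{H^1}^2\le Ce^{-2aA^{-\frac13}t}E_1$.
You also have $\|U_{2,\neq}\|_{H^1}\le\|\nabla_LU_{\neq}\|_{H^1}=\|\Omega_{\neq}\|_{H^1}$. Hence the time-integrated forcing in each of your energy inequalities is at most
$\frac CA\int_0^T\|\Omega_{\neq}\|_{H^1}^2\bigl(\|\Omega_{\neq}\|_{H^1}^2+\|\partial_XC_{\neq}\|_{H^1}^2\bigr)\,dt\le \frac CA E_1^2\int_0^Te^{-4aA^{-\frac13}t}\,dt\le CE_1^2A^{-\frac23}$,
which is bounded once $A\ge E_1^3$. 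Even your cruder bound, using only $\mathcal M\ge1$ and $T\le T^*$, gives $CE_1^2A^{-\frac23+\delta}$, which suffices because $\delta<\frac23$. This is how the paper argues, and it involves no circularity.

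Second, your derivation of $J_5$ does not give the stated bound. You cannot absorb $M^2$ into $E_1$: on $\mathbb T\times\mathbb R$ the mass $M=\|n_{\rm in}\|_{L^1}$ is not controlled by $\|n_{\rm in}\|_{H^2}$ (consider spread-out, small-amplitude data). So $T^*J_4^2$ only yields $CA^{\frac13+\delta}(M^2+E_1)$.

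To remove $M$, bound all of $\|C_{1,0}\|_{H^2}$ by $\|N_0\|_{L^\infty H^1}$ alone. For example, test \eqref{equ:c10} against $\Lambda_Y^4C_{1,0}$ with $\Lambda_Y=(1-\partial_Y^2)^{\frac12}$. The damping term then gives
$\|C_{1,0}(t)\|_{H^2}^2\le\frac CA\int_0^te^{-\frac{c(t-s)}{A}}\|N_0(s)\|_{H^1}^2\,ds\le CE_1$,
as in the paper. Alternatively, use $L^2$-contractivity of the damped heat semigroup for the low-order parts instead of the $L^1\to L^2$ smoothing. In either case $\|C_{1,0}\|_{L^2H^2}^2\le CT^*E_1$, which is the stated $J_5$.
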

\begin{proof}
	\underline{\textbf{I. Zero mode estimate of $U_{1,0}$.}}
	Since 
	\begin{equation*}
		\partial_t U_{1,0}-\frac{1}{A}\partial_Y^2 U_{1,0}
		=-\frac{\partial_Y(U_{2,\neq}U_{1,\neq} )_0}{A},
	\end{equation*}
	a standard energy estimate yields
	\begin{equation}\label{equ:u1}
		\|U_{1,0}\|_{L^{\infty}H^1}^2
		+\frac{\|\partial_Y U_{1,0}\|^2_{L^2H^1}}{A}
		\leq C\Big(\|(U_{1,\rm in})_0\|_{H^1}^2+\frac{\|(U_{2,\neq}U_{1,\neq} )_0\|_{L^2H^1}^2}{A}\Big).
	\end{equation} 
	By \eqref{F-L2}, \eqref{u=w} and $\eqref{embedding}_4$, we have
	\begin{align*}
		\|(U_{2,\neq}U_{1,\neq} )_0\|_{H^1}\leq C\|\partial_XU_{2,\neq}\|_{H^1}\|\partial_XU_{1,\neq}\|_{H^1}
		\leq C\|\nabla_LU_{\neq}\|_{H^1}^2=C\|\Omega_{\neq}\|_{H^1}^2.
	\end{align*}
	Moreover, \eqref{mul1} and \eqref{assume} imply that
	\begin{align*}
		\|\Omega_{\neq}(t)\|_{H^1}^2
		\leq e^{-2aA^{-\frac13}t}\|\mathcal{M}\Omega_{\neq}(t)\|_{H^1}^2
		\leq Ce^{-2aA^{-\frac13}t}E_1. \alabel{OH1}
	\end{align*}
	Consequently,
	\begin{equation*}
		\begin{aligned}
			\|(U_{2,\neq}U_{1,\neq} )_0\|_{L^2H^1}^2
			&=\int_{0}^T\|(U_{2,\neq}U_{1,\neq} )_0(t)\|_{H^1}^2dt
			\leq 
			C\int_{0}^T\|\Omega_{\neq}(t)\|_{H^1}^4dt \\
			&\leq CE_1^2\int_{0}^T  e^{-4aA^{-\frac13}t} dt \leq CA^{\frac13}E_1^2,
		\end{aligned}
	\end{equation*}
	Substituting this estimate into \eqref{equ:u1}, we obtain
	\begin{equation*}
		\|U_{1,0}\|_{L^{\infty}H^1}^2
		+\frac{\|\partial_Y U_{1,0}\|^2_{L^2H^1}}{A}
		\leq C\Big(\|(U_{1,\rm in})_0\|_{H^1}^2+\frac{E_1^2}{A^{\frac23}}\Big)
		\leq C\Big(\|(U_{1,\rm in})_0\|_{H^1}^2+1\Big),
	\end{equation*}
	provided that
	\begin{align*}
		A\geq \max\{1,E_1^3\}=:A_0. \alabel{A-E1}
	\end{align*}
	\underline{\textbf{II. Zero mode estimate of $\Omega_{0}$.}}
	The equation of $\Omega_0$ can be rewritten as follows
	\begin{equation*}
		\partial_t \Omega_{0}-\frac{1}{A}\partial_Y^2 \Omega_{0}
		=-\frac{\partial_Y(U_{2,\neq}\Omega_{\neq} )_0}{A}.
	\end{equation*}
	The energy estimate shows that 
	\begin{equation}\label{equ:u2}
		\|\Omega_0\|_{L^{\infty}H^1}^2
		+\frac{\|\partial_Y \Omega_0\|^2_{L^2H^1}}{A}
		\leq C\Big(\|(\Omega_{\rm in})_0\|_{H^1}^2+\frac{\|(U_{2,\neq} \Omega_{\neq} )_0\|_{L^2H^1}^2}{A}\Big)
		\leq C\Big(\|(\Omega_{\rm in})_0\|_{H^1}^2+\frac{E_1^2}{A^{\frac23}}\Big).
	\end{equation} 
	Indeed, we have
	\begin{align*}
		\|(U_{2,\neq}\Omega_{\neq})_0\|_{H^1}
		\leq C\|U_{2,\neq}\|_{H^1}\|\Omega_{\neq}\|_{H^1}
		\leq C\|\nabla_LU_{\neq}\|_{H^1}\|\Omega_{\neq}\|_{H^1}
		\leq C\|\Omega_{\neq}\|_{H^1}^2.
	\end{align*}
	Combining this estimate with \eqref{OH1}, we obtain 
	\begin{equation*}
		\begin{aligned}
			\|(U_{2,\neq}\Omega_{\neq})_0\|_{L^2H^1}^2
			\leq 
			C\int_{0}^T\|\Omega_{\neq}(t)\|_{H^1}^4dt 
			\leq CE_1^2\int_{0}^T  e^{-4aA^{-\frac13}t} dt \leq CA^{\frac13}E_1^2,
		\end{aligned}
	\end{equation*}
	which gives the last inequality in \eqref{equ:u2}.
	If \eqref{A-E1} holds, we obtain $\eqref{zero-mode eatimate-small}_2$.\\
	\underline{\textbf{III. Zero mode estimates of $C_{1,0},C_{2,0}$.}} For $C_0$, we introduce the decomposition $C_0=C_{1,0}+C_{2,0}$, where $C_{1,0}$ and $C_{2,0}$ satisfy
	\begin{equation}\label{equ:c10}
		\begin{cases}
			\partial_tC_{1,0}-\frac1A\partial_Y^2C_{1,0}+\frac1A C_{1,0}
			=\frac1A N_0,\\
			C_{1, 0}|_{t=0}=0,
		\end{cases}
	\end{equation}
	and
	\begin{equation}\label{equ:c20}
		\begin{cases}
			\partial_tC_{2,0}-\frac1A\partial_Y^2C_{2,0}+\frac1A C_{2,0}
			=-\frac1A\partial_Y(U_{2,\neq}C_{\neq})_0,\\
			C_{2, 0}|_{t=0}=(C_{\rm in})_0.
		\end{cases}
	\end{equation}
	The heat kernel representation for \eqref{equ:c10} is given by
	\[
	C_{1,0}(t,Y)
	=\frac{1}{A}\int_0^te^{-\frac{t-s}{A}}\int_{\mathbb{R}}\frac{1}{\sqrt{4\pi(t-s)A^{-1}}}e^{-\frac{(Y-Z)^2}{4(t-s)A^{-1}}}N_0(s,Z)dZds.
	\]
	Applying Minkowski's inequality and Young's convolution inequality, we obtain
	\[
	\begin{aligned}
		\|C_{1,0}(t)\|_{L^2}
		&\leq\frac{1}{A}\int_0^t e^{-\frac{t-s}{A}}
		\Big\|\int_{\mathbb{R}}\frac{1}{\sqrt{4\pi(t-s)A^{-1}}}e^{-\frac{(Y-Z)^2}{4(t-s)A^{-1}}}N_0(s,Z)dZ\Big\|_{L^2}ds\\
		&\leq\frac{1}{A}\int_0^t e^{-\frac{t-s}{A}}\Big\|\frac{1}{\sqrt{4\pi(t-s)A^{-1}}}e^{-\frac{Y^2}{4(t-s)A^{-1}}}\Big\|_{L^2}\|N_0(s,\cdot)\|_{L^1}ds\\
		&\leq\frac{CM}{A}\int_0^te^{-\frac{t-s}{A}}\Big(\frac{A}{t-s}\Big)^{\frac14}ds
		=CM\int_0^{t/A}e^{-\tau}\tau^{-\frac14}d\tau
		\leq CM.
	\end{aligned}
	\]
	Similarly, differentiating the heat kernel with respect to $Y$ and applying the same argument, we obtain
	\begin{align*}
		\|\partial_YC_{1,0}(t)\|_{L^2}
		&\leq\frac{1}{A}\int_0^t e^{-\frac{t-s}{A}}\Big\|\int_{\mathbb R}\partial_Y\Big(\frac{1}{\sqrt{4\pi(t-s)A^{-1}}}e^{-\frac{(Y-Z)^2}{4(t-s)A^{-1}}}\Big)N_0(s,Z)dZ\Big\|_{L^2}ds\\
		&\leq \frac{1}{A}\int_0^t e^{-\frac{t-s}{A}}\Big\|\frac{1}{\sqrt{4\pi(t-s)A^{-1}}}\frac{Y}{2(t-s)A^{-1}}e^{-\frac{Y^2}{4(t-s)A^{-1}}}\Big\|_{L^2}\|N_0(s,\cdot)\|_{L^1}ds\\
		&\leq
		\frac{CM}{A}\int_0^t e^{-\frac{t-s}{A}}
		\Big(\frac{A}{t-s}\Big)^{\frac34}ds
		=CM\int_0^{t/A}e^{-\tau}\tau^{-\frac34}d\tau\leq CM. \alabel{C1_YL2} 
	\end{align*}
	Therefore,
	\begin{align*}
		\|C_{1,0}\|_{L^{\infty}H^1}^2\leq CM^2.\alabel{C10-H1}
	\end{align*}
	Taking the derivative of \eqref{equ:c10} with respect to $Y$, we obtain
	\begin{align*}
		\partial_t\partial_YC_{1,0}-\frac{1}{A}\partial_Y^3C_{1,0}+\frac{1}{A}\partial_YC_{1,0}=\frac{1}{A}\partial_YN_0,\quad\quad\partial_YC_{1,0}|_{t=0}=0.
	\end{align*}
	Applying the heat kernel representation and Young's convolution inequality, we obtain
	\begin{align*}
		\|\partial_Y^2C_{1,0}(t)\|_{L^2}&\leq
		\frac{1}{A}\int_0^t e^{-\frac{t-s}{A}}
		\left\|\frac{1}{\sqrt{4\pi(t-s)A^{-1}}}\frac{Y}{2(t-s)A^{-1}}e^{-\frac{Y^2}{4(t-s)A^{-1}}}\right\|_{L^1}\|\partial_YN_0(s)\|_{L^2}ds\\
		&\leq\frac{C}{A}\int_0^te^{-\frac{t-s}{A}}\left(\frac{A}{t-s}\right)^{\frac12}\|\partial_YN_0(s)\|_{L^2}ds\\
		&\leq C\|N_0\|_{L^\infty H^1}\int_0^{t/A}e^{-\tau}\tau^{-\frac12}d\tau
		\leq C\|N_0\|_{L^\infty H^1}.
	\end{align*}
	Combining this estimate with \eqref{C10-H1}, we obtain
	\[
	\|C_{1,0}\|_{L^\infty H^2}^2
	\leq C\left(M^2+\|N_0\|_{L^\infty H^1}^2\right).
	\]

	Let $\Lambda_Y=(1-\partial_Y^2)^{\frac12}$. Taking the $L^2$
	inner product of \eqref{equ:c10} with $\Lambda_Y^4C_{1,0}$, we obtain
	\begin{equation}\label{estimate:C10-H2}
		\frac{d}{dt}\|C_{1,0}\|_{H^2}^2
		+\frac{c}{A}\left(
		\|C_{1,0}\|_{H^3}^2+
		\|C_{1,0}\|_{H^2}^2\right)
		\leq \frac CA\|N_0\|_{H^1}^2,
	\end{equation}
	where we used 
	\[
	|
	\langle N_0,\Lambda_Y^4C_{1,0}\rangle|
	=|
	\langle\Lambda_YN_0,\Lambda_Y^3C_{1,0}\rangle|
	\leq\frac1{2}\|C_{1,0}\|_{H^3}^2
	+ C \|N_0\|_{H^1}^2.
	\]
	Multiplying both sides of \eqref{estimate:C10-H2} by $e^{\frac{ct}{A}}$ gives
	\[
	\frac{d}{dt}\left(e^{\frac{ct}{A}}\|C_{1,0}(t)\|_{H^2}^2\right)
	\leq \frac{C}{A}e^{\frac{ct}{A}}\|N_0(t)\|_{H^1}^2.
	\]
	Integrating over $[0,t]$ and using \eqref{assume}, we obtain
	\begin{align*}
		\|C_{1,0}(t)\|_{H^2}^2
		\leq\frac{C}{A}\int_0^t e^{-\frac{c(t-s)}{A}}\|N_0(s)\|_{H^1}^2 ds
		\leq\frac{CE_1}{A}\int_0^t e^{-\frac{c(t-s)}{A}} ds
		\leq CE_1.
	\end{align*}
	\begin{revision}
		Combining it with $t\leq T^*=A^{\frac{1}{3}+\delta}$, there holds
		\begin{equation*}
			\|C_{1,0}\|_{L^2H^2}^2
			=\int_0^{t}\|C_{1,0}(s)\|_{H^2}^2 ds
			\leq\int_0^{T^*}\|C_{1,0}(t)\|_{H^2}^2 dt
			\leq CE_1 T^*
			=CA^{\frac13+\delta}E_1.
		\end{equation*}
	\end{revision}
	
	For $\eqref{equ:c20}$, by direct energy estimate, we obtain
	\begin{align*}
		\|C_{2,0}\|_{L^{\infty}H^1}^2+\frac{\|C_{2,0}\|_{L^2H^2}^2}{A}
		\leq C\Big(\|(C_{\rm in})_0\|_{H^1}^2+\frac{\|(U_{2,\neq}C_{\neq})_0\|_{L^2H^1}^2}{A}\Big). \alabel{C20-H1}
	\end{align*}
	Using \eqref{F-L2}, \eqref{u=w}, $\eqref{embedding}_4$ and \eqref{OH1}, we have
	\begin{align*}
		\|(U_{2,\neq}C_{\neq})_0\|_{H^1}
		\leq&~C\|\partial_XU_{\neq}\|_{H^1}\|\partial_XC_{\neq}\|_{H^1}\leq C\|\Omega_{\neq}\|_{H^1}\left(\|\partial_XC_{1,\neq}\|_{H^1}+\|\partial_XC_{2,\neq}\|_{H^1}\right)\\
		\leq&~Ce^{-2aA^{-\frac13}t}\|\mathcal{M}\Omega_{\neq}(t)\|_{H^1}\left(\|\mathcal{M}\partial_XC_{1,\neq}(t)\|_{H^1}+\|\mathcal{M}\partial_XC_{2,\neq}(t)\|_{H^1}\right)\\
		\leq&~Ce^{-2aA^{-\frac13}t}\mathcal{C}_{\Omega}^{\frac12}(t)\Big(\mathcal{C}_{\partial_XC_1}^{\frac12}(t)+\mathcal{C}_{\partial_XC_2}^{\frac12}(t)\Big)
		\leq Ce^{-2aA^{-\frac13}t}E_1,
	\end{align*}
	which implies
	\begin{align*}
		\|(U_{2,\neq}C_{\neq})_0\|_{L^2H^1}^2
		\leq C\int_0^T e^{-4aA^{-\frac13}t}E_1^2 dt\leq CA^{\frac13}E_1^2.\alabel{U2C-H1}
	\end{align*}
	Under the condition \eqref{A-E1}, combining \eqref{U2C-H1} with \eqref{C20-H1}, we obtain
	\begin{align*}
		\|C_{2,0}\|_{L^{\infty}H^1}^2+\frac{\|C_{2,0}\|_{L^2H^2}^2}{A}
		\leq C\Big(\|(C_{\rm in})_0\|_{H^1}^2+\frac{E_1^2}{A^{\frac23}}\Big)
		\leq C\Big(\|(C_{\rm in})_0\|_{H^1}^2+1\Big).\alabel{C20-L2H2}
	\end{align*}
\end{proof}

\subsection{Non-zero modes estimates}
\label{subsec:short-energy}

Using the zero-mode estimates obtained in the previous subsection, we now establish energy estimates for the non-zero modes.

\begin{lemma}\label{energye1}
	Under the assumptions of Lemma \ref{lem:zero mode-kelxin}, we obtain
	\begin{equation*}
		\begin{aligned}
			\frac{d}{dt}\mathcal{E}_{1}(t)+\mathcal{E}_{2}(t)
			\leq&~C\bigg[\big(\mathcal{E}_{0}(t)+\mathcal{E}_{1}(t)\big)
			\frac{\|e^{aA^{-\frac13}t} \partial_X\nabla_LC_{1,\neq}\|_{H^1}^2
				+\|\partial_Y C_{2,0}\|_{H^1}^2}{A}+4E_1 \mathcal{D}_{\partial_X C_2}(t)\\
			&+\frac{1+\| C_{1,0}\|_{L^{\infty}H^2}^2+\| C_{2,0}\|_{L^{\infty}H^1}^2
				+\|N_0\|^2_{L^{\infty}H^1}+\|U_0\|^2_{L^{\infty}H^1}+\|\Omega_0\|^2_{L^{\infty}H^1}+E_1}{A^{\frac23}}\mathcal{E}_2(t)\bigg].
		\end{aligned}
	\end{equation*}	
\end{lemma}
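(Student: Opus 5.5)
The plan is to apply the multiplier energy identity \eqref{mui2} from the proof of Lemma~\ref{prop:kelvin-timespace}, at the $H^1$ level, to each of the four components $\partial_X N_{\neq}$, $\partial_X C_{1,\neq}$, $\partial_X C_{2,\neq}$ and $\Omega_{\neq}$. Since $\mathcal M$ commutes with $\partial_X,\partial_Y$ and with $\Delta_L$, the same differential inequality holds with $L^2$ replaced by $H^1$. Its left side is exactly $\frac{d}{dt}\mathcal C_F+\mathcal D_F$ up to constants, and summing gives $\frac{d}{dt}\mathcal E_1+\mathcal E_2$.

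The equations come from \eqref{eq:nonzero-mode-system-kelvin}, differentiated in $X$. For $C_{1,\neq}$ I take the homogeneous damped problem \eqref{equ:c1}, carrying the transport terms $U_{1,0}\partial_X C_{1,\neq}$ and $(U_{\neq}\cdot\nabla_L C_{1,\neq})_{\neq}$, with data $(C_{\rm in})_{\neq}$. For $C_{2,\neq}$ I take \eqref{equ:c2}, which has source $\frac1A N_{\neq}$, the terms $U_{\neq}\cdot\nabla_L C_0$, and the remaining transport, with zero data. The damping term $\frac1A C$ is handled by the Remark following Lemma~\ref{prop:kelvin-timespace}.

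Each forcing is placed in one of three slots. A term goes into the $F^{(3)}$ slot, at cost $A\|\mathcal M F^{(3)}\|_{H^1}^2$, when it is a divergence $\frac1A\nabla_L\cdot(\cdots)$. It goes into the $F^{(1)}$ slot, at cost $\|\mathcal M\nabla_L F^{(1)}\|^2$, when it is an $X$-derivative such as $\frac1A\partial_X N_{\neq}$ in the vorticity equation. The reaction terms go into the $F^{(2)}$ slot, at cost $A^{1/3}\|\cdot\|^2$. As a first check, the linear coupling $\frac1A\partial_X N_{\neq}$ costs $A^{-2}\|\mathcal M\nabla_L\partial_XN_{\neq}\|_{H^1}^2\le A^{-1}\mathcal D_{\partial_XN}$. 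Similarly $\frac1A N_{\neq}$ in the $C_2$ equation costs $A^{1/3-2}\|\cdot\|^2\lesssim A^{-4/3}\mathcal D_{\partial_X N}$. Both are of the stated $A^{-2/3}\mathcal E_2$ type.

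For the nonlinear terms I use the products in \eqref{embedding}, together with Lemma~\ref{lem:U} and \eqref{F-L2}. These convert $U_{\neq}$ into $\Omega_{\neq}$ and $\nabla_L$ into $\mathcal D$-type quantities. A typical term is $\frac1A\nabla_L\cdot(U_0 N_{\neq})$, which costs
\[
A\cdot A^{-2}\|U_0\|_{H^1}^2\|\mathcal M\partial_X N_{\neq}\|_{H^1}^2\le A^{-2/3}\|U_0\|_{L^\infty H^1}^2\mathcal D_{\partial_XN}.
\]
Terms in which a zero mode multiplies a nonzero mode produce the coefficients $\|N_0\|^2,\|U_0\|^2,\|\Omega_0\|^2,\|C_{1,0}\|_{H^2}^2,\|C_{2,0}\|_{H^1}^2$, each times $A^{-2/3}\mathcal E_2$. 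In the $F^{(1)}$ slot one instead gets $A^{-2}\|\nabla_L\cdot\|^2$, and this is controlled by $A^{-1}\mathcal D$. Quadratic nonzero–nonzero terms such as $(U_{\neq}N_{\neq})_{\neq}$ are bounded by $\mathcal C_\Omega\,\mathcal D\le E_1 A^{-2/3}\mathcal E_2$ via $\eqref{embedding}_2$, using the bootstrap \eqref{assume}.

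The main obstacle is the chemotactic flux in the $N$ equation,
\[
\tfrac1A\nabla_L\cdot\big(N_0\nabla_LC_{\neq}+N_{\neq}\nabla_L C_0+N_{\neq}\nabla_L C_{\neq}\big).
\]
Here $\nabla_L C$ carries the transverse derivative $\partial_Y-t\partial_X$ with no smallness. After differentiating in $X$ and placing the flux in $F^{(3)}$, the cost is $A^{-1}\|\mathcal M\partial_X(N_0\nabla_LC_{\neq})\|_{H^1}^2$, and I split $C_{\neq}=C_{1,\neq}+C_{2,\neq}$.

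For $C_1$, $\eqref{embedding}_1$ gives the bound $\mathcal E_0 A^{-1}\|e^{aA^{-1/3}t}\partial_X\nabla_L C_{1,\neq}\|_{H^1}^2$. This is the Gronwall term, and I bound $\mathcal M\le e^{2\pi}e^{aA^{-1/3}t}$ there. The $N_{\neq}\nabla_L C_{1,\neq}$ piece is handled the same way via $\eqref{embedding}_2$, giving $\mathcal C_{\partial_XN}$ times the same coefficient.

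For $C_2$, I put $\mathcal M$ on $\nabla_L\partial_X C_{2,\neq}$. The contribution is then $\|N_0\|_{H^1}^2A^{-1}\|\mathcal M\nabla_L\partial_XC_{2,\neq}\|_{H^1}^2\le 2E_1\mathcal D_{\partial_XC_2}$. This is not small, and it is recorded as the term $4E_1\mathcal D_{\partial_X C_2}$.

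For $\nabla_L C_0$, I split $C_0=C_{1,0}+C_{2,0}$. The $C_{1,0}$ part goes into the $A^{-2/3}$ term, since $\|\partial_Y C_{1,0}\|_{H^1}\le\|C_{1,0}\|_{H^2}$ is bounded. The $C_{2,0}$ part uses $\mathcal C_{\partial_X N}\,\|\partial_YC_{2,0}\|_{H^1}^2/A$, which is again the Gronwall term.

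Care is needed to keep $\mathcal M$ on a single factor throughout. Since $\mathcal M_k$ is comparable to $e^{aA^{-1/3}t}$ uniformly by \eqref{eq:weighted-M-bounded}, I replace $\mathcal M$ by this scalar weight in every product. That substitution loses only constants, and collecting all the pieces gives the inequality of Lemma~\ref{energye1}.
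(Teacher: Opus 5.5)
Your proposal is correct and follows essentially the same route as the paper. You apply the $H^1$-level multiplier inequality \eqref{mui2} to $\partial_XN_{\neq}$, $\partial_XC_{1,\neq}$, $\partial_XC_{2,\neq}$ and $\Omega_{\neq}$, and split $C_{\neq}=C_{1,\neq}+C_{2,\neq}$ and $C_0=C_{1,0}+C_{2,0}$ in the chemotactic flux; this produces the Gronwall coefficient and the non-small term $(\|N_0\|_{H^1}^2+2E_1)\mathcal D_{\partial_XC_2}\le 4E_1\mathcal D_{\partial_XC_2}$, while the bootstrap \eqref{assume} and $\mathcal M\approx e^{aA^{-\frac13}t}$ send everything else into the $A^{-\frac23}\mathcal E_2$ remainder. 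The only difference is cosmetic: you place $\frac1A\partial_XN_{\neq}$ in the vorticity equation in the $F^{(1)}$ slot, whereas the paper uses the $F^{(2)}$ slot (cost $A^{-\frac53}\|e^{aA^{-\frac13}t}\partial_XN_{\neq}\|_{H^1}^2$), and both give at most $A^{-1}\mathcal D_{\partial_XN}$.
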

\begin{proof}
	Recall that 
	\begin{equation*}
		\left(\partial_t-\frac1A\Delta_L\right)C_{\neq}
		=\frac1A(N_{\neq}-C_{\neq})
		-\frac1A\big[U_{1,0}\partial_XC_{\neq}
		+U_{\neq}\cdot\nabla_L C_0
		+(U_{\neq}\cdot\nabla_LC_{\neq})_{\neq}
		\big].
	\end{equation*}
	We introduce the decomposition $C_{\neq}=C_{1, \neq}+C_{2,\neq}$, where $C_{1,\neq}$ and $C_{2,\neq}$ satisfy
	\begin{equation}\label{equ:c1}
		\begin{cases}
			\left(\partial_t-\frac1A\Delta_L+\frac1A \right)C_{1,\neq}
			=-\frac1A\big[U_{1,0}\partial_XC_{1,\neq}
			+(U_{\neq}\cdot\nabla_LC_{1,\neq})_{\neq}\big],\\
			C_{1,\neq}|_{t=0}=(C_{\rm in})_{\neq},
		\end{cases}
	\end{equation}
	and 
	\begin{equation}\label{equ:c2}
		\begin{cases}
			\left(\partial_t-\frac1A\Delta_L+\frac1A \right)C_{2,\neq}
			=\frac1A N_{\neq}-\frac1A\big[U_{1,0}\partial_XC_{2,\neq}
			+U_{\neq}\cdot\nabla_L C_0
			+(U_{\neq}\cdot\nabla_LC_{2,\neq})_{\neq}\big],\\
			C_{2,\neq}|_{t=0}=0.
		\end{cases}
	\end{equation}	
	
	Applying Lemma \ref{prop:kelvin-timespace} to \eqref{equ:c1}, and using Lemma \ref{lem:embedding}, \eqref{F-L2}, and $\eqref{zero-mode eatimate-small}_1$, we obtain
	\begin{equation*}
		\begin{aligned}
			\|\partial_X C_{1, \neq}\|_{Z_a^1}^2
			&\leq C\bigg(\|\partial_X C_{\rm in}\|_{H^1}^2
			+\frac{ \|e^{aA^{-\frac13}t} U_{1,0}\partial_XC_{1,\neq}\|^2_{L^2H^1}+
				\|e^{aA^{-\frac13}t} \partial_X(U_{\neq}C_{1,\neq})_{\neq}\|^2_{L^2H^1} }{A}\bigg)\\
			&\leq C\bigg(\|\partial_X C_{\rm in}\|_{H^1}^2
			+\frac{\left(\|U_{1,0}\|_{L^{\infty}H^1}^2+
				\|\Omega_{\neq}\|_{L^{\infty}H^1}^2\right)\|e^{aA^{-\frac13}t} \partial_XC_{1,\neq}\|^2_{L^2H^1} }{A}
			\bigg)\\
			&\leq C\bigg(\|\partial_X C_{\rm in}\|_{H^1}^2+\frac{(\|U_{1,\rm in}\|_{H^1}^2+E_1^2+1)}{A^{\frac23}}\|\partial_XC_{1,\neq}\|_{Z_a^1}^2\bigg).
		\end{aligned}
	\end{equation*}
	Thus, for
	\begin{align*}
		A\geq 2C\max\left\{E_1^3,\|U_{\rm in}\|_{H^2}^3,1\right\}=:A_1,
	\end{align*}
	the last term can be absorbed into the left-hand side, and hence
	\begin{equation}\label{c1r1}
		\|\partial_X C_{1, \neq}\|_{Z_a^1}^2
		\leq C\|\partial_X C_{\rm in}\|_{H^1}^2.
	\end{equation}
	For $C_{1,\neq}$ and $C_{2,\neq}$, using \eqref{mui2} and \eqref{mul1}, we obtain
	\begin{equation}\label{c11}
		\begin{aligned}
			\frac{d}{dt}\mathcal{C}_{\partial_X C_1}(t)+\mathcal{D}_{\partial_X C_1}(t)			 
			\leq C\bigg(
			\frac{
				\|e^{aA^{-\frac13}t}U_{1,0}\partial_X C_{1,\neq}\|_{H^1}^2+
				\|e^{aA^{-\frac13}t}\partial_X(U_{\neq} C_{1,\neq})_{\neq}\|_{H^1}^2		
			}{A}\bigg)
		\end{aligned}
	\end{equation}
	and
	\begin{equation}\label{c21}
		\begin{aligned}
			&\quad \frac{d}{dt}\mathcal{C}_{\partial_X C_2}(t)+\mathcal{D}_{\partial_X C_2}(t)	\\		 
			&\leq C\bigg(
			\frac{\|e^{aA^{-\frac13}t}\partial_XN_{\neq}\|_{H^1}^2
			}{A^{\frac53}}
			+\frac{
				\|e^{aA^{-\frac13}t}U_{1,0}\partial_X C_{2,\neq}\|_{H^1}^2+
				\|e^{aA^{-\frac13}t}\partial_X U_{\neq} C_0\|_{H^1}^2+
				\|e^{aA^{-\frac13}t}\partial_X(U_{\neq} C_{2,\neq})_{\neq}\|_{H^1}^2}{A}\bigg).
		\end{aligned}
	\end{equation}
	For $j\in\{1,2\}$, by \eqref{mul1}, Lemma \ref{Poincare} and \eqref{F-L2}, we have 
	\begin{equation*}
		\begin{aligned}
			&{\|e^{aA^{-\frac13}t}\partial_X U_{\neq} C_0\|_{H^1}^2}
			\leq C {\|C_0\|_{H^1}^2\|e^{aA^{-\frac13}t}\partial_X U_{\neq}\|_{H^1}^2}
			\leq CA^{\frac13}\big(\|C_{1,0}\|_{H^1}^2+\|C_{2,0}\|_{H^1}^2\big) \mathcal{D}_{\Omega}(t),\\
			&{\|e^{aA^{-\frac13}t}U_{1,0}\partial_X C_{j,\neq}\|_{H^1}^2}
			\leq C\|U_{1,0}\|_{H^1}^2\|e^{aA^{-\frac13}t}\partial_X C_{j,\neq}\|_{H^1}^2
			\leq CA^{\frac13}\|U_{1,0}\|_{H^1}^2\mathcal{D}_{\partial_X C_j}(t),\\
			&{\|e^{aA^{-\frac13}t}\partial_X(U_{\neq} C_{j,\neq})_{\neq}\|_{H^1}^2}
			\leq C{\|e^{aA^{-\frac13}t}\Omega_{\neq } \|_{H^1}^2
				\|e^{aA^{-\frac13}t}\partial_X C_{j,\neq}\|_{H^1}^2}
			\leq CA^{\frac13}{2E_1\mathcal{D}_{\partial_X C_j}(t)}.
		\end{aligned}
	\end{equation*}
	Combining these estimates with \eqref{c11} and \eqref{c21}, we obtain
	\begin{align*}
		&\frac{d}{dt}\mathcal{C}_{\partial_X C_1}(t)+\mathcal{D}_{\partial_X C_1}(t)
		\leq C\frac{2E_1+\|U_{1,0}\|_{L^{\infty}H^1}^2}{A^{\frac23}}\mathcal{E}_2(t),\alabel{C1_X}\\
		&\frac{d}{dt}\mathcal{C}_{\partial_X C_2}(t)+\mathcal{D}_{\partial_X C_2}(t)
		\leq C\frac{1+2E_1
			+\|C_{1,0}\|^2_{L^{\infty}H^1}
			+\|C_{2,0}\|^2_{L^{\infty}H^1}+\|U_{1,0}\|_{L^{\infty}H^1}^2}{A^{\frac23}}\mathcal{E}_2(t).\alabel{C2_X}
	\end{align*}
	Applying \eqref{mui2} to \eqref{eq:nonzero-mode-system-kelvin}, and using \eqref{mul1} and Lemma \ref{lem:embedding}, we obtain
	\begin{equation}\label{result:U}
		\begin{aligned}
			&\frac{d}{dt}\mathcal{C}_{\Omega}(t)+\mathcal{D}_{\Omega}(t)\\
			\leq&~ C\bigg(\frac{\|e^{aA^{-\frac13}t}\partial_XN_{\neq}\|_{H^1}^2}{A^{\frac53}}+
			\frac{\|e^{aA^{-\frac13}t}U_{1,0}\Omega_{\neq}\|_{H^1}^2
				+\|e^{aA^{-\frac13}t}U_{\neq}\Omega_0\|_{H^1}^2
				+\|e^{aA^{-\frac13}t}(U_{\neq}\Omega_{\neq})_{\neq}\|_{H^1}^2}{A}\bigg)\\
			\leq&~ C\bigg(\frac{\mathcal{D}_{\partial_XN}(t)}{A^{\frac43}}
			+\frac{\|U_{1,0}\|_{H^1}^2\|e^{aA^{-\frac13}t}\Omega_{\neq}\|_{H^1}^2
				+\|\Omega_0\|_{H^1}^2\|e^{aA^{-\frac13}t}U_{\neq}\|_{H^1}^2
				+\|e^{aA^{-\frac13}t}\partial_XU_{\neq}\|_{H^1}^2\|\Omega_{\neq}\|_{H^1}^2}{A}\bigg)\\
			\leq&~ C\bigg(\frac{1+\|U_{0}\|^2_{L^{\infty}H^1}
				+\|\Omega_0\|^2_{L^{\infty}H^1}+2E_1}{A^{\frac23}}\bigg)\mathcal{E}_2(t).
		\end{aligned}
	\end{equation}
	We next estimate $N_{\neq}$. Applying \eqref{mui2}, we obtain
	\begin{equation}\label{equ:n0}
		\begin{aligned}
			&\frac{d}{dt}\mathcal{C}_{\partial_X N}(t)+\mathcal{D}_{\partial_X N}(t)\\
			\leq&~ C\bigg(\frac{\|e^{aA^{-\frac13}t} N_0\partial_X\nabla_LC_{\neq}\|_{H^1}^2
				+\|e^{aA^{-\frac13}t} \partial_X(N_{\neq}\nabla_LC_{\neq})\|_{H^1}^2
				+\|e^{aA^{-\frac13}t} \partial_XN_{\neq}\nabla_LC_0\|_{H^1}^2}{A}\\
			&+\frac{\|e^{aA^{-\frac13}t}\partial_X U_{\neq}N_0\|_{H^1}^2
				+\|e^{aA^{-\frac13}t}U_0\partial_X N_{\neq}\|_{H^1}^2
				+\|e^{aA^{-\frac13}t}\partial_X(U_{\neq}N_{\neq})\|_{H^1}^2}{A}\bigg) .
		\end{aligned}
	\end{equation}
	Since $C_{\neq}=C_{1,\neq}+C_{2,\neq}$, Lemma \ref{lem:embedding} gives
	\begin{equation*}
		\begin{aligned}
			\|e^{aA^{-\frac13}t} N_0\partial_X\nabla_LC_{\neq}\|_{H^1}^2
			&\leq C \|N_0\|_{H^1}^2(\|e^{aA^{-\frac13}t} \partial_X\nabla_LC_{1,\neq}\|_{H^1}^2 
			+\|e^{aA^{-\frac13}t} \partial_X\nabla_LC_{2,\neq}\|_{H^1}^2 )\\
			&\leq C\left(\|N_0(t)\|_{H^1}^2\|e^{aA^{-\frac13}t}\partial_X\nabla_LC_{1,\neq}\|_{H^1}^2
			+A\|N_0\|_{L^{\infty}H^1}^2 \mathcal{D}_{\partial_X C_2}(t)\right)
		\end{aligned}
	\end{equation*}
	and 
	\begin{equation*}
		\begin{aligned}
			\|e^{aA^{-\frac13}t} \partial_X(N_{\neq}\nabla_LC_{\neq})\|_{H^1}^2
			&\leq C\|e^{aA^{-\frac13}t}  \partial_X N_{\neq}\|_{H^1}^2(\|\partial_X\nabla_LC_{1,\neq}\|_{H^1}^2 
			+\|\partial_X\nabla_LC_{2,\neq}\|_{H^1}^2 )\\
			&\leq C\left(\mathcal{C}_{\partial_X N}(t)\|e^{aA^{-\frac13}t} \partial_X\nabla_LC_{1,\neq}\|_{H^1}^2
			+2AE_1 \mathcal{D}_{\partial_X C_2}(t)\right).
		\end{aligned}
	\end{equation*}
	Using $C_0=C_{1,0}+C_{2,0}$ together with $\eqref{embedding}_1$, we obtain
	\begin{equation*}
		\begin{aligned}
			\|e^{aA^{-\frac13}t} \partial_XN_{\neq}\nabla_LC_0\|_{H^1}^2
			&\leq C \|e^{aA^{-\frac13}t} \partial_XN_{\neq}\|_{H^1}^2
			(\|\partial_Y C_{1,0}\|_{H^1}^2+\|\partial_Y C_{2,0}\|_{H^1}^2)\\
			&\leq C\left(\mathcal{C}_{\partial_X N}(t)
			\|\partial_Y C_{2,0}\|_{H^1}^2+A^{\frac13}\mathcal{D}_{\partial_X N}(t)\|\partial_Y C_{1,0}\|_{H^1}^2\right).
		\end{aligned}
	\end{equation*}
	Moreover, Lemma \ref{Poincare} and Lemma \ref{lem:embedding} imply that
	\begin{equation*}
		\begin{aligned}
			&\frac{\|e^{aA^{-\frac13}t}\partial_X U_{\neq}N_0\|_{H^1}^2}{A}
			\leq C \frac{\|N_0\|^2_{H^1}\|e^{aA^{-\frac13}t}\partial_X U_{\neq}\|_{H^1}^2}{A}
			\leq C \frac{\|N_0\|^2_{L^{\infty}H^1}\mathcal{D}_{\Omega}(t)}{A^\frac{2}{3}},\\
			&\frac{\|e^{aA^{-\frac13}t}U_0\partial_X N_{\neq}\|_{H^1}^2}{A}
			\leq C \frac{\|U_0\|^2_{H^1}\|e^{aA^{-\frac13}t}\partial_X N_{\neq}\|_{H^1}^2}{A}
			\leq C \frac{\|U_0\|^2_{L^{\infty}H^1}\mathcal{D}_{\partial_X N}(t)}{A^\frac{2}{3}},\\
			&\frac{\|e^{aA^{-\frac13}t}\partial_X(U_{\neq}N_{\neq})\|_{H^1}^2}{A}
			\leq C \frac{\|\partial_X N_{\neq}\|^2_{H^1}\|e^{aA^{-\frac13}t}\partial_X U_{\neq}\|_{H^1}^2}{A}
			\leq C \frac{E_1\mathcal{D}_{\Omega}(t)}{A^\frac{2}{3}}.
		\end{aligned}
	\end{equation*}	
	Substituting the above estimates into \eqref{equ:n0}, we obtain
	\begin{align}
		&\frac{d}{dt}\mathcal{C}_{\partial_X N}(t)+\mathcal{D}_{\partial_X N}(t)\nonumber\\
		\leq&~ C\bigg[\big(\|N_0(t)\|_{H^1}^2+\mathcal{C}_{\partial_X N}(t)\big)
		\frac{\|e^{aA^{-\frac13}t} \partial_X\nabla_LC_{1,\neq}(t)\|_{H^1}^2}{A}
		+\mathcal{C}_{\partial_X N}(t)\frac{\|\partial_Y C_{2,0}(t)\|_{H^1}^2}{A}
		\nonumber\\
		& +\big(\|N_0\|^2_{L^{\infty}H^1}+2E_1\big) \mathcal{D}_{\partial_X C_2}(t)
		+\frac{\|\partial_Y C_{1,0}\|_{H^1}^2+\|N_0\|^2_{L^{\infty}H^1}+\|U_0\|^2_{L^{\infty}H^1}+E_1}{A^\frac{2}{3}}\mathcal{E}_2(t)\bigg]\label{result:n}\\
		\leq&~ C\bigg[ \big(\|N_0(t)\|_{H^1}^2+\mathcal{C}_{\partial_X N}(t)\big)
		\frac{\|e^{aA^{-\frac13}t} \partial_X\nabla_LC_{1,\neq}\|_{H^1}^2
			+\|\partial_Y C_{2,0}\|_{H^1}^2}{A}+4E_1 \mathcal{D}_{\partial_X C_2}(t)\nonumber\\
		&+\frac{\|\partial_Y C_{1,0}\|_{H^1}^2+\|N_0\|^2_{L^{\infty}H^1}
			+\|U_0\|^2_{L^{\infty}H^1}+E_1}{A^\frac{2}{3}}\mathcal{E}_2(t)\bigg].
		\nonumber
	\end{align}
	Combining the estimates \eqref{C1_X}--\eqref{result:U} and \eqref{result:n}, we 
	finish the proof.
\end{proof}

\begin{lemma}\label{energye0}
	Under the assumptions of Lemma \ref{lem:zero mode-kelxin}, the following estimate holds
	\begin{equation*}
		\begin{aligned}
			\frac{d}{dt}\mathcal E_0(t)
			\leq C
			\bigg(\frac{\|C_{1,0}\|_{H^2}^2+\|\partial_YC_{2,0}\|_{H^1}^2}{A}\mathcal E_0(t)
			+2E_1\mathcal{D}_{\partial_X C_2}(t)
			+\frac{\| \partial_X\nabla_LC_{1,\neq}\|_{H^1}^2}{A}
			\mathcal E_1(t)
			+\frac{E_1}{A^\frac{2}{3}}\mathcal{E}_2(t)
			\bigg).
		\end{aligned}
	\end{equation*}
\end{lemma}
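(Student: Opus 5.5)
We carry out an $H^1$ energy estimate for the zero-mode equation $\eqref{eq:zero-mode-system-kelvin}_1$. Pair it with $N_0$ and with $-\partial_Y^2N_0$ and integrate by parts once. This gives
\begin{align*}
\frac12\frac{d}{dt}\|N_0\|_{H^1}^2+\frac1A\|\partial_YN_0\|_{H^1}^2
\leq \frac1A\Big(\|N_0\partial_YC_0\|_{H^1}+\|(N_{\neq}(\partial_Y-t\partial_X)C_{\neq})_0\|_{H^1}+\|(U_{2,\neq}N_{\neq})_0\|_{H^1}\Big)\|\partial_YN_0\|_{H^1}.
\end{align*}
By Young's inequality, half of the dissipation $\frac1A\|\partial_YN_0\|_{H^1}^2$ absorbs the right-hand side. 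What remains is bounded by
\[
\frac CA\Big(\|N_0\partial_YC_0\|_{H^1}^2+\|(N_{\neq}\nabla_LC_{\neq})_0\|_{H^1}^2+\|(U_{2,\neq}N_{\neq})_0\|_{H^1}^2\Big).
\]
We then estimate these three terms one at a time.

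For the first term, write $C_0=C_{1,0}+C_{2,0}$. The one-dimensional algebra property of $H^1(\mathbb R)$ gives
\[
\|N_0\partial_YC_0\|_{H^1}^2\leq C\|N_0\|_{H^1}^2\big(\|C_{1,0}\|_{H^2}^2+\|\partial_YC_{2,0}\|_{H^1}^2\big).
\]
This yields the first term of the lemma.

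For the second term, use $\eqref{embedding}_4$ in the form $\|(FG)_0\|_{H^1}\le C\|F_{\neq}\|_{H^1}\|G_{\neq}\|_{H^1}$, together with Lemma~\ref{Poincare}. Taking $F=N_{\neq}$ and $G=\nabla_LC_{j,\neq}$, we get
\[
\|(N_{\neq}\nabla_LC_{j,\neq})_0\|_{H^1}^2\le C\|\partial_XN_{\neq}\|_{H^1}^2\|\nabla_LC_{j,\neq}\|_{H^1}^2 .
\]
Each $C_{j,\neq}$ has zero $X$-mean, so applying Lemma~\ref{Poincare} mode by mode gives $\|\nabla_LC_{j,\neq}\|_{H^1}\le\|\partial_X\nabla_LC_{j,\neq}\|_{H^1}$.
\begin{itemize}
\item For $j=1$, use $\|\partial_XN_{\neq}\|_{H^1}^2\le\mathcal C_{\partial_XN}\le\mathcal E_1$ by \eqref{mul1}. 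The weight $\mathcal M\ge1$ is simply dropped. This gives the term $\frac{\|\partial_X\nabla_LC_{1,\neq}\|_{H^1}^2}{A}\mathcal E_1$.
\item For $j=2$, use the bootstrap \eqref{assume} to get $\|\partial_XN_{\neq}\|_{H^1}^2\le 2E_1e^{-2aA^{-1/3}t}$. Combine this with $\frac1A\|e^{aA^{-1/3}t}\partial_X\nabla_LC_{2,\neq}\|_{H^1}^2\le\mathcal D_{\partial_XC_2}$. The result is $2CE_1\mathcal D_{\partial_XC_2}$, with no extra factor of $A$ to spare. This is precisely why the weight $\kappa$ is needed later.
\end{itemize}

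For the third term, use $\eqref{embedding}_4$ and Lemma~\ref{Poincare} to get $\|(U_{2,\neq}N_{\neq})_0\|_{H^1}\le C\|U_{2,\neq}\|_{H^1}\|\partial_XN_{\neq}\|_{H^1}$. Then:
\begin{itemize}
\item $\|\partial_XN_{\neq}\|_{H^1}^2\le 2E_1e^{-2aA^{-1/3}t}$ by the bootstrap \eqref{assume}.
\item $\|U_{2,\neq}\|_{H^1}\le C\|\partial_X\nabla_L\Delta_L^{-1}\Omega_{\neq}\|_{H^1}$, since $U_{2,\neq}=-\partial_X\Delta_L^{-1}\Omega_{\neq}$ and $|k|\le(k^2+\xi^2)^{1/2}$.
\item Hence $\|e^{aA^{-1/3}t}U_{2,\neq}\|_{H^1}^2\le C\mathcal D_\Omega$.
\end{itemize}
Alternatively, one can use $\|U_{\neq}\|_{H^1}\le\|\Omega_{\neq}\|_{H^1}$ from Lemma~\ref{lem:U} and the bound $A^{1/3}\mathcal D_\Omega$. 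Either way the term is at most $\frac{CE_1}{A^{2/3}}\mathcal E_2$, which is the last term claimed.

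The main obstacle is the mixed term $(N_{\neq}\nabla_LC_{\neq})_0$. Here $\nabla_L$ carries the growth factor $t$, and that growth must be placed entirely into the dissipation $\frac1A\|\partial_X\nabla_LC\|_{H^1}^2$ rather than into the energy. I would first check that $\eqref{embedding}_4$ only needs $H^1$ of each factor, so that the Poincaré step with $\partial_X$ is legitimate.
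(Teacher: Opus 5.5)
Your proposal is correct and follows the paper's proof essentially step by step. It uses the same $H^1$ energy estimate for $N_0$ with Young absorption and the same one-dimensional algebra bound for $N_0\partial_YC_0$ with $C_0=C_{1,0}+C_{2,0}$. It then applies the fourth inequality of \eqref{embedding} together with Lemma~\ref{Poincare} to the $X$-averaged products, pairing the $C_{1,\neq}$ part with $\mathcal E_1$ and the $C_{2,\neq}$ part with the bootstrap to get $2CE_1\mathcal D_{\partial_XC_2}$. The one small deviation is your primary bound for $(U_{2,\neq}N_{\neq})_0$ through the $\partial_X\nabla_L\Delta_L^{-1}$ part of $\mathcal D_{\Omega}$, which gives the slightly sharper $\frac{CE_1}{A}\mathcal D_{\Omega}$ instead of the paper's $\frac{CE_1}{A^{2/3}}\mathcal D_{\Omega}$; the paper obtains its bound exactly as in your alternative, and either one suffices.
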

\begin{proof}
	Applying the $H^1$ energy estimate to $\eqref{eq:zero-mode-system-kelvin}_1$, we obtain
	\begin{equation}\label{energy:zero-N}
		\frac{d}{dt}\mathcal E_0(t)
		+\frac1A\|\partial_YN_0(t)\|_{H^1}^2
		\leq C\frac{\|N_0\partial_YC_0\|_{H^1}^2
			+\|(N_{\neq}\nabla_L C_{\neq})_0\|_{H^1}^2
			+\|(U_{2,\neq}N_{\neq})_0\|_{H^1}^2}{A}
	\end{equation}
	Since $H^1(\mathbb R)\hookrightarrow L^\infty(\mathbb R)$, we have
	\begin{align*}
		\|fg\|_{H^1}
		\leq C\|f\|_{H^1}\|g\|_{H^1}.
	\end{align*}
	Hence, using $C_0=C_{1,0}+C_{2,0}$, we obtain
	\begin{equation}\label{n0:es1}
		\|N_0\partial_YC_0\|_{H^1}^2
		\leq C\|N_0\|_{H^1}^2\|\partial_YC_0\|_{H^1}^2
		\leq C\mathcal E_0(t)\left(
		\|C_{1,0}\|_{H^2}^2
		+\|\partial_YC_{2,0}\|_{H^1}^2\right).
	\end{equation}
	Moreover, Lemma \ref{Poincare} and Lemma \ref{lem:embedding} yield
	\begin{equation}\label{n0:es2}
		\begin{aligned}
			\|(N_{\neq}\nabla_LC_{\neq})_0\|_{H^1}^2
			&\leq C\|e^{aA^{-\frac13}t}  \partial_X N_{\neq}\|_{H^1}^2
			(\|\partial_X\nabla_LC_{1,\neq}\|_{H^1}^2 
			+\|\partial_X\nabla_LC_{2,\neq}\|_{H^1}^2 )\\
			&\leq C\left(\mathcal{C}_{\partial_X N}(t)\| \partial_X\nabla_LC_{1,\neq}(t)\|_{H^1}^2
			+2AE_1 \mathcal{D}_{\partial_X C_2}(t)\right),\\
			\|(U_{\neq}N_{\neq})_0\|_{H^1}^2
			&\leq C\|\partial_X N_{\neq}\|^2_{H^1}\|e^{aA^{-\frac13}t}\partial_X U_{\neq}\|_{H^1}^2
			\leq CA^{\frac13}E_1
			\mathcal{D}_{\Omega}(t)
			\leq CA^{\frac13}E_1\mathcal{E}_2(t).
		\end{aligned}
	\end{equation}
	Combining \eqref{energy:zero-N}, \eqref{n0:es1}, and \eqref{n0:es2}, we obtain
	\begin{equation*}
		\begin{aligned}
			\frac{d}{dt}\mathcal E_0(t)
			\leq C\left(\frac{\|C_{1,0}\|_{H^2}^2+\|\partial_YC_{2,0}\|_{H^1}^2}{A}\mathcal E_0(t)
			+2E_1 \mathcal{D}_{\partial_X C_2}(t)
			+\frac{\| \partial_X\nabla_LC_{1,\neq}\|_{H^1}^2}{A}
			\mathcal E_1(t)
			+\frac{E_1}{A^\frac{2}{3}}\mathcal{E}_2(t).\right) 
		\end{aligned}
	\end{equation*}
	This completes the proof.
\end{proof}

We now combine the preceding estimates using the modified energy defined in \eqref{modified E1}.
	\begin{proof}[\normalfont\bfseries Proof of Proposition \ref{prop:short-time}]
		\begin{revision}
			If $E_1=0$, then $N_{\rm in}=0$ and $(\Omega_{\rm in})_{\neq}=(C_{\rm in})_{\neq}=0$. By the uniqueness of the solution, $N$, $\Omega_{\neq}$ and $C_{\neq}$ remain zero on $[0,T]$, so the asserted estimates hold. We therefore assume $E_1>0$.
			
			
		\end{revision}
		By \eqref{modified E1}, we have
		\begin{align*}
			\widetilde{\mathcal E}_1(t)
			=\mathcal E_1(t)
			+(\kappa-1)\mathcal C_{\partial_XC_2}(t).
		\end{align*}
		Combining Lemma~\ref{lem:zero mode-kelxin}, Lemma \ref{energye1}, Lemma \ref{energye0}, and \eqref{C2_X}, we obtain
		\begin{align*}
			&\frac{d}{dt}\big(\mathcal E_0(t)+\widetilde{\mathcal E}_1(t)\big)+\mathcal E_2(t)
			=\frac{d}{dt}\big(\mathcal E_0(t)+\mathcal E_1(t)
			+(\kappa-1)\mathcal C_{\partial_XC_2}(t)\big)+\mathcal E_2(t)\\
			\leq&~ C\bigg[
			\big(\mathcal E_0(t)+\mathcal E_1(t)\big)
			\frac{\|e^{aA^{-\frac13}t}\partial_X\nabla_LC_{1,\neq}(t)\|_{H^1}^2+\|\partial_YC_{2,0}(t)\|_{H^1}^2+\|C_{1,0}(t)\|_{H^2}^2}{A}\\
			&\quad+\frac{1+\|C_{1,0}\|_{L^\infty H^2}^2+\|C_{2,0}\|_{L^\infty H^1}^2+\|N_0\|_{L^\infty H^1}^2+\|U_0\|_{L^\infty H^1}^2+\|\Omega_0\|_{L^\infty H^1}^2+E_1}{A^{\frac23}}\mathcal E_2(t)\bigg]\\
			&\quad+(\kappa-1)\bigg(\frac{d}{dt}\mathcal C_{\partial_XC_2}(t)+\mathcal D_{\partial_XC_2}(t)\bigg)\\
			\leq&~ C\bigg[\big(\mathcal E_0(t)+\widetilde{\mathcal E}_1(t)\big)
			\frac{\|e^{aA^{-\frac13}t}\partial_X\nabla_LC_{1,\neq}(t)\|_{H^1}^2+\|\partial_YC_{2,0}(t)\|_{H^1}^2+\|C_{1,0}(t)\|_{H^2}^2}{A}\\
			&\quad+\frac{(1+E_1)\big(1+\|C_{1,0}\|_{L^\infty H^2}^2+\|C_{2,0}\|_{L^\infty H^1}^2+\|N_0\|_{L^\infty H^1}^2+\|U_0\|_{L^\infty H^1}^2+\|\Omega_0\|_{L^\infty H^1}^2+2E_1\big)}{A^{\frac23}}\mathcal E_2(t)\bigg]\\
			\leq&~ C\bigg[\big(\mathcal E_0(t)+\widetilde{\mathcal E}_1(t)\big)\frac{\|e^{aA^{-\frac13}t}\partial_X\nabla_LC_{1,\neq}(t)\|_{H^1}^2+\|\partial_YC_{2,0}(t)\|_{H^1}^2+\|C_{1,0}(t)\|_{H^2}^2}{A}\\
			&\quad+\frac{1+M^4+\|U_{1,\rm in}\|_{H^2}^4+\|C_{\rm in}\|_{H^1}^4+\|\Omega_{\rm in}\|_{H^1}^4+E_1^2}{A^{\frac23-\delta}}\mathcal E_2(t)\bigg].
			\alabel{energy:result1}
		\end{align*}
		Thus, for $0<\delta<\frac23$, if
		\[
		A\geq C\left(1+M^4+\|U_{1,\rm in}\|_{H^2}^4+\|C_{\rm in}\|_{H^1}^4+\|\Omega_{\rm in}\|_{H^1}^4+E_1^2\right)^{\frac{3}{2-3\delta}}
		=:A_2,
		\]
		with $C$ sufficiently large, the last term in \eqref{energy:result1} can be absorbed into the left-hand side, and we obtain
		\begin{equation}\label{energy:result2}
			\begin{aligned}
				\frac{d}{dt}\big(
				\mathcal E_0(t)+\widetilde{\mathcal E}_1(t)
				\big)+\frac12\mathcal E_2(t)
				\leq C\big(
				\mathcal E_0(t)+\widetilde{\mathcal E}_1(t)
				\big)G(t),
			\end{aligned}
		\end{equation}
		where
		\[
		G(t):=
		\frac{
			\|e^{aA^{-\frac13}t}
			\partial_X\nabla_LC_{1,\neq}(t)\|_{H^1}^2
			+\|\partial_YC_{2,0}(t)\|_{H^1}^2
			+\|C_{1,0}(t)\|_{H^2}^2}{A}.
		\]
		Multiplying \eqref{energy:result2} by the integrating factor 
		$e^{-C\int_0^tG(s)\,ds}$, we obtain
		\begin{equation*}
			\begin{aligned}
				\frac{d}{dt}\Big(
				e^{-C\int_0^tG(s)\,ds}
				\big(\mathcal E_0(t)+\widetilde{\mathcal E}_1(t)\big)
				\Big)+\frac12e^{-C\int_0^tG(s)\,ds}\mathcal E_2(t)
				\leq0.
			\end{aligned}
		\end{equation*}
		Integrating over $[0,t]$ and using $\kappa\geq1$, we find
		\begin{equation}\label{result1}
			\begin{aligned}
				\mathcal E_0(t)+\mathcal E_1(t)
				\leq\mathcal E_0(t)+\widetilde{\mathcal E}_1(t)
				\leq e^{C\int_0^tG(s)\,ds}
				\big(\mathcal E_0(0)+\widetilde{\mathcal E}_1(0)\big).
			\end{aligned}
		\end{equation}
		\begin{revision}
			For $A\geq A^*\geq\max\{A_0,A_1,A_2\}$, $\eqref{zero-mode eatimate-small}_5$, \eqref{C20-L2H2} and \eqref{c1r1} yield, 
			\begin{align*}
				\int_0^tG(s)\,ds
				&\leq \frac{\|e^{aA^{-\frac13}t}\partial_X\nabla_LC_{1,\neq}(t)\|_{L^2H^1}^2+\|C_{2,0}(t)\|_{L^2H^2}^2+\|C_{1,0}(t)\|_{L^2H^2}^2}{A}\\
				&\leq\|\partial_XC_{1,\neq}\|_{Z_a^1}^2+C\left(\|(C_{\rm in})_0\|_{H^1}^2+\frac{E_1^2+1}{A^{\frac23-\delta}}\right)
				\leq C\left(\|C_{\rm in}\|_{H^2}^2+1\right).
			\end{align*}
		\end{revision}
		
		Since $C_{2,\neq}(0)=0$, \eqref{mul1} also gives
		\begin{equation*}
			\begin{aligned}
				\mathcal E_0(0)+\widetilde{\mathcal E}_1(0)
				\leq C\Big(\|(N_{\rm in})_0\|_{H^1}^2
				+\|(N_{\rm in})_{\neq}\|_{H^2}^2
				+\|(\Omega_{\rm in})_{\neq}\|_{H^1}^2
				+\|(C_{\rm in})_{\neq}\|_{H^2}^2
				\Big).
			\end{aligned}
		\end{equation*}
		Combining these bounds with \eqref{result1}, and choosing
		the constant in the definition of $E_1$ sufficiently large,
		we conclude that
		\begin{align*}
			\mathcal{E}_{0}(t)+\mathcal{E}_{1}(t)
			\leq C\left(\|N_{\rm in}\|_{H^2}^2 +\|(\Omega_{\rm in})_{\neq}\|_{H^2}^2
			+\|(C_{\rm in})_{\neq}\|_{H^2}^2\right) e^{C\left(\|C_{\rm in}\|_{H^2}^2+1\right)}
			=:E_1, 
		\end{align*}
		This improves the bootstrap assumption \eqref{assume}.
		Since the assumption holds initially, a continuity argument
		yields the energy estimate on $[0,T]$.
		
		At $t=0$, the Kelvin and physical coordinates coincide. Hence,
		\[
		(N_{\rm in},C_{\rm in},U_{\rm in},\Omega_{\rm in})
		=(n_{\rm in},c_{\rm in},u_{\rm in},\omega_{\rm in}),
		\]
		and the definition of the vorticity gives
		\[
		\|(\Omega_{\rm in})_{\neq}\|_{H^2}
		=\|(\omega_{\rm in})_{\neq}\|_{H^2}
		\leq C\|(u_{\rm in})_{\neq}\|_{H^3}.
		\]
		By the definitions of $A_1$ and $A_2$, and enlarging the constant $C$
		if necessary, the constant $A^*$ in the statement satisfies
		\begin{align*}
			A^*:=C(1+M^4+\|u_{\rm in}\|_{H^3}^4+\|c_{\rm in}\|_{H^2}^4+\|n_{\rm in}\|_{H^2}^4)^{\frac{3}{2-3\delta}}e^{C\frac{\|c_{\rm in}\|_{H^2}^2+1}{2-3\delta}}\geq\max\{A_0,A_1,A_2\}.
		\end{align*}
		For $A\geq A^*$, \eqref{mul1} and
		$C_{\neq}=C_{1,\neq}+C_{2,\neq}$ imply
		\begin{equation}\label{re2}
			\begin{aligned}
				&\quad\|N_0(t)\|_{H^1}^2
				+e^{2aA^{-\frac13}t}\Big(
				\|\partial_XN_{\neq}(t)\|_{H^1}^2
				+\|\partial_XC_{\neq}(t)\|_{H^1}^2
				+\|\Omega_{\neq}(t)\|_{H^1}^2
				\Big)\\
				&\leq C\big(\mathcal E_0(t)+\mathcal E_1(t)\big)
				\leq C\left(
				\|n_{\rm in}\|_{H^2}^2
				+\|(u_{\rm in})_{\neq}\|_{H^3}^2
				+\|(c_{\rm in})_{\neq}\|_{H^2}^2
				\right)e^{C(\|c_{\rm in}\|_{H^2}^2+1)}.
			\end{aligned}
		\end{equation}
		We now transfer the estimates from Kelvin coordinates $(X,Y)$ back to the physical coordinates $(x,y)$. By using \eqref{relation}, for $j=0,1$, we have 
		\begin{align*}
			&\|n_0(t)\|_{L^2}
			=\|N_0(t)\|_{L^2},\quad
			\|\partial_x^j f_{\neq}(t)\|_{L^2}
			=\|\partial_X^j F_{\neq}(t)\|_{L^2},\\
			&\|\partial_x\nabla f_{\neq}(t)\|_{L^2}^2
			=\|\partial_X^2F_{\neq}(t)\|_{L^2}^2
			+\|(\partial_Y-t\partial_X)\partial_XF_{\neq}(t)\|_{L^2}^2\\
			&\qquad\qquad\qquad\leq (1+2t^2)\|\partial_X^2F_{\neq}(t)\|_{L^2}^2
			+2\|\partial_Y\partial_XF_{\neq}(t)\|_{L^2}^2
			\leq C(1+t^2)\|\partial_XF_{\neq}(t)\|_{H^1}^2.
		\end{align*}
		Combining these relations with \eqref{re2}, we obtain the desired estimate stated in Proposition \ref{prop:small time-main}. This completes the proof.
	\end{proof}

	\section{Proof of Proposition \ref{prop:long-time}}\label{sec:long-time}
	\begin{revision}
		Throughout this section, let $[0,T]$ be an interval of existence with $T>T^*$. Following~\cite{ZengZhangZi2021}, we first estimate the zero modes on $[T^*,T]$ and then use the data at $T^*$ to close the long-time bootstrap.
	\end{revision}
	
	\subsection{Zero-mode estimates}
	\label{subsec:long-zero-modes}
	

	\begin{lemma}\label{lem:long-time-zero-modes}
		Under the assumption \eqref{assume1}, there exists a positive constant $A_3$ depending on $E_2$, such that if $A\geq A_3:=\max\{1,E_2^{12}\}$, there holds 
		\begin{equation}\label{zero-mode eatimate-long}
			\begin{gathered}
				\|\omega_0\|_{L^\infty L^2}+\frac{1}{A^{\frac12}}\|\partial_y\omega_0\|_{L^2L^2}
				\leq C\left(\|(\omega_{\rm in})_0\|_{L^2}+1\right)=:B_1, \\
				\|u_{1,0}\|_{L^\infty H^1}
				\leq C\left(\|(u_{\rm in})_0\|_{L^2}+\|(\omega_{\rm in})_0\|_{L^2}+1\right)=:B_2, \\
				\|\partial_yc_0\|_{L^\infty L^2}
				\leq C\left(\|(\partial_yc_{\rm in})_0\|_{L^2}+M+1\right)=:B_3,\\
				\|n_0\|_{L^\infty L^2}
				\leq C\left(\|(n_{\rm in})_0\|_{L^2}+\|(\partial_yc_{\rm in})_0\|_{L^2}^2+M^2+1\right)=:B_4.
			\end{gathered}
		\end{equation}
	\end{lemma}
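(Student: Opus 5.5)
We estimate the four zero modes on $[T^*,T]$ in physical coordinates, using the one-dimensional zero-mode system \eqref{eq:zero-mode-system}, with data at $T^*$ denoted by the $(\cdot)_{\rm in}$ convention. The nonlinear forcing terms are controlled by the bootstrap bound \eqref{assume1}. The idea is that every nonzero-mode product is quadratic in quantities whose weighted $L^2L^2$ norms are bounded by $E_2$ times a small power of $A$. After division by $A$, the coefficient $A^{-1}$ leaves a net negative power of $A$, which is made $\leq 1$ by taking $A\geq E_2^{12}$.

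\textbf{Step 1 ($\omega_0$ and $u_{1,0}$).} An $L^2$ energy estimate for $\partial_t\omega_0-\frac1A\partial_y^2\omega_0=-\frac1A\partial_y(u_{2,\neq}\omega_{\neq})_0$ gives
\[
\|\omega_0\|_{L^\infty L^2}^2+\frac1A\|\partial_y\omega_0\|_{L^2L^2}^2\leq \|(\omega_{\rm in})_0\|_{L^2}^2+\frac CA\|(u_{2,\neq}\omega_{\neq})_0\|_{L^2L^2}^2 .
\]
By \eqref{fg-L2}, Lemma~\ref{Poincare2} and $\|\nabla u_{\neq}\|_{L^2}\leq\|\omega_{\neq}\|_{L^2}$,
\[
\|(u_{2,\neq}\omega_{\neq})_0\|_{L^2}\leq C\|\partial_xu_{2,\neq}\|^{1/2}\|\partial_x\nabla u_{2,\neq}\|^{1/2}\|\omega_{\neq}\|\leq C\|\partial_x\omega_{\neq}\|_{L^2}^2 .
\]
Squaring and integrating in time, we bound one factor in $L^\infty$ by $E_2$ and the other in $L^2$ via the $A^{-1/3}L^2L^2$ piece of $X_a$, which gives $A^{1/3}E_2^2$ up to constants. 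The forcing contribution is therefore $\lesssim E_2^4A^{-2/3}\leq 1$. For $u_{1,0}$, the identity $\partial_yu_{1,0}=\omega_0$ handles the derivative part. For the $L^2$ part we do an energy estimate on $\partial_tu_{1,0}-\frac1A\partial_y^2u_{1,0}=-\frac1A\partial_y(u_{2,\neq}u_{1,\neq})_0$, bounded the same way.

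\textbf{Step 2 ($\partial_yc_0$).} We split $c_0$ as in Section~\ref{sec:short-time}: $c_0=c_{1,0}+c_{2,0}$. Here $c_{1,0}$ is driven by $\frac1An_0$ with zero data at $T^*$, and $c_{2,0}$ carries $(c_{\rm in})_0$ and the flux $(u_{2,\neq}c_{\neq})_0$. The heat-kernel argument \eqref{C1_YL2} with $\|n_0\|_{L^1}\leq M$ gives $\|\partial_yc_{1,0}\|_{L^\infty L^2}\leq CM$ uniformly in time, thanks to the damping $e^{-(t-s)/A}$. For $c_{2,0}$ we use an $H^1$ energy estimate. Its forcing is bounded via \eqref{fg-L2} by $\|\partial_x\omega_{\neq}\|\,\|\partial_x\nabla c_{\neq}\|$, which is again quadratic in $\mathcal E_3$ and hence small after the factor $A^{-1}$.

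\textbf{Step 3 ($n_0$), the main obstacle.} The equation for $n_0$ contains the non-small term $-\frac1A\partial_y(n_0\partial_yc_0)$, and no mass restriction is allowed. We test with $n_0$ and write
\[
\frac1A\int n_0\partial_yc_0\partial_yn_0\leq \frac1A\|n_0\|_{L^\infty}\|\partial_yc_0\|_{L^2}\|\partial_yn_0\|_{L^2}.
\]
We then use the one-dimensional Gagliardo--Nirenberg inequality $\|n_0\|_{L^\infty}\leq C\|n_0\|_{L^2}^{1/2}\|\partial_yn_0\|_{L^2}^{1/2}$. Young's inequality absorbs $\frac1{2A}\|\partial_yn_0\|^2$ and leaves $\frac CA B_3^4\|n_0\|_{L^2}^2$. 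This term does not decay, so we close with Nash's inequality
\[
\|n_0\|_{L^2}^2\leq C\|n_0\|_{L^1}^{4/3}\|\partial_yn_0\|_{L^2}^{2/3}.
\]
It converts the remaining dissipation into a superlinear damping $\frac cA\|n_0\|_{L^2}^6/M^4$. This produces the ODE inequality
\[
\frac{d}{dt}y\leq \frac1A\big(CB_3^4y-cM^{-4}y^3\big)+\text{(small forcing)},\qquad y=\|n_0\|_{L^2}^2 .
\]
Its solutions are bounded uniformly in time by $\max\{y(T^*),CB_3^2M^2\}$ plus a small correction. The nonzero-mode fluxes $(n_{\neq}\partial_yc_{\neq})_0$ and $(u_{2,\neq}n_{\neq})_0$ are treated as in Steps 1--2, using \eqref{fg-L2} and the bounds on $\partial_xn_{\neq}$ and $\partial_x\nabla c_{\neq}$ in $X_a$. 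Applying Young's inequality to $B_3^2M^2$ yields $B_4$. The delicate points are that Step 2 must close before Step 3, and that all $A$-powers from the long interval are absorbed by the exponential weights in $X_a$. That absorption is where the threshold $A_3=E_2^{12}$ comes from.
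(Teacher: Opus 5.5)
Your proposal is correct and follows essentially the same route as the paper: $L^2$ energy estimates for $\omega_0$ and $u_{1,0}$, the splitting $c_0=c_{1,0}+c_{2,0}$ with the heat-kernel bound for $c_{1,0}$ and an $H^1$ energy estimate for $c_{2,0}$, and for $n_0$ the Gagliardo--Nirenberg/Young step, then Nash's inequality and a comparison argument against the time-integrated nonzero-mode forcing. One clarification: the threshold $E_2^{12}$ does not come from the exponential weights; it comes from the flux $(n_{\neq}\partial_yc_{\neq})_0$, which is not handled exactly as in your Steps 1--2, because \eqref{fg-L2} produces the extra factor $\|\partial_x\nabla n_{\neq}\|_{L^2L^2}\leq A^{\frac12}\|\partial_xn_{\neq}\|_{X_a}$ and hence the net bound $A^{-\frac13}E_2^4$.
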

	
	\begin{proof}
		\underline{\textbf{I. Estimate of $\omega_0$.}}
		Taking the $L^2$ inner product of the equation $\eqref{eq:zero-mode-system}_3$ with $\omega_0$ and applying Young's inequality, we obtain
		\begin{align}
			\frac{d}{dt}\|\omega_0\|_{L^2}^2+\frac1A\|\partial_y\omega_0\|_{L^2}^2
			\leq\frac CA\|(u_{2,\neq}\omega_{\neq})_0\|_{L^2}^2. \label{omega0}
		\end{align}
		Since $\nabla\cdot u_{\neq}=0$, integration by parts gives
		\[
		\int_{\mathbb T\times\mathbb R}
		\partial_yu_{1,\neq}\partial_xu_{2,\neq}dxdy
		=\int_{\mathbb T\times\mathbb R}\partial_xu_{1,\neq}\partial_yu_{2,\neq}dxdy
		=-\|\partial_xu_{1,\neq}\|_{L^2}^2
		=-\|\partial_yu_{2,\neq}\|_{L^2}^2.
		\]
		Hence, it is obvious that
		\begin{align*}
			\|\omega_{\neq}\|_{L^2}^2
			=\|\nabla\times u_{\neq}\|_{L^2}^2
			=\|\partial_yu_{1,\neq}-\partial_xu_{2,\neq}\|_{L^2}^2
			=\|\nabla u_{\neq}\|_{L^2}^2. \alabel{u=w-xy}
		\end{align*}
		Integrating \eqref{omega0} over $[T^*,t]$ and using Lemma \ref{Poincare2} and $\eqref{fg-L2}_3$, we obtain
		\begin{align*}
			\|\omega_0\|_{L^{\infty}L^2}^2+\frac{1}{A}\|\partial_y\omega_0\|_{L^2L^2}^2
			\leq&~C\|\omega_0(T^*)\|_{L^2}^2+\frac{C}{A}\|\partial_x\nabla u_{\neq}\|_{L^2L^2}^2\|\omega_{\neq}\|_{L^{\infty}L^2}^2\\
			\leq&~C\|(\omega_{\rm in})_0\|_{L^2}^2+\frac{C}{A}\|\partial_x\omega_{\neq}\|_{L^2L^2}^2\|\partial_x\omega_{\neq}\|_{L^{\infty}L^2}^2\\
			\leq&~C\|(\omega_{\rm in})_0\|_{L^2}^2+\frac{C}{A^{\frac23}}\|\partial_x\omega_{\neq}\|_{X_a}^4
			\leq C\left(\|(\omega_{\rm in})_0\|_{L^2}^2+1\right),
		\end{align*}
		provided that
		\begin{align}
			A\geq E_2^6. \label{A01}
		\end{align}
		\underline{\textbf{II. Estimate of $u_{1,0}$.}}
		The equation for $u_{1,0}$ can be written as
		\begin{align*}
			\partial_tu_{1,0}-\frac{1}{A}\partial_y^2u_{1,0}
			=-\frac{1}{A}\partial_y(u_{2,\neq}u_{1,\neq})_0.
		\end{align*}
		Testing this equation with $u_{1,0}$ and applying Young's inequality, we obtain
		\begin{align*}
			\frac{d}{dt}\|u_{1,0}\|_{L^2}^2+\frac1A\|\partial_yu_{1,0}\|_{L^2}^2
			\leq\frac CA\|(u_{2,\neq}u_{1,\neq})_0\|_{L^2}^2.
		\end{align*}
		Integrating over $[T^*,t]$ and applying Lemma \ref{Poincare2} and $\eqref{fg-L2}_3$, we deduce that
		\begin{align*}
			\|u_{1,0}\|_{L^{\infty}L^2}^2+\frac{1}{A}\|\partial_yu_{1,0}\|_{L^2L^2}^2
			\leq&~C\|u_{1,0}(T^*)\|_{L^2}^2+\frac{C}{A}\|\partial_x\nabla u_{\neq}\|_{L^2L^2}^2\|u_{\neq}\|_{L^{\infty}L^2}^2\\
			\leq&~C\|(u_{\rm in})_0\|_{L^2}^2+\frac{C}{A}\|\partial_x\omega_{\neq}\|_{L^2L^2}^2\|\partial_x\nabla u_{\neq}\|_{L^{\infty}L^2}^2\\
			\leq&~C\|(u_{\rm in})_0\|_{L^2}^2+\frac{C}{A^{\frac23}}\|\partial_x\omega_{\neq}\|_{X_a}^4
			\leq C\left(\|(u_{\rm in})_0\|_{L^2}^2+1\right),
		\end{align*}
		provided that \eqref{A01} holds. Since $\partial_yu_{1,0}=\omega_0$, it follows that
		\begin{align*}
			\|u_{1,0}\|_{L^{\infty}H^1}^2
			\leq \|u_{1,0}\|_{L^{\infty}L^2}^2+\|\omega_0\|_{L^{\infty}L^2}^2
			\leq&~ C\left(\|(u_{\rm in})_0\|_{L^2}^2+\|(\omega_{\rm in})_0\|_{L^2}^2+1\right).
		\end{align*}
		\underline{\textbf{III. Estimate of $\partial_yc_0$.}}
		\begin{revision}
			We decompose $c_0$ as $c_0=c_{1,0}+c_{2,0}$, where $c_{1,0}$ and $c_{2,0}$ solve the following equations, respectively:
			\begin{equation}\label{c10}
				\begin{cases}
					\partial_tc_{1,0}-\frac1A\partial_y^2c_{1,0}+\frac1A c_{1,0}=\frac1A n_0,\\
					c_{1,0}(T^*)=0,
				\end{cases}
			\end{equation}
			and
			\begin{equation}\label{c20}
				\begin{cases}
					\partial_tc_{2,0}-\frac1A\partial_y^2c_{2,0}+\frac1A c_{2,0}
					=-\frac1A\partial_y(u_{2,\neq}c_{\neq})_0,\\
					c_{2,0}(T^*)=(c_{\rm in})_0.
				\end{cases}
			\end{equation}
		\end{revision}
		As in the estimate of \eqref{C1_YL2}, we obtain
		\begin{align*}
			\|\partial_yc_{1,0}\|_{L^{\infty}L^2}^2\leq CM^2. \alabel{c10_y}
		\end{align*}
		Differentiating \eqref{c20} with respect to $y$ and testing the resulting equation with $\partial_yc_{2,0}$, we obtain
		\begin{align*}
			\frac{d}{dt}\|\partial_yc_{2,0}\|_{L^2}^2+\frac{1}{A}\|\partial_yc_{2,0}\|_{H^1}^2\leq\frac{C}{A}\|\partial_y(u_{2,\neq}c_{\neq})_0\|_{L^2}^2.
		\end{align*}
		\begin{revision}
			The anisotropic Sobolev inequality used in the proof of
			$\eqref{fg-L2}_3$ and Lemma~\ref{Poincare2} yield
			\begin{align*}
				\|\partial_y(u_{2,\neq}c_{\neq})_0\|_{L^2}^2
				\leq&~C\left(\|c_{\neq}\partial_yu_{2,\neq}\|_{L^2}^2+\|u_{2,\neq}\partial_yc_{\neq}\|_{L^2}^2\right)\\
				\leq&~C\left(\|\partial_x\nabla c_{\neq}\|_{L^2}^2\|\partial_yu_{2,\neq}\|_{L^2}^2+\|\partial_x\nabla u_{\neq}\|_{L^2}^2\|\partial_yc_{\neq}\|_{L^2}^2\right)\\
				\leq&~C\|\partial_x\nabla c_{\neq}\|_{L^2}^2\|\partial_x\omega_{\neq}\|_{L^2}^2.
			\end{align*}
		\end{revision}
		Integrating in time and applying H\"older's inequality, we deduce that
		\begin{align*}
			\|\partial_yc_{2,0}\|_{L^{\infty}L^2}^2+\frac{1}{A}\|\partial_yc_{2,0}\|_{L^2H^1}^2
			\leq&~\|\partial_yc_{2,0}(T^*)\|_{L^2}^2+\frac{C}{A}\|\partial_x\nabla c_{\neq}\|_{L^2L^2}^2\|\partial_x\omega_{\neq}\|_{L^{\infty}L^2}^2\\
			\leq&~\|(\partial_yc_{\rm in})_0\|_{L^2}^2+\frac{C}{A^{\frac23}}\|\partial_x\nabla c_{\neq}\|_{X_a}^2\|\partial_x\omega_{\neq}\|_{X_a}^2\\
			\leq&~C\left(\|(\partial_yc_{\rm in})_0\|_{L^2}^2+1\right),\alabel{c20_y}
		\end{align*}
		where we used \eqref{A01} in the last inequality. Combining \eqref{c10_y} and \eqref{c20_y}, we obtain $\eqref{zero-mode eatimate-long}_3$.\\
		\underline{\textbf{IV. Estimate of $n_0$.}}
		Since the case of $M=0$ is trivial, we consider the case of $M>0$.
		Taking the $L^2$ inner product of $\eqref{eq:zero-mode-system}_1$ with $n_0$ and applying Young's inequality, we obtain
		\begin{align*}
			\frac12\frac{d}{dt}\|n_0\|_{L^2}^2+\frac{1}{A}\|\partial_yn_0\|_{L^2}^2
			\leq\frac{3}{8A}\|\partial_yn_0\|_{L^2}^2+\frac{C}{A}\left(\|n_0\partial_yc_0\|_{L^2}^2+\|(n_{\neq}\partial_yc_{\neq})_0\|_{L^2}^2+\|(u_{2,\neq}n_{\neq})_0\|_{L^2}^2\right). \alabel{ineq:n0}
		\end{align*}
		By H\"older's inequality and the one-dimensional Gagliardo-Nirenberg inequality, we have
		\begin{align*}
			\|n_0\partial_yc_0\|_{L^2}^2
			=&~|T|\|n_0\partial_yc_0\|_{L_y^2}^2
			\leq|T|\|n_0\|_{L_y^{\infty}}^2\|\partial_yc_0\|_{L_y^2}^2
			\leq C\|n_0\|_{L_y^2}\|\partial_yn_0\|_{L_y^2}\|\partial_yc_0\|_{L_y^2}^2\\
			\leq&~C\|\partial_yn_0\|_{L^2}\|n_0\|_{L^2}\|\partial_yc_0\|_{L^2}^2.
		\end{align*}
		Substituting this bound into \eqref{ineq:n0} and using Young's inequality, we deduce that
		\begin{align}\label{n-0 L22}
			\frac{d}{dt}\|n_0\|_{L^2}^2+\frac{1}{A}\|\partial_yn_0\|_{L^2}^2
			\leq&~\frac{C}{A}\big(\|n_0\|_{L^2}^2\|\partial_yc_0\|_{L^{\infty}L^2}^4+\|(n_{\neq}\partial_yc_{\neq})_0\|_{L^2}^2+\|(u_{2,\neq}n_{\neq})_0\|_{L^2}^2\big).
		\end{align}
		Due to the Nash inequality, we find
		\begin{align}\label{partial_yn_0 L22}
			-\|\partial_yn_0\|_{L^2}^2
			\leq-\frac{\|n_0\|_{L^2}^6}{C\|n_0\|_{L^1}^4}
			\leq-\frac{\|n_0\|_{L^2}^6}{CM^4}.
		\end{align}
		Combining \eqref{n-0 L22} and \eqref{partial_yn_0 L22}, we have
		\begin{align*}
			\frac{d}{dt}\|n_0\|_{L^2}^2
			\leq&-\frac{\|n_0\|_{L^2}^2}{CAM^4}\left(\|n_0\|_{L^2}^4-C^2M^4\|\partial_yc_0\|_{L^{\infty}L^2}^4\right)+\frac{C}{A}\big(\|(n_{\neq}\partial_yc_{\neq})_0\|_{L^2}^2+\|(u_{2,\neq}n_{\neq})_0\|_{L^2}^2\big).\alabel{ineq:n0finnal}
		\end{align*}
		For $t\geq T^*$, set
		\begin{align*}
			H(t):=\frac{C}{A}\int_{T^*}^t\big(\|(n_{\neq}\partial_yc_{\neq})_0\|_{L^2}^2+\|(u_{2,\neq}n_{\neq})_0\|_{L^2}^2\big)ds.
		\end{align*}
		Using $\eqref{fg-L2}_3$, Lemma~\ref{Poincare2}, and \eqref{u=w-xy}, we obtain
		\begin{align}\label{n,u_2, partial_yc}
			&\|(n_{\neq}\partial_yc_{\neq})_0\|_{L^2}^2
			\leq C\|\partial_xn_{\neq}\|_{L^2}\|\partial_x\nabla n_{\neq}\|_{L^2}\|\partial_yc_{\neq}\|_{L^2}^2
			\leq C\|\partial_xn_{\neq}\|_{L^2}\|\partial_x\nabla n_{\neq}\|_{L^2}\|\partial_x\nabla c_{\neq}\|_{L^2}^2,\\\nonumber
			&\|(u_{2,\neq}n_{\neq})_0\|_{L^2}^2
			\leq C\|\partial_xu_{\neq}\|_{L^2}\|\partial_x\nabla u_{\neq}\|_{L^2}\|n_{\neq}\|_{L^2}^2
			\leq C\|\partial_x\nabla u_{\neq}\|_{L^2}^2\|\partial_xn_{\neq}\|_{L^2}^2
			\leq C\|\partial_x\omega_{\neq}\|_{L^2}^2\|\partial_xn_{\neq}\|_{L^2}^2.
		\end{align}
		By H\"older's inequality and \eqref{n,u_2, partial_yc}, we have
		\begin{align*}
			H(t)
			\leq&~\frac{C}{A}\left(\|\partial_x\nabla c_{\neq}\|_{L^{\infty}L^2}^2\|\partial_xn_{\neq}\|_{L^2L^2}\|\partial_x\nabla n_{\neq}\|_{L^2L^2}+\|\partial_x\omega_{\neq}\|_{L^{\infty}L^2}^2\|\partial_xn_{\neq}\|_{L^2L^2}^2\right)\\
			\leq&~\frac{C}{A^{\frac13}}\|\partial_x\nabla c_{\neq}\|_{X_a}^2\|\partial_xn_{\neq}\|_{X_a}^2+\frac{C}{A^{\frac23}}\|\partial_x\omega_{\neq}\|_{X_a}^2\|\partial_xn_{\neq}\|_{X_a}^2
			\leq \frac{C}{A^{\frac13}}E_2^4
			\leq C, \alabel{H(t)}
		\end{align*}
		provided that
		$A\geq E_2^{12}.$	
		Thus, \eqref{ineq:n0finnal} can be rewritten as follows
		\begin{align*}
			\frac{d}{dt}\left(\|n_0\|_{L^2}^2-H(t)\right)
			\leq-\frac{\|n_0\|_{L^2}^2}{CAM^4}\left(\|n_0\|_{L^2}^4-C^2M^4\|\partial_yc_0\|_{L^{\infty}L^2}^4\right).
		\end{align*}
		\begin{revision}
			Applying a standard comparison argument and using \eqref{H(t)}
			and $\eqref{zero-mode eatimate-long}_3$, we obtain
		\end{revision}
		\begin{align*}
			\|n_0\|_{L^{\infty}L^2}^2
			\leq&~C\left(\|n_0(T^*)\|_{L^2}^2+M^2\|\partial_yc_0\|_{L^{\infty}L^2}^2+1\right)\\
			\leq&~C\left(\|(n_{\rm in})_0\|_{L^2}^2+\|(\partial_yc_{\rm in})_0\|_{L^2}^4+M^4+1\right).
		\end{align*}
		This completes the proof.
	\end{proof}

	\subsection{Nonzero-mode estimates and bootstrap closure}
	\label{subsec:long-closure}
	
	Using the zero-mode estimates, we now close the energy estimate for $\mathcal{E}_3(t)$.

	\begin{proof}[\normalfont\bfseries Proof of Proposition \ref{prop:long-time}]
		\underline{\textbf{I. Estimate of $\|\partial_xn_{\neq}\|_{X_a}$.}}
		Applying Corollary \ref{cor:physical-timespace} to $\eqref{eq:non1}_1$, 
		we obtain
		\begin{align*}
			\|\partial_x n_{\neq}\|_{X_a}^2
			\leq&~ C\bigg( \|\partial_xn_{\neq}(T^*)\|_{L^2}^2
			+\frac{\|e^{aA^{-\frac13}(t-T^*)} n_0 \partial_x \nabla c_{\neq}\|^2_{L^2L^2}
				+\|e^{aA^{-\frac13}(t-T^*)} \partial_x n_{\neq} \partial_y c_{0}\|^2_{L^2L^2}}{A}\\
			&+\frac{\|e^{aA^{-\frac13}(t-T^*)} \partial_x(n_{\neq}\nabla c_{\neq})\|^2_{L^2L^2}+\|e^{aA^{-\frac13}(t-T^*)} n_0 \partial_x u_{\neq}\|^2_{L^2L^2}}{A}\\
			&+\frac{\|e^{aA^{-\frac13}(t-T^*)} u_{1,0} \partial_x n_{\neq} \|^2_{L^2L^2}
				+\|e^{aA^{-\frac13}(t-T^*)} \partial_x(u_{\neq}n_{\neq})_{\neq}\|^2_{L^2L^2}}{A}\bigg).	
		\end{align*}
		By $\eqref{fg-L2}_1$, we have
		\begin{equation*}
			\|n_0 \partial_x \nabla c_{\neq}\|^2_{L^2}
			\leq C\|n_0 \|^2_{L^2}
			\|\partial_x \nabla c_{\neq}\|_{L^2}
			\|\partial_x \Delta c_{\neq}\|_{L^2}.
		\end{equation*} 
		Using the above estimate and H\"older's inequality, we obtain
		\begin{align*}
			\|e^{aA^{-\frac13}(t-T^*)} n_0 \partial_x \nabla c_{\neq}\|^2_{L^2L^2}
			\leq&~ C\| n_0 \|^2_{L^{\infty}L^2}
			\|e^{aA^{-\frac13}(t-T^*)}  \partial_x \nabla c_{\neq}\|_{L^2L^2}
			\|e^{aA^{-\frac13}(t-T^*)} \partial_x \Delta c_{\neq}\|_{L^2L^2}\\
			\leq&~ CA^{\frac23}\| n_0 \|^2_{L^{\infty}L^2}\|\partial_x\nabla c_{\neq}\|_{X_a}^2. \alabel{n_x-1}
		\end{align*}
		Similarly, by \eqref{u=w-xy}, we have 
		\begin{align*}
			\|e^{aA^{-\frac13}(t-T^*)} n_0 \partial_x u_{\neq}\|^2_{L^2L^2}
			\leq&~ C\|n_0\|^2_{L^{\infty}L^2}
			\|e^{aA^{-\frac13}(t-T^*)}  \partial_x u_{\neq}\|_{L^2L^2}
			\|e^{aA^{-\frac13}(t-T^*)} \partial_x \nabla u_{\neq}\|_{L^2L^2}\\
			\leq&~CA^{\frac13}\| n_0 \|^2_{L^{\infty}L^2}\|\partial_x\omega_{\neq}\|_{X_a}^2
			\alabel{n_x-2}
		\end{align*}
		and
		\begin{align*}
			\|e^{aA^{-\frac13}(t-T^*)} \partial_x n_{\neq} \partial_yc_{0}\|^2_{L^2L^2}
			\leq&~ C\|\partial_y c_{0}\|_{L^{\infty}L^2}^2
			\|e^{aA^{-\frac13}(t-T^*)} \partial_x n_{\neq}  \|_{L^2L^2} 
			\|e^{aA^{-\frac13}(t-T^*)} \partial_x\nabla n_{\neq}  \|_{L^2L^2} \\
			\leq&~ CA^{\frac23}\|\partial_y c_{0}\|_{L^{\infty}L^2}^2\|\partial_x n_{\neq}\|_{X_a}^2.\alabel{n_x-3}
		\end{align*}
		By $\eqref{fg-L2}_2$, we obtain
		\begin{align*}
			\|e^{aA^{-\frac13}(t-T^*)} u_{1,0} \partial_x n_{\neq}\|^2_{L^2L^2}
			\leq C\| u_{1,0} \|^2_{L^{\infty}H^1}\|e^{aA^{-\frac13}(t-T^*)}  \partial_x n_{\neq}\|_{L^2L^2}^2
			\leq CA^{\frac13}\| u_{1,0} \|^2_{L^{\infty}H^1}\|\partial_xn_{\neq}\|_{X_a}^2.\alabel{n_x-4}
		\end{align*}
		Notice
		$$
		\partial_x(n_{\neq}\nabla c_{\neq})=
		\partial_xn_{\neq}\nabla c_{\neq}+n_{\neq}\partial_x \nabla c_{\neq}.
		$$
		By the one-dimensional Gagliardo--Nirenberg inequality and Lemma \ref{Poincare2}, we have
		\begin{align*}
			\|\nabla c_{\neq}\|_{L^{\infty}_xL^2_{y}}^2\|\partial_xn_{\neq}\|_{L^2_{x}L^{\infty}_y}^2
			\leq&~C\|\nabla c_{\neq}\|_{L^2}\|\nabla\partial_xc_{\neq}\|_{L^2}\|\partial_xn_{\neq}\|_{L^2}\|\partial_x\partial_yn_{\neq}\|_{L^2}\\
			\leq&~C\|\partial_x\nabla c_{\neq}\|_{L^2}^2
			\|\partial_x n_{\neq}\|_{L^2}
			\|\partial_x\nabla n_{\neq}\|_{L^2}, 
		\end{align*} 
		and
		\begin{align*}
			\|n_{\neq}\|_{L^{\infty}}^2\|\partial_x \nabla c_{\neq}\|_{L^2}^2
			\leq&~C\|n_{\neq}\|_{L_x^{\infty}L_y^2}\|\partial_yn_{\neq}\|_{L_x^{\infty}L_y^2}\|\partial_x \nabla c_{\neq}\|_{L^2}^2\\
			\leq&~C\|n_{\neq}\|_{L^2}^{\frac12}\|\partial_xn_{\neq}\|_{L^2}^{\frac12}\|\partial_yn_{\neq}\|_{L^2}^{\frac12}\|\partial_y\partial_xn_{\neq}\|_{L^2}^{\frac12}\|\partial_x\nabla c_{\neq}\|_{L^2}^2\\
			\leq&~C\|\partial_xn_{\neq}\|_{L^2}\|\partial_x\nabla n_{\neq}\|_{L^2}\|\partial_x\nabla c_{\neq}\|_{L^2}^2.
		\end{align*}
		Consequently,
		\begin{align*}
			\|\partial_x(n_{\neq}\nabla c_{\neq})\|_{L^2}^2
			&\leq 
			C\|\nabla c_{\neq}\|_{L^{\infty}_xL^2_{y}}^2
			\|\partial_xn_{\neq}\|_{L^2_{x}L^{\infty}_y}^2
			+\|n_{\neq}\|_{L^{\infty}}^2\|\partial_x \nabla c_{\neq}\|_{L^2}^2\\
			&\leq C\|\partial_x\nabla c_{\neq}\|_{L^2}^2
			\|\partial_x n_{\neq}\|_{L^2}
			\|\partial_x\nabla n_{\neq}\|_{L^2}.
		\end{align*}
		Applying H\"older's inequality in time, we obtain
		\begin{align*}
			&\|e^{aA^{-\frac13}(t-T^*)} \partial_x(n_{\neq}\nabla c_{\neq})\|^2_{L^2L^2}\\
			&\quad\leq C
			\|e^{aA^{-\frac13}(t-T^*)}  \partial_x \nabla c_{\neq}\|^2_{L^{\infty}L^2}
			\|e^{aA^{-\frac13}(t-T^*)}  \partial_x n_{\neq}\|_{L^2L^2}
			\|e^{aA^{-\frac13}(t-T^*)}  \partial_x \nabla n_{\neq}\|_{L^2L^2}\\
			&\quad\leq CA^{\frac23}\|\partial_x\nabla c_{\neq}\|_{X_a}^2\|\partial_xn_{\neq}\|_{X_a}^2
			\leq CA^{\frac23}E_2^2\|\partial_xn_{\neq}\|_{X_a}^2.\alabel{n_x-5}
		\end{align*} 
		The same argument as in \eqref{n_x-5} gives
		\begin{equation}\label{n_x-6}
			\begin{aligned}
				\|e^{aA^{-\frac13}(t-T^*)} \partial_x(n_{\neq}u_{\neq})\|^2_{L^2L^2}
				&\leq C
				\|e^{aA^{-\frac13}(t-T^*)}  \partial_x u_{\neq}\|^2_{L^{\infty}L^2}
				\|e^{aA^{-\frac13}(t-T^*)}  \partial_x n_{\neq}\|_{L^2L^2}
				\|e^{aA^{-\frac13}(t-T^*)}  \partial_x \nabla n_{\neq}\|_{L^2L^2}\\
				&\leq CA^{\frac23}\|\partial_x\omega_{\neq}\|_{X_a}^2
				\|\partial_xn_{\neq}\|_{X_a}^2
				\leq CA^{\frac23}E_2^2\|\partial_xn_{\neq}\|_{X_a}^2.
			\end{aligned}
		\end{equation}
		Combining \eqref{n_x-1}-\eqref{n_x-6}, we obtain
		\begin{equation}\label{rn1}
			\begin{aligned}
				\|\partial_x n_{\neq}\|_{X_a}
				\leq &~C\Big(\|\partial_xn_{\neq}(T^*)\|_{L^2}
				+\frac{\|n_0\|_{L^{\infty}L^2}+\|u_{1,0}\|_{L^{\infty}H^1}}{A^{\frac13}}\left(\|\partial_x\omega_{\neq}\|_{X_a}+\|\partial_xn_{\neq}\|_{X_a}\right)\\
				&+\frac{E_2+\|\partial_y c_{0}\|_{L^{\infty}L^2}
					+\|n_{0}\|_{L^{\infty}L^2}}{A^{\frac16}}
				\left(\|\partial_xn_{\neq}\|_{X_a}+\|\partial_x\nabla c_{\neq}\|_{X_a}\right)\Big).
			\end{aligned}
		\end{equation}
		\underline{\textbf{II. Estimate of $\|\partial_x\nabla c_{\neq}\|_{X_a}$.}}
		Differentiating $\eqref{eq:non1}_2$ twice with respect to $x$, we obtain 
		\begin{equation*}
			\begin{aligned}
				\mathcal L_A \partial_x^2 c_{\neq}
				+\frac{1}{A}\partial_x^2 c_{\neq}
				=
				\frac{\partial_x^2n_{\neq}}{A}
				-\frac{\partial_x^2\big[
					u_{1,0}\partial_xc_{\neq}
					+u_{2,\neq}\partial_yc_0
					+(u_{\neq}\cdot\nabla c_{\neq})_{\neq}
					\big]}{A}.
			\end{aligned}
		\end{equation*}	
		Applying Corollary \ref{cor:physical-timespace} to the above equation, we have 
		\begin{equation}\label{equc1}
			\begin{aligned}
				\|\partial_x^2 c_{\neq}\|_{X_a}^2
				\leq&~ C\bigg(\|\partial_x^2 c_{\neq}(T^*)\|_{L^2}^2
				+\frac{\|e^{aA^{-\frac13}(t-T^*)} \partial_x \nabla n_{\neq} \|^2_{L^2L^2}}{A^2}	+\frac{\|e^{aA^{-\frac13}(t-T^*)}\partial_x^2(u_{\neq}c_{\neq})_{\neq}\|^2_{L^2L^2}}{A}\\
				&+\frac{\|e^{aA^{-\frac13}(t-T^*)} u_{1,0} \partial_x^3 c_{\neq}\|^2_{L^2L^2}
					+\|e^{aA^{-\frac13}(t-T^*)} \partial_x^2 u_{2,\neq}\partial_yc_0\|^2_{L^2L^2}}
				{A^{\frac53}}\bigg).
			\end{aligned}
		\end{equation}
		\begin{revision}
			Since $u_{\neq}=\nabla^\perp\Delta^{-1}\omega_{\neq}$, the $L^2$ boundedness of the Riesz transforms implies
			\begin{align*}
				&\|\partial_x^2 u_{2,\neq}\|_{L^2L^2}
				\leq \|\partial_x^2 \nabla^{\perp}\Delta^{-1}\omega_{\neq}\|_{L^2L^2}
				\leq \|\partial_x\omega_{\neq}\|_{X_a},\\
				&\|\partial_x^2\nabla u_{2,\neq}\|_{L^2L^2}
				=\|-\partial_x^2\Delta^{-1}(\partial_x\nabla\omega_{\neq})\|_{L^2L^2}
				\leq\|\partial_x\nabla\omega_{\neq}\|_{L^2L^2}
				\leq A^{\frac12}\|\partial_x\omega_{\neq}\|_{X_a}.
			\end{align*}
		\end{revision}
		Using the above estimates, $\eqref{fg-L2}_1$ and H\"older's inequality, we obtain
		\begin{equation}\label{c_x-1}
			\begin{aligned}
				&\|e^{aA^{-\frac13}(t-T^*)} \partial_x^2 u_{2,\neq}\partial_yc_0\|^2_{L^2L^2} \\
				\leq&~C\|\partial_yc_0\|_{L^{\infty}L^2}^2
				\|e^{aA^{-\frac13}(t-T^*)} \partial_x^2 u_{2,\neq}\|_{L^2L^2}
				\|e^{aA^{-\frac13}(t-T^*)} \partial_x^2 \nabla u_{2,\neq}\|_{L^2L^2} 
				\leq CA^{\frac12}\|\partial_yc_0\|_{L^{\infty}L^2}^2\|\partial_x\omega_{\neq}\|_{X_a}^2,
			\end{aligned}
		\end{equation}
		Similar to the estimate of \eqref{n_x-5}, we have 
		\begin{equation*}
			\begin{aligned}
				\|\partial_x^2(u_{\neq}c_{\neq})_{\neq}\|^2_{L^2}
				\leq&~C\left(\|\partial_xc_{\neq}\partial_xu_{\neq}\|_{L^2}^2+\|c_{\neq}\partial_x^2u_{\neq}\|_{L^2}^2+\|u_{\neq}\partial_x^2c_{\neq}\|_{L^2}^2\right)\\
				\leq&~C\left(\|\partial_xc_{\neq}\|_{L_x^2L_y^{\infty}}^2\|\partial_xu_{\neq}\|_{L_x^{\infty}L_y^2}^2+\|c_{\neq}\|_{L^{\infty}}^2\|\partial_x^2u_{\neq}\|_{L^2}^2+\|u_{\neq}\|_{L_x^{\infty}L_y^2}^2\|\partial_x^2c_{\neq}\|_{L_x^2L_y^{\infty}}^2\right)\\
				\leq&~C\|\partial_x^2u_{\neq}\|_{L^2}^2\|\partial_x^2c_{\neq}\|_{L^2}\|\partial_x^2\nabla c_{\neq}\|_{L^2}.
			\end{aligned}
		\end{equation*}
		Applying H\"older's inequality and \eqref{u=w-xy}, we obtain
		\begin{align*}
			\|e^{aA^{-\frac13}(t-T^*)}\partial_x^2(u_{\neq}c_{\neq})_{\neq}\|^2_{L^2L^2}				
			\leq&~ C\|e^{aA^{-\frac13}(t-T^*)}\partial_x^2u_{\neq}\|^2_{L^{\infty}L^2}
			\|\partial_x^2c_{\neq}\|_{L^2L^2}
			\|\partial_x^2\nabla c_{\neq}\|_{L^2L^2}\\
			\leq&~ CA^{\frac23}\|e^{aA^{-\frac13}(t-T^*)}\partial_x\omega_{\neq}\|_{L^{\infty}L^2}^2\|\partial_x^2 c_{\neq}\|_{X_a}^2
			\leq CA^{\frac23}E_2^2\|\partial_x^2 c_{\neq}\|_{X_a}^2. \alabel{c_x-2}
		\end{align*}
		By $\eqref{fg-L2}_2$, we have 
		\begin{align*}
			\|e^{aA^{-\frac13}(t-T^*)} u_{1,0} \partial_x^3 c_{\neq}\|^2_{L^2L^2}
			\leq \|u_{1,0}\|_{L^{\infty}H^1}^2
			\|e^{aA^{-\frac13}(t-T^*)} \partial_x^3 c_{\neq}\|^2_{L^2L^2}
			\leq A\|u_{1,0} \|_{L^{\infty}H^1}^2\|\partial_x^2 c_{\neq}\|_{X_a}^2. \alabel{c_x-3}
		\end{align*}
		Substituting \eqref{c_x-1}--\eqref{c_x-3} into \eqref{equc1}, we obtain 
		\begin{equation}\label{cx2}
			\begin{aligned}
				\|\partial_x^2 c_{\neq}\|_{X_a} 
				&\leq C\Big(\|\partial_x^2 c_{\neq}(T^*)\|_{L^2}+\frac{\|\partial_x n_{\neq}\|_{X_a}}{A^{\frac12}}
				+\frac{E_2+\|u_{1,0} \|_{L^{\infty}H^1}}
				{A^{\frac16}}\|\partial_x^2 c_{\neq}\|_{X_a}
				+\frac{\|\partial_yc_0\|_{L^{\infty}L^2}}{A^{\frac{7}{12}}}\|\partial_x\omega_{\neq}\|_{X_a}\Big)\\
				&\leq C\Big(\|\partial_x^2 c_{\neq}(T^*)\|_{L^2}+\frac{\|\partial_x n_{\neq}\|_{X_a}}{A^{\frac12}}
				+\frac{\|\partial_yc_0\|_{L^{\infty}L^2}\|\partial_x\omega_{\neq}\|_{X_a}}{A^{\frac{7}{12}}}\Big)+\frac12\|\partial_x^2 c_{\neq}\|_{X_a},
			\end{aligned}
		\end{equation}
		provided that
		\begin{align*}
			A\geq\left[2C(E_2+B_2)\right]^6=:A_4.
		\end{align*}
		
		Differentiating $\eqref{eq:non1}_2$ with respect to $x$ and $y$, we obtain
		\begin{align*}
			\mathcal L_A \partial_x\partial_yc_{\neq}
			+\frac{1}{A}\partial_x\partial_yc_{\neq}+\partial_x^2c_{\neq}
			=\frac{\partial_x\partial_yn_{\neq}}{A}
			-\frac{\partial_x\partial_y\big[u_{1,0}\partial_xc_{\neq}+u_{2,\neq}\partial_yc_0+(u_{\neq}\cdot\nabla c_{\neq})_{\neq}\big]}{A}.
		\end{align*}
		Applying Corollary \ref{cor:physical-timespace} to it, we have
		\begin{equation}\label{equc2}
			\begin{aligned}
				\|\partial_x\partial_y c_{\neq}\|_{X_a}^2
				&\leq C\bigg(\|\partial_x\partial_y c_{\neq}(T^*)\|_{L^2}^2
				+\frac{\|e^{aA^{-\frac13}(t-T^*)} \partial_x n_{\neq} \|^2_{L^2L^2}}{A}		+\frac{\|e^{aA^{-\frac13}(t-T^*)}\partial_x(u_{\neq}\cdot\nabla c_{\neq})\|^2_{L^2L^2}}{A}\\ 
				&\quad+\frac{\|e^{aA^{-\frac13}(t-T^*)} u_{1,0} \partial_x^2 c_{\neq}\|^2_{L^2L^2}
					+\|e^{aA^{-\frac13}(t-T^*)} \partial_x u_{2,\neq}\partial_yc_0\|^2_{L^2L^2}}{A}\\
				&\quad+A^{\frac{1}{3}}\|e^{aA^{-\frac13}(t-T^*)} \partial_x^2 c_{\neq} \|^2_{L^2L^2}\bigg).
			\end{aligned}
		\end{equation}
		By the one-dimensional Gagliardo--Nirenberg inequality, Lemma \ref{Poincare2}, and \eqref{assume1}, we obtain	
		\begin{equation*}
			\begin{aligned}
				\|e^{aA^{-\frac13}(t-T^*)}\partial_x(u_{\neq}\cdot\nabla c_{\neq})\|^2_{L^2L^2}
				\leq&~ C\|e^{aA^{-\frac13}(t-T^*)}\partial_x\nabla u_{\neq}\|^2_{L^{\infty}L^2}
				\|e^{aA^{-\frac13}(t-T^*)}\partial_x \nabla c_{\neq}\|^2_{L^2L^2}\\
				\leq&~ CA^{\frac13}E_2^2\|\partial_x\nabla c_{\neq}\|_{X_a}^2,
			\end{aligned}
		\end{equation*}	
		and
		\begin{equation*}
			\begin{aligned}
				\|e^{aA^{-\frac13}(t-T^*)} u_{1,0} \partial_x^2 c_{\neq} 	\|^2_{L^2L^2}
				\leq C\|u_{1,0} \|_{L^{\infty}H^1}^2
				\|e^{aA^{-\frac13}(t-T^*)} \partial_x^2 c_{\neq} 	\|^2_{L^2L^2}
				\leq CA^\frac13\|u_{1,0} \|_{L^{\infty}H^1}^2\|\partial_x^2 c_{\neq}\|_{X_a}^2.
			\end{aligned}
		\end{equation*}	
		The same argument as in \eqref{c_x-1} gives
		\begin{align*}
			\|e^{aA^{-\frac13}(t-T^*)} \partial_x u_{2,\neq}\partial_yc_0\|^2_{L^2L^2}
			\leq&~ C\|\partial_yc_0\|_{L^{\infty}L^2}^2
			\|e^{aA^{-\frac13}(t-T^*)} \partial_x u_{2,\neq}\|_{L^2L^2}
			\|e^{aA^{-\frac13}(t-T^*)} \partial_x \nabla u_{2,\neq}\|_{L^2L^2}\\
			\leq&~ CA^{\frac12}\|\partial_yc_0\|_{L^{\infty}L^2}^2\|\partial_x\omega_{\neq}\|_{X_a}^2.
		\end{align*}
		Substituting these estimates into \eqref{equc2}, we find
		\begin{equation*}
			\begin{aligned}
				\|\partial_x\partial_y c_{\neq}\|_{X_a}
				&\leq C\bigg(\|\partial_x\partial_y c_{\neq}(T^*)\|_{L^2}
				+A^{\frac13}\|\partial_x^2 c_{\neq}\|_{X_a}
				+\frac{\|\partial_x n_{\neq}\|_{X_a}}{A^{\frac13}}\\
				&\quad+\frac{E_2\|\partial_x\nabla c_{\neq}\|_{X_a}+\|u_{1,0}\|_{L^{\infty}H^1}\|\partial_x^2c_{\neq}\|_{X_a}}{A^{\frac13}}+\frac{\|\partial_yc_0\|_{L^{\infty}L^2}\|\partial_x\omega_{\neq}\|_{X_a}}{A^{\frac14}}\bigg),
			\end{aligned}
		\end{equation*}
		which along with \eqref{cx2} shows that 
		\begin{equation}\label{equc3}
			\begin{aligned}
				\|\partial_x\nabla c_{\neq}\|_{X_a}
				\leq&~ C\bigg(\|\partial_x\partial_y c_{\neq}(T^*)\|_{L^2}
				+A^{\frac13}\|\partial_x^2 c_{\neq}(T^*)\|_{L^2}
				+\frac{\|\partial_x n_{\neq}\|_{X_a}}{A^{\frac16}}\\
				&+\frac{\|\partial_yc_0\|_{L^{\infty}L^2}+\|u_{1,0}\|_{L^{\infty}H^1}+E_2}{A^{\frac14}}\left(\|\partial_x^2 c_{\neq}\|_{X_a}+\|\partial_x\nabla c_{\neq}\|_{X_a}+\|\partial_x\omega_{\neq}\|_{X_a}\right)\bigg).
			\end{aligned}
		\end{equation}
		By \eqref{nwc1} and Proposition \ref{prop:short-time}, we have
		\begin{equation}\label{xyxc2}
			\begin{aligned}
				\|\partial_x\partial_y c_{\neq}(T^*)\|_{L^2}
				+A^{\frac13}\|\partial_x^2 c_{\neq}(T^*)\|_{L^2}
				\leq C\left(\|n_{\rm in}\|_{H^2} +\|(u_{\rm in})_{\neq}\|_{H^3}
				+\|(c_{\rm in})_{\neq}\|_{H^2}\right) e^{C\left(\|c_{\rm in}\|_{H^2}^2+1\right)}.
			\end{aligned}
		\end{equation}
		Therefore, combining \eqref{xyxc2} with \eqref{equc2} and \eqref{equc3}, we conclude that 
		\begin{equation}\label{rc1}
			\begin{aligned}
				\|\partial_x\nabla c_{\neq}\|_{X_a}
				\leq&~ C\bigg(\left(\|n_{\rm in}\|_{H^2} +\|(u_{\rm in})_{\neq}\|_{H^3}
				+\|(c_{\rm in})_{\neq}\|_{H^2}\right)e^{C\left(\|c_{\rm in}\|_{H^2}^2+1\right)}
				+\frac{\|\partial_x n_{\neq}\|_{X_a}}{A^{\frac16}}\\
				&+\frac{\|\partial_yc_0\|_{L^{\infty}L^2}+\|u_{1,0}\|_{L^{\infty}H^1}+E_2}{A^{\frac16}}\left(\|\partial_x\nabla c_{\neq}\|_{X_a}+\|\partial_x\omega_{\neq}\|_{X_a}\right)\bigg).
			\end{aligned}
		\end{equation}
		\underline{\textbf{III. Estimate of $\|\partial_x\omega_{\neq}\|_{X_a}$.}}
		Differentiating $\eqref{eq:non1}_3$ with respect to $x$, we obtain
		\begin{equation*}
			\mathcal L_A\partial_x\omega_{\neq}
			=-\frac{\partial_x}{A}\big[\partial_xn_{\neq}
			+u_{1,0}\partial_x\omega_{\neq}
			+u_{2,\neq}\partial_y\omega_0
			+(u_{\neq}\cdot\nabla\omega_{\neq})_{\neq}
			\big],
		\end{equation*}
		Applying Corollary \ref{cor:physical-timespace} to this equation, we have 
		\begin{equation}\label{xw1}
			\begin{aligned}
				\|\partial_x\omega_{\neq}\|_{X_a}^2
				\leq&~C\bigg(\|\partial_x \omega_{\neq}(T^*)\|_{L^2}^2
				+\frac{\|e^{aA^{-\frac13}(t-T^*)} \partial_x n_{\neq}\|^2_{L^2L^2}}{A}		
				+\frac{\|e^{aA^{-\frac13}(t-T^*)}\partial_x(u_{\neq} \omega_{\neq})\|^2_{L^2L^2}}{A}\\
				&+\frac{\|e^{aA^{-\frac13}(t-T^*)} u_{1,0} \partial_x\omega_{\neq} \|^2_{L^2L^2}
					+\|e^{aA^{-\frac13}(t-T^*)} \partial_x u_{\neq}\omega_0\|^2_{L^2L^2}}{A}\bigg).
			\end{aligned}
		\end{equation}
		It follows from $\eqref{fg-L2}_2$ that
		\begin{align*}
			\|e^{aA^{-\frac13}(t-T^*)} u_{1,0} \partial_x \omega_{\neq}\|^2_{L^2L^2}
			\leq C\|u_{1,0}\|_{L^{\infty}H^1}^2\|e^{aA^{-\frac13}(t-T^*)}\partial_x \omega_{\neq} 	\|^2_{L^2L^2}
			\leq CA^{\frac13}\|u_{1,0}\|_{L^{\infty}H^1}^2
			\|\partial_x\omega_{\neq}\|_{X_a}^2.
		\end{align*}
		Using $\eqref{fg-L2}_1$, H\"older's inequality, and Lemma \ref{Poincare2}, we obtain
		\begin{align*}
			\|e^{aA^{-\frac13}(t-T^*)}\partial_x u_{\neq}\omega_0\|^2_{L^2L^2}
			\leq&~C\|\omega_0\|_{L^{\infty}L^2}^2
			\|e^{aA^{-\frac13}(t-T^*)}\partial_x u_{\neq}\|_{L^2L^2}\|e^{aA^{-\frac13}(t-T^*)}\partial_x\nabla u_{\neq}\|_{L^2L^2}\\
			\leq&~C\|\omega_0\|_{L^{\infty}L^2}^2\|e^{aA^{-\frac13}(t-T^*)}\partial_x\omega_{\neq}\|_{L^2L^2}^2\\
			\leq&~CA^{\frac13}\|\omega_0\|_{L^{\infty}L^2}^2
			\|\partial_x\omega_{\neq}\|_{X_a}^2.
		\end{align*}
		Arguing as in the estimate of \eqref{n_x-5}, we find
		\begin{align*}
			\|e^{aA^{-\frac13}(t-T^*)}\partial_x(u_{\neq} \omega_{\neq})\|^2_{L^2L^2}
			\leq&~C\|e^{aA^{-\frac13}(t-T^*)}\partial_x\omega_{\neq}\|^2_{L^{\infty}L^2}
			\|e^{aA^{-\frac13}(t-T^*)}\partial_xu_{\neq}\|_{L^2L^2}\|e^{aA^{-\frac13}(t-T^*)}\partial_x\nabla u_{\neq}\|_{L^2L^2}\\
			\leq&~C\|\partial_x\omega_{\neq}\|_{X_a}^2\|e^{aA^{-\frac13}(t-T^*)}\partial_x\omega_{\neq}\|_{L^2L^2}^2
			\leq CE_2^2A^{\frac13}\|\partial_x\omega_{\neq}\|_{X_a}^2.
		\end{align*}
		Substituting these estimates into \eqref{xw1}, we obtain 
		\begin{equation}\label{rw1}
			\begin{aligned}
				\|\partial_x\omega_{\neq}\|_{X_a} 
				\leq C\Big(\|\partial_x \omega_{\neq}(T^*)\|_{L^2}
				+\frac{\|\partial_x n_{\neq} \|_{X_a}}{A^{\frac13}}		
				+\frac{\|u_{1,0}\|_{L^{\infty}H^1}
					+\|\omega_0\|_{L^{\infty}L^2}+E_2}{A^{\frac13}}\|\partial_x\omega_{\neq}\|_{X_a}\Big).
			\end{aligned}
		\end{equation}
		Combining \eqref{rn1}, \eqref{rc1}, and \eqref{rw1}, we deduce that
		\begin{align*}
			\mathcal{E}_3(t)
			\leq&~C\bigg(\mathcal{E}_{3,\rm in}
			+\frac{1+E_2+\|\partial_y c_0\|_{L^{\infty}L^2}+\|n_0\|_{L^{\infty}L^2}+\|u_{1,0}\|_{L^{\infty}H^1}}{A^{\frac16}}\|\partial_xn_{\neq}\|_{X_a}\\
			&+\frac{E_2+\|\partial_y c_0\|_{L^{\infty}L^2}+\|n_0\|_{L^{\infty}L^2}+\|u_{1,0}\|_{L^{\infty}H^1}}{A^{\frac16}}\|\partial_x\nabla c_{\neq}\|_{X_a}\\
			&+\frac{E_2+\|n_0\|_{L^{\infty}L^2}+\|\partial_y c_0\|_{L^{\infty}L^2}+\|u_{1,0}\|_{L^{\infty}H^1}+\|\omega_0\|_{L^{\infty}L^2}}{A^{\frac16}}\|\partial_x\omega_{\neq}\|_{X_a}\bigg)\\
			\leq&~C\left(\mathcal{E}_{3,\rm in}+\frac{1+E_2+B_1+B_2+B_3+B_4}{A^{\frac16}}\mathcal{E}_{3}(t)\right). \alabel{E3}
		\end{align*}
		
		\begin{revision}
			For
			\[
			A\geq [2C(1+E_2+B_1+B_2+B_3+B_4)]^6=:A_5,
			\]
			the last term in \eqref{E3} can be absorbed. Hence,
			\[
			\mathcal{E}_3(t)\leq 2C\mathcal{E}_{3,\rm in}=:E_2.
			\]
			Finally, taking
			\[
			\tilde{A}:=\max\{A_3,A_4,A_5\},
			\]
			and using Lemma \ref{lem:long-time-zero-modes}, we complete the proof
			of Proposition \ref{prop:long-time}.

		\end{revision}
	\end{proof}

	\section{Proof of Proposition \ref{prop:infty-estimate}}
	\label{sec:uniform-bounds}
	\begin{revision}

		Throughout this section, let $[0,T]$ be an interval of existence. We use the short-time estimates on $[0,\min\{T,T^*\}]$ and the long-time estimates on $[T^*,T]$ when $T>T^*$. For $T\leq T^*$, the terms involving long-time bounds are omitted. For brevity, set
	\end{revision}
	\[
	Q:=E_1^{\frac12}+E_2+1.
	\]
	By \eqref{relation}, the estimates in Lemma \ref{lem:zero mode-kelxin}
	also hold in physical coordinates. We first estimate $\|n\|_{L^{\infty}L^{\infty}}$ and $\|u\|_{L^{\infty}L^{\infty}}$. We then use these bounds and an estimate for $\|\omega\|_{L^\infty L^4}$ to control $\|c\|_{L^{\infty}W^{1,\infty}}$. 
	

	\subsection{\texorpdfstring{$L^\infty$ estimates for $n$}
		{L-infinity estimates for n}}
	\label{subsec:n-infinity}
	

	\begin{lemma}\label{lem:n-infty}
		Under the conditions of Proposition \ref{prop:infty-estimate},
		\rev{there exists a positive constant $D_1$, depending only on
			$A$, $M$, $\|n_{\rm in}\|_{H^2}$, $\|c_{\rm in}\|_{H^2}$ and
			$\|u_{\rm in}\|_{H^3}$ 
			such that}
		\begin{align*}
			\|n\|_{L^{\infty}L^{\infty}}
			\leq D_1.
		\end{align*}
	\end{lemma}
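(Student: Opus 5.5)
We prove the bound with a Moser--Alikakos iteration on the density equation in the rescaled variables, $\partial_tn+y\partial_xn+\frac1A u\cdot\nabla n-\frac1A\Delta n=-\frac1A\nabla\cdot(n\nabla c)$. Since $u$ is divergence free, the transports $y\partial_x$ and $\frac1Au\cdot\nabla$ drop out of $L^p$ energy identities, and $A$ enters only as a fixed time-scale factor that the constant $D_1$ may absorb. For $p\geq1$ we test with $n^{2p-1}$ and set $w=n^p$. This gives
\begin{align*}
\frac{d}{dt}\|w\|_{L^2}^2+\frac{c_p}{A}\|\nabla w\|_{L^2}^2\leq \frac{Cp^2}{A}\|w\nabla c\|_{L^2}^2 ,
\end{align*}
with $c_p\gtrsim 1$ after absorbing a fraction of the gradient.

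The key point is to bound the right-hand side without any a priori $L^\infty$ bound on $\nabla c$. Following Key observation 3, we split $\nabla c=(0,\partial_yc_0)+\nabla c_{\neq}$ and use the anisotropic inequalities of Section~\ref{subsec:physical-preliminaries}:
\begin{align*}
\|w\nabla c\|_{L^2}^2\leq C\|w\|_{L^2}\|\partial_yw\|_{L^2}\bigl(\|\partial_yc_0\|_{L^2}+\|\partial_x\nabla c_{\neq}\|_{L^2}\bigr)^2 .
\end{align*}
For the zero-mode part, we use $\|w\|_{L^\infty_y}^2\lesssim\|w\|_{L^2_y}\|\partial_yw\|_{L^2_y}$ in each $x$-slice. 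For the nonzero part, we use $\|\nabla c_{\neq}\|_{L^2_xL^\infty_y}$ interpolation together with Lemma~\ref{Poincare2}.

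The factor $K(t):=\|\partial_yc_0\|_{L^2}+\|\partial_x\nabla c_{\neq}\|_{L^2}$ is bounded uniformly in time, with a constant depending on $A$. On $[0,T^*]$ this follows from Proposition~\ref{prop:short-time}, since $(1+t^2)\leq C(1+T^{*2})$, and from the bounds on $C_{1,0}$ and $C_{2,0}$ in Lemma~\ref{lem:zero mode-kelxin}. On $[T^*,T]$ it follows from Proposition~\ref{prop:long-time} together with $\eqref{zero-mode eatimate-long}_3$. Young's inequality then gives
\begin{align*}
\frac{d}{dt}\|w\|_{L^2}^2+\frac{c}{A}\|\nabla w\|_{L^2}^2\leq \frac{Cp^4K^4}{A}\|w\|_{L^2}^2 .
\end{align*}
To close the iteration we need a damping term, and the Nash/Gagliardo--Nirenberg inequality in two dimensions supplies it:
\begin{align*}
\|w\|_{L^2}^2\leq C\|w\|_{L^1}\|\nabla w\|_{L^2} .
\end{align*}
Substituting this yields $\frac{d}{dt}\|w\|_{L^2}^2\leq-\frac{c}{A}\frac{\|w\|_{L^2}^4}{\|w\|_{L^1}^2}+\frac{Cp^4K^4}{A}\|w\|_{L^2}^2$. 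Since $\|w\|_{L^1}=\|n^p\|_{L^1}$ equals $\|n^{p/2}\|_{L^2}^2$, an ODE comparison bounds $\sup_t\|n\|_{L^{2p}}^{2p}$ by $\max\{\|n_{\rm in}\|_{L^{2p}}^{2p},\,Cp^{8}K^4\sup_t\|n\|_{L^p}^{2p}\}$. This is exactly the recursive inequality of the Moser--Alikakos lemma in Appendix~\ref{sec_A}. The lemma applies with $p=2^k$ and base case $\|n\|_{L^\infty L^1}=M$, or, more conveniently, $\|n\|_{L^\infty L^2}$. The polynomial growth in $p$ is harmless, and the $L^{2^k}$ norms of the initial data are controlled by $\|n_{\rm in}\|_{H^2}\hookrightarrow L^\infty$.

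The remaining step is the base case, a uniform bound on $\|n\|_{L^\infty L^2}$. We split $n=n_0+n_{\neq}$. The zero mode $\|n_0\|_{L^2}$ is controlled by Proposition~\ref{prop:short-time} on $[0,T^*]$ and by $B_4$ in Lemma~\ref{lem:long-time-zero-modes} afterward. The nonzero mode satisfies $\|n_{\neq}\|_{L^2}\leq\|\partial_xn_{\neq}\|_{L^2}$, which is bounded by $CE_1^{1/2}$ and $\mathcal E_3\leq E_2$. All quantities are thus bounded by powers of $Q$, independently of $T$.

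I expect the main obstacle to be keeping every constant independent of $T$. The iteration must use only the time-uniform bounds on $K$ and on $\|n\|_{L^2}$. It must not use time-integrated dissipation, since a Gronwall step in time would introduce a factor of $T$. The Nash-damping form of the ODE avoids this. A secondary technical point is justifying the anisotropic product bound $\|w\nabla c_{\neq}\|_{L^2}$ for $w$ that has a zero mode. I would handle it with $\|w\|_{L^2_xL^\infty_y}^2\lesssim\|w\|_{L^2}\|\partial_yw\|_{L^2}$ and $\|\nabla c_{\neq}\|_{L^\infty_xL^2_y}\lesssim\|\partial_x\nabla c_{\neq}\|_{L^2}$.
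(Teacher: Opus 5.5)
Your route is the paper's. You test with $n^{2p-1}$, bound $\|n^p\nabla c\|_{L^2}^2$ by the anisotropic estimate involving $\|\partial_yc_0\|_{L^2}+\|\partial_x\nabla c_{\neq}\|_{L^2}$, get damping from a Nash inequality, and apply Lemma~\ref{lem:uniform-iteration}. You then close with time-uniform bounds on $\|\partial_yc_0\|_{L^\infty L^2}$, $\|\partial_x\nabla c_{\neq}\|_{L^\infty L^2}$ and $\|n\|_{L^\infty L^2}$ from Propositions~\ref{prop:short-time} and \ref{prop:long-time} and the zero-mode lemmas. On $[0,T^*]$ you bound $1+t^2$ by $C(1+T^{*2})$, which gives a factor $A^{\frac13+\delta}$. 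The paper instead gets $A^{\frac13}$ from $\sup_t(1+t^2)^{\frac12}e^{-aA^{-\frac13}t}\leq CA^{\frac13}$. Your cruder bound only worsens the $A$-dependence of $D_1$, which is permitted.

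There is, however, one false step. The inequality $\|w\|_{L^2}^2\leq C\|w\|_{L^1}\|\nabla w\|_{L^2}$ is the Nash inequality on $\mathbb R^2$, and it fails on $\mathbb T\times\mathbb R$, which is effectively one-dimensional at large scales. Take the $x$-independent function $w=e^{-y^2/L^2}$. Then $\|w\|_{L^2}^2\sim L$, $\|w\|_{L^1}\sim L$ and $\|\nabla w\|_{L^2}\sim L^{-\frac12}$, so the left side exceeds the right by a factor of order $L^{\frac12}$. Nothing in the problem prevents the zero mode of $n^p$ from spreading over a long $y$-interval, so your damped differential inequality is not justified as written.

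What does hold on this domain is obtained by treating the two parts of $w$ separately. For the zero mode, use the 1D Nash inequality $\|w_0\|_{L^2}^6\leq C\|w_0\|_{L^1}^4\|\partial_yw_0\|_{L^2}^2$ together with Young's inequality. For $w_{\neq}$, use the two-dimensional bound, which is valid because $k=0$ is excluded. Together these give
\[
\|w\|_{L^2}^4\leq C\|w\|_{L^1}^2\bigl(\|\nabla w\|_{L^2}^2+\|w\|_{L^1}^2\bigr),
\]
which is the form the paper uses. It adds $\frac1A\|w\|_{L^1}^2$ to the right-hand side, exactly the term $+\|f^p\|_{L^1}^2$ allowed in \eqref{eq:iteration-hypothesis}. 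At a first crossing time this term is harmless, because there $\|w\|_{L^1}^2$ is dominated by $\|w\|_{L^2}^2$. With this correction your comparison bound holds in the same form, and the rest of your argument goes through as in the paper.
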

	\begin{proof}
		For $p=2^j, j\geq1$, testing $\eqref{eq:1.7}_1$ with $2pn^{2p-1}$ yields
		\begin{align*}
			\frac{d}{dt}\|n^p\|_{L^2}^2+\frac{2(2p-1)}{pA}\|\nabla n^p\|_{L^2}^2
			=&~\frac{2(2p-1)}{A}\int_{\mathbb{T}\times\mathbb{R}}n^p\nabla c\cdot \nabla n^pdxdy\\
			\leq&~\frac{2p-1}{2pA}\|\nabla n^p\|_{L^2}^2+\frac{2p(2p-1)}{A}\|n^p\nabla c\|_{L^2}^2.\alabel{ineq:n_1}
		\end{align*}
		By H\"older's inequality, the one-dimensional Gagliardo--Nirenberg inequality, and Lemma~\ref{Poincare2}, we have 
		\begin{align*}
			\|n^p\nabla c\|_{L^2}^2
			\leq&~\|n^p\|_{L_x^2L_y^{\infty}}^2\|\nabla c\|_{L_x^{\infty}L_y^2}^2
			\leq C\|n^p\|_{L^2}\|\partial_yn^p\|_{L^2}\left(\|\partial_yc_0\|_{L_x^{\infty}L_y^2}+\|\nabla c_{\neq}\|_{L_x^{\infty}L_y^2}\right)^2\\
			\leq&~C\|n^p\|_{L^2}\|\partial_yn^p\|_{L^2}\left(\|\partial_yc_0\|_{L^2}+\|\nabla c_{\neq}\|_{L_x^{\infty}L_y^2}\right)^2\\
			\leq&~C\|n^p\|_{L^2}\|\partial_yn^p\|_{L^2}\left(\|\partial_yc_0\|_{L^2}+\|\partial_x\nabla c_{\neq}\|_{L^2}\right)^2.
		\end{align*}
		The Nash inequality gives
		\begin{align*}
			-\|\nabla n^p\|_{L^2}^2
			\leq-\frac{\|n^p\|_{L^2}^4}{C\|n^p\|_{L^1}^2}+\|n^p\|_{L^1}^2.
		\end{align*}
		Using these estimates and Young's inequality in \eqref{ineq:n_1}, we obtain
		\begin{align*}
			\frac{d}{dt}\|n^p\|_{L^2}^2
			\leq&-\frac{2p-1}{pA}\|\nabla n^p\|_{L^2}^2+\frac{Cp^3(2p-1)}{A}\|n^p\|_{L^2}^2\left(\|\partial_yc_0\|_{L^{\infty}L^2}+\|\partial_x\nabla c_{\neq}\|_{L^2}\right)^4\\
			\leq&-\frac{\|n^p\|_{L^2}^4}{CA\|n^p\|_{L^1}^2}+\frac{\|n^p\|_{L^1}^2}{A}+\frac{Cp^4}{A}\|n^p\|_{L^2}^2\left(1+\|\partial_yc_0\|_{L^{\infty}L^2}+\|\partial_x\nabla c_{\neq}\|_{L^2}\right)^4.
		\end{align*}
		\rev{
			Applying Lemma \ref{lem:uniform-iteration} to the above inequality, we deduce that}
		\begin{align*}
			\|n\|_{L^\infty L^\infty}
			\leq C\big(1+\|\partial_y c_0\|_{L^\infty L^2}
			+\|\partial_x\nabla c_{\neq}\|_{L^\infty L^2}\big)^2\big(\|n\|_{L^\infty L^2}+\|n_{\rm in}\|_{L^\infty}+1\big). \alabel{n-infty}
		\end{align*}
		By \eqref{prop-smalltime}, Proposition \ref{prop:long-time}, and the definition of $X_a$, we have
		\begin{align*}
			\|\partial_x\nabla c_{\neq}\|_{L^\infty L^2}
			\leq CE_1^{\frac12}\sup_{0\leq t\leq T_-}
			(1+t^2)^{\frac12}e^{-aA^{-\frac13}t}
			+\|\partial_x\nabla c_{\neq}\|_{X_a}
			\leq CQ\left(A^{\frac13}+1\right). \alabel{n-infty1}
		\end{align*}
		It follows from Propositions \ref{prop:small time-main}
		and \ref{prop:long-time}, Lemma \ref{lem:zero mode-kelxin} and
		Lemma \ref{lem:long-time-zero-modes}, with the interval convention above, that
		\begin{align*}
			&\|\partial_y c_0\|_{L^\infty L^2}
			\leq C(J_3+J_6+B_3),\\
			&\|n\|_{L^\infty L^2}
			\leq|\mathbb{T}|^{\frac12}\|n_0\|_{L^\infty L_y^2}+\|n_{\neq}\|_{L^{\infty}L^2}
			\leq C\left(E_1^{\frac12}+B_4+E_2\right)
			\leq C\left(Q+B_4\right).\alabel{n-infty2}
		\end{align*}
		Substituting \eqref{n-infty1} and \eqref{n-infty2} into \eqref{n-infty}, we conclude that
		\begin{align*}
			\|n\|_{L^{\infty}L^{\infty}}
			\leq C\big(J_3+J_6+B_3+Q^2+A^{\frac23}\big)^2\left(Q+B_4+\|n_{\rm in}\|_{L^{\infty}}\right)=:D_1.\alabel{n-D1}
		\end{align*}
		This completes the proof.
	\end{proof}

	\subsection{\texorpdfstring{$L^{\infty}$ estimates for $u$}{L-infinity estimates for u}}
	\label{subsec:velocity-estimates}
	\begin{lemma}\label{lem:u-infty}
		Under the conditions of Proposition \ref{prop:infty-estimate},
		\rev{there exists a positive constant $D_2$, depending only on
			$A$, $M$, $\|n_{\rm in}\|_{H^2}$, $\|c_{\rm in}\|_{H^2}$ and
			$\|u_{\rm in}\|_{H^3}$
			such that}
		\begin{align*}
			\|u\|_{L^\infty L^\infty}
			\leq D_2.
		\end{align*}
	\end{lemma}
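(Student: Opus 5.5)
We will bound $u=u_0+u_{\neq}$ by splitting into the zero mode and the nonzero modes, and treating the short-time interval $[0,\min\{T,T^*\}]$ and the long-time interval $[T^*,T]$ separately. The zero mode is $u_0=(u_{1,0},0)$, a function of $y$ only. So the one-dimensional embedding $H^1(\mathbb R)\hookrightarrow L^\infty(\mathbb R)$ gives
\[
\|u_0\|_{L^\infty}\le C\|u_{1,0}\|_{H^1}.
\]
On $[0,T^*]$ this is bounded by $J_1$ from Lemma~\ref{lem:zero mode-kelxin}, transferred to physical coordinates via \eqref{relation}; zero modes are invariant under the Kelvin change of variables. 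On $[T^*,T]$ it is bounded by $B_2$ from Lemma~\ref{lem:long-time-zero-modes}. Both quantities depend only on $M$ and the initial norms, since $E_1,E_2$ do.

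For the nonzero modes we use the anisotropic interpolation from the proof of \eqref{fg-L2}: one-dimensional Gagliardo--Nirenberg in $x$ and in $y$, together with Lemma~\ref{Poincare2}, gives
\[
\|u_{\neq}\|_{L^\infty}^2\le C\|\partial_xu_{\neq}\|_{L^2}\|\partial_x\nabla u_{\neq}\|_{L^2}\le C\|\partial_x\nabla u_{\neq}\|_{L^2}^2 .
\]
By the identity \eqref{u=w-xy} applied to $\partial_xu_{\neq}$ (which is still divergence free), $\|\partial_x\nabla u_{\neq}\|_{L^2}=\|\partial_x\omega_{\neq}\|_{L^2}$. On the short-time interval, \eqref{prop-smalltime} gives $\|\partial_x\omega_{\neq}(t)\|_{L^2}\le CE_1^{1/2}$. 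On the long-time interval, the $L^\infty L^2$ component of $\|\partial_x\omega_{\neq}\|_{X_a}$ in Proposition~\ref{prop:long-time} gives $\|\partial_x\omega_{\neq}(t)\|_{L^2}\le E_2$, since the exponential weight is at least $1$. Hence $\|u_{\neq}\|_{L^\infty L^\infty}\le CQ$, and we may take
\[
D_2:=C\,(J_1+B_2+Q).
\]

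The only point needing care is the long-time zero mode. Lemma~\ref{lem:long-time-zero-modes} uses the data at $T^*$: $B_2$ involves $\|u_{1,0}(T^*)\|_{L^2}$ and $\|\omega_0(T^*)\|_{L^2}$. These must be controlled by the original initial data through the short-time bound $J_1+J_2$ at $t=T^*$, which holds because $\|u_{1,0}(T^*)\|_{H^1}\le J_1$ and $\omega_0=\partial_yu_{1,0}$. With this, $D_2$ depends only on $M$ and the initial norms, and in particular is independent of $T$. When $T\le T^*$ the long-time terms are simply omitted.
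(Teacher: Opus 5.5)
Your proposal is correct and follows the paper's proof essentially step for step: the same split $u=(u_{1,0},0)+u_{\neq}$, the same anisotropic bound $\|u_{\neq}\|_{L^\infty}\le C\|\partial_x\nabla u_{\neq}\|_{L^2}=C\|\partial_x\omega_{\neq}\|_{L^2}$, the same inputs $J_1$, $B_2$, $E_1^{1/2}$ and $E_2$, and the same constant $D_2=C(J_1+B_2+Q)$. Your remark that $B_2$ is built from the data at $T^*$, which the short-time bound $J_1$ controls, makes explicit a step the paper leaves implicit.
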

	
	\begin{proof}
		Using the one-dimensional Gagliardo--Nirenberg inequality, Lemma \ref{Poincare2} and \eqref{u=w-xy}, we obtain
		\begin{align*}
			\|u_{\neq}\|_{L^{\infty}}
			\leq&~ \|u_{\neq}\|_{L_x^2L_y^{\infty}}^{\frac12}\|\partial_xu_{\neq}\|_{L_x^2L_y^{\infty}}^{\frac12}
			\leq C\|u_{\neq}\|_{L^2}^{\frac14}\|\partial_yu_{\neq}\|_{L^2}^{\frac14}\|\partial_xu_{\neq}\|_{L^2}^{\frac14}\|\partial_y\partial_xu_{\neq}\|_{L^2}^{\frac14}\\
			\leq&~C\|\partial_x\nabla u_{\neq}\|_{L^2}
			=C\|\partial_x\omega_{\neq}\|_{L^2}.\alabel{u-neq-infty}
		\end{align*}
		Since $u_0=(u_{1,0},0)$, it follows from \eqref{u-neq-infty} that
		\begin{align*}
			\|u\|_{L^\infty L^\infty}
			&\leq C\left(\|u_{1,0}\|_{L^{\infty}L_y^{\infty}}+\|u_{\neq}\|_{L^{\infty}L^{\infty}}\right)\\
			&\leq C\Big(\|u_{1,0}\|_{L^\infty H_y^1}
			+\|\partial_x\omega_{\neq}\|_{L^\infty L^2}\Big).
		\end{align*}
		By $\eqref{zero-mode eatimate-small}_1$ and $\eqref{zero-mode eatimate-long}_2$, we have
		\[
		\|u_{1,0}\|_{L^\infty H_y^1}
		\leq C(J_1+B_2).
		\]
		Moreover, Propositions~\ref{prop:small time-main}
		and \ref{prop:long-time} imply that
		\begin{align*}
			\|\partial_x\omega_{\neq}\|_{L^\infty L^2}
			\leq CE_1^{\frac12}
			+\|\partial_x\omega_{\neq}\|_{X_a}
			\leq CQ.
		\end{align*}
		Substituting these bounds into the above estimate for $u$, we conclude that
		\begin{align*}
			\|u\|_{L^\infty L^\infty}
			\leq C(J_1+B_2+Q):=D_2,
		\end{align*}
		which completes the proof.
	\end{proof}


	
	\subsection{\texorpdfstring{$L^\infty$ estimates for $c$ and $\nabla c$}
		{L-infinity estimates for c}}
	\label{subsec:c-infinity}
	
	
	\begin{lemma}\label{lem:c-infty}
		Under the conditions of Proposition \ref{prop:infty-estimate},
		\rev{there exists a positive constant $D_3$, depending only on
			$A$, $M$, $\|n_{\rm in}\|_{H^2}$, $\|c_{\rm in}\|_{H^2}$ and
			$\|u_{\rm in}\|_{H^3}$
			such that}
		\begin{align*}
			\|c\|_{L^{\infty}L^{\infty}}
			\leq D_3.
		\end{align*}
	\end{lemma}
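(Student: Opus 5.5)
Split $c=c_0+c_{\neq}$ and bound the two pieces separately. The nonzero modes are controlled by the energy estimates already closed; the zero mode is handled by one-dimensional estimates.

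\emph{Nonzero modes.} As in \eqref{u-neq-infty}, the one-dimensional Gagliardo--Nirenberg inequality and Lemma~\ref{Poincare2} give
\[
\|c_{\neq}\|_{L^\infty}\leq C\|\partial_x\nabla c_{\neq}\|_{L^2}.
\]
By \eqref{n-infty1}, which combines \eqref{prop-smalltime} with Proposition~\ref{prop:long-time}, the right-hand side is bounded uniformly in time by $CQ(A^{\frac13}+1)$.

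\emph{Zero mode.} Use the decomposition $c_0=c_{1,0}+c_{2,0}$ from \eqref{equ:c10}--\eqref{equ:c20} on $[0,T^*]$ and \eqref{c10}--\eqref{c20} on $[T^*,T]$. By $H^1(\mathbb R)\hookrightarrow L^\infty(\mathbb R)$ it suffices to bound $\|c_0\|_{L^\infty H^1_y}$.
\begin{itemize}
\item The derivative part: $\|\partial_yc_0\|_{L^\infty L^2}\leq C(J_3+J_6+B_3)$, as already noted in \eqref{n-infty2}.
\item The $L^2$ part of $c_{1,0}$ on both intervals: the heat-kernel argument in Lemma~\ref{lem:zero mode-kelxin} gives $\|c_{1,0}\|_{L^2}\leq CM$ using only $\|n_0\|_{L^1}\leq M$ and the damping $e^{-(t-s)/A}$.
\item The $L^2$ part of $c_{2,0}$ on $[0,T^*]$: Lemma~\ref{lem:zero mode-kelxin} gives $\|C_{2,0}\|_{L^\infty H^1}\leq J_6$.
\item The $L^2$ part of $c_{2,0}$ on $[T^*,\infty)$: test \eqref{c20} with $c_{2,0}$, then proceed as follows.
\end{itemize}
The forcing in \eqref{c20} satisfies
\[
\frac1A\|(u_{2,\neq}c_{\neq})_0\|_{L^2}^2\leq \frac CA\|\partial_x\omega_{\neq}\|_{L^2}^2\|\partial_x\nabla c_{\neq}\|_{L^2}^2 .
\]
Integrate in time using the $X_a$ norms: one factor in $L^\infty_t$, the other in $L^2_t$, which costs $A^{\frac13}$. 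This gives a bound $C A^{-\frac23}E_2^4$, which is harmless. The data at $T^*$ is $\|c_0(T^*)\|_{L^2}$, already controlled by the short-time bound.

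Combining the two pieces yields $D_3:=C\big(M+J_6+J_3+B_3+Q(A^{\frac13}+1)+1\big)$. This depends only on $A$ and the listed norms, not on $T$.

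\emph{Main obstacle.} The delicate point is the undamped $L^2$ growth of $c_0$. Mass conservation of $n$ does not prevent $\|c_0\|_{L^2}$ from growing linearly if we only use the energy identity, because of the source $\frac1A n_0$. I therefore rely on the heat-kernel/Young estimate with the damping $e^{-(t-s)/A}$: the time integral $\int_0^{\infty}e^{-\tau}\tau^{-\frac14}d\tau<\infty$ gives a bound independent of $T$. If needed, an alternative is positivity. Since $c\geq0$ by the maximum principle, $\|c_0\|_{L^1}\leq \max\{\|c_{\rm in}\|_{L^1},M\}$-type bounds hold from integrating the $c$ equation, and the one-dimensional Gagliardo--Nirenberg inequality $\|c_0\|_{L^\infty}\leq C\|c_0\|_{L^1}^{1/3}\|\partial_yc_0\|_{L^2}^{2/3}$ then closes the zero-mode estimate directly.
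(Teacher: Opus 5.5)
Your argument is correct, but it follows a different route from the paper. The paper does not split $c$ into modes. It tests $\eqref{eq:1.7}_2$ with $2pc^{2p-1}$, so both transport terms drop out by incompressibility, and it obtains $\frac{d}{dt}\|c^p\|_{L^2}^2+\frac1A\|c^p\|_{L^2}^2\leq\frac1A\|n^p\|_{L^2}^2$. Integrating with the factor $e^{t/A}$ and letting $p\to\infty$ gives the comparison bound $\|c\|_{L^\infty L^\infty}\leq\max\{\|c_{\rm in}\|_{L^\infty},\|n\|_{L^\infty L^\infty}\}$, hence $D_3=\|c_{\rm in}\|_{L^\infty}+D_1$. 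That argument takes a few lines, but it depends on Lemma~\ref{lem:n-infty}, that is, on the Moser iteration for $n$, and it inherits the large $A$-dependence of $D_1$.

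Your route avoids $D_1$ entirely and uses only the closed energy bounds. You control $c_{\neq}$ pointwise by $\|\partial_x\nabla c_{\neq}\|_{L^2}$, so the $A^{\frac13}$ loss visibly comes from the transient growth of $\partial_yc_{\neq}$. You control $c_0$ through its $H^1_y$ norm. For this you use the heat-kernel $L^1\to L^2$ bound for $c_{1,0}$, which is valid on both intervals since it uses only $\|n_0\|_{L^1_y}$ and the damping $e^{-(t-s)/A}$. You also need an $L^2$ energy estimate for $c_{2,0}$ on $[T^*,T]$, which you correctly supply yourself, since Lemma~\ref{lem:long-time-zero-modes} records only $\partial_yc_0$. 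Your forcing bound $\frac CA\|\partial_x\omega_{\neq}\|_{L^\infty L^2}^2\|\partial_x\nabla c_{\neq}\|_{L^2L^2}^2\leq CA^{-\frac23}E_2^4$ is right. The price of your route is a longer argument; the gain is independence from the $L^\infty$ theory for $n$.

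One caveat about your closing alternative. The $L^1$ bound for $c$ obtained by integrating the equation requires $c_{\rm in}\in L^1$, which is not a hypothesis of Theorem~\ref{thm:main}; only $c_{\rm in}\in H^2\cap W^{1,\infty}$ is assumed. That fallback is therefore not available as stated, but your main argument does not need it.
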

	\begin{proof}
		Let $p=2^j$ with $j\geq1$. Taking the $L^2$ inner product of $\eqref{eq:1.7}_2$ with $2pc^{2p-1}$ and applying Young's inequality, we find
		\begin{align*}
			\frac{d}{dt}\|c^p\|_{L^2}^2+\frac{2(2p-1)}{pA}\|\nabla c^p\|_{L^2}^2+\frac{2p}{A}\|c^p\|_{L^2}^2
			=&\frac{2p}{A}\int_{\mathbb{T}\times\mathbb{R}}nc^{2p-1}dxdy
			\leq \frac{1}{A}\|n^p\|_{L^2}^2+\frac{2p-1}{A}\|c^p\|_{L^2}^2.
		\end{align*}
		Discarding the nonnegative gradient term, we deduce that
		\begin{align*}
			\frac{d}{dt}\|c^p\|_{L^2}^2+\frac{1}{A}\|c^p\|_{L^2}^2
			\leq\frac{1}{A}\|n^p\|_{L^2}^2.
		\end{align*}
		Multiplying both sides of the above inequality by $e^{\frac{t}{A}}$ and integrating over $[0,t]$, we obtain
		\begin{align*}
			\|c^p\|_{L^2}^2
			\leq&~e^{-\frac{t}{A}}\|c^p_{\rm in}\|_{L^2}^2+\frac{1}{A}e^{-\frac{t}{A}}\int_0^te^{\frac{s}{A}}\|n^p(s)\|_{L^2}^2ds
			\leq e^{-\frac{t}{A}}\|c^p_{\rm in}\|_{L^2}^2+\left(1-e^{-\frac{t}{A}}\right)\mathop{\sup}\limits_{0\leq t\leq T}\|n^p\|_{L^2}^2.
		\end{align*}
		Consequently,
		\begin{align*}
			\mathop{\sup}\limits_{0\leq t\leq T}\|c^p\|_{L^2}^2
			\leq \max\left\{\|c_{\rm in}^p\|_{L^2}^2,\mathop{\sup}\limits_{0\leq t\leq T}\|n^p\|_{L^2}^2\right\}.
		\end{align*}
		Taking both sides of the above inequality to the power $\frac{1}{2p}$ and letting $p\to\infty$, we conclude that
		\begin{align*}
			\|c\|_{L^{\infty}L^{\infty}}
			\leq \max\left\{\|c_{\rm in}\|_{L^{\infty}},\|n\|_{L^{\infty}L^{\infty}}\right\}
			\leq\|c_{\rm in}\|_{L^{\infty}}+\|n\|_{L^{\infty}L^{\infty}}=:D_3.
		\end{align*}
	\end{proof}
	It remains to estimate $\|\nabla c\|_{L^\infty L^\infty}$. To this end, we first establish a bound for $\|\omega\|_{L^\infty L^4}$.

	\begin{lemma}
		Under the assumptions of Proposition \ref{prop:infty-estimate},
		\rev{there exists a positive constant $D_4$, depending only on
			$A$, $M$, $\|n_{\rm in}\|_{H^2}$, $\|c_{\rm in}\|_{H^2}$ and
			$\|u_{\rm in}\|_{H^3}$
			such that}
		\begin{align*}
			\|\omega\|_{L^{\infty}L^4}
			\leq D_4.
		\end{align*}
	\end{lemma}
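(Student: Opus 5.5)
We derive an $L^p$ energy estimate for the vorticity equation in the rescaled form $\partial_t\omega+y\partial_x\omega+\frac1A u\cdot\nabla\omega-\frac1A\Delta\omega=-\frac1A\partial_x n$, with $p=4$. Testing with $|\omega|^2\omega$ kills both transport terms: $y\partial_x$ integrates to zero on $\mathbb T\times\mathbb R$ because it is independent of $x$ and divergence free, and $u$ is divergence free. Since $\omega$ is the perturbation vorticity, it carries no $A$ background term. This gives
\begin{align*}
\frac{d}{dt}\|\omega\|_{L^4}^4+\frac{3}{A}\int|\omega|^2|\nabla\omega|^2\,dxdy
=\frac{3}{A}\int n\,\partial_x\omega\,|\omega|^2\,dxdy,
\end{align*}
after integrating the forcing by parts in $x$.

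By Young's inequality and Lemma \ref{lem:n-infty}, the right-hand side is bounded by $\frac{3}{2A}\int|\omega|^2|\nabla\omega|^2+\frac{C}{A}D_1^2\|\omega\|_{L^4}^2\|n\|_{L^2}$-type terms. More precisely, $\frac{C}{A}\int n^2|\omega|^2\le \frac{C}{A}D_1\|n\|_{L^2}\|\omega\|_{L^4}^2$, where $\|n\|_{L^\infty L^2}\le C(Q+B_4)$ by \eqref{n-infty2}. So the forcing grows at most like $\|\omega\|_{L^4}^2$ and needs damping to be controlled uniformly in time. To get damping, I apply the Nash/Gagliardo--Nirenberg inequality to $w=|\omega|^2$ in two dimensions: $\|w\|_{L^2}^2\le C\|w\|_{L^1}\|\nabla w\|_{L^2}$, that is, $\|\omega\|_{L^4}^4\le C\|\omega\|_{L^2}^2\big(\int|\omega|^2|\nabla\omega|^2\big)^{1/2}$. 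This converts the dissipation into $-\frac{c}{A}\|\omega\|_{L^4}^8/\|\omega\|_{L^2}^4$.

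The key input is therefore a uniform $L^\infty L^2$ bound on $\omega$. I split $\omega=\omega_0+\omega_{\neq}$. The zero mode is controlled by $\eqref{zero-mode eatimate-small}_2$ on $[0,T^*]$ and by $\eqref{zero-mode eatimate-long}_1$ afterwards, giving $C(J_2+B_1)$. The nonzero mode is controlled by $\|\omega_{\neq}\|_{L^2}\le\|\partial_x\omega_{\neq}\|_{L^2}\le CQ$, using Lemma \ref{Poincare2}, \eqref{prop-smalltime} and Proposition \ref{prop:long-time}. Writing $Y=\|\omega\|_{L^4}^4$ and $K=C(J_2+B_1+Q)$, we get
\begin{align*}
\frac{dY}{dt}\le -\frac{c}{A K^4}Y^2+\frac{C D_1(Q+B_4)}{A}Y^{1/2}.
\end{align*}
A comparison argument, the same one used for $n_0$ in Lemma \ref{lem:long-time-zero-modes}, then gives $Y(t)\le\max\{Y(0),\,C(K^4D_1(Q+B_4))^{2/3}\}$. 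Finally, $Y(0)=\|\omega_{\rm in}\|_{L^4}^4\le C\|u_{\rm in}\|_{H^3}^4$ by Sobolev embedding, and taking fourth roots defines $D_4$.

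The main obstacle is the $L^2$ bound on $\omega$ rather than the $L^4$ step itself. It must hold uniformly over both time regimes, and the transfer between them must remain consistent: the Kelvin estimates give $H^1$ in $(X,Y)$, and for $\omega_0$ and for the $L^2$ norm of $\omega_{\neq}$ these norms are invariant under the change of coordinates, so the transfer should be harmless. One should also check that the integration by parts of $y\partial_x|\omega|^4$ is justified for the strong solution, for example through the decay given by Lemma \ref{lem:local-continuation}.
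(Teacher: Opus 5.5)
Your route is the same as the paper's. You test the vorticity equation with $\omega^3$, turn the dissipation into damping through a Nash inequality for $\omega^2$, and close with a first-crossing comparison. You then feed in the uniform bound $\|\omega\|_{L^\infty L^2}\le C(J_2+B_1+Q)$ from the zero-mode lemmas and the nonzero-mode energy bounds. Your forcing estimate $\int n^2\omega^2\le D_1\|n\|_{L^2}\|\omega\|_{L^4}^2$ differs from the paper's $\int n^2\omega^2\le\|n\|_{L^\infty}^2\|\omega\|_{L^2}^2$. The paper's version makes the forcing bounded rather than of size $Y^{1/2}$, but both close.

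One step fails as written. The homogeneous two-dimensional Nash inequality $\|w\|_{L^2}^2\le C\|w\|_{L^1}\|\nabla w\|_{L^2}$ is false on $\mathbb T\times\mathbb R$. For $w(x,y)=f(y/L)$ the left side is of order $L$ and the right side of order $L^{1/2}$, so it breaks down as $L\to\infty$. This is exactly the relevant regime: at late times $\omega$ is essentially its zero mode $\omega_0(t,y)$, which spreads by one-dimensional diffusion.

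What does hold on $\mathbb T\times\mathbb R$ is the inhomogeneous version, which is the form the paper uses. To prove it, split frequencies at $R\ge1$; the low-frequency set $\{(k,\eta):k^2+\eta^2\le R^2\}$ has measure at most $CR^2$. This gives
\begin{align*}
\|w\|_{L^2}^4\le C\|w\|_{L^1}^2\|\nabla w\|_{L^2}^2+C\|w\|_{L^1}^4 .
\end{align*}
With $w=\omega^2$, this adds a term $\frac{C}{A}\|\omega\|_{L^2}^4\le\frac{CK^4}{A}$ to your differential inequality. The comparison then yields $Y(t)\le\max\{Y(0),\,CK^4,\,C(K^4D_1(Q+B_4))^{2/3}\}$, and the rest of your argument goes through unchanged.
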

	
	\begin{proof}
		Multiplying $\eqref{eq:1.7}_3$ by $4\omega^3$, using integration by parts and Young's inequality, we obtain
		\begin{align*}
			&\frac{d}{dt}\|\omega^2\|_{L^2}^2+\frac{3}{A}\|\nabla \omega^2\|_{L^2}^2\\
			=&-\frac{4}{A}\int_{\mathbb{T}\times\mathbb{R}}\omega^{3}\partial_xn dxdy
			\leq\frac{3}{2A}\|\nabla \omega^2\|_{L^2}^2+\frac{C}{A}\|\omega n\|_{L^2}^2
			\leq\frac{3}{2A}\|\nabla \omega^2\|_{L^2}^2
			+\frac{C}{A}\|\omega\|_{L^2}^{2}\|n\|_{L^{\infty}}^2.
			\alabel{inequ:omega}
		\end{align*}
		The Nash inequality implies that
		\begin{align*}
			-\|\nabla \omega^2\|_{L^2}^2
			\leq-\frac{\|\omega^2\|_{L^2}^4}{C\|\omega^2\|_{L^1}^2}+\|\omega^2\|_{L^1}^2.
			\alabel{Nash2}
		\end{align*}
		Combining \eqref{inequ:omega} with \eqref{Nash2}, we deduce that
		\begin{align*}
			\frac{d}{dt}\|\omega^2\|_{L^2}^2
			\leq&-\frac{3}{2A}\|\nabla \omega^2\|_{L^2}^2
			+\frac{C}{A}\|\omega\|_{L^{\infty}L^2}^{2}\|n\|_{L^{\infty}L^{\infty}}^2\\
			\leq&-\frac{\|\omega^2\|_{L^2}^4}{CA\|\omega^2\|_{L^1}^2}+\frac{\|\omega^2\|_{L^1}^2}{A}
			+\frac{C}{A}\|\omega\|_{L^{\infty}L^2}^{2}\|n\|_{L^{\infty}L^{\infty}}^2.
		\end{align*}
		Using the comparison argument in the proof of \eqref{eq:iteration-recursion}, we obtain
		\begin{align*}
			\|\omega^2\|_{L^{\infty}L^2}^2
			\leq\max\left\{\|\omega_{\rm in}\|_{L^4}^4,
			C\|\omega\|_{L^{\infty}L^2}^4
			\left(1+\|n\|_{L^{\infty}L^{\infty}}^2\right),1\right\}.
		\end{align*}
		Taking fourth roots and applying Young's inequality, we find
		\begin{align*}
			\|\omega\|_{L^\infty L^4}
			\leq C\left(1+\|\omega_{\rm in}\|_{L^4}
			+\|\omega\|_{L^\infty L^2}^2
			+\|n\|_{L^\infty L^\infty}\right).
			\alabel{omega-4}
		\end{align*}
		By Propositions~\ref{prop:small time-main} and \ref{prop:long-time},
		Lemma~\ref{Poincare2}, $\eqref{zero-mode eatimate-small}_2$,
		and $\eqref{zero-mode eatimate-long}_1$, we have
		\begin{align*}
			\|\omega_0\|_{L^\infty L_y^2}
			&\leq C(J_2+B_1),\\
			\|\omega_{\neq}\|_{L^\infty L^2}
			&\leq CE_1^{\frac12}
			+\|\partial_x\omega_{\neq}\|_{X_a}
			\leq CQ.
		\end{align*}
		Since $\omega=\omega_0+\omega_{\neq}$, it follows that
		\begin{align*}
			\|\omega\|_{L^\infty L^2}
			\leq |\mathbb T|^{\frac12}\|\omega_0\|_{L^\infty L_y^2}
			+\|\omega_{\neq}\|_{L^\infty L^2}
			\leq C(J_2+B_1+Q).
		\end{align*}
		Substituting this bound and \eqref{n-D1} into \eqref{omega-4}, we conclude that
		\begin{align*}
			\|\omega\|_{L^\infty L^4}
			\leq C\left[\|\omega_{\rm in}\|_{L^4}
			+\left(J_2+B_1+Q\right)^2+D_1\right]=:D_4.
		\end{align*}
	\end{proof}
	
	\begin{lemma}\label{lem:c-xy-infty}
		Under the assumptions of Proposition \ref{prop:infty-estimate},
		\rev{there exists a positive constant $D_5$, depending only on
			$A$, $M$, $\|n_{\rm in}\|_{H^2}$, $\|c_{\rm in}\|_{H^2}$,
			$\|u_{\rm in}\|_{H^3}$ and $\|\nabla c_{\rm in}\|_{L^\infty}$
			such that}
		\begin{align*}
			\|\nabla c\|_{L^{\infty}L^{\infty}}
			\leq D_5.
		\end{align*}
	\end{lemma}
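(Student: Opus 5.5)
Since the bound is allowed to depend on $A$, I will treat the $c$-equation in \eqref{eq:1.7} as a linear heat equation with bounded, divergence-free drift and a bounded source, and run a Duhamel argument. Write
\[
\partial_t c-\tfrac1A\Delta c=-y\partial_x c-\tfrac1A u\cdot\nabla c+\tfrac1A(n-c).
\]
The term $y\partial_x c$ is unbounded, so I will first pass to Kelvin coordinates (or, equivalently, use the explicit Couette heat kernel). There the linear operator $\partial_t-\frac1A\Delta_L$ has an explicit Gaussian kernel whose gradient, measured back in physical variables, satisfies the usual bound $C\,(A^{-1}(t-s))^{-1/2}$ in $L^1$ for $t-s\le 1$. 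A cleaner alternative is to use the original unscaled variables. There the Couette flow contributes $-\partial_x c\,(0,A)$ to the equation for $\nabla c$, that is, the term $A\partial_x c$ appears only in the $\partial_y c$ equation. I plan to handle it on unit time windows.

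\textbf{Step 1 (local-in-time gradient bound on windows).} Fix $t_0\ge 0$ and work on $[t_0,t_0+1]$, in the rescaled time of \eqref{eq:1.7}, with the Couette semigroup $S(t)$ generated by $-y\partial_x+\frac1A\Delta-\frac1A$. Its kernel is an explicit anisotropic Gaussian, and a direct computation gives
\[
\|\nabla S(\tau)f\|_{L^\infty}\le C(1+\tau)\|\nabla f\|_{L^\infty}
\quad\text{and}\quad
\|\nabla S(\tau)\nabla\cdot F\|_{L^\infty}\le C_A\tau^{-1/2}(1+\tau)\|F\|_{L^\infty}
\]
(with a harmless modification of the second bound). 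The drift term is written as $u\cdot\nabla c=\nabla\cdot(uc)$, and Lemmas~\ref{lem:u-infty} and \ref{lem:c-infty} give $\|uc\|_{L^\infty}\le D_2D_3$. However, $\nabla S\nabla\cdot$ is not bounded from $L^\infty$ to $L^\infty$ (the singularity $\tau^{-1}$ is not integrable). I will therefore use the $L^4$ route instead: estimate $\nabla(u\cdot\nabla c)$ through $\|\nabla u\|_{L^4}\le C\|\omega\|_{L^4}\le CD_4$ by Calderón--Zygmund, which is exactly why Lemma~\ref{omega-4} on $\|\omega\|_{L^\infty L^4}$ was prepared.

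\textbf{Step 2 (bootstrap via $W^{1,4}$ then Sobolev).} Concretely, I will first bound $\|\nabla c\|_{L^\infty L^4}$ by testing the equation for $\nabla c$ against $|\nabla c|^2\nabla c$. The Couette term produces $-\int \partial_x c\,|\nabla c|^2\partial_y c$, bounded by $\|\nabla c\|_{L^4}^4$. In rescaled time $t\in[t_0,t_0+1]$ this gives growth by at most a factor $e^{C}$, which is harmless on a unit window. The drift term produces $\int |\nabla u||\nabla c|^4$, and after Gagliardo--Nirenberg against the dissipation $\frac1A\|\nabla|\nabla c|^2\|_{L^2}^2$ with $\|\nabla u\|_{L^4}$ bounded, it is absorbed up to $C_A\|\nabla c\|_{L^4}^4$. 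The source $\nabla n$ is moved onto $|\nabla c|^2\nabla c$ by integration by parts, leaving $\|n\|_{L^\infty}$.

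To get uniformity in time rather than only on unit windows, I will not rely on Gronwall. Instead I will use the damping $\frac1A c$ together with the Nash-type dissipative comparison used for $\omega^2$ in Lemma~\ref{omega-4}, applied to $|\nabla c|^2$. The lower-order $L^2$ control comes from $\|\nabla c\|_{L^\infty L^2}$, which is available from $\|\partial_y c_0\|_{L^\infty L^2}$ (via $J_3$, $J_6$, $B_3$) and from $\|\partial_x\nabla c_{\neq}\|_{L^\infty L^2}$ in \eqref{n-infty1}. Finally, on each window $[t_0,t_0+1]$ I will apply the Duhamel formula for the Couette semigroup in $L^4\to L^\infty$ form,
\[
\|\nabla S(\tau)\nabla\cdot F\|_{L^\infty}\le C_A\tau^{-1/2-1/2}\|F\|_{L^4},
\]
which is integrable because $2/4<1$. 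This gives $\|\nabla c(t_0+1)\|_{L^\infty}\le C_A\bigl(\|c(t_0)\|_{L^\infty}+\sup\|u c\|_{L^4}+\|n\|_{L^\infty}\bigr)$. For $t\le 1$ I will use $\|\nabla c_{\rm in}\|_{L^\infty}$ directly, which is where that norm enters $D_5$.

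\textbf{Main obstacle.} The hardest step is controlling the unbounded shear term: either obtaining semigroup smoothing estimates for $S(\tau)$ on $\mathbb T\times\mathbb R$ that respect the growth $\partial_y\mapsto\partial_y-\tau\partial_x$, or handling the $\partial_x c\,\partial_y c|\nabla c|^2$ term uniformly in time. My first attempt will be to exploit the time-localization. Since $\|c\|_{L^\infty}$, $\|u\|_{L^\infty}$, $\|n\|_{L^\infty}$ and $\|\omega\|_{L^4}$ are already uniform in time, restarting on unit windows with only $L^\infty$ data for $c$ avoids accumulating this growth, and the factor $(1+\tau)$ is bounded by $2$. This yields $D_5$ depending on $A$, $D_1,\dots,D_4$ and $\|\nabla c_{\rm in}\|_{L^\infty}$.
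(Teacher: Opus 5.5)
\textbf{Steps 1--2 are sound in outline.} Testing the $\nabla c$ equation with $|\nabla c|^2\nabla c$, bounding the unit-size shear contribution $\int\partial_xc\,\partial_yc\,|\nabla c|^2$ by $\|\nabla c\|_{L^4}^4$, and comparing with the Nash lower bound for $\frac1A\|\nabla|\nabla c|^2\|_{L^2}^2$ does give a time-uniform bound on $\|\nabla c\|_{L^\infty L^4}$. The threshold is of order $A\|\nabla c\|_{L^\infty L^2}^4$, using the uniform $L^2$ bounds you cite. Note that it is the Nash term, not the damping $\frac1A c$, that beats the $O(1)$ shear coefficient.

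\textbf{The gap is the final $L^4\to L^\infty$ step.} The kernel bound $\|\nabla S(\tau)\nabla\cdot F\|_{L^\infty}\le C_A\tau^{-1/2-1/2}\|F\|_{L^4}$ is miscomputed. In two dimensions, two derivatives cost $\tau^{-1}$ and passing from $L^4$ to $L^\infty$ costs a further $\tau^{-1/4}$, so the singularity is $\tau^{-5/4}$. Even $\tau^{-1}$ would not be integrable, so ``$2/4<1$'' does not help. More fundamentally, $\nabla(\partial_t-\frac1A\Delta)^{-1}\nabla\cdot$ is an order-zero (Calder\'on--Zygmund type) operator. No Duhamel formula with both derivatives on the kernel can convert $L^p$ control of $F=uc$ into $L^\infty$ control of $\nabla c$. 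So the claimed bound $\|\nabla c(t_0+1)\|_{L^\infty}\le C_A(\|c(t_0)\|_{L^\infty}+\sup\|uc\|_{L^4}+\|n\|_{L^\infty})$ does not follow. As written, it does not even use the $L^4$ gradient bound you just proved.

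\textbf{Repair.} Keep the drift in non-divergence form. Then $\|u\cdot\nabla c\|_{L^4}\le D_2\|\nabla c\|_{L^\infty L^4}$, and for $0<\tau\le1$ the short-time Couette kernel gives $\|\nabla S(\tau)g\|_{L^\infty}\le C_A\tau^{-3/4}\|g\|_{L^4}$, which is integrable. Combine this with:
\begin{itemize}
\item $\|\nabla S(\tau)(n-c)\|_{L^\infty}\le C_A\tau^{-1/2}(D_1+D_3)$;
\item $\|\nabla S(1)c(t_0)\|_{L^\infty}\le C_A D_3$;
\item $\|\nabla S(\tau)c_{\rm in}\|_{L^\infty}\le C\|\nabla c_{\rm in}\|_{L^\infty}$ for $t\le 1$.
\end{itemize}
With these, the window argument closes. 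You would need to state and justify the $L^4\to L^\infty$ gradient estimate for the Couette heat semigroup on $\mathbb T\times\mathbb R$; the appendix records only $L^\infty\to L^\infty$ and Sobolev bounds. This is standard via the explicit anisotropic Gaussian, whose covariance is comparable to $\tau/A$ for $\tau\le1$.

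\textbf{Comparison with the paper.} After this fix your route is a genuine alternative. The paper uses no semigroup smoothing here. It runs the full Moser--Alikakos iteration of Lemma~\ref{lem:uniform-iteration} separately on the scalar equations for $\partial_xc$ and $\partial_yc$, starting from the uniform $L^2$ bounds. The $O(1)$ shear source $-\partial_xc$ in the $\partial_yc$ equation (the term $I_2$) is controlled through $\|\partial_xc\|_{L^\infty L^2}\le CQ$ and absorbed into the iteration constant $K\sim A^2Q^2$. Your version replaces the infinite iteration with one $L^4$ energy step plus local parabolic smoothing, at the cost of the kernel estimate above.
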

	\begin{proof}
		\underline{\textbf{I. Estimate of $\partial_x c$.}}
		Differentiating $\eqref{eq:1.7}_2$ with respect to $x$, we obtain
		\begin{align*}
			\partial_t\partial_xc+y\partial_x^2c-\frac1A\Delta\partial_xc+\frac{1}{A}u\cdot\nabla\partial_xc+\frac{1}{A}\partial_xc
			=\frac{1}{A}\partial_xn-\frac{1}{A}\partial_xu\cdot\nabla c.
		\end{align*}
		For $p=2^j$ with $j\geq1$, testing this equation with $2p(\partial_xc)^{2p-1}$ and using integration by parts, we find
		\begin{align*}
			&\quad\frac{d}{dt}\|(\partial_xc)^p\|_{L^2}^2+\frac{2(2p-1)}{pA}\|\nabla(\partial_xc)^p\|_{L^2}^2+\frac{2p}{A}\|(\partial_xc)^p\|_{L^2}^2\\
			&=\frac{2p}{A}\int_{\mathbb{T}\times\mathbb{R}}\partial_xn(\partial_xc)^{2p-1}dxdy
			-\frac{2p}{A}\int_{\mathbb{T}\times\mathbb{R}}\nabla\cdot(c\partial_xu)(\partial_xc)^{2p-1}dxdy\\
			&=-\frac{2(2p-1)}{A}\int_{\mathbb{T}\times\mathbb{R}}\partial_x(\partial_xc)^p n(\partial_xc)^{p-1}dxdy
			+\frac{2(2p-1)}{A}\int_{\mathbb{T}\times\mathbb{R}}\nabla(\partial_xc)^p c\partial_xu(\partial_xc)^{p-1}dxdy\\
			&\leq\frac{2p-1}{2pA}\|\nabla(\partial_xc)^p\|_{L^2}^2+\frac{Cp^2}{A}\left(\|n\|_{L^4}^2+\|c\|_{L^{\infty}}^2\|\partial_xu\|_{L^4}^2\right)\|\partial_xc\|_{L^{4p-4}}^{2p-2}.\alabel{c_x}
		\end{align*}
		By the Gagliardo--Nirenberg inequality, we have
		\begin{align*}
			\|\partial_xc\|_{L^{4p-4}}^{2p-2}\leq C\|\nabla(\partial_xc)^p\|_{L^2}^{\frac{p-2}{p}}\|(\partial_xc)^p\|_{L^2}+C\|(\partial_xc )^p\|_{L^2}^{2-\frac{2}{p}}.\alabel{c_x-GN}
		\end{align*}
		The Nash inequality implies that
		\begin{align*}
			-\|\nabla (\partial_xc)^p\|_{L^2}^2
			\leq-\frac{\|(\partial_xc)^p\|_{L^2}^4}{C\|(\partial_xc)^p\|_{L^1}^2}+\|(\partial_xc)^p\|_{L^1}^2.
			\alabel{Nash3}
		\end{align*}
		\rev{By the threshold formulation of Lemma
			\ref{lem:uniform-iteration}, it suffices to establish the
			following differential inequality at times when
			$\|(\partial_xc)^p\|_{L^2}^2>1$.}
		Using Young's inequality, \eqref{c_x-GN}, and \eqref{Nash3}, we deduce from \eqref{c_x} that
		\begin{align*}
			&\frac{d}{dt}\|(\partial_xc)^p\|_{L^2}^2+\frac{2p}{A}\|(\partial_xc)^p\|_{L^2}^2\\
			\leq&-\frac{1}{A}\|\nabla(\partial_xc)^p\|_{L^2}^2+\frac{Cp^4}{A}\left(\|n\|_{L^{\infty}L^4}^2+\|c\|_{L^{\infty}L^{\infty}}^2\|\partial_xu\|_{L^{\infty}L^4}^2\right)^{\frac{2p}{p+2}}\|(\partial_xc)^p\|_{L^2}^{\frac{2p}{p+2}}\\
			&+\frac{Cp^2}{A}\left(\|n\|_{L^{\infty}L^4}^2+\|c\|_{L^{\infty}L^{\infty}}^2\|\partial_xu\|_{L^{\infty}L^4}^2\right)\|(\partial_xc)^p\|_{L^2}^{2-\frac{2}{p}}\\
			\leq&-\frac{\|(\partial_xc)^p\|_{L^2}^4}{CA\|(\partial_xc)^p\|_{L^1}^2}+\frac{\|(\partial_xc)^p\|_{L^1}^2}{A}\\
			&+\frac{Cp^4}{A}\left(\|n\|_{L^{\infty}L^4}^2+\|c\|_{L^{\infty}L^{\infty}}^2\|\partial_xu\|_{L^{\infty}L^4}^2+1\right)^2\|(\partial_xc)^p\|_{L^2}^{2}.
		\end{align*}
		By Lemma \ref{lem:uniform-iteration}, we have
		\begin{align*}
			\|\partial_x c\|_{L^{\infty}L^{\infty}}
			\leq C\left(\|n\|_{L^{\infty}L^4}^2+\|c\|_{L^{\infty}L^{\infty}}^2\|\partial_xu\|_{L^{\infty}L^4}^2+1\right)\left(\|\partial_xc\|_{L^{\infty}L^2}+\|(\partial_xc)_{\rm in}\|_{L^{\infty}}+1\right). \alabel{c_x-infty1}
		\end{align*}
		It follows from Proposition \ref{prop:small time-main}, Proposition \ref{prop:long-time} and Lemma \ref{Poincare2} that
		\begin{align*}
			\|\partial_xc\|_{L^{\infty}L^2}
			\leq&CE_1^{\frac12}+\|\partial_x\nabla c_{\neq}\|_{X_a}
			\leq CQ. \alabel{c_x-L2}
		\end{align*}
		Since $u=\nabla^\perp\Delta^{-1}\omega$, the Calder\'on--Zygmund
		estimate on $\mathbb T\times\mathbb R$ gives
		\begin{align*}
			\|\partial_xu\|_{L^{\infty}L^4}
			\leq C\|\omega\|_{L^{\infty}L^4}
			\leq C D_4.
		\end{align*}
		By the Gagliardo--Nirenberg inequality and \eqref{n-infty2}, we have
		\begin{align*}
			\|n\|_{L^{\infty}L^4}
			\leq \|n\|_{L^{\infty}L^2}^{\frac12}\|n\|_{L^{\infty}L^{\infty}}^{\frac12}
			\leq C\left(Q+B_4\right)^{\frac12}D_1^{\frac{1}{2}},\alabel{n-infty4}
		\end{align*}
		Substituting the above estimates into \eqref{c_x-infty1}, we conclude that
		\begin{align*}
			\|\partial_x c\|_{L^{\infty}L^{\infty}}
			\leq&~ C\left((Q+B_4)D_1+D_3^2 D_4^2+1\right)\left(Q+\|(\partial_xc)_{\rm in}\|_{L^{\infty}}+1\right)\\
			\leq&~ C\left(Q^2+B_4^2+D_1^2+D_3^2 D_4^2\right)\left(Q+\|(\partial_xc)_{\rm in}\|_{L^{\infty}}\right). \alabel{c_x-infty}
		\end{align*}
		\underline{\textbf{II. Estimate of $\partial_y c$.}}	
		Taking the derivative of $\eqref{eq:1.7}_2$ with respect to $y$, we find
		\begin{align*}
			\partial_t\partial_yc
			+y\partial_x\partial_yc
			+\frac{1}{A}u\cdot\nabla\partial_yc
			-\frac{1}{A}\Delta\partial_yc
			+\frac{1}{A}\partial_yc
			=
			\frac{1}{A}\partial_yn
			-\partial_xc
			-\frac{1}{A}\nabla\cdot(c\partial_yu).
		\end{align*}
		Multiplying both sides of the above equation by the test function
		$2p(\partial_yc)^{2p-1}$, integrating over
		$\mathbb{T}\times\mathbb{R}$, and using integration by parts, we obtain
		\begin{align*}
			&\quad\frac{d}{dt}\|(\partial_yc)^p\|_{L^2}^2
			+\frac{2(2p-1)}{pA}\|\nabla(\partial_yc)^p\|_{L^2}^2
			+\frac{2p}{A}\|(\partial_yc)^p\|_{L^2}^2\\
			&=
			\frac{2p}{A}\int_{\mathbb{T}\times\mathbb{R}}\partial_yn(\partial_yc)^{2p-1}dxdy
			-2p\int_{\mathbb{T}\times\mathbb{R}}\partial_xc(\partial_yc)^{2p-1}dxdy
			-\frac{2p}{A}\int_{\mathbb{T}\times\mathbb{R}}\nabla\cdot(c\partial_yu)(\partial_yc)^{2p-1}dxdy\\
			&=-\frac{2(2p-1)}{A}\int_{\mathbb{T}\times\mathbb{R}}\partial_y(\partial_yc)^p n(\partial_yc)^{p-1}dxdy
			-2p\int_{\mathbb{T}\times\mathbb{R}}\partial_xc(\partial_yc)^{2p-1}dxdy\\
			&\quad+\frac{2(2p-1)}{A}\int_{\mathbb{T}\times\mathbb{R}}\nabla(\partial_yc)^p\cdot c\partial_yu(\partial_yc)^{p-1}dxdy
			=:I_1+I_2+I_3.\alabel{I1+I2+I3}
		\end{align*}
		By H\"older's and Young's inequalities, we have
		\begin{align*}
			I_1
			\leq&~\frac{2p-1}{4pA}\|\partial_y(\partial_yc)^p\|_{L^2}^2+\frac{Cp(2p-1)}{A}\|n(\partial_yc)^{p-1}\|_{L^2}^2\\
			\leq&~\frac{2p-1}{4pA}\|\nabla(\partial_yc)^p\|_{L^2}^2+\frac{Cp^2}{A}\|n\|_{L^{\infty}L^4}^2\|\partial_yc\|_{L^{4p-4}}^{2p-2},
		\end{align*}
		and
		\begin{align*}
			I_3
			\leq&~\frac{2p-1}{4pA}\|\nabla(\partial_yc)^p\|_{L^2}^2+\frac{Cp(2p-1)}{A}\|c\partial_yu(\partial_yc)^{p-1}\|_{L^2}^2\\
			\leq&~\frac{2p-1}{4pA}\|\nabla(\partial_yc)^p\|_{L^2}^2+\frac{Cp^2}{A}\|c\|_{L^\infty L^\infty}^2\|\partial_yu\|_{L^\infty L^4}^2\|\partial_yc\|_{L^{4p-4}}^{2p-2}.
		\end{align*}
		The Gagliardo--Nirenberg inequality implies that
		\begin{align*}
			\|\partial_yc\|_{L^{4p-4}}^{2p-2}
			=\|(\partial_yc)^p\|_{L^{4-\frac4p}}^{2-\frac2p}
			\leq C\Big(\|\nabla(\partial_yc)^p\|_{L^2}^{1-\frac2p}\|(\partial_yc)^p\|_{L^2}+\|(\partial_yc)^p\|_{L^2}^{2-\frac2p}\Big).
		\end{align*}
		Adding the estimates for $I_1$ and $I_3$ and applying Young's inequality,
		we deduce that
		\begin{align*}
			I_1+I_3
			\leq&~\frac{2p-1}{2pA}\|\nabla(\partial_yc)^p\|_{L^2}^2+\frac{Cp^2}{A}\left(\|n\|_{L^{\infty}L^4}^2+\|c\|_{L^\infty L^\infty}^2\|\partial_yu\|_{L^\infty L^4}^2\right)\|\partial_yc\|_{L^{4p-4}}^{2p-2}\\
			\leq&~\frac{3(2p-1)}{4pA}\|\nabla(\partial_yc)^p\|_{L^2}^2+\frac{Cp^4}{A}\left(\|n\|_{L^{\infty}L^4}^2+\|c\|_{L^\infty L^\infty}^2\|\partial_yu\|_{L^\infty L^4}^2\right)^{\frac{2p}{p+2}}\|(\partial_yc)^p\|_{L^2}^{\frac{2p}{p+2}}\\
			&+\frac{Cp^2}{A}\left(\|n\|_{L^\infty L^4}^2
			+\|c\|_{L^\infty L^\infty}^2\|\partial_yu\|_{L^\infty L^4}^2\right)\|(\partial_yc)^p\|_{L^2}^{2-\frac2p}.\alabel{I1+I3}
		\end{align*}
		For $I_2$, applying the H\"older's inequality, the Gagliardo--Nirenberg inequality, the Young's inequality and \eqref{c_x-L2}, we obtain
		\begin{align*}
			I_2
			\leq&~ 2p\|\partial_xc\|_{L^2}\|\partial_yc\|_{L^{4p-2}}^{2p-1}
			\leq Cp\|\partial_xc\|_{L^{\infty}L^2}\Big(\|\nabla(\partial_yc)^p\|_{L^2}^{1-\frac1p}\|(\partial_yc)^p\|_{L^2}+\|(\partial_yc)^p\|_{L^2}^{2-\frac1p}\Big)\\
			\leq&~\frac{2p-1}{4pA}\|\nabla(\partial_yc)^p\|_{L^2}^2+CAp^{\frac{2p}{p+1}}Q^{\frac{2p}{p+1}}\|(\partial_yc)^p\|_{L^2}^{\frac{2p}{p+1}}+CpQ\|(\partial_yc)^p\|_{L^2}^{2-\frac1p}.\alabel{I2}
		\end{align*}
		We use the Nash inequality
		\begin{align*}
			-\|\nabla (\partial_yc)^p\|_{L^2}^2
			\leq-\frac{\|(\partial_yc)^p\|_{L^2}^4}{C\|(\partial_yc)^p\|_{L^1}^2}+\|(\partial_yc)^p\|_{L^1}^2.
			\alabel{Nash4}
		\end{align*}
		\rev{Again, Lemma \ref{lem:uniform-iteration} requires the
			differential inequality only at times when
			$\|(\partial_yc)^p\|_{L^2}^2>1$.}
		Substituting \eqref{I1+I3} and \eqref{I2} into \eqref{I1+I2+I3} and using \eqref{Nash4}, we deduce that
		\begin{align*}
			&\frac{d}{dt}\|(\partial_yc)^p\|_{L^2}^2+\frac{2p}{A}\|(\partial_yc)^p\|_{L^2}^2\\
			\leq&-\frac{2p-1}{pA}\|\nabla(\partial_yc)^p\|_{L^2}^2+CAp^2 Q^{\frac{2p}{p+1}}\|(\partial_yc)^p\|_{L^2}^{\frac{2p}{p+1}}+CpQ\|(\partial_yc)^p\|_{L^2}^{2-\frac1p}\\
			&+\frac{Cp^4}{A}\left(\|n\|_{L^\infty L^4}^2
			+\|c\|_{L^\infty L^\infty}^2\|\partial_yu\|_{L^\infty L^4}^2+1\right)^{2}\Big(\|(\partial_yc)^p\|_{L^2}^{\frac{2p}{p+2}}+\|(\partial_yc)^p\|_{L^2}^{2-\frac2p}\Big)\\
			\leq&-\frac{\|(\partial_yc)^p\|_{L^2}^4}{CA\|(\partial_yc)^p\|_{L^1}^2}+\frac{\|(\partial_yc)^p\|_{L^1}^2}{A}\\
			&+\frac{Cp^4}{A}\Big[\Big(\|n\|_{L^\infty L^4}^2
			+\|c\|_{L^\infty L^\infty}^2\|\partial_yu\|_{L^\infty L^4}^2+1\Big)^{2}+A^2Q^2\Big]\|(\partial_yc)^p\|_{L^2}^2.		\end{align*}
		By Lemma \ref{lem:uniform-iteration}, we have
		\begin{align*}
			\|\partial_yc\|_{L^{\infty}L^{\infty}}
			\leq C\Big[\left(\|n\|_{L^\infty L^4}^2
			+\|c\|_{L^\infty L^\infty}^2\|\partial_yu\|_{L^\infty L^4}^2+1\right)^{2}+A^2Q^2\Big]^{\frac12}\left(\|\partial_yc\|_{L^{\infty}L^2}+\|(\partial_yc)_{\rm in}\|_{L^{\infty}}+1\right).
		\end{align*}
		It follows from Lemma \ref{Poincare2} and \eqref{n-infty1} that
		\begin{align*}
			\|\partial_yc_{\neq}\|_{L^{\infty}L^2}
			\leq\|\partial_x\nabla c_{\neq}\|_{L^{\infty}L^2}
			\leq CQ\left(A^{\frac13}+1\right).
		\end{align*}
		By Lemma \ref{lem:zero mode-kelxin} and Lemma \ref{lem:long-time-zero-modes}, we have
		\begin{align*}
			\|\partial_yc_0\|_{L^{\infty}L^2}
			\leq C\left(J_3+J_6+B_3\right).
		\end{align*}
		Combining these estimates, we obtain
		\begin{align*}
			\|\partial_yc\|_{L^{\infty}L^2}
			\leq C\left(J_3+J_6+B_3+A^{\frac13}Q\right).
		\end{align*}
		\rev{The same Calder\'on--Zygmund estimate gives
			$\|\partial_yu\|_{L^\infty L^4}\leq CD_4$.
			Using \eqref{n-infty4},
			we conclude that}
		\begin{align*}
			\|\partial_yc\|_{L^{\infty}L^{\infty}}
			\leq&~C\big(Q^2+B_4^2+D_1^2+D_3^2 D_4^2+AQ\big)\big(J_3+J_6+B_3+A^{\frac23}+Q^2+\|(\partial_yc)_{\rm in}\|_{L^{\infty}}\big).\alabel{c_y-infty}
		\end{align*}
		Finally, \eqref{c_x-infty} and \eqref{c_y-infty} imply that
		\begin{align*}
			\|\nabla c\|_{L^{\infty}L^{\infty}}
			\leq&~ C\left(Q^2+B_4^2+D_1^2+D_3^2 D_4^2+AQ\right)\left(J_3+J_6+B_3+A^{\frac23}+Q^2+\|\nabla c_{\rm in}\|_{L^{\infty}}\right)=:D_5.
		\end{align*}
	\end{proof}
	
	\begin{proof}[\normalfont\bfseries Proof of Proposition \ref{prop:infty-estimate}]
		\begin{revision}
			The result follows from Lemma~\ref{lem:n-infty}, Lemma~\ref{lem:u-infty}, Lemma~\ref{lem:c-infty} and Lemma~\ref{lem:c-xy-infty}.
		\end{revision}
	\end{proof}
		
		\appendix
		\section{Moser--Alikakos iteration}\label{sec_A}
		\begin{lemma}\label{lem:uniform-iteration}
			Let $T>0$ and $A,K\geq1$. Suppose that
			$f\in C([0,T];L^2)$ with $f(0)=f_{\rm in}\in L^\infty$,
			where all spatial norms are taken over $\mathbb T\times\mathbb R$
			and all time norms over $[0,T]$.
			Assume that, for every $p=2^j$, $j\geq1$,
			the map $t\mapsto\|f^p(t)\|_{L^2}^2$ is continuous on $[0,T]$,
			differentiable on $(0,T)$, and satisfies
			\begin{equation}\label{eq:iteration-hypothesis}
				\frac{d}{dt}\|f^p(t)\|_{L^2}^2
				\leq \frac1A\left(
				-\frac{\|f^p(t)\|_{L^2}^4}
				{C\|f^p(t)\|_{L^1}^2}
				+\|f^p(t)\|_{L^1}^2
				+Cp^4K\|f^p(t)\|_{L^2}^2
				\right)
			\end{equation}
			\rev{for every $t\in(0,T)$ with $\|f^p(t)\|_{L^2}^2>1$
			}
			where $C\geq1$ is independent of $p$, $A$, $K$, and $T$.
			\rev{No differential inequality is required when
				$\|f^p(t)\|_{L^2}^2\leq1$.}
			Then
			\begin{equation}\label{eq:iteration-conclusion}
				\|f\|_{L^\infty L^\infty}
				\leq CK^{\frac12}\left(
				\|f\|_{L^\infty L^2}
				+\|f_{\rm in}\|_{L^\infty}+1
				\right),
			\end{equation}
			with a possibly larger constant $C$, independent of $A$, $K$, and $T$.
		\end{lemma}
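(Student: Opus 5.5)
The plan is a standard Alikakos comparison argument run over the dyadic exponents $p=2^j$. Write $y_p(t):=\|f^p(t)\|_{L^2}^2$ and $M_p:=\max\{\sup_{t}\|f^{p}(t)\|_{L^1},1\}$. Since $\|f^p\|_{L^1}=\|f^{p/2}\|_{L^2}^2$, we have $M_p\le \max\{\sup_t y_{p/2},1\}$, so the recursion links consecutive levels.

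\textbf{Step 1: one-level bound.} For $y_p(t)>1$, \eqref{eq:iteration-hypothesis} gives
\[
\frac{d}{dt}y_p\le\frac1A\Big(-\frac{y_p^2}{CM_p^2}+M_p^2+Cp^4Ky_p\Big).
\]
The right-hand side is negative once $y_p\ge \bar y_p:=C'(p^4KM_p^2+M_p^2)$, because $y_p^2/(CM_p^2)$ then dominates both remaining terms. A barrier argument uses the continuity of $y_p$: at any first time $y_p$ exceeds $\max\{y_p(0),\bar y_p,1\}$, it is $>1$ and the derivative would be negative, which is a contradiction. The value $1$ covers the region where no inequality is assumed. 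The factor $1/A$ only rescales time, so the bound is $A$-independent. This yields
\[
\sup_t y_p\le \max\{\|f_{\rm in}\|_{L^{2p}}^{2p},\,C p^4K\max\{\sup_t y_{p/2},1\}^2\}.
\]
This is presumably the recursion \eqref{eq:iteration-recursion} referenced earlier.

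\textbf{Step 2: iterate.} Set $a_j:=\max\{\sup_t\|f\|_{L^{2^{j+1}}}, \|f_{\rm in}\|_{L^\infty}\text{-type quantity},1\}$. Taking $2p$-th roots in Step 1 gives
\[
a_j\le (C16^{j}K)^{2^{-j-1}}a_{j-1}.
\]
The base case is $a_0\lesssim \|f\|_{L^\infty L^2}+\|f_{\rm in}\|_{L^\infty}+1$, where the initial term is handled because $\|f_{\rm in}\|_{L^{2p}}^{2p}\le$ (a factor involving $\|f_{\rm in}\|_{L^2}^2$) times $\|f_{\rm in}\|_{L^\infty}^{2p-2}$, absorbed into the max. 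The products $\prod_j C^{2^{-j-1}}16^{j2^{-j-1}}$ converge, and $\prod_j K^{2^{-j-1}}=K^{1/2}$. Letting $j\to\infty$ gives \eqref{eq:iteration-conclusion}.

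\textbf{Main obstacle.} The main difficulty is bookkeeping the initial-data term inside the max so that it contributes only $\|f_{\rm in}\|_{L^\infty}$ and not a growing power. I would handle it by normalizing: set $b_j:=\max\{\sup_t\|f^{2^j}\|_{L^2}^{2^{-j}},\|f_{\rm in}\|_{L^\infty}^2\cdot\text{const}\}$ and use interpolation $\|f_{\rm in}\|_{L^{2p}}\le\|f_{\rm in}\|_{L^2}^{1/p}\|f_{\rm in}\|_{L^\infty}^{1-1/p}\le \max(\|f\|_{L^\infty L^2},\|f_{\rm in}\|_{L^\infty})$. A second concern is that $f^p$ for nonsigned $f$ with odd structure is avoided since $p$ is even, so the Nash/$L^1$ quantities are of $|f|^p$.
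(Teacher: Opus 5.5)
Your proposal is correct and follows essentially the same route as the paper. Both arguments use a first-crossing barrier at the level $\max\{\|f_{\rm in}^p\|_{L^2}^2,\,Cp^4K\|f\|_{L^\infty L^p}^{2p},\,1\}$. The only difference there is that the paper compares with the instantaneous $\|f^p(t_*)\|_{L^1}$ at the crossing time, whereas you replace it beforehand by the uniform bound $M_p$. Both then use the interpolation $\|f_{\rm in}\|_{L^{2p}}\le\|f_{\rm in}\|_{L^2}^{1/p}\|f_{\rm in}\|_{L^\infty}^{1-1/p}$ and the dyadic iteration whose convergent product produces the factor $K^{\frac12}$.

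One small slip: in your ``main obstacle'' paragraph, $\|f_{\rm in}\|_{L^\infty}^2$ should appear to the first power, consistent with the interpolation bound you then state.
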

		
		\begin{proof}
			Fix $p=2^j$, $j\geq1$, and suppose that
			$\|f\|_{L^\infty L^p}<\infty$.
			We claim that
			\begin{equation}\label{eq:iteration-recursion}
				\|f^p\|_{L^\infty L^2}^2
				\leq
				\max\left\{
				\|f_{\rm in}^p\|_{L^2}^2,\,
				2C^2p^4K\|f\|_{L^\infty L^p}^{2p},\,
				1
				\right\}.
			\end{equation}
			If \eqref{eq:iteration-recursion} did not hold, there would exist
			$\varepsilon>0$ and $t_1\in(0,T]$ such that
			\[
			\|f^p(t_1)\|_{L^2}^2
			>
			\max\left\{
			\|f_{\rm in}^p\|_{L^2}^2,\,
			2C^2p^4K\|f\|_{L^\infty L^p}^{2p},\,
			1
			\right\}+\varepsilon.
			\]
			Let $t_*\in(0,t_1)$ be the first time such that
			\[
			\|f^p(t_*)\|_{L^2}^2
			=
			\max\left\{
			\|f_{\rm in}^p\|_{L^2}^2,\,
			2C^2p^4K\|f\|_{L^\infty L^p}^{2p},\,
			1
			\right\}+\varepsilon.
			\]
			By continuity and the definition of $t_*$, we have
			\[
			\left.\frac{d}{dt}\|f^p(t)\|_{L^2}^2\right|_{t=t_*}
			\geq0.
			\]
			On the other hand,
			\begin{align*}
				\|f^p(t_*)\|_{L^2}^2
				>2C^2p^4K\|f\|_{L^\infty L^p}^{2p}
				\geq 2C^2p^4K\|f^p(t_*)\|_{L^1}^2.
			\end{align*}
			\rev{Since $\|f^p(t_*)\|_{L^2}^2>1$, the differential
				inequality is available at this first crossing, and
				$\|f^p(t_*)\|_{L^1}>0$.}
			Thus, \eqref{eq:iteration-hypothesis} gives
			\begin{align*}
				\left.\frac{d}{dt}\|f^p(t)\|_{L^2}^2\right|_{t=t_*}
				\leq&~\frac1A\Big(-\frac{\|f^p(t_*)\|_{L^2}^4}{C\|f^p(t_*)\|_{L^1}^2}
				+\|f^p(t_*)\|_{L^1}^2
				+Cp^4K\|f^p(t_*)\|_{L^2}^2\Big)\\
				\leq&~\frac{\|f^p(t_*)\|_{L^2}^2}{A}\Big(-\frac{\|f^p(t_*)\|_{L^2}^2}{C\|f^p(t_*)\|_{L^1}^2}
				+2Cp^4K\Big)
				<0,
			\end{align*}
			a contradiction. Hence, \eqref{eq:iteration-recursion} holds.
			
			Set
			$B:=\|f\|_{L^\infty L^2}+\|f_{\rm in}\|_{L^\infty}+1.$
			By interpolation,
			\[
			\|f_{\rm in}^p\|_{L^2}^2
			\leq\|f_{\rm in}\|_{L^\infty}^{2p-2}\|f_{\rm in}\|_{L^2}^2
			\leq B^{2p}.
			\]
			Raising both sides of \eqref{eq:iteration-recursion}
			to the power $\frac1{2p}$, with $p=2^j$, we obtain
			\begin{align*}
				\|f\|_{L^\infty L^{2^{j+1}}}
				\leq\max\{B,\left(2C^2p^4K\right)^{\frac1{2^{j+1}}}\|f\|_{L^\infty L^{2^j}},1\}
				\leq\left(2C^2 16^jK\right)^{\frac1{2^{j+1}}}\max\left\{B,\,\|f\|_{L^\infty L^{2^j}},\,1\right\}.
			\end{align*}
			Since $B\geq1$ and $\|f\|_{L^\infty L^2}\leq B$,
			iterating this inequality for $j=1,\ldots,m$ yields
			\begin{align*}
				\|f\|_{L^\infty L^{2^{m+1}}}
				\leq&~\left(2C^2 16^m K\right)^{\frac{1}{2^{m+1}}}\max\{B,\|f\|_{L^{\infty}L^{2^m}},1\}\\
				\leq&~\max\{B,\|f\|_{L^{\infty}L^2},1\}\prod_{j=1}^{m}\left(2C^2K\,16^j\right)^{\frac1{2^{j+1}}}
				\leq 16\sqrt{2}\,CK^{\frac12}B,
			\end{align*}
			where we have used
			\[
			\sum_{j=1}^{\infty}\frac1{2^{j+1}}=\frac12,\qquad
			\sum_{j=1}^{\infty}\frac{j}{2^{j+1}}=1.
			\]
			Letting $m\to\infty$, we conclude that
			\[
			\|f\|_{L^\infty L^\infty}
			\leq CK^{\frac12}\left(\|f\|_{L^\infty L^2}+\|f_{\rm in}\|_{L^\infty}+1\right).
			\]
			This completes the proof.
		\end{proof}

		\begin{revision}
			\section{Local well-posedness and continuation}
			\label{sec:local-continuation}
			
			In this appendix, we establish local well-posedness and the blow-up criterion used in the proof of Theorem~\ref{thm:main}. Throughout this appendix, $A\geq1$ is fixed, and all spatial norms are taken over $\mathbb T\times\mathbb R$. We first recall the following product estimates.
			
			\begin{lemma}[Proposition A.29 in \cite{BV2022}]\label{Hs estimate}
				Let $f$, $g\in H^s(\mathbb{T}\times\mathbb{R})$ for $s>0$. Then
				\begin{align*}
					\|fg\|_{H^s}
					\leq C\left(\|f\|_{H^s}\|g\|_{L^{\infty}}+\|g\|_{H^s}\|f\|_{L^{\infty}}\right).
				\end{align*}
				For $s>1$, there holds
				\begin{align*}
					\|fg\|_{H^s}\leq C\|f\|_{H^s}\|g\|_{H^s}.
				\end{align*}
			\end{lemma}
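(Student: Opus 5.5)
The plan is to prove the first (tame) inequality with a Bony paraproduct decomposition adapted to $\mathbb T\times\mathbb R$. The second inequality then follows from the first and a Sobolev embedding.

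\emph{Littlewood--Paley setup.} I would use the frequency variable $(k,\eta)\in\mathbb Z\times\mathbb R$ with $\langle k,\eta\rangle=(1+k^2+\eta^2)^{1/2}$, so that $\|f\|_{H^s}^2\simeq\sum_k\int\langle k,\eta\rangle^{2s}|\widehat f_k(\eta)|^2\,d\eta$. Fix a smooth dyadic partition of unity $1=\psi(\xi)+\sum_{j\ge0}\varphi(2^{-j}\xi)$ on $\mathbb R^2$, and define $\Delta_j$ and $S_j=\sum_{j'<j}\Delta_{j'}$ by restricting these multipliers to $\xi=(k,\eta)$. The convolution kernel of $S_j$ on $\mathbb T\times\mathbb R$ is the $x$-periodization of the corresponding $\mathbb R^2$ kernel (Poisson summation in $x$). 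Its $L^1(\mathbb T\times\mathbb R)$ norm is therefore bounded by the $L^1(\mathbb R^2)$ norm, uniformly in $j$. Hence $\|S_jf\|_{L^\infty}\le C\|f\|_{L^\infty}$, and similarly $\|\Delta_jf\|_{L^\infty}\le C\|f\|_{L^\infty}$. Plancherel gives $\|f\|_{H^s}^2\simeq\sum_j2^{2js}\|\Delta_jf\|_{L^2}^2$. For sums of pieces with frequency support in annuli $\{|\xi|\sim2^j\}$ (respectively balls $\{|\xi|\lesssim2^j\}$ with $s>0$), the same $\ell^2$ characterization holds up to constants, as in the usual $\mathbb R^2$ theory.

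\emph{Paraproduct estimate.} Write $fg=T_fg+T_gf+R(f,g)$, where $T_fg=\sum_jS_{j-1}f\,\Delta_jg$ and $R(f,g)=\sum_{|j-j'|\le1}\Delta_jf\,\Delta_{j'}g$. Each term $S_{j-1}f\,\Delta_jg$ has frequency support in an annulus of size $2^j$, because frequencies add under products and this is exact on $\mathbb Z\times\mathbb R$. Using $\|S_{j-1}f\|_{L^\infty}\le C\|f\|_{L^\infty}$, this gives
\[
\|T_fg\|_{H^s}\le C\|f\|_{L^\infty}\|g\|_{H^s},
\]
and symmetrically $\|T_gf\|_{H^s}\le C\|g\|_{L^\infty}\|f\|_{H^s}$. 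For the remainder, $\Delta_jf\,\Delta_{j'}g$ with $|j-j'|\le1$ has frequencies only in a ball of radius $C2^j$. Since $s>0$, I would bound
\[
\|\Delta_{\ell}R(f,g)\|_{L^2}\le C\sum_{j\ge\ell-3}\|\Delta_jf\|_{L^\infty}\|\Delta_{j'}g\|_{L^2}\le C\|f\|_{L^\infty}\sum_{j\ge\ell-3}\|\Delta_{j'}g\|_{L^2}.
\]
Multiplying by $2^{\ell s}$ and applying Young's inequality for the discrete convolution with kernel $2^{-(j-\ell)s}\mathbf 1_{j\ge\ell-3}$, which is summable precisely because $s>0$, gives $\|R(f,g)\|_{H^s}\le C\|f\|_{L^\infty}\|g\|_{H^s}$. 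Summing the three terms yields the first inequality.

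\emph{The case $s>1$.} By Cauchy--Schwarz,
\[
\|f\|_{L^\infty}\le C\sum_k\int|\widehat f_k(\eta)|\,d\eta\le C\Big(\sum_k\int\langle k,\eta\rangle^{-2s}\,d\eta\Big)^{1/2}\|f\|_{H^s},
\]
and the first factor is finite exactly when $2s>2$. Thus $H^s(\mathbb T\times\mathbb R)\hookrightarrow L^\infty$, and inserting this into the first inequality gives $\|fg\|_{H^s}\le C\|f\|_{H^s}\|g\|_{H^s}$.

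The main obstacle is the first inequality. Specifically, I must check that the Littlewood--Paley machinery transfers to the mixed periodic/continuous domain: the uniform $L^1$ bound on the periodized kernels, and the $\ell^2$ characterization for pieces supported in balls rather than annuli. The remainder term is where $s>0$ is genuinely used. If the transfer became delicate, I would instead take the result as given, since the paper cites it from \cite{BV2022}.
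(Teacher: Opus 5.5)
Your argument is correct. The paper does not prove this lemma at all: it simply imports it as Proposition A.29 of \cite{BV2022}, so there is no internal proof to match.

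What you give is the standard Bony paraproduct proof of the Moser-type tame estimate. Relative to the bare citation, its added value is that it actually checks the mixed domain $\mathbb T\times\mathbb R$, which the citation does not address. Two facts make the transfer work:
\begin{itemize}
\item The $x$-periodization of the $\mathbb R^2$ Littlewood--Paley kernels gives $\|S_jf\|_{L^\infty}+\|\Delta_jf\|_{L^\infty}\le C\|f\|_{L^\infty}$ uniformly in $j$.
\item Fourier supports add exactly on $\mathbb Z\times\mathbb R$.
\end{itemize}
Hence the annulus and ball localizations you use for $T_fg$, $T_gf$ and $R(f,g)$ carry over verbatim.

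Your direct bound on $\Delta_\ell R(f,g)$, followed by the discrete Young inequality with kernel $2^{-ms}\mathbf 1_{m\ge -3}$, already proves the ``ball'' version of the $\ell^2$ characterization you flagged as a concern. It also shows that $s>0$ is used only in the remainder term.

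Likewise, the computation $\sum_k\int\langle k,\eta\rangle^{-2s}\,d\eta\simeq\sum_k(1+k^2)^{\frac12-s}$ shows that $s>1$ enters only through the embedding $H^s(\mathbb T\times\mathbb R)\hookrightarrow L^\infty$.

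The citation buys brevity. Your route buys a self-contained verification on the exact domain where the paper applies the estimate: in the local well-posedness appendix, with $s=1$ for the tame form and $s=2$ for the algebra form.
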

			\begin{lemma}\label{lem:local-continuation}
				Assume that
				\[
				0\leq n_{\mathrm{in}}\in L^1\cap H^2,\qquad
				0\leq c_{\mathrm{in}}\in H^2\cap W^{1,\infty},\qquad
				u_{\mathrm{in}}\in H^3,\qquad \nabla\cdot u_{\mathrm{in}}=0.
				\]
				For each fixed $A\geq1$, there exist a maximal existence time $T_{\max}\in(0,\infty]$ and a unique strong solution $(n,c,u)$ of \eqref{eq:1.7} on $[0,T_{\max})$. For every $0<T<T_{\max}$, we obtain
				\begin{align*}
					n&\in C([0,T];L^1\cap H^2)\cap L^2(0,T;H^3),\\
					c&\in C([0,T];H^2)\cap L^2(0,T;H^3)\cap L^\infty(0,T;W^{1,\infty}),\\
					u&\in C([0,T];H^3)
					\qquad 
					\nabla\cdot u=0.
					\alabel{app:solution-class}
				\end{align*}
				Moreover, we have
				\[
				n(t),c(t)\geq0,\qquad
				\|n(t)\|_{L^1}=\|n_{\mathrm{in}}\|_{L^1}=:M.
				\]
				If $T_{\max}<\infty$, then
				\begin{align*}
					\limsup_{t\to T_{\max}}
					\left(\|n(t)\|_{L^\infty}+\|u(t)\|_{L^\infty}+\|c(t)\|_{W^{1,\infty}}
					\right)
					=\infty.
					\alabel{app:blowup}
				\end{align*}
			\end{lemma}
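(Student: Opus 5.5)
The plan is to follow the standard route for semilinear parabolic systems: a contraction argument in an energy space on a short interval, then higher-order energy estimates that are controlled by the quantities in \eqref{app:blowup}. Throughout, $A\geq1$ is fixed, so every $A$-dependence is harmless.

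\textbf{Local existence and uniqueness.} I would work in the original (rescaled) variables, with the linear operator $\partial_t+y\partial_x-\frac1A\Delta$ acting on each of $n$, $c$ and $u$. The unbounded coefficient $y\partial_x$ is handled through its explicit semigroup. In Kelvin variables this semigroup is the heat flow with symbol $e^{-\frac1A\int_0^t(k^2+(\eta-ks)^2)ds}$. It is a contraction on every $H^s$ measured with $\nabla_L$, and converting back to physical $H^s$ costs at most a factor polynomial in $t$. Equivalently, one can run energy estimates directly: $\int y\partial_x f\,f=0$, and $[\partial_y,y\partial_x]=\partial_x$ is a lower-order term. For the velocity I would use the vorticity or Leray-projected form, where the Couette term $(Au_2,0)$ is linear and bounded.

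I then set up the Picard map on
\[
X_T=C([0,T];H^2)\cap L^2(0,T;H^3)\ \text{for } (n,c),\qquad C([0,T];H^3)\cap L^2(0,T;H^4)\ \text{for } u.
\]
The nonlinearities $\nabla\cdot(n\nabla c)$, $u\cdot\nabla n$, $u\cdot\nabla c$ and $u\cdot\nabla u$ are estimated with Lemma~\ref{Hs estimate}, using that $H^s$ is an algebra for $s>1$ in two dimensions. Parabolic smoothing gains one derivative in $L^2_t$, so the map is a contraction for small $T$, and the contraction gives uniqueness. The extra regularity $c\in L^\infty W^{1,\infty}$ follows from the $H^2$ bound only if we embed $H^2\hookrightarrow W^{1,\infty}$, which fails in two dimensions. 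I would therefore propagate $\nabla c\in L^\infty$ separately. This can be done either from the Duhamel formula, using the $L^\infty$-contractivity of the transport-heat semigroup together with the $W^{1,\infty}$ initial data, or by a maximum-principle argument on $\partial_x c$ and $\partial_y c$ as in Lemma~\ref{lem:c-xy-infty}.

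\textbf{Positivity, mass and the continuation criterion.} Nonnegativity of $n$ and $c$ follows from the maximum principle. For $n$, I test the equation with $n_-$; the drift term is controlled because $\nabla c$ is bounded. For $c$, the source $n\geq0$ makes the comparison immediate. Mass conservation follows by integrating the divergence-form equation for $n$, since $y\partial_x n$ integrates to zero in $x$.

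The main obstacle is the blow-up criterion: showing that a bound on $\|n\|_{L^\infty}+\|u\|_{L^\infty}+\|c\|_{W^{1,\infty}}$ on $[0,T_{\max})$ controls the $H^2\times H^2\times H^3$ norm, so that the local solution can be restarted. I would climb the derivatives in stages.

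First, $L^2$ energy estimates for $n$, $c$ and $u$ use only $\|\nabla c\|_{L^\infty}$ and $\|u\|_{L^\infty}$. Next come $H^1$ estimates. The term $\nabla\cdot(n\nabla c)$ tested against $\Delta n$ is bounded by
\[
\|\nabla n\|_{L^2}\|\nabla c\|_{L^\infty}+\|n\|_{L^\infty}\|\Delta c\|_{L^2},
\]
and $\|\Delta c\|_{L^2}$ is in turn controlled by parabolic regularity of the $c$-equation. For the velocity, the $H^1$ and $H^2$ estimates of the vorticity follow the classical two-dimensional Navier--Stokes argument: $\omega$ satisfies an advection-diffusion equation with bounded forcing $\partial_x n$, and the commutator $[\nabla,y\partial_x]$ gives a Gronwall term with $A$-dependent constants. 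Finally, at the $H^2$ and $H^3$ levels the nonlinear terms are handled by Kato--Ponce type commutator estimates (Lemma~\ref{Hs estimate}), which are linear in the top norm. The highest norm is then multiplied by $L^\infty$ or $W^{1,\infty}$ quantities, together with lower-order norms already controlled at the earlier stages.

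Gronwall's inequality then yields finite bounds up to $T_{\max}$, contradicting maximality. The delicate point is $\nabla u$ in $L^\infty$, which is not assumed. I would avoid needing it by putting the derivatives on the unknowns other than $u$. Where that is impossible, I would use a Brezis--Gallouet logarithmic inequality for $\|\nabla u\|_{L^\infty}$ in terms of $\|\omega\|_{L^\infty}$ and $\log\|u\|_{H^3}$, with $\|\omega\|_{L^\infty}$ obtained by the maximum principle since $n$ is bounded. This gives a log-Gronwall inequality, which still prevents blow-up in finite time.
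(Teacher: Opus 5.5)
Your outline is workable, but it takes a different route from the paper at both main steps, and two points need repair.

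\emph{Local existence.} The paper does not iterate in the energy space $C_tH^2\cap L^2_tH^3$ for $n$. It runs the Duhamel fixed point with $n\in L^\infty H^{3/2}$, $c\in L^\infty(H^2\cap W^{1,\infty})$ and $u\in L^\infty H^3$, so $\nabla c\in L^\infty$ is part of the norm rather than propagated afterwards. The key estimate is $\|n\nabla c\|_{H^1}\le C\|n\|_{H^{3/2}}(\|\nabla c\|_{L^\infty}+\|c\|_{H^2})$. Combined with the smoothing bounds for $S_A$ and $\mathbb S_A$, every Duhamel term then carries a factor $A^{-1/4}T^{1/4}$ or $A^{-1/2}T^{1/2}$. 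The lifespan therefore depends only on $\|n_{\rm in}\|_{H^{3/2}}+\|c_{\rm in}\|_{H^2}+\|\nabla c_{\rm in}\|_{L^\infty}+\|u_{\rm in}\|_{H^3}$, and the $H^2$ and $L^2H^3$ regularity is recovered afterwards by energy estimates.

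In your space, the chemotactic term only gives $\frac1A\|n\nabla c\|_{L^2_TH^2}^2\lesssim\frac1A\|n\|_{L^\infty H^2}^2\|c\|_{L^2_TH^3}^2$, with no power of $T$. Since $c_{\rm in}$ is only in $H^2$, a frozen-coefficient Picard scheme needs a ball in which $\|c\|_{L^2_TH^3}$ is small. The time at which the free flow of $c_{\rm in}$ becomes that small depends on the profile of $c_{\rm in}$, not only on its norm. Existence and uniqueness survive, but your last step (``Gronwall yields bounds up to $T_{\max}$, contradicting maximality'') then does not follow from a uniform restart, as it does in Step 6 of the paper. You would have to do one of two things. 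Either show that the solution has a limit in $H^2\times H^2\times H^3$ at $T_{\max}$ and restart from it, or set up the iteration with the $n$-equation linear in $n$, bounded by Gronwall with $\frac1A\int\|c\|_{H^3}^2$ in the exponent (compare Step 4).

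\emph{Blow-up criterion.} The paper uses one weighted functional, $\|n\|_{H^2}^2+C_0K^2(\|c\|_{H^2}^2+\|u\|_{H^3}^2)$. It writes every nonlinearity in divergence form, applies the tame product estimate of Lemma~\ref{Hs estimate}, and absorbs the extra derivative into the viscous dissipation, which is legitimate since $A$ is fixed. For example, with $\Lambda=(1-\Delta)^{1/2}$,
\[
\tfrac1A|\langle\Lambda^3\nabla\cdot(u\otimes u),\Lambda^3u\rangle|\le\tfrac CA\|u\|_{L^\infty}\|u\|_{H^3}\|\nabla u\|_{H^3}.
\]
Your staged climb also closes, and for the same reason. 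What makes it work is not Kato--Ponce commutators in the form ``top norm times $W^{1,\infty}$ norms''. At the top level that form would need $\|\nabla n\|_{L^\infty}$, $\|\nabla^2 c\|_{L^\infty}$, or $\Delta c\in H^2$ (for the term $n\Delta c$), and none of these is controlled.

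Moreover, your own $H^2$ vorticity stage already gives $u\in L^\infty H^3$, hence $\nabla u\in L^\infty$, and it only needs $n\in L^2_tH^2$ from the $H^1$ stage for $n$. So the Brezis--Gallouet branch is unnecessary. As written it is also wrong: the vorticity forcing is $-\frac1A\partial_x n$, so the maximum principle gives no bound on $\|\omega\|_{L^\infty}$ from $\|n\|_{L^\infty}$ alone; you would need Duhamel smoothing or a Moser iteration there.

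Your staged version shows that the velocity is controlled by the two-dimensional Navier--Stokes structure plus lower-order bounds on $n$. The paper's one-shot estimate is shorter and shows directly that only $\|n\|_{L^\infty}$, $\|u\|_{L^\infty}$ and $\|c\|_{W^{1,\infty}}$ enter.
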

			\begin{proof}
				Let $0<T\leq1$, $A\geq1$ and choose the Banach space
				\[
				{\mathcal Y}=L^{\infty}([0,T];H^{\frac32})\times L^{\infty}([0,T];H^2\cap W^{1,\infty})\times L^{\infty}([0,T];H^3_\sigma),\qquad H^3_\sigma=\{v\in H^3:\nabla\cdot v=0\}.
				\]
				We equip $\mathcal{Y}$ with the norm
				\begin{align*}
					\|(n,c,u)\|_{\mathcal{Y}}
					=&\|n\|_{L^\infty(0,T;H^{\frac32})}
					+\|c\|_{L^\infty(0,T;H^2)}
					+\|\nabla c\|_{L^\infty(0,T;L^\infty)}
					+\|u\|_{L^\infty(0,T;H^3)}.
				\end{align*}
				Set
				\[
				R:=2C\left(
				\|n_{\rm in}\|_{H^{\frac{3}{2}}}
				+\|c_{\rm in}\|_{H^2}
				+\|\nabla c_{\rm in}\|_{L^\infty}
				+\|u_{\rm in}\|_{H^3}
				\right),
				\]
				and define the closed ball
				\[
				B_R:=\left\{
				(n,c,u)\in\mathcal Y:
				\|(n,c,u)\|_{\mathcal Y}\leq R
				\right\}.
				\]

				For scalar initial data $f_{\rm in}$, define $S_A(t)$ by $f(t)=S_A(t)f_{\rm in}$, where $f$ solves
				\begin{align*}
					\partial_t f+y\partial_x f-\frac{1}{A}\Delta f=0,\qquad f(0)=f_{\rm in}.
				\end{align*}
				The full velocity equation is
				\begin{align*}
					\partial_tu+y\partial_xu+\begin{pmatrix}u_2\\0\end{pmatrix}-\frac1A\Delta u+\frac1A\nabla p
					=\frac1A\left(\begin{pmatrix}0\\n\end{pmatrix}-u\cdot\nabla u\right),
					\qquad \nabla\cdot u=0.
					\alabel{app:velocity}
				\end{align*}
				Let $\mathbb{S}_A(t)$ be the solution operator for the linear part of \eqref{app:velocity} on divergence-free vector fields, and let $\mathbb P$ denote the Leray projection.
				Following the argument in \cite{BedrossianVicolWang2018}, we obtain the following estimates for the scalar and velocity semigroups, where $r,\sigma\geq0$ and $0<t\leq1$:
				\begin{equation}\label{Property of operator}
					\begin{aligned}
						\|S_A(t)f\|_{H^{r+\sigma}}
						&\leq C A^{\frac{\sigma}{2}} t^{-\frac{\sigma}{2}}\|f\|_{H^r},\\
						\|\nabla S_A(t)f\|_{L^\infty}
						&\leq C A^{\frac12} t^{-\frac12}\|f\|_{L^\infty},\\
						\|\mathbb S_A(t)\mathbb P f\|_{H^{r+\sigma}}
						&\leq C A^{\frac{\sigma}{2}}t^{-\frac{\sigma}{2}}\|f\|_{H^r}.
					\end{aligned}
				\end{equation}
				Using Duhamel's formula, we define $\Phi=(\Phi_n,\Phi_c,\Phi_u)$ on $B_R$ by
				\begin{equation}\label{Phi}
					\begin{aligned}
						\Phi_n(n,c,u)(t)
						={}&S_A(t)n_{\mathrm{in}}
						-\frac1A\int_0^tS_A(t-s)\nabla\cdot(n\nabla c+un)(s)\,ds,\\
						\Phi_c(n,c,u)(t)
						={}&e^{-\frac tA}S_A(t)c_{\mathrm{in}}+\frac1A\int_0^te^{-\frac{t-s}{A}}S_A(t-s)
						\bigl(n-u\cdot\nabla c\bigr)(s)\,ds,\\
						\Phi_u (n,c,u)(t)
						={}&\mathbb S_A(t)u_{\mathrm{in}}+\frac1A\int_0^t\mathbb S_A(t-s)\mathbb P
						\left[\begin{pmatrix}0\\n\end{pmatrix}-u\cdot\nabla u\right](s)\,ds.
					\end{aligned}
				\end{equation}
				\noindent\textbf{Step 1. $\Phi$ maps $B_R$ into itself.}
				
				\noindent\underline{\textbf{I. Estimate of $\Phi_n$.}}
				By Lemma \ref{Hs estimate}, we have
				\begin{align*}
					\|n\nabla c\|_{H^1}
					&\leq C\left(
					\|n\|_{H^1}\|\nabla c\|_{L^{\infty}}+\|\nabla c\|_{H^1}\|n\|_{L^{\infty}}
					\right)\\
					&\leq C\|n\|_{H^{\frac32}}\left(\|\nabla c\|_{L^{\infty}}+\|c\|_{H^2}\right)
					\leq CR^2,\alabel{nc-H1}
				\end{align*}
				and
				\begin{align*}
					\|un\|_{H^1}
					\leq C\left(\|u\|_{H^1}\|n\|_{L^{\infty}}+\|n\|_{H^1}\|u\|_{L^{\infty}}\right)
					\leq C\|u\|_{H^2}\|n\|_{H^{\frac32}}
					\leq CR^2.\alabel{un-H1}
				\end{align*}
				Using $\eqref{Property of operator}_1$, \eqref{nc-H1} and \eqref{un-H1}, we obtain
				\begin{align*}
					\|\Phi_n(n,c,u)\|_{L^\infty(0,T;H^{\frac{3}{2}})}
					&\leq C\|n_{\rm in}\|_{H^{\frac{3}{2}}}
					+CA^{-\frac{1}{4}}
					\sup_{0\leq t\leq T}
					\int_0^t(t-s)^{-\frac{3}{4}}
					\left(
					\|n\nabla c\|_{H^1}+\|un\|_{H^1}
					\right)(s)\,ds\\
					&\leq C\|n_{\rm in}\|_{H^{\frac{3}{2}}}
					+CA^{-\frac{1}{4}}R^2
					\sup_{0\leq t\leq T}
					\int_0^t(t-s)^{-\frac{3}{4}}\,ds\\
					&\leq C\|n_{\rm in}\|_{H^{\frac{3}{2}}}
					+CA^{-\frac{1}{4}}T^{\frac{1}{4}}R^2.\alabel{eq:n-output}
				\end{align*}
				\noindent\underline{\textbf{II. Estimate of $\Phi_c$.}}
				By Lemma \ref{Hs estimate}, we have
				\begin{align*}
					\|u\cdot\nabla c\|_{H^1}
					\leq& C\left(\|u\|_{H^1}\|\nabla c\|_{L^{\infty}}+\|\nabla c\|_{H^1}\|u\|_{L^{\infty}}\right)\\
					\leq& C\|u\|_{H^2}\left(\|\nabla c\|_{L^{\infty}}+\|c\|_{H^2}\right)
					\leq CR^2.\alabel{uc-H1}
				\end{align*}
				By $\eqref{Property of operator}_1$ and \eqref{uc-H1}, we find
				\begin{equation*}
					\begin{aligned}
						\|\Phi_c(n,c,u)(\cdot,t)\|_{H^2}
						&\leq C\|c_{\mathrm{in}}\|_{H^2}
						+CA^{-\frac12}\int_0^t(t-s)^{-\frac12}
						\|n-u\cdot\nabla c\|_{H^1}(s)\,ds\\
						&\leq C\|c_{\mathrm{in}}\|_{H^2}
						+CA^{-\frac12} t^{\frac12}(R+R^2).
					\end{aligned}
				\end{equation*}
				Therefore,
				\begin{equation}\label{eq:c-output}
					\|\Phi_c(n,c,u)\|_{L^\infty H^2}
					\leq C\|c_{\mathrm{in}}\|_{H^2}+CA^{-\frac12}T^{\frac12}(R+R^2).
				\end{equation}
				
				To estimate $\nabla\Phi_c$, we use the commutation identities
				\[
				\partial_xS_A(t)=S_A(t)\partial_x,
				\qquad
				\partial_yS_A(t)=S_A(t)(\partial_y-t\partial_x).
				\]
				By these identities and the $L^\infty$ contraction property of $S_A(t)$, we have
				\[
				\|\nabla S_A(t)c_{\rm in}\|_{L^\infty}
				\leq C\|\nabla c_{\rm in}\|_{L^\infty},
				\qquad 0\leq t\leq1.
				\]
				Using $\eqref{Property of operator}_2$ and the Sobolev embedding, we obtain
				\begin{align*}
					\|\nabla\Phi_c(n,c,u)\|_{L^\infty(0,T;L^\infty)}
					&\leq C\|\nabla c_{\rm in}\|_{L^\infty}
					+CA^{-\frac{1}{2}}
					\sup_{0\leq t\leq T}
					\int_0^t(t-s)^{-\frac{1}{2}}
					\|n-u\cdot\nabla c\|_{L^\infty}(s)\,ds\\
					&\leq C\|\nabla c_{\rm in}\|_{L^\infty}
					+CA^{-\frac{1}{2}}
					\sup_{0\leq t\leq T}
					\int_0^t(t-s)^{-\frac{1}{2}}
					\left(
					\|n(s)\|_{L^\infty}
					+\|u(s)\|_{L^\infty}\|\nabla c(s)\|_{L^\infty}
					\right)\,ds\\
					&\leq C\|\nabla c_{\rm in}\|_{L^\infty}
					+CA^{-\frac{1}{2}}(R+R^2)
					\sup_{0\leq t\leq T}
					\int_0^t(t-s)^{-\frac{1}{2}}\,ds\\
					&\leq C\|\nabla c_{\rm in}\|_{L^\infty}
					+CA^{-\frac{1}{2}}T^{\frac{1}{2}}(R+R^2).\alabel{Phi-c-infty}
				\end{align*}

				\noindent\underline{\textbf{III. Estimate of $\Phi_u$.}}
				The Leray projection satisfies
				\[
				\|\mathbb P f\|_{H^r}\leq C\|f\|_{H^r},
				\qquad r\geq0.
				\]
				By Lemma \ref{Hs estimate}, we have
				\begin{align*}
					\|u\cdot\nabla u\|_{H^2}
					\leq C\|u\|_{H^2}\|\nabla u\|_{H^2}
					\leq C\|u\|_{H^3}^2
					\leq CR^2.\alabel{uu-H2}
				\end{align*}
				Applying the semigroup estimates to $\eqref{Phi}_3$, we obtain
				\begin{align*}
					\|\Phi_u(n,c,u)(\cdot,t)\|_{H^3}
					\leq& C\|u_{\rm in}\|_{H^3}
					+CA^{-\frac14}\int_0^t(t-s)^{-\frac34}
					\|n(s)\|_{H^{\frac32}}\,ds\\
					&+CA^{-\frac12}\int_0^t(t-s)^{-\frac12}
					\|u(s)\cdot\nabla u(s)\|_{H^2}\,ds\\
					\leq& C\|u_{\rm in}\|_{H^3}
					+CA^{-\frac14}R\int_0^t(t-s)^{-\frac34}\,ds
					+CA^{-\frac12}R^2\int_0^t(t-s)^{-\frac12}\,ds\\
					\leq&C\|u_{\rm in}\|_{H^3}
					+CA^{-\frac14}t^{\frac14}(R+R^2).
				\end{align*}
				Taking the supremum over $0\leq t\leq T$, we obtain
				\begin{align*}
					\|\Phi_u(n,c,u)\|_{L^\infty(0,T;H^3)}
					\leq C\|u_{\rm in}\|_{H^3}
					+CA^{-\frac14}T^{\frac14}(R+R^2).\alabel{Phi_u}
				\end{align*}
				Combining \eqref{eq:n-output}, \eqref{eq:c-output} and \eqref{Phi-c-infty} with \eqref{Phi_u}, we conclude
				\begin{align*}
					\|\Phi(n,c,u)\|_{\mathcal{Y}}
					\leq& C\bigg(\|n_{\mathrm{in}}\|_{H^{\frac32}}+\|c_{\mathrm{in}}\|_{H^2}+\|\nabla c_{\mathrm{in}}\|_{L^\infty}+\|u_{\mathrm{in}}\|_{H^3}+A^{-\frac14} T^{\frac14}R^2+A^{-\frac12}T^{\frac12}(R+R^2)\\
					&+A^{-\frac12}T^{\frac12}(R+R^2)+A^{-\frac14}T^{\frac14}(R+R^2)\bigg)\\
					\leq&\frac{1}{2}R+CA^{-\frac14}T^{\frac14}(R^2+R).\alabel{combine}
				\end{align*}
				Then, we take $T>0$ sufficiently small such that
				\begin{equation}\label{tau choose}
					0<T\leq
					\min\left\{
					1,\frac{A}{16C^4(1+R)^4}
					\right\}.
				\end{equation}
				This choice ensures that
				$
				CA^{-\frac14}T^{\frac14}(R^2+R)\leq\frac{R}{2}.
				$
				By \eqref{combine} and \eqref{tau choose}, we have $\|\Phi(n,c,u)\|_{\mathcal{Y}}\leq R$, so $\Phi$ maps $B_R$ into itself.
				
				\noindent\textbf{Step 2. $\Phi$ is a contraction on $B_R$.}
				Let $(n_1,c_1,u_1),(n_2,c_2,u_2)\in B_R$.
				
				\noindent\underline{\textbf{I. Estimate of $\Phi_n$.}}
				The definition of $\Phi_n$ gives
				\begin{align*}
					\Phi_n(n_1,c_1,u_1)-\Phi_n(n_2,c_2,u_2)
					=&-\frac1A\int_0^t S_A(t-s)\nabla\cdot
					\Big[(n_1-n_2)\nabla c_1+n_2\nabla(c_1-c_2)\\
					&+(u_1-u_2)n_1+u_2(n_1-n_2)\Big](s)\,ds.
				\end{align*}
				Using the same estimates in \eqref{nc-H1} and \eqref{un-H1}, we obtain
				\begin{align*}
					\|(n_1-n_2)\nabla c_1\|_{H^1}
					\leq& C\|n_1-n_2\|_{H^{\frac32}}\left(\|\nabla c_1\|_{L^{\infty}}+\|c_1\|_{H^2}\right)
					\leq CR\|n_1-n_2\|_{H^{\frac32}},\\
					\|n_2\nabla(c_1-c_2)\|_{H^1}
					\leq& C\|n_2\|_{H^{\frac32}}\left(\|\nabla (c_1-c_2)\|_{L^{\infty}}+\|c_1-c_2\|_{H^2}\right)\\
					\leq& CR\left(\|\nabla (c_1-c_2)\|_{L^{\infty}}+\|c_1-c_2\|_{H^2}\right),
				\end{align*}
				and
				\begin{align*}
					\|(u_1-u_2)n_1+u_2(n_1-n_2)\|_{H^1}
					\leq& C\big(
					\|u_1-u_2\|_{H^2}\|n_1\|_{H^{\frac32}}
					+\|u_2\|_{H^2}\|n_1-n_2\|_{H^{\frac32}}
					\big)\\
					\leq& CR\big(
					\|u_1-u_2\|_{H^2}+\|n_1-n_2\|_{H^{\frac32}}
					\big).
				\end{align*}
				It follows from $\eqref{Property of operator}_1$ that
				\begin{align*}
					\|\Phi_n(n_1,c_1,u_1)(t)-\Phi_n(n_2,c_2,u_2)(t)\|_{H^{\frac32}}
					\leq& CA^{-\frac14}R
					\|(n_1-n_2,c_1-c_2,u_1-u_2)\|_{\mathcal Y}
					\int_0^t(t-s)^{-\frac34}\,ds\\
					\leq& CA^{-\frac14}t^{\frac14}R
					\|(n_1-n_2,c_1-c_2,u_1-u_2)\|_{\mathcal Y}.
				\end{align*}
				Taking the supremum over $0\leq t\leq T$, we obtain
				\begin{align*}
					\|\Phi_n(n_1,c_1,u_1)-\Phi_n(n_2,c_2,u_2)\|
					_{L^\infty(0,T;H^{\frac32})}
					\leq CA^{-\frac14}T^{\frac14}R
					\|(n_1-n_2,c_1-c_2,u_1-u_2)\|_{\mathcal Y}.\alabel{Phi-n1n2}
				\end{align*}

				\noindent\underline{\textbf{II. Estimate of $\Phi_c$.}}
				Using the estimates in \eqref{uc-H1}, we obtain
				\begin{align*}
					&\|(u_1-u_2)\cdot\nabla c_1
					+u_2\cdot\nabla(c_1-c_2)\|_{H^1}\\
					\leq&C\|u_1-u_2\|_{H^2}\left(\|\nabla c_1\|_{L^{\infty}}+\|c_1\|_{H^2}\right)
					+C\|u_2\|_{H^2}\left(\|\nabla (c_1-c_2)\|_{L^{\infty}}+\|c_1-c_2\|_{H^2}\right)\\
					\leq& CR\|(n_1-n_2,c_1-c_2,u_1-u_2)\|_{\mathcal Y}.
				\end{align*}
				By the Sobolev embedding, we have
				\begin{align*}
					&\|(u_1-u_2)\cdot\nabla c_1
					+u_2\cdot\nabla(c_1-c_2)\|_{L^\infty}\\
					\leq&\|u_1-u_2\|_{L^\infty}\|\nabla c_1\|_{L^\infty}
					+\|u_2\|_{L^\infty}\|\nabla(c_1-c_2)\|_{L^\infty}\\
					\leq& CR\|(n_1-n_2,c_1-c_2,u_1-u_2)\|_{\mathcal Y}.
				\end{align*}
				Applying the semigroup estimates and taking the supremum over $0\leq t\leq T$,
				we obtain
				\begin{align*}
					&\|\Phi_c(n_1,c_1,u_1)-\Phi_c(n_2,c_2,u_2)\|
					_{L^\infty(0,T;H^2)}+\|\nabla[\Phi_c(n_1,c_1,u_1)-\Phi_c(n_2,c_2,u_2)]\|
					_{L^\infty(0,T;L^\infty)}\\
					\leq& CA^{-\frac{1}{2}}(1+R)
					\|(n_1-n_2,c_1-c_2,u_1-u_2)\|_{\mathcal Y}
					\sup_{0\leq t\leq T}\int_0^t(t-s)^{-\frac{1}{2}}\,ds\\
					\leq& CA^{-\frac{1}{2}}T^{\frac{1}{2}}(1+R)
					\|(n_1-n_2,c_1-c_2,u_1-u_2)\|_{\mathcal Y}.\alabel{Phi-c1c2}
				\end{align*}

				\noindent\underline{\textbf{III. Estimate of $\Phi_u$.}}
				Using the same estimates in \eqref{uu-H2}, we obtain
				\begin{align*}
					\|(u_1-u_2)\cdot\nabla u_1
					+u_2\cdot\nabla(u_1-u_2)\|_{H^2}
					\leq& C\big(\|u_1\|_{H^3}+\|u_2\|_{H^3}\big)
					\|u_1-u_2\|_{H^3}\\
					\leq& CR\|u_1-u_2\|_{H^3}.
				\end{align*}
				Applying the semigroup estimates 
				and taking the supremum over $0\leq t\leq T$, we obtain
				\begin{align*}
					&\|\Phi_u(n_1,c_1,u_1)-\Phi_u(n_2,c_2,u_2)\|
					_{L^\infty(0,T;H^3)}\\
					\leq& CA^{-\frac{1}{4}}
					\|n_1-n_2\|_{L^\infty(0,T;H^{\frac{3}{2}})}
					\sup_{0\leq t\leq T}\int_0^t(t-s)^{-\frac{3}{4}}\,ds\\
					&+CA^{-\frac{1}{2}}R
					\|u_1-u_2\|_{L^\infty(0,T;H^3)}
					\sup_{0\leq t\leq T}\int_0^t(t-s)^{-\frac{1}{2}}\,ds\\
					\leq& C\big(
					A^{-\frac{1}{4}}T^{\frac{1}{4}}
					+A^{-\frac{1}{2}}T^{\frac{1}{2}}R
					\big)
					\|(n_1-n_2,c_1-c_2,u_1-u_2)\|_{\mathcal Y}\\
					\leq& CA^{-\frac{1}{4}}T^{\frac{1}{4}}(1+R)
					\|(n_1-n_2,c_1-c_2,u_1-u_2)\|_{\mathcal Y}.\alabel{Phi_u1u2}
				\end{align*}
				By \eqref{Phi-n1n2}--\eqref{Phi_u1u2} and the choice of \(T\) in \eqref{tau choose}, we have
				\begin{align*}
					&\|\Phi(n_1,c_1,u_1)-\Phi(n_2,c_2,u_2)\|_{\mathcal Y}\\
					\leq& CA^{-\frac{1}{4}}T^{\frac{1}{4}}(1+R)
					\|(n_1-n_2,c_1-c_2,u_1-u_2)\|_{\mathcal Y}\\
					\leq& \frac{1}{2}
					\|(n_1-n_2,c_1-c_2,u_1-u_2)\|_{\mathcal Y}.
				\end{align*}
				
				By the Banach fixed point theorem, $\Phi$ has a unique
				fixed point $(n,c,u)\in B_R$.
				Since $n_{\rm in},c_{\rm in}\geq0$, negative-part estimates
				for the density and chemical equations give $n,c\geq0$.
				Using periodicity in $x$ and a cutoff argument in $y$,
				we obtain
				\[
				\|n(t)\|_{L^1}
				=\int_{\mathbb T\times\mathbb R}n(t,x,y)\,dx\,dy
				=\int_{\mathbb T\times\mathbb R}n_{\rm in}(x,y)\,dx\,dy
				=\|n_{\rm in}\|_{L^1},
				\qquad 0\leq t\leq T.
				\]

				\noindent\textbf{Step 3. Higher regularity of $c$.}
				The following energy estimates are justified by regularization. Taking the $L^2$ inner product of $\eqref{eq:1.7}_2$ with $c$ over $\mathbb T\times\mathbb R$, by $\nabla\cdot u=0$ and Young's inequality, we obtain
				\begin{equation}
					\frac{d}{dt}\|c\|_{L^2}^2
					+\frac{2}{A}\|\nabla c\|_{L^2}^2
					+\frac{1}{A}\|c\|_{L^2}^2
					\leq\frac{1}{A}\|n\|_{L^2}^2.
					\label{app:c-l2}
				\end{equation}
				
				Applying $\nabla$ to $\eqref{eq:1.7}_2$ and taking the $L^2$ inner product with $\nabla c$ over $\mathbb T\times\mathbb R$, by integration by parts and Young's inequality, we obtain,
				\begin{align*}
					\frac{1}{2}\frac{d}{dt}\|\nabla c\|_{L^2}^2
					+\frac{1}{A}\|\nabla^2c\|_{L^2}^2
					+\frac{1}{A}\|\nabla c\|_{L^2}^2
					=&-\int_{\mathbb{T}\times\mathbb{R}}\partial_xc\,\partial_ycdxdy
					+\frac{1}{A}\int_{\mathbb{T}\times\mathbb{R}}\left[\nabla n\cdot\nabla c-\nabla(u\cdot\nabla c)\cdot\nabla c\right]dxdy\\
					\leq&\|\nabla c\|_{L^2}^2+\frac{1}{A}\|\nabla n\|_{L^2}\|\nabla c\|_{L^2}+\frac{1}{A}\int_{\mathbb{T}\times\mathbb{R}}(u\cdot\nabla c)\Delta cdxdy\\
					\leq&\frac{1}{2A}\|\nabla n\|_{L^2}^2+(\frac{1}{2A}+1)\|\nabla c\|_{L^2}^2+\frac{1}{2A}\|\Delta c\|_{L^2}^2+\frac{C}{A}\|u\|_{H^2}^2\|\nabla c\|_{L^2}^2.
				\end{align*}
				Using $\|\nabla^2c\|_{L^2}=\|\Delta c\|_{L^2}$, we have
				\begin{equation}
					\begin{aligned}
						\frac{d}{dt}\|\nabla c\|_{L^2}^2
						+\frac{1}{A}\|\Delta c\|_{L^2}^2
						+\frac{1}{A}\|\nabla c\|_{L^2}^2
						\leq& C\left(1+\frac{1}{A}\|u\|_{H^2}^2\right)
						\|\nabla c\|_{L^2}^2
						+\frac{C}{A}\|\nabla n\|_{L^2}^2.
					\end{aligned}
					\label{app:c-h1}
				\end{equation}
				
				Using
				\[
				\Delta(y\partial_xc)
				=y\partial_x\Delta c+2\partial_x\partial_yc,
				\]
				we apply $\Delta$ to $\eqref{eq:1.7}_2$ and take the $L^2$ inner product with $\Delta c$ over $\mathbb T\times\mathbb R$. By integration by parts and Young's inequality, we obtain
				
				\begin{equation}
					\begin{aligned}
						&\frac{1}{2}\frac{d}{dt}\|\Delta c\|_{L^2}^2
						+\frac{1}{A}\|\nabla\Delta c\|_{L^2}^2
						+\frac{1}{A}\|\Delta c\|_{L^2}^2\\
						=&-2\int_{\mathbb{T}\times\mathbb{R}}\partial_x\partial_yc\,\Delta cdxdy
						-\frac{1}{A}\int_{\mathbb{T}\times\mathbb{R}}\nabla n\cdot\nabla\Delta cdxdy
						+\frac{1}{A}\int_{\mathbb{T}\times\mathbb{R}}\nabla(u\cdot\nabla c)\cdot\nabla\Delta cdxdy\\
						\leq&C\|\Delta c\|_{L^2}^2+\frac{1}{2A}\|\nabla\Delta c\|_{L^2}^2+\frac{C}{A}\|\nabla n\|_{L^2}^2+\frac{C}{A}\|u\|_{H^2}^2\|c\|_{H^2}^2,
					\end{aligned}
					\label{the estimate about delta c}
				\end{equation}
				where we have used 
				\begin{align*}
					\|\nabla(u\cdot\nabla c)\|_{L^2}
					&\leq C\big(
					\|\nabla u\|_{L^4}\|\nabla c\|_{L^4}
					+\|u\|_{L^\infty}\|\nabla^2c\|_{L^2}
					\big)\\
					&\leq C\|u\|_{H^2}\|c\|_{H^2}.
				\end{align*}
				Rearranging \eqref{the estimate about delta c}, we obtain
				\begin{align*}
					\frac{d}{dt}\|\Delta c\|_{L^2}^2+\frac{1}{A}\|\nabla\Delta c\|_{L^2}^2+\frac{2}{A}\|\Delta c\|_{L^2}^2
					\leq C\|\Delta c\|_{L^2}^2+\frac{C}{A}\big(\|\nabla n\|_{L^2}^2+\|u\|_{H^2}^2\|c\|_{H^2}^2\big).\alabel{app:c-second}
				\end{align*}
				Combining \eqref{app:c-l2}, \eqref{app:c-h1}, and \eqref{app:c-second}, we obtain
				\begin{equation*}
					\frac{d}{dt}\|c\|_{H^2}^2
					+\frac{1}{A}\|c\|_{H^3}^2
					\leq C\|c\|_{H^2}^2
					+\frac{C}{A}\big(
					\|n\|_{H^1}^2+\|u\|_{H^2}^2\|c\|_{H^2}^2
					\big).\alabel{c-H2}
				\end{equation*}
				Integrating over $[0,T]$ and using $(n,c,u)\in B_R$, we obtain
				\begin{equation}
					\frac{1}{A}\int_0^T\|c(t)\|_{H^3}^2\,dt
					\leq C\|c_{\rm in}\|_{H^2}^2
					+CTR^2+\frac{CT}{A}(R^2+R^4).
					\label{app:c-h3-integral}
				\end{equation}
				Therefore, $c\in L^2(0,T;H^3)$.

				\noindent\textbf{Step 4. Higher regularity of $n$.}
				By Lemma \ref{Hs estimate}, we have
				\begin{align*}
					\|n\nabla c\|_{H^2}
					\leq& C\|n\|_{H^2}\|\nabla c\|_{H^2}
					\leq C\|n\|_{H^2}\|c\|_{H^3},\\
					\|un\|_{H^2}
					\leq& C\|u\|_{H^2}\|n\|_{H^2}.
				\end{align*}
				Using the same argument as for \eqref{c-H2}, we obtain
				\begin{align*}
					\frac{1}{2}\frac{d}{dt}\|n\|_{H^2}^2
					+\frac{1}{A}\|\nabla n\|_{H^2}^2
					\leq& C\|n\|_{H^2}^2
					+\frac{C}{A}\big(
					\|n\nabla c\|_{H^2}+\|un\|_{H^2}
					\big)\|\nabla n\|_{H^2}\\
					\leq& C\|n\|_{H^2}^2
					+\frac{C}{A}\big(
					\|c\|_{H^3}+\|u\|_{H^2}
					\big)\|n\|_{H^2}\|\nabla n\|_{H^2}.
				\end{align*}
				By Young's inequality, we have
				\begin{equation}\label{n-H2 estimate}
					\frac{d}{dt}\|n\|_{H^2}^2
					+\frac{1}{A}\|\nabla n\|_{H^2}^2
					\leq C\left(
					1+\frac{1}{A}\|u\|_{H^2}^2
					+\frac{1}{A}\|c\|_{H^3}^2
					\right)\|n\|_{H^2}^2.
				\end{equation}
				Applying \eqref{app:c-h3-integral} and Gronwall's inequality, we obtain
				\begin{align*}
					\|n\|_{L^\infty(0,T;H^2)}^2
					&\leq \|n_{\rm in}\|_{H^2}^2
					\exp\left\{
					CT+\frac{C}{A}\int_0^T
					\big(\|u(s)\|_{H^2}^2+\|c(s)\|_{H^3}^2\big)\,ds
					\right\}\\
					&\leq \|n_{\rm in}\|_{H^2}^2
					\exp\left\{
					C\|c_{\rm in}\|_{H^2}^2
					+CT(1+R^2)+\frac{CT}{A}(R^2+R^4)
					\right\}.
				\end{align*}
				Integrating \eqref{n-H2 estimate} over $[0,T]$, we also obtain
				$n\in L^2(0,T;H^3)$.

				\noindent\textbf{Step 5. Continuity in time.}
				The preceding product estimates imply
				\begin{align*}
					\|n\nabla c+un\|_{L^2(0,T;H^2)}
					&\leq C\|n\|_{L^\infty(0,T;H^2)}
					\left(
					\|c\|_{L^2(0,T;H^3)}
					+T^{\frac{1}{2}}\|u\|_{L^\infty(0,T;H^2)}
					\right)<\infty.
				\end{align*}
				
				Fix $t_0\in(0,T]$ and $\varepsilon>0$.
				For any $a\in[0,t_0)$ with $n(a)\in H^2$
				and $t\in[a,T]$, the Duhamel formula gives
				\[
				n(t)-S_A(t-a)n(a)
				=-\frac{1}{A}\int_a^t
				S_A(t-s)\nabla\cdot(n\nabla c+un)(s)\,ds
				=:W_a(t).
				\]
				The $H^2$ energy estimate for $W_a$ gives
				\[
				\frac{d}{dt}\|W_a\|_{H^2}^2
				+\frac{1}{A}\|\nabla W_a\|_{H^2}^2
				\leq C\|W_a\|_{H^2}^2
				+\frac{C}{A}\|n\nabla c+un\|_{H^2}^2.
				\]
				Since $W_a(a)=0$ and $T\leq1$, Gronwall's inequality implies
				\[
				\|W_a(t)\|_{H^2}
				\leq CA^{-\frac{1}{2}}
				\left(
				\int_a^t\|n\nabla c+un\|_{H^2}^2(s)\,ds
				\right)^{\frac{1}{2}}.
				\]
				For $t\in[a,T]$, we obtain
				\begin{align*}
					\|n(t)-n(t_0)\|_{H^2}
					&\leq
					\|\big(S_A(t-a)-S_A(t_0-a)\big)n(a)\|_{H^2}
					+\|W_a(t)\|_{H^2}+\|W_a(t_0)\|_{H^2}\\
					&\leq
					\|\big(S_A(t-a)-S_A(t_0-a)\big)n(a)\|_{H^2}\\
					&\quad+CA^{-\frac{1}{2}}
					\left(
					\int_a^{\max\{t,t_0\}}
					\|n\nabla c+un\|_{H^2}^2(s)\,ds
					\right)^{\frac{1}{2}}.
				\end{align*}
				By the integrability of $\|n\nabla c+un\|_{H^2}^2$,
				we choose $a\in(0,t_0)$ with $n(a)\in H^2$
				sufficiently close to $t_0$ such that
				\[
				CA^{-\frac{1}{2}}
				\left(
				\int_a^{t_0}\|n\nabla c+un\|_{H^2}^2(s)\,ds
				\right)^{\frac{1}{2}}
				<\frac{\varepsilon}{4}.
				\]
				
				With this $a$ fixed, by the strong continuity of $S_A(t)$
				and the absolute continuity of the integral, we can choose
				$\delta\in(0,t_0-a)$ such that
				\[
				\begin{aligned}
					\|\big(S_A(t-a)-S_A(t_0-a)\big)n(a)\|_{H^2}
					&<\frac{\varepsilon}{2},\\
					CA^{-\frac{1}{2}}
					\left(
					\int_a^{\max\{t,t_0\}}
					\|n\nabla c+un\|_{H^2}^2(s)\,ds
					\right)^{\frac{1}{2}}
					&<\frac{\varepsilon}{2}
				\end{aligned}
				\]
				whenever $t\in[0,T]$ and $|t-t_0|<\delta$.
				Using the preceding difference estimate, we obtain
				\[
				\|n(t)-n(t_0)\|_{H^2}<\varepsilon,
				\]
				which proves continuity at every $t_0\in(0,T]$.
				
				At $t_0=0$, let $\varepsilon>0$.
				The strong continuity of $S_A(t)$ and the integrability of
				$\|n\nabla c+un\|_{H^2}^2$ allow us to choose $\delta>0$
				such that, for $t\in[0,T]$ with $t<\delta$,
				\begin{align*}
					\|n(t)-n_{\rm in}\|_{H^2}
					\leq& \|S_A(t)n_{\rm in}-n_{\rm in}\|_{H^2}
					+CA^{-\frac{1}{2}}
					\left(
					\int_0^t\|n\nabla c+un\|_{H^2}^2(s)\,ds
					\right)^{\frac{1}{2}}\\
					<&\frac{\varepsilon}{2}+\frac{\varepsilon}{2}
					=\varepsilon.
				\end{align*}
				Thus, $n\in C([0,T];H^2)$. By nonnegativity, mass conservation and dominated convergence, we also have
				\[
				\begin{aligned}
					\|n(t)-n(t_0)\|_{L^1}
					&=2\|n_{\rm in}\|_{L^1}
					-2\int_{\mathbb T\times\mathbb R}
					\min\{n(t,x,y),n(t_0,x,y)\}\,dx\,dy
					\longrightarrow0
				\end{aligned}
				\]
				as $t\to t_0$ within $[0,T]$.
				Hence, $n\in C([0,T];L^1\cap H^2)$.
				
				Moreover, we know
				\[
				n-u\cdot\nabla c\in L^\infty(0,T;H^1),
				\qquad
				\mathbb P\left(
				\begin{pmatrix}0\\n\end{pmatrix}-u\cdot\nabla u
				\right)\in L^\infty(0,T;H^2).
				\]
				Using the corresponding Duhamel formulas, the strong continuity of the semigroups and the integrable kernel $A^{-\frac{1}{2}}(t-s)^{-\frac{1}{2}}$, we obtain by the same argument
				\[
				c\in C([0,T];H^2),
				\qquad
				u\in C([0,T];H^3).
				\]
				
				\noindent\textbf{Step 6. Blow-up criterion.}
				Suppose that $T_{\max}<\infty$ and
				\[
				\sup_{0\leq t<T_{\max}}
				\left(
				\|n(t)\|_{L^\infty}
				+\|u(t)\|_{L^\infty}
				+\|c(t)\|_{W^{1,\infty}}
				\right)\leq K
				\]
				for some $K\geq1$.
				Using $u\cdot\nabla u=\nabla\cdot(u\otimes u)$
				and the product estimates, we obtain
				\begin{align*}
					\frac{d}{dt}\|n\|_{H^2}^2
					+\frac{1}{A}\|\nabla n\|_{H^2}^2
					&\leq C\left(1+\frac{K^2}{A}\right)\|n\|_{H^2}^2
					+\frac{CK^2}{A}
					\left(\|c\|_{H^3}^2+\|u\|_{H^3}^2\right),\\
					\frac{d}{dt}\|c\|_{H^2}^2
					+\frac{1}{A}\|c\|_{H^3}^2
					&\leq C\left(1+\frac{K^2}{A}\right)\|c\|_{H^2}^2
					+\frac{C}{A}\|n\|_{H^2}^2
					+\frac{CK^2}{A}\|u\|_{H^3}^2,\\
					\frac{d}{dt}\|u\|_{H^3}^2
					+\frac{1}{A}\|\nabla u\|_{H^3}^2
					&\leq C\left(1+\frac{K^2}{A}\right)\|u\|_{H^3}^2
					+\frac{C}{A}\|n\|_{H^2}^2.
				\end{align*}
				Here $C$ is independent of $A$ and $T$. Multiplying the last two inequalities by $C_0K^2$
				for a sufficiently large constant $C_0$, and adding them to the first, we absorb $\frac{CK^2}{A}\|c\|_{H^3}^2$ and obtain
				\begin{align*}
					\frac{d}{dt}
					\left[
					\|n\|_{H^2}^2
					+C_0K^2\left(\|c\|_{H^2}^2+\|u\|_{H^3}^2\right)
					\right]
					\leq C\left(1+\frac{K^2}{A}\right)
					\left[
					\|n\|_{H^2}^2+C_0K^2\left(\|c\|_{H^2}^2+\|u\|_{H^3}^2\right)
					\right].
				\end{align*}
				By Gronwall's inequality and $T_{\max}<\infty$, we have
				\[
				\sup_{0\leq t<T_{\max}}
				\left(
				\|n(t)\|_{H^2}^2
				+\|c(t)\|_{H^2}^2
				+\|u(t)\|_{H^3}^2
				\right)<\infty.
				\]
				By this bound, $\|n(t)\|_{L^1}=\|n_{\rm in}\|_{L^1}$ and
				$\|c(t)\|_{W^{1,\infty}}\leq K$, the local existence
				argument gives a positive lifespan independent of
				the restarting time.
				Restarting sufficiently close to $T_{\max}$ and using
				uniqueness, we extend the solution beyond $T_{\max}$,
				contradicting its maximality.
				This proves \eqref{app:blowup}.

			\end{proof}

			\end{revision}

			\section*{Acknowledgement}
			S. Cui is partially supported by the National Postdoctoral Researcher Support Program   under Grant  GZB20260732 and the China Postdoctoral Science Foundation  under Grant  2026M793415. W. Wang is partially supported by the National Key R\&D Program of China under Grant 2023YFA1009200 and the National Natural Science Foundation of China under Grant 12471219.
			
			\section*{Declaration of generative AI use}
			
			During the preparation of this manuscript, the authors used ChatGPT (OpenAI) for language polishing and occasional assistance in discussing and checking intermediate calculations. All research ideas, mathematical arguments, proofs, and conclusions were developed and independently verified by the authors. The authors take full responsibility for the content of the manuscript.

			\section*{Declaration of competing interest}
			The authors declare that they have no known competing financial interests
			or personal relationships that could have appeared to influence the work reported in this paper.


\begin{thebibliography}{99}	
				\bibitem{Alikakos1979}
				N.~D. Alikakos, $L^p$ bounds of solutions of reaction-diffusion equations, Comm. Partial Differential Equations {4} (1979), 827--868.
				
				\bibitem{BGM2017}
				Bedrossian J., Germain P. and Masmoudi N. (2017). On the stability threshold for the 3D Couette flow in Sobolev regularity. Annals of Mathematics, 185, 541--608.
				
				\bibitem{BedrossianHe2017}
				J.~Bedrossian and S.~He, Suppression of blow-up in Patlak--Keller--Segel via shear flows, SIAM J. Math. Anal. {49} (2017), 4722--4766.
				
				\bibitem{BedrossianMasmoudiVicol2016}
				J.~Bedrossian, N.~Masmoudi and V.~Vicol, Enhanced dissipation and inviscid damping in the inviscid limit of the Navier--Stokes equations near the two dimensional Couette flow, Arch. Ration. Mech. Anal. {219} (2016), 1087--1159.
				
				\bibitem{BedrossianVicolWang2018}
				J.~Bedrossian, V.~Vicol and F.~Wang, The Sobolev stability threshold for 2D shear flows near Couette, J. Nonlinear Sci. {28} (2018), 2051--2075.
				
				\bibitem{BV2022}
				J. Bedrossian and V. Vicol, The mathematical analysis of the incompressible Euler and Navier--Stokes equations: an introduction, Graduate Studies in Mathematics, vol. 225, American Mathematical Society, Providence, RI, 2022.
				
				\bibitem{CalvezCorrias2008}
				V.~Calvez and L.~Corrias, The parabolic--parabolic Keller--Segel model in $\mathbb R^2$, Commun. Math. Sci. {6} (2008), 417--447.
				
				\bibitem{CuiWang2024}
				S.~Cui and W.~Wang, Suppression of blow-up in multispecies Patlak--Keller--Segel--Navier--Stokes system via the Poiseuille flow in a finite channel, SIAM J. Math. Anal. {56} (2024), 7683--7712.
				
				\bibitem{CuiWangWang2026}
				S.~Cui, L.~Wang and W.~Wang, Suppression of blow-up for the 3D Patlak--Keller--Segel--Navier--Stokes system via the Couette flow, J. Math. Pures Appl. {210} (2026), 103874.
				
				\bibitem{DengShiWang2025}
				S. Deng, B. Shi and W. Wang, Suppression of blow-up in 3-D Keller--Segel model via the Couette flow in whole space, J. Differential Equations {432} (2025), 113265.
				
				
				\bibitem{FengShiWang2022}
				Y. Feng, B. Shi and W. Wang, Dissipation enhancement of planar helical flows and applications to three-dimensional Kuramoto--Sivashinsky and Keller--Segel equations, J. Differential Equations {313} (2022), 420--449.
				
				\bibitem{He2018}
				S.~He, Suppression of blow-up in parabolic--parabolic Patlak--Keller--Segel via strictly monotone shear flows, Nonlinearity {31} (2018), 3651--3688.
				
				\bibitem{He2023}
				S.~He, Enhanced dissipation and blow-up suppression in a chemotaxis-fluid system, SIAM J. Math. Anal. {55} (2023), 2615--2643.
				
				\bibitem{He2025Alternating}
				S.~He, Time-dependent flows and their applications in parabolic-parabolic Patlak-Keller-Segel systems part I: alternating flows, J. Funct. Anal. {288} (2025), 110786.
				
				\bibitem{He2025Shear}
				S.~He, Time-dependent shear flows and their applications in parabolic--parabolic Patlak--Keller--Segel systems, Nonlinearity {38} (2025), 035029.
				
				\bibitem{Hu2024}
				Z.~Hu, Suppression of chemotactic singularity via viscous flow with large buoyancy, SIAM J. Math. Anal. {56} (2024), 7866--7902.
				
				\bibitem{HuKiselevYao2025}
				Z.~Hu, A.~Kiselev and Y.~Yao, Suppression of chemotactic singularity by buoyancy, Geom. Funct. Anal. {35} (2025), 812--841.
				
				\bibitem{KellerSegel1970}
				E.~F. Keller and L.~A. Segel, Initiation of slime mold aggregation viewed as an instability, J. Theoret. Biol. {26} (1970), 399--415.
				
				\bibitem{KiselevXu2016}
				A.~Kiselev and X.~Xu, Suppression of chemotactic explosion by mixing, Arch. Ration. Mech. Anal. {222} (2016), 1077--1112.
				
				\bibitem{LiXiangXu2025}
				H.~Li, Z.~Xiang and X.~Xu, Suppression of blow-up in Patlak--Keller--Segel--Navier--Stokes system via the Poiseuille flow, J. Differential Equations {434} (2025), 113301.
				
				\bibitem{LZ2024}
				Li H. and Zhao W. (2024). Asymptotic stability in the critical space of 2D monotone shear flow in the viscous fluid. Communications in Mathematical Physics, 405, 267.
				
				\bibitem{Patlak1953}
				C.~S. Patlak, Random walk with persistence and external bias, Bull. Math. Biophys. {15} (1953), 311--338.
				
				\bibitem{ShiWang2024}
				B. Shi and W. Wang, Enhanced dissipation and blow-up suppression for the three
				dimensional Keller--Segel equation with the plane Couette--Poiseuille flow, J. Differential Equations 403 (2024), 368--405.
				
				\bibitem{Schweyer2014}
				R.~Schweyer, Stable blow-up dynamic for the parabolic-parabolic Patlak-Keller-Segel model, arXiv:1403.4975, 2014.
				
				\bibitem{WangWangWu2026}
				G. Wang, W. Wang and T. Wu, Suppression of blow-up in Patlak--Keller--Segel--Navier--Stokes system via the Couette flow in whole space,
				Adv. Nonlinear Anal. 15 (2026), 20250148.
				
				\bibitem{WeiZhang2021}
				D.~Wei and Z.~Zhang, Transition threshold for the 3D Couette flow in Sobolev space, Comm. Pure Appl. Math. {74} (2021), 2398--2479.
				
				\bibitem{WeiZhang2023}
				D.~Wei and Z.~Zhang, Nonlinear enhanced dissipation and inviscid damping for the 2D Couette flow, Tunis. J. Math. {5} (2023), 573--592.
				
				\bibitem{ZengZhangZi2021}
				L.~Zeng, Z.~Zhang and R.~Zi, Suppression of blow-up in Patlak--Keller--Segel--Navier--Stokes system via the Couette flow, J. Funct. Anal. {280} (2021), 108967.
				
				
				
				
				
				
				
				
			\end{thebibliography}
		\end{document}